\tikzset{
  vertice/.style={circle,draw=black},
  decoration={markings,mark=at position 0.5 with {\arrow{>}}},
}
\theoremstyle{plain}
\newtheorem{corollary}{Corollary}[section]
\newtheorem{convention}[corollary]{Convention}
\newtheorem*{theorem*}{Theorem}
\newtheorem{theorem}[corollary]{Theorem}
\newtheorem{lemma}[corollary]{Lemma}
\newtheorem*{lemma*}{Lemma}
\newtheorem{remark}[corollary]{Remark}
\newtheorem{proposition}[corollary]{Proposition}
\theoremstyle{definition}
\newtheorem{definition}[corollary]{Definition}
\newtheorem{definition*}{Definition}
\newcommand{\A}{\mathcal{A}}
\DeclareMathOperator{\Aut}{Aut}
\newcommand{\bl}[2]{\left\langle #1,#2\right\rangle}
\DeclareMathOperator{\BiMod}{BiMod}
\DeclareMathOperator{\bimod}{bimod}
\def\cd{\operatorname {cd}}
\DeclareMathOperator{\coh}{coh}
\DeclareMathOperator{\cone}{cone}
\DeclareMathOperator{\coker}{coker}
\renewcommand{\d}[1]{\mathbb{#1}}
\newcommand{\D}{{\mathscr{D}}}
\DeclareMathOperator{\der}{R}
\newcommand{\ds}{\oplus}
\newcommand{\DS}{\bigoplus}
\newcommand{\epi}{\xymatrix{{}\ar@{->>}[r]&{}}}
\DeclareMathOperator{\End}{End}
\DeclareMathOperator{\Ext}{Ext}
\newcommand{\f}[1]{\mathfrak{#1}}
\newcommand{\floor}[1]{\left\lfloor #1 \right\rfloor}
\newcommand{\fun}{\mapsto}
\DeclareMathOperator{\Gr}{Gr}
\let\H\relax %unassign \H
\DeclareMathOperator{\H}{H}
\DeclareMathOperator{\Hom}{Hom}
\DeclareMathOperator{\Id}{Id}
\DeclareMathOperator{\im}{im}
\renewcommand{\k}{\Bbbk}
\def\mod{\operatorname{mod}}
\newcommand{\mor}{\longrightarrow}
\newcommand{\T}{\mathcal{T}}
\renewcommand{\O}{\mathcal{O}}
\DeclareMathOperator{\Ob}{Ob}
\DeclareMathOperator{\Proj}{Proj}
\DeclareMathOperator{\Qcoh}{Qcoh}
\DeclareMathOperator{\RHom}{RHom}
\renewcommand{\r}[1]{\mathcal{#1}}
\newcommand\reallywidehat[1]{%
\begin{array}{c}
\stretchto{
  \scaleto{
    \scalerel*[\widthof{#1}]{\bigwedge}
    {\rule[-\textheight/2]{1ex}{\textheight}}
  }{1.25\textheight} % THIS STRETCHES THE WEDGE A LITTLE EXTRA WIDE
}{0.5ex}\\           % THIS SQUEEZES THE WEDGE TO 0.5ex HEIGHT
#1\\                 % THIS STACKS THE WEDGE ATOP THE ARGUMENT
\rule{-1ex}{0ex}
\end{array}
}
\DeclareMathOperator{\rk}{rk}
\def\ShExt{\mathcal{E}\mathit{xt}}
\DeclareMathOperator{\Spec}{Spec}
\DeclareMathOperator{\Supp}{Supp}
\DeclareMathOperator{\Sym}{Sym}
\def\ShHom{\operatorname {\mathcal{H}\mathit{om}}}
\renewcommand{\r}[1]{\mathcal{#1}}
\newcommand{\mono}{\xymatrix{{}\ar@{^{(}->}[r]&{}}}\DeclareMathOperator{\num}{num}
\DeclareMathOperator{\Tors}{Tors}
\newcommand{\tr}{\otimes}
\newcommand{\Z}{\mathbb{Z}}
\title[Homological Properties of a Noncommutative Del Pezzo Surface]{Homological Properties of a Certain Noncommutative Del Pezzo Surface}
\author{Louis de Thanhoffer de Volcsey \& Dennis Presotto}
\begin{document}
\begin{abstract}
In an upcoming paper, de Thanhoffer de Volcsey and Van den Bergh  consider a generalization of the numerical Grothendieck group of a Del Pezzo surface and show that if this group has an exceptional sequence of length 4, it must be of one of four types, the fourth one not coming from a commutative Del Pezzo surface.  In this paper, we adapt the theory of noncommutative $\d{P}^1$-bundles as appearing in the work of Van den Bergh and Nyman to produce a sheaf $\d{Z}$-algebra whose associated noncommutative projective scheme has a full exceptional sequence of length 4 for which the Gram matrix is of this fourth type. We show that this noncommutative scheme is noetherian and describe its local structure through the use of generalized preprojective algebras as defined in (\cite{Genpreprojective})
\end{abstract}
\maketitle

\tableofcontents
\section{Introduction and Statement of Results}

In the paper, \cite{deTVdB14}, de Thanhoffer de Volcsey and Van den Bergh provide a numerical classification of possibly noncommutative Del Pezzo surfaces with an exceptional sequence of length 4. More precisely they consider a free abelian group $\Lambda$ with a nondegenerate bilinear form $\langle -,-\rangle $ and consider the following sets of conditions

\begin{itemize}
\item there is an $s \in \Aut(\Lambda)$ such that $\langle x,sy\rangle=\langle y,x \rangle$ for $x,y \in \Lambda$
\item $(s-1)$ is nilpotent
\item $\rk(s-1)=2$
\item $\bl{(s-1)x}{(s-1)x}<0$ for $x \notin \ker(s-1)$
\item the form is indefinite on a certain subquotient of $\Lambda$
\end{itemize}
It is proved that the numerical Grothendieck group $K(X)_{\num}$, together with the Euler form, of a Del Pezzo surface $X$ satisfies these conditions. The classification result they obtain is the following:
\begin{theorem*}
Let $\Lambda$ satisfy the above conditions. Then $\Lambda$ is isomorphic to $\d{Z}^4$ where the matrix of the bilinear form is one of the following standard types:
$$\begin{bmatrix}
1&0&0&0\\
0&1&3&6\\
0&0&1&3\\
0&0&0&1\\
\end{bmatrix},
\begin{bmatrix}
1&2&2&4\\
0&1&0&2\\
0&0&1&2\\
0&0&0&1\\
\end{bmatrix},
\begin{bmatrix}
1&2&3&5\\
0&1&1&3\\
0&0&1&2\\
0&0&0&1\\
\end{bmatrix}
\textrm{ and  }
\begin{bmatrix}
1&2&1&5\\
0&1&0&4\\
0&0&1&2\\
0&0&0&1\\
\end{bmatrix}
$$
\end{theorem*}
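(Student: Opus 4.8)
The plan is to convert the intrinsic hypotheses into a normal-form problem for the Gram matrix, extract the combinatorial skeleton forced by (i)--(iii), and then settle a finite Diophantine classification in which the definiteness conditions are what keep the list finite. Concretely: fix a $\Z$-basis of $\Lambda$ and let $C\in\operatorname{GL}_{4}(\Z)$ be the Gram matrix, so $\langle x,y\rangle=x^{T}Cy$; as the form is nondegenerate, $C$ is unimodular. Condition (i) forces $s=C^{-1}C^{T}$, hence $N:=s-1=C^{-1}(C^{T}-C)$; writing $A:=C^{T}-C$, an integral alternating matrix, we have $CN=A$, so $\rk N=\rk A$. Thus (iii) says that $A$ is alternating of rank $2$, while (ii) says that the characteristic polynomial of $s$ is $(t-1)^{4}$. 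By the normal form for alternating forms over $\Z$ we may, after a $\operatorname{GL}_{4}(\Z)$-change of basis, assume $A$ consists of a single block $\left(\begin{smallmatrix}0&m\\-m&0\end{smallmatrix}\right)$, $m\ge 1$, together with zero rows and columns, so that $\ker N=\ker A=\Z e_{3}\oplus\Z e_{4}$ and the column space of $A$ is $\Z e_{1}\oplus\Z e_{2}$.

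The next step is to show $N$ has Jordan type $(3,1)$: if instead $N^{2}=0$ then $AC^{-1}A=0$, i.e. $\im N=C^{-1}(\Z e_{1}\oplus\Z e_{2})\subseteq\ker N$; comparing ranks this is an equality over $\Q$, hence $Ce_{3},Ce_{4}\in\Z e_{1}\oplus\Z e_{2}$ by saturation, so $C_{33}=C_{34}=C_{43}=C_{44}=0$ and $\langle-,-\rangle$ vanishes identically on $\ker N\supseteq\im N$, contradicting (iv). Therefore $N^{3}=0\neq N^{2}$, $\rk N^{2}=1$, and there is a canonical $s$-stable flag $0\subset\im N^{2}\subset\ker N\subset\ker N^{2}\subset\Lambda$ with rank-one graded pieces. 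It is worth recording that the four target matrices are precisely the Gram matrices which, in a suitable basis, are upper triangular with $1$'s on the diagonal --- equivalently, those admitting a basis $e_{1},\dots,e_{4}$ with $\langle e_{i},e_{i}\rangle=1$ and $\langle e_{j},e_{i}\rangle=0$ for $j>i$ --- so the goal is to produce such a basis and to enumerate the possibilities up to mutation.

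I would then use the flag together with the alternating normal form to rigidify $C$ down to a few integer parameters, and impose the remaining conditions: $\langle-,-\rangle$ is symmetric on $\ker N$ and, by (iv), negative definite on the rank-$2$ sublattice $\im N$; condition (v) pins the form on the relevant rank-$2$ subquotient of the flag; and unimodularity of $C$, through $CN=A$, glues these constituents. The decisive point is that negative definiteness of $\langle-,-\rangle$ on $\im N$, used \emph{integrally} against unimodularity rather than merely rationally, forces a bound on the invariant factor --- one shows $m\in\{1,2,3\}$ --- and makes the negative-definite rank-$2$ piece and its embedding into $\Lambda$ essentially unique once $m$ is fixed. What is left is a finite verification: enumerate the surviving $C$, quotient by the remaining change-of-basis freedom (unipotent base changes compatible with the flag, together with the isometries of the two rank-$2$ constituents), check that exactly four orbits remain and exhibit the displayed matrices as representatives; the converse --- that each of the four matrices satisfies (i)--(v) --- is a direct computation.

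The genuine obstacle is this finiteness-and-rigidity step. A priori the Gram matrix carries several integer parameters cut out only by polynomial identities, and without the inequalities the family is infinite; one must show that condition (iv), read integrally together with unimodularity, collapses it --- in particular proving $m\le 3$ and identifying the negative-definite constituent --- and then organize the residual symmetry group carefully enough that precisely four orbits survive rather than an a priori longer list separated by $m$ and by finer gluing invariants. By contrast the matrix reformulation, the exclusion of Jordan type $(2,2)$, and the final matching are essentially formal once this core is in place.
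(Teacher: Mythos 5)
First, a remark on the comparison you were asked to be measured against: this paper does not prove the statement at all. The theorem is quoted verbatim from \cite{deTVdB14} as background motivation, so there is no internal proof to compare your argument with; moreover, as quoted, the hypotheses do not even record that $\Lambda$ has rank $4$, and in the source the existence of a length-$4$ exceptional basis is part of the setup (see the abstract: ``\ldots if this group has an exceptional sequence of length 4\ldots''). This matters for your plan, because your aside that ``the goal is to produce such a basis'' is either a hypothesis you should simply invoke, or a genuinely hard step for which you give no argument. Your formal reductions are fine: $s=C^{-1}C^{T}$, $A=C^{T}-C$ alternating with $\rk(s-1)=\rk A$, the integral skew normal form, and the exclusion of Jordan type $(2,2)$ (since $\im(s-1)\subseteq\ker(s-1)$ would force the form to vanish on $\ker(s-1)$, contradicting the negativity condition) all check out, and also the unimodularity of $C$ is defensible if ``nondegenerate'' is read as a perfect pairing.

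The genuine gap is that the part you defer --- ``one shows $m\in\{1,2,3\}$'', ``the negative-definite rank-2 piece and its embedding are essentially unique once $m$ is fixed'', ``check that exactly four orbits remain'' --- is not a finishing verification; it \emph{is} the theorem, and one of your guiding claims is false as stated. For the third and the fourth target matrices the alternating part $C^{T}-C$ has rank $2$ (its Pfaffian is $(-2)(-2)-(-3)(-3)+(-5)(-1)=0$, resp.\ $(-2)(-2)-(-1)(-4)+(-5)(0)=0$) and the gcd of its entries is $1$ in both cases, so both have $m=1$: fixing the invariant factor does not make the gluing ``essentially unique'', and the ``finer gluing invariants'' you gesture at are precisely where the commutative $\mathbb{F}_1$ type is separated from the exotic fourth type that motivates this whole paper. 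No bound $m\le 3$, no identification of the negative-definite constituent on $\im(s-1)$, and no orbit count under the residual base-change/mutation group is actually derived from conditions (i)--(v); condition (v) is never used beyond a vague appeal. So what you have is a sensible skeleton (matrix reformulation, Jordan-type analysis, flag) with the arithmetic core --- the finite Diophantine classification and the orbit enumeration --- asserted rather than proved; as it stands it does not constitute a proof of the classification.
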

The first type is a trivial extension the Grothendieck group of $\mathbb{P}^2$ to a rank 4 lattice. The matrices of type 2 and 3 correspond to the Grothendieck groups
of the Hirzebruch surfaces $\mathbb{P}^1 \times \mathbb{P}^1$ and $\mathbb{F}_1$ respectively. Moreover, it was proven in \cite{deTVdB14} that the last type does not correspond to the Grothendieck group of a Del Pezzo surface.\\ \\
The goal of this paper is to construct a noncommutative surface $Z$ together with a full exceptional sequence $E$ of $Z$-modules whose classes in $K(Z)$ form a basis in which the Euler form has the desired Gram matrix. As the top-left and bottom-right $2\times 2$ submatrices, show that the sequences $(E_1,E_2)$  and $(E_3,E_4)$ are isomorphic to the standard sequence $(\O_X,\O_X(1))$ on $K(\d{P}^1)$, we heuristically conclude that $Z$ should be equipped with 2 `maps' (in the noncommutative sense) $\Pi_0,\Pi_1: Z\mor \d{P}^1$ such that $E$ is obtained by pulling back $(\O_X,\O_X(1))$ along both:
\begin{gather}
\label{eq:exceptionalsequence}
E=\bigg(\Pi_1^*(\r{O}_{\d{P}^1}),\Pi_1^*(\r{O}_{\d{P}^1}(1)), \Pi_0^*(\r{O}_{\d{P}^1}),\Pi_0^*(\r{O}_{\d{P}^1}(1))\bigg)
\end{gather}
The construction of this noncommutative surface is an adaptation of Van den Bergh's theory of noncommutative $\d{P}^1$-bundles over a smooth base scheme $X$ of finite type over $\k$ as developed in \cite{VdB_12}. In that paper Van den Bergh proposes a new construction which results in a sheafified notion of a $\Z$-algebra: more precisely, let $\r{E}$ be a \emph{coherent} $X$-bimodule (see Definition \ref{def:bimodule}) which is locally free on both sides. Then there is an appropriate notion of left- and right dual $^{*}{\r{E}}$ (resp. $\r{E}^*$) in this context (Lemma \ref{lem:dual}). Applying the construction indefinitely yields $^{*m}{\r{E}}$ resp. $\r{E}^{*m}$, which by naturality comes with a unit morphism
\[
i_m:\O_\Delta \mor \r{E}^{*m} \tr_X \r{E}^{*m+1}
\]
Van den Bergh defines the  \emph{symmetric sheaf $\d{Z}$-algebra}  $\d{S}(\r{E})$ as 
\begin{itemize}
\item $\d{S}(\r{E})_{m,m}=\r{O}_X$
\item $\d{S}(\r{E})_{m,m+1}=\r{E}^{*m}$
\item $\d{S}(\r{E})$ is freely generated by $\d{S}(\r{E})_{m,m+1}$ subject to the relations given by the images of the morphism $i_m$	
\end{itemize}
(see Definition \ref{def:nodig}). There is an associated category of graded $\A$-modules $\Gr(\d{S}(\r{E}))$ which is Grothendieck (Theorem \ref{thm:groth}). The intuition behind the definition of $\d{S}(\r{E})$ comes from the fact that in the case where $\r{E}$ is central of rank $(2,2)$, the definition coincides with the notion of a $\d{P}^1$-bundle over $X$ in the sense that there is an equivalence  between their categories of graded modules. For the convenience of the reader, we provide an explicit proof of this in Corollary \ref{cor:commutativealgebra}.\\[\medskipamount]
Pushing our heuristic intuition further, 
\[
\dim_\k \left(\Hom_Z(\Pi_1^*(\r{O}_{\d{P}^1}(1)),\Pi_0^*(\r{O}_{\d{P}^1}(1)) \right)=\bl{\Pi_1^*\r{O}_{\d{P}^1}(1))}{\Pi_0^*(\r{O}_{\d{P}^1}(1)}=4
\]
 seems to suggest that $\r{E}$ should be built using a morphism of degree $4$ on the left and  similarly by the identity on the right. This leads one to adapt Van den Bergh's construction under the additional assumptions that the bimodule $\r{E}$ is locally free of rank $(4,1)$ over a pair of base schemes $X,Y$. 
To construct the noncommutative scheme $\Proj(\d{S}(\r{E}))$ and establish its properties, we shall first prove two facts in the setting: 
The first is a description of $\d{S}(\r{E})$ in the case where the base scheme $X$ is affine. More precisely,  we relate $\d{S}(\r{E})$ to the generalized preprojective algebras $\Pi_R(S)$ associated to a morphism of rings $R\mor S$ which is relatively Frobenius as introduced in \cite{Genpreprojective} as follows:
\begin{theorem*}
\label{maintheorem:localdescription}(see \ref{thm:coverconditions} and \ref{lem:catC2} together with \ref{lem:locallypirs})
Let $\r{E}$ be an $X$-$Y$-bimodule of rank $(4,1)$. Then there is a finite affine open cover $U_i\subset X$ such that
	the category  $\Gr(\d{S}(\r{E})\arrowvert_{U_i})$ identifies with a direct summand of $\Gr(\Pi_{R_i}(S_i))$ where $R_i\mor S_i$ is relatively Frobenius of rank 4
\end{theorem*}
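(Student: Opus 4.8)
The plan is to reduce everything to the affine-local situation on $X$, where the restricted $\Z$-algebra can be written down by generators and relations, and then to recognise it inside a generalized preprojective algebra; the statement is then the combination of Theorem \ref{thm:coverconditions}, Lemma \ref{lem:catC2} and Lemma \ref{lem:locallypirs}. \emph{Step 1 (structure of $\r{E}$).} Since $\r{E}$ is locally free of rank $1$ on the right, its support in $X\times Y$ projects isomorphically onto $Y$, so it is the pushforward of a line bundle $\r{L}$ on $Y$ along the graph $Y\hookrightarrow X\times Y$ of a morphism $f\colon Y\mor X$, and the rank $4$ on the left forces $f$ to be finite flat of degree $4$. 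Hence for any affine open $U=\Spec R\subseteq X$ one has $f^{-1}(U)=\Spec S$ with $R\mor S$ finite flat of rank $4$, and $\r{E}\arrowvert_{U}$ is the bimodule $S$ regarded as an $R$-$S$-bimodule via $R\mor S$, twisted by $\r{L}\arrowvert_{f^{-1}(U)}$. I would also observe at the outset that the theorem asks only for an identification over each member of the cover \emph{separately}, so no gluing over overlaps is needed.

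\emph{Step 2 (choosing the cover).} This is the crux. Using that $f$ is finite --- hence affine, so $f^{-1}(U)$ is affine for $U\subseteq X$ affine, and closed, so every open of $Y$ containing a whole fibre $f^{-1}(x)$ contains one of the form $f^{-1}(U')$ --- together with the triviality of $\Pic$ over semilocal rings and the quasi-compactness of $X$, I would cover $X$ by finitely many affine opens $U_i=\Spec R_i$ such that, setting $S_i=\Gamma(f^{-1}(U_i),\O_Y)$, both $\r{L}$ and the relative dualizing module $\Hom_{R_i}(S_i,R_i)$ restrict to free rank-$1$ $S_i$-modules. The first triviality identifies $\r{E}\arrowvert_{U_i}$ with the standard $R_i$-$S_i$-bimodule $S_i$; the second, using that $f$ is Gorenstein (which holds here because the base schemes are regular --- this is the only place the geometry of the base enters), says exactly that $R_i\mor S_i$ is relatively Frobenius of rank $4$ in the sense of \cite{Genpreprojective}. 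This is the content of Theorem \ref{thm:coverconditions}.

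\emph{Step 3 (the local model and the comparison).} Over $U_i$ the definition of $\d{S}(\r{E})$ becomes purely combinatorial: left and right bimodule duals commute with restriction (Lemma \ref{lem:dual}) and, over the affine base, interchange the standard bimodule $S_i$ with its $R_i$-linear dual ${}^{*}S_i=\Hom_{R_i}(S_i,R_i)$; hence the generators of $\d{S}(\r{E})\arrowvert_{U_i}$ are, alternately, $S_i$ and ${}^{*}S_i$ (as an $R_i$-$S_i$- resp. $S_i$-$R_i$-bimodule), and the unit morphisms $i_m$ restrict to the canonical coevaluation morphisms of the duality between $S_i$ and ${}^{*}S_i$. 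Thus $\d{S}(\r{E})\arrowvert_{U_i}$ is the $\Z$-algebra over the alternating sequence of rings $\dots,R_i,S_i,R_i,S_i,\dots$ freely generated by these two bimodules modulo the images of those morphisms, and $\Gr(\d{S}(\r{E})\arrowvert_{U_i})$ identifies with the combinatorial category of Lemma \ref{lem:catC2}. On the other hand, the generalized preprojective algebra $\Pi_{R_i}(S_i)$ of \cite{Genpreprojective} is built from the very same pair of bimodules with the same mesh relations but over the two-vertex base ring $R_i\times S_i$; since every generating bimodule joins the two vertices, a graded $\Pi_{R_i}(S_i)$-module decomposes, according to the parity of the internal degree, into a direct sum of two submodules, and this exhibits $\Gr(\Pi_{R_i}(S_i))$ as a product of two equivalent subcategories, one of which is precisely $\Gr(\d{S}(\r{E})\arrowvert_{U_i})$. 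The corresponding idempotent realizes $\Gr(\d{S}(\r{E})\arrowvert_{U_i})$ as a direct summand of $\Gr(\Pi_{R_i}(S_i))$ (Lemma \ref{lem:locallypirs}), which completes the proof.

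\emph{Main obstacle.} I expect the genuine difficulty to be Step 2: producing a single finite affine cover of $X$ over which $\r{L}$ and the relative dualizing module are simultaneously trivialized while the members of the cover remain of the shape $f^{-1}(U_i)$ with $U_i\subseteq X$ affine. This is exactly where the Gorenstein property of $f$ and a semilocal Picard argument are needed; Steps 1 and 3 amount essentially to an unwinding of the definitions of $\d{S}(-)$ and of the generalized preprojective algebra.
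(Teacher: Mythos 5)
Your proposal is correct and follows essentially the same route as the paper: write $\r{E}={}_f\r{L}_{\Id}$ for a finite flat degree-$4$ map (Lemma \ref{lem:inducedbyline}), choose a finite affine cover trivializing the relevant line bundles via the semilocal Picard argument and duality for $f$ so that $S_i/R_i$ is relative Frobenius (Lemma \ref{thm:coverconditions}), and then identify the restricted, $2$-periodic $\Z$-algebra of global sections with the parity summand of $\Gr(\Pi_{R_i}(S_i))$ (Lemmas \ref{lem:catC}, \ref{lem:catC2}, Proposition \ref{prp:periodicalgebras}, Lemma \ref{lem:locallypirs}). The only cosmetic differences are that the paper trivializes $\omega_X$ and $\omega_Y$ separately and invokes $f^!\omega_X\cong\omega_Y$ (rather than trivializing the relative dualizing module directly), and that it verifies explicitly, via dual bases and the Casimir element $\sum_i e_i\otimes f_i$, that the unit morphisms $i_0,i_1$ become the defining relations of $\Pi_{R_i}(S_i)$ — the coevaluation identification you assert.
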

Second, we adapt the technique of point modules which was developed in \cite{VdB_12} for the rank $(2,2)$ to the rank $(4,1)$ case. This proves to be a substantial modification, requiring an adaptation of the very definition of point module. We use this technique to prove that $\d{S}(\r{E})_{n,m}$ is a locally free bimodule in each degree. This, together with the previous result allows us to adapt the ideas of \cite{Mori07} and \cite{Nyman04} to obtain local noetherianity of the category $\Gr(\d{S}(\r{E}))$. This allows us in turn to consider the noncommutative scheme $Z=\Proj(\d{S}(\r{E}))$ in the language of \cite{ArtZhang94}. We summarize:
\begin{theorem*}(see \ref{thm:noeth} and \ref{cor:locallyfree})
	Let $\r{E}$ be an $X$-$Y$-bimodule of rank $(4,1)$. Then
	\begin{itemize}
		\item
		$Z:=\Proj(\d{S}(\r{E}))$ is noetherian. 
		\item 
		Each bimodule $\d{S}(\r{E})_{n,m}$ is locally free and the ranks can be explicitly computed\footnote{Moreover these ranks agree with the numbers obtained in \cite{Nyman15} where this case was studied for $X,Y$ zero-dimensional schemes}. In particular, if $n-m$ is even, these ranks coincide with the ``classical'' case where $\r{E}$ has rank $(2,2)$. (See Corollary \ref{cor:locallyfree}) .	
	\end{itemize}
\end{theorem*}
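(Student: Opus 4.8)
\emph{Local freeness and the ranks.} The plan is to prove the two bullet points in the order: first local freeness of the graded pieces $\d{S}(\r{E})_{n,m}$, and then --- using it --- noetherianity of $Z$, the latter being a formal consequence of local noetherianity of $\Gr(\d{S}(\r{E}))$ once one passes to the formalism of \cite{ArtZhang94}. By the defining presentation of $\d{S}(\r{E})$, the bimodule $\d{S}(\r{E})_{n,m}$ (say with $n\le m$) is the quotient of the iterated tensor product $\r{E}^{*n}\tr\cdots\tr\r{E}^{*m-1}$ of degree-one pieces by the sub-bimodule generated by the images of the unit maps $i_k\colon\O_\Delta\mor\r{E}^{*k}\tr\r{E}^{*k+1}$. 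Since $\r{E}$ has rank $(4,1)$, its successive one-sided duals have ranks alternating between two values, so the ``expected'' ranks $r_{n,m}$ read off from this presentation satisfy a period-two recursion which can be solved in closed form; a direct check matches these numbers with those computed in \cite{Nyman15} and, for $n-m$ even, with the ranks occurring in the classical rank-$(2,2)$ situation. The content of the statement is that these expected ranks are attained, i.e.\ that the relations $i_k$ impose independent conditions in every degree with no further collapse. Here the plan is to adapt the point-module argument of \cite{VdB_12}: one introduces the notion of (truncated) point module appropriate to the rank-$(4,1)$ situation, shows that for each $d$ the functor of $d$-truncated point modules is represented by a scheme $\mathbf{P}_d$ that is projective over $X$ and carries a flat universal family, and plays this off against the obvious upper bound from the presentation. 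Concretely: $\d{S}(\r{E})_{n,m}$ has, at each point of $X$, dimension at most the expected rank, while a point module with the appropriate alternating dimension vector supported over a given point witnesses that the bound is attained there; since such point modules exist over every point of $X$, the graded pieces are locally free of the expected ranks, and the symmetric argument applied to $\r{E}^*$ yields local freeness on the other side.

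\emph{Noetherianity of $Z$.} It suffices to show that the Grothendieck category $\Gr(\d{S}(\r{E}))$ is locally noetherian: then $Z=\Proj(\d{S}(\r{E}))$, whose category of quasi-coherent sheaves is by definition $\QGr(\d{S}(\r{E}))=\Gr(\d{S}(\r{E}))/\Tors$, is noetherian, a quotient of a locally noetherian Grothendieck category by a localizing subcategory being again locally noetherian. For local noetherianity of $\Gr(\d{S}(\r{E}))$ the plan is to work over the finite affine open cover $U_i\subset X$ supplied by the local-description theorem: there $\Gr(\d{S}(\r{E})|_{U_i})$ is a direct summand of $\Gr(\Pi_{R_i}(S_i))$ for a relatively Frobenius map $R_i\mor S_i$ of rank $4$; since $X$ is of finite type over $\k$ each $R_i$ is noetherian, and $\Pi_{R_i}(S_i)$ is a noetherian $\d{Z}$-algebra (cf.\ \cite{Genpreprojective}), so $\Gr(\Pi_{R_i}(S_i))$ is locally noetherian and hence so is its direct summand $\Gr(\d{S}(\r{E})|_{U_i})$. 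Feeding this local input, together with the local freeness of all the $\d{S}(\r{E})_{n,m}$ --- which keeps the graded pieces coherent and their multiplications under control as the $U_i$ vary --- into the strategy of \cite{Mori07} and \cite{Nyman04}, one glues up to the conclusion that $\Gr(\d{S}(\r{E}))$ itself is locally noetherian.

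\emph{Expected main obstacle.} The delicate part is the point-module analysis in the rank-$(4,1)$ case. The very definition must be revised --- a point module can no longer be one-dimensional in each degree, but has to carry the alternating dimension vector dictated by the ranks of $\r{E}$ and its iterated duals --- and one then has to verify that this is the correct notion: that the moduli functor of truncated point modules is represented by a scheme projective over $X$, that the universal family is flat, and, most delicately, that there is no jumping of fibre dimension over the locus where $\r{E}$ is less generic than at the generic point of $X$ (for instance over the ramification of the degree-$4$ side). Once this is in place, everything else --- the period-two recursion, the numerical comparison with \cite{Nyman15} and with the rank-$(2,2)$ case, and the Mori--Nyman gluing --- is comparatively routine, given the local preprojective model and the local freeness it ultimately produces.
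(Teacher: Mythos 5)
Your second half (noetherianity) is essentially the paper's argument: restrict to the affine cover of Lemma \ref{thm:coverconditions}, identify $\Gr(\d{S}(\r{E})\arrowvert_{U_l})$ with a direct summand of $\Gr(\Pi_{R_l}(S_l))$ via Proposition \ref{prp:periodicalgebras} and Lemma \ref{lem:locallypirs}, invoke noetherianity of $\Pi_{R_l}(S_l)$ (Theorem \ref{thm:pirsnoetherian}), and glue with Lemma \ref{lem:opencover} applied to the generators $\r{N}\tr e_n\d{S}(\r{E})$. Note, however, that this gluing needs no local freeness of the $\d{S}(\r{E})_{n,m}$: in the paper the two bullets are logically independent, so making the noetherianity half depend on the first bullet is an unnecessary (if harmless) extra hypothesis.

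The genuine gap is in your plan for local freeness: the division of labour between the presentation and the point modules is inverted, and as stated the key step fails. Fibrewise at a point $p$, the presentation of $\d{S}(\r{E})$ only yields the \emph{right}-exact sequence $\r{Q}_m\tr \d{S}(\r{E})_{m+2,n}\mor \r{E}^{*m}\tr \d{S}(\r{E})_{m+1,n}\mor \d{S}(\r{E})_{m,n}\mor 0$, which (together with an induction hypothesis on the outer terms) bounds the fibre dimension of $\d{S}(\r{E})_{m,n}$ from \emph{below} by the expected rank; the only upper bound the presentation gives is the rank of the tensor algebra, which is useless. Conversely, the mere existence of a point module supported over a given point cannot ``witness that the bound is attained'': your point modules have fibre dimension $1$ or $2$ in each degree, while the expected ranks grow linearly in $\vert n-m\vert$, so existence provides only a trivial lower bound. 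In the paper the point modules enter on the opposite side: Lemmas \ref{lem:notpointmod} and \ref{lem:pointupperbound} give \emph{upper} bounds ($\le 1$ in even, $\le 2$ in odd degrees) for any module generated in even degree with the correct start, and these upper bounds are played off against the right-exactness lower bounds through the three sequences (\ref{eq:locallyfreeloceven}), (\ref{eq:locallyfreelocodd}), (\ref{eq:pointresolution}) in a simultaneous induction over all points and all degrees, pinning the fibre dimensions of $\r{O}_p\tr e_n\d{S}(\r{E})$ exactly (\ref{eq:correctdimensions}); constancy of these dimensions over the smooth base then gives local freeness, exactness of (\ref{eq:locallyfree}), and the ranks of Corollary \ref{cor:locallyfree} by additivity. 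Finally, the representability of truncated point-module functors by schemes projective over $X$ with flat universal families, and the absence of fibre-dimension jumping over the ramification of $f$, are neither proved in the paper nor needed for this elementary dimension count, and you do not indicate how representability would produce the bounds you require; so the crucial upper-bound mechanism of the first bullet is missing from your proposal.
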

The noncommutative scheme $Z$ comes with a sequence of maps $\Pi_{2n}:Z\mor X$, $\Pi_{2n+1}: Z \mor Y$ (again in the sense of \cite{ArtZhang94}) given by taking the corresponding degree of the graded module. We describe these and show that $\Pi_0$ and $\Pi_1$ contain all the information on these maps in a certain sense. With these definitions, we finally prove
\begin{theorem*}(See \ref{thm:mainexoticsequence})
Let  $\r{E}$ be a $\d{P}^1$-bimodule of rank $(4,1)$. Let $\d{S}(\r{E})$ be the associated symmetric sheaf $\d{Z}$-algebra and put $Z=\Proj(\d{S}(\r{E}))$. Then 
\[
\bigg( \Pi_1^*(\r{O}_{\d{P}^1}),\Pi_1^*(\r{O}_{\d{P}^1}(1)), \Pi_0^*(\r{O}_{\d{P}^1}),\Pi_0^*(\r{O}_{\d{P}^1}(1))\bigg)
\]
is a full strong exceptional sequence on $Z$ for which the Gram matrix of the Euler form is given by
\[
\begin{bmatrix}
1&2&1&5\\
0&1&0&4\\
0&0&1&2\\
0&0&0&1\\
\end{bmatrix}
\]
\end{theorem*}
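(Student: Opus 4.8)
The plan is to establish the four required properties in sequence: exceptionality of each $\Pi_i^*(\r{O}_{\d{P}^1}(j))$, strongness and fullness of the collection, and finally the computation of the Gram matrix. First I would recall from the structural results cited in the previous theorems that $Z=\Proj(\d{S}(\r{E}))$ is noetherian and that each bimodule $\d{S}(\r{E})_{n,m}$ is locally free with explicitly computed ranks; these ranks are exactly what feeds the Euler form computation at the end. The pullback functors $\Pi_{2n}^*$ (to $X=\d{P}^1$) and $\Pi_{2n+1}^*$ (to $Y=\d{P}^1$) are defined via the graded pieces of the $\d{Z}$-algebra, so $\Pi_1^*(\r{O}_{\d{P}^1}(j))$ and $\Pi_0^*(\r{O}_{\d{P}^1}(j))$ are concrete graded modules (roughly, shifts of free rank-one modules supported in a single degree, suitably truncated), and $\Hom$ and $\Ext$ groups between them can be computed as (hyper)cohomology of the appropriate degree-$(n,m)$ pieces $\d{S}(\r{E})_{n,m}$ against $\r{O}_{\d{P}^1}$ and $\r{O}_{\d{P}^1}(1)$ on the base.

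Next I would carry out the cohomology vanishing. For each ordered pair $(i,j)<(i',j')$ in the sequence, one needs $\Ext^{>0}_Z=0$ (strongness) and for the diagonal pairs $\Ext^{*}_Z(\Pi_i^*\r{O},\Pi_i^*\r{O})=\k$ in degree $0$ and $0$ otherwise (exceptionality of each term). Since $\Pi_i^*$ is essentially a "restriction to degree $2n$ or $2n+1$" operation, these reduce to computing $\der\Hom$ on $\d{P}^1$ between $\r{O}$, $\r{O}(1)$ twisted by the locally free bimodules $\d{S}(\r{E})_{n,m}$; the key inputs are: (a) the rank formulas from Corollary \ref{cor:locallyfree}, (b) the fact that on $\d{P}^1$ a globally generated locally free sheaf of the relevant splitting type has vanishing $\H^1$, and (c) that the adjunction $\Hom_Z(\Pi_i^* F, M)\cong\Hom_{\d{P}^1}(F,(\Pi_i)_* M)$ lets one transport the computation to $\d{P}^1$. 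For fullness, I would show that the four objects generate the derived category $\der\operatorname{coh}(Z)$ by exhibiting the shifted free modules $e_n\d{S}(\r{E})$ (which generate $\Gr(\d{S}(\r{E}))$, hence $\Proj$) as iterated extensions/cones of the four $\Pi_i^*(\r{O}_{\d{P}^1}(j))$ — concretely using the "generation in two steps" argument familiar from the $\d{P}^1$-bundle case, where each $e_n\A$ sits in a Koszul-type resolution built from $e_{n+1}\A$ and $e_{n+2}\A$, together with the observation that $\Pi_0,\Pi_1$ "contain all the information" about the higher $\Pi_n$ (the content of the earlier discussion).

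Finally, the Gram matrix entries $\bl{E_a}{E_b}=\sum_i(-1)^i\dim_\k\Ext^i_Z(E_a,E_b)$ collapse to $\dim_\k\Hom_Z(E_a,E_b)$ once strongness is known, and each such $\Hom$ is computed by the adjunction/base-change argument above: the diagonal entries are $1$, the $(1,2)$ and $(3,4)$ entries are $2=\dim\Hom_{\d{P}^1}(\r{O},\r{O}(1))$ reflecting that $(E_1,E_2)$ and $(E_3,E_4)$ are pulled back from the standard sequence on $\d{P}^1$, and the mixed entries $\bl{E_1}{E_3}=1$, $\bl{E_1}{E_4}=5$, $\bl{E_2}{E_4}=4$, $\bl{E_2}{E_3}=0$ come out of the rank computations for $\d{S}(\r{E})_{0,2}$, $\d{S}(\r{E})_{0,3}$ and the twists by $\r{O}_{\d{P}^1}(1)$ — this is precisely where the rank-$(4,1)$ hypothesis (as opposed to rank $(2,2)$) changes the answer and produces the fourth exotic matrix rather than that of $\d{F}_1$. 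I expect the main obstacle to be the fullness step: one must be careful that the noncommutative $\Proj$ has finite cohomological dimension and that the generating resolutions for $e_n\A$ terminate, which requires genuinely using the noetherianity and local-freeness theorems rather than formal nonsense, and possibly a separate argument (e.g. an Euler-characteristic / Hilbert-series count) to check that no further objects are needed; the vanishing computations, while numerous, should be routine given the explicit rank formulas.
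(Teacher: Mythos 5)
Your treatment of exceptionality, strongness and the Gram matrix is essentially the paper's route: everything is transported to $\d{P}^1$ via the adjunction $\Hom_{\Proj(\r{A})}(\Pi_m^*\r{F},-)\cong\Hom_{X_m}(\r{F},\der\Pi_{m,*}(-))$ together with the identification $\der\Pi_{m,*}\Pi_n^*\r{G}\cong \r{G}\tr\d{S}(\r{E})_{n,m}$ for $m\ge n-1$ (Theorem \ref{thm:extandpullback}), and the entries of the matrix then come out of cohomology of line bundles on $\d{P}^1$ (e.g. $\H^i(\d{P}^1,f^*\r{O}(1))=\H^i(\d{P}^1,\r{O}(4))$ for the entry $5$). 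Just be aware that this identification is not a formal consequence of the rank formulas: it rests on the vanishing $\der^{i}\Pi_{m,*}\Pi_n^*\r{G}=0$ for $i\ge 1$, which in the paper is obtained from the torsion-functor machinery and, ultimately, from the exact sequence (\ref{eq:locallyfree}) established via point modules.

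The genuine gap is in your fullness argument. The downward induction you sketch does work as far as it goes: from $0\to\Pi_{n}^*(\r{G})\to\Pi_{n-1}^*(\r{G}\tr\r{E}^{*(n-2)})\to\Pi_{n-2}^*(\r{G})\to 0$ (the sequences (\ref{eq:basic})) and exactness of $\Pi_m^*$ one gets that every $\Pi_n^*(\r{O}(a))$ lies in the triangulated hull of the four objects. But this only shows that the hull contains a set of generators of the \emph{abelian} Grothendieck category $\Proj(\d{S}(\r{E}))$; it does not show that every object of $\D$ (bounded noetherian cohomology) lies in the \emph{finite} triangulated hull, since an arbitrary noetherian object only admits a possibly infinite resolution by sums of $\Pi_n^*(\r{O}(a))$, and neither ``finite cohomological dimension of $\Proj$'' nor an Euler-characteristic/Hilbert-series count can close this gap (generation of a triangulated subcategory is not detected on K-theory). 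The paper circumvents exactly this point with the mutation criterion of Lemma \ref{lem:crit}: one checks (a) that $\sum_j\dim\Hom^j(E_i,T)<\infty$ for \emph{all} $T$, which requires Lemma \ref{lem:tau} ($\cd\Pi_{m,*}\le 1$ and coherence of $\der^i\Pi_{m,*}$ on noetherian objects — an ingredient absent from your proposal), and (c) that the right orthogonal of the four objects vanishes (Lemma \ref{lem:ort}), which is where your induction on (\ref{eq:basic}) is actually used; successive left mutations then force any object orthogonal to the hull to be zero, giving fullness and $\Ext$-finiteness of $\D$ simultaneously. Without this (or some substitute producing genuinely finite resolutions), your proof of fullness is incomplete.
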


\section{Symmetric Sheaf $\d{Z}$-Algebras}

\subsection{Definitions and Construction}
We begin by giving a summary of the material needed to construct symmetric sheaf $\Z$-algebras following \cite{VdB_12}. 
\begin{convention}
\label{conv:conditionsxy}
Throughout the paper $\k$ denotes an algebraically closed field. $W$, $X$ and $Y$ will denote smooth varieties (that is smooth, integral\footnote{One could leave out this condition, which leads to the more general setting of disjoint unions of varieties, we choose not to do this for purposes of clarity}, separated and of finite type over $\k$).

\end{convention}

\begin{definition} \label{def:bimodule}
A coherent $X-Y$ bimodule $\r{E}$ is a coherent $\r{O}_{X\times Y}$-module such that the support of $\r{E}$ is finite over $X$ and $Y$. We denote the corresponding category by $\bimod(X-Y)$. More generally an $X-Y$-bimodule is a quasi-coherent $\r{O}_{X\times Y}$-module which is a filtered direct limit of objects in $\bimod(X-Y)$. The  category of $X-Y$-bimodules is denoted $\BiMod(X-Y)$. Finally, a bimodule $\r{E}$ is called locally free if ${\pi_X}_*(\r{E})$ and ${\pi_Y}_*(\r{E})$ are locally free, where $ \pi_X, \pi_Y$ denote the standard projections. If $\rk ({\pi_X}_*(\r{E}))=m$ and $\rk({\pi_Y}_*(\r{E}))=n$, we write $\rk\r{E}=(m,n)$.
\end{definition}

For $W$, $X$ and $Y$ the tensor product of $\r{O}_{W \times X \times Y}$-modules induces a tensor product
\begin{equation*}
\label{def:tensorproductbimods}
\BiMod(W-X) \tr  \BiMod(X-Y) \mor \BiMod(W-Y):(\r{E},\r{F})\mapsto \r{E}\tr_X \r{F}
\end{equation*}
through the formula
\[ \r{E} \tr \r{F} :=  {\pi_{W\times Y}}_*\big(\pi^*_{W\times X}(\r{E}) \tr_{W \times X \times Y} \pi^*_{X\times Y} ( \r{F}) \big) \]
For each $\r{E} \in \BiMod(W-X)$ this defines a functor :
\begin{equation}
\label{def:tensorproductwithbimod}
 - \tr_X \r{E}: \Qcoh(W) \mor \Qcoh(X):\r{M}\fun  \r{M} \tr_X \r{E} := {\pi_X}_*\big(\pi^*_W(\r{M}) \tr_{W \times X} \r{E} \big)
\end{equation}
which is right exact in general and  exact if $\r{E}$ is locally free on the left. We mention that \cite[lemma 3.1.1.]{VdB_12} shows that this functor determines the bimodule $\r{E}$ uniquely.

\begin{definition}
\label{def:standardformbimodule}
Consider morphisms ${u: W \mor X}$, ${v: W \mor Y}$. If $\r{U} \in \Qcoh(W)$, then we denote $(u, v)_*\r{U} \in \BiMod(X-Y)$
as $_u\r{U}_v$. One easily checks:
\begin{equation}
\label{eq:pullbacktensorproduct}
 - \tr {}_u\r{U}_v = v_*(u^*(-) \tr_W \r{U})
 \end{equation}
A bimodule isomorphic to one of the form ${}_u \r{U}_u \cong {}_{\Id} ( u_* \r{U} )_{\Id}$ is called \emph{central}.
\end{definition}

Next we introduce the language of sheaf $\d{Z}$-algebras, a `sheafified version' of a classical $\d{Z}$-algebra.

\begin{definition} \label{def:nodig}
Let $(X_i)_{i\in \d{Z}}$ be a sequence of smooth varieties.\\
A sheaf $\d{Z}$-algebra $\r{A}$, is a collection of $X_i-X_j$-bimodules $\r{A}_{ij}$  equipped with multiplication - and identity maps
\[
\mu_{i,j,k}:\r{A}_{i,j}\tr \r{A}_{j,k} \mor \r{A}_{i,k} \text{ and }
u_i:\r{O}_{X_i}\mor \r{A}_{i,i}
\]
such that the usual associativity 
%\begin{displaymath}
%\xymatrix{
%\r{A}_{i,j}\tr \r{A}_{j,k}\tr{A}_{k,l} \ar[rr]^{\mu_{i,j,k}\tr 1}\ar[d]_{1\tr \mu_{j,k,l}} & & \r{A}_{i,k}\tr \r{A}_{k,l}\ar[d]^{\mu_{i,k,l}}\\
%\r{A}_{i,j}\tr \r{A}_{j,l}\ar[rr]_{\mu_{i,j,l}} & & \r{A}_{i,l}
%}	
%\end{displaymath}
and unit diagrams
%\begin{displaymath}
%\xymatrix{
%\r{O}_{X_i}\tr \r{A}_{i,j} \ar[rr]^{u_i\tr 1}\ar[dr] && \r{A}_{i,i}\tr \r{A}_{i,j}\ar[dl]^{\mu_{i,i,j}}&\r{A}_{i,j}\tr \r{O}_{X_i} \ar[rr]^{1\tr u_i}\ar[dr] && \r{A}_{i,j}\tr \r{A}_{j,j}\ar[dl]^{\mu_{i,j,j}}\\
%& \r{A}_{i,j}&&&\r{A}_{i,j}}
%\end{displaymath}
commute.
 \end{definition}
In a similar vain, we introduce the notion of graded (right) module over a sheaf $\Z$-algebra:
\begin{definition} Let $\r{A}$ be a sheaf $\Z$-algebra.\\
A graded $\r{A}$-module is a sequence of $X_i$-modules $\r{M}_i$ together with maps 
\[
\mu_{i,j}:\r{M}_i \tr \r{A}_{i,j} \mor \r{M}_j
\]

compatible with the multiplication and identity maps on $\r{A}$ in the usual sense.
A morphism of graded  (right) $\r{A}$-modules $f:\r{M}\mor \r{N}$ is a collection of $X_i$-module morphisms $f_i:\r{M}_i\mor \r{N}_i$ such that the diagram
\begin{displaymath}
\xymatrix{
\r{M}_i\tr \r{A}_{i,j}\ar[rr]^{f_i}\ar[d] &&\r{N}_i\tr \r{A}_{i,j}\ar[d] \\
\r{M}_j\ar[rr]_{f_j}&& \r{N}_j
}	
\end{displaymath}
commutes. The associated category is denoted $\Gr(\r{A})$.
\end{definition}

\begin{definition}
\label{def:proj}
An $\r{A}$-module is right bounded if $\r{M}_i = 0$ for $i \gg 0$. An $\r{A}$-module is called \emph{torsion} if it is a filtered colimit of right bounded modules. Let $\Tors(\r{A})$ be the subcategory of $\Gr(\r{A})$ consisting of torsion modules. If $\Gr(\r{A})$ is a locally noetherian category\footnote{As mentioned in the introduction, this property is nontrivial and in fact one of the main results of this paper in case $\r{A}$ is a \emph{symmetric sheaf $\mathbb{Z}$-algebra}.}, $\Tors(\r{A})$ is a localizing subcategory and the corresponding quotient category is denoted by $\Proj(\r{A})$.\\
This construction yields a projection functor $p: \Gr(\r{A})\mor \Proj(\r{A})$ with right adjoint $\omega$ (see \cite{Smith_99}).
\end{definition}

\begin{remark}
It an easy observation that $\Gr(A)$ is abelian and that all universal constructions are defined `degreewise'	
\end{remark}

The fundamental example of a graded right $\r{A}$-module is given by the collection $e_n\r{A}$ satisfying
\begin{equation}
\label{def:e_nA}
\big(e_n\r{A}\big)_i=\r{A}_{n,i}	
\end{equation}
The first crucial step in our construction is a certain duality between locally free bimodules. To this end, we recall that by Convention\ref{conv:conditionsxy}, $X$ and $Y$ denote smooth varieties over $\k$.

\begin{lemma}(see \cite[\S4]{VdB_12})
\label{lem:dual}
Let $\r{E} \in \bimod(X-Y)$ be a locally free coherent bimodule. Then there is a unique object $\r{E}^* \in \bimod(Y-X)$ such that the functor
\[ 
- \tr_Y \r{E}^*: \Qcoh(Y) \mor \Qcoh(X)
\]
(see (\ref{def:tensorproductwithbimod})) is  right adjoint to the functor $- \tr_X \r{E}$., i.e for $\r{M} \in \Qcoh(X)$ and ${\r{N} \in \Qcoh(Y):}$
\[ \Hom_Y(\r{M}\tr \r{E}, \r{N}) \cong \Hom_X(\r{M}, \r{N} \tr \r{E}^*) \]
\end{lemma}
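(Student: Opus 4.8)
The plan is to get uniqueness for free from abstract nonsense and to construct $\r{E}^*$ by hand after restricting to the support of $\r{E}$. For uniqueness: a right adjoint of a given functor is unique up to natural isomorphism if it exists, and two bimodules in $\bimod(Y-X)$ whose tensor functors $-\tr_Y(-)$ are naturally isomorphic are themselves isomorphic by \cite[Lemma 3.1.1]{VdB_12} (the fact recalled just before the statement). So there is at most one $\r{E}^*$, and it only remains to exhibit one.

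For existence, I would first replace $X\times Y$ by the support. Let $S\subseteq X\times Y$ be the (scheme-theoretic) support of $\r{E}$; then $\r{E}$ is the direct image of a coherent sheaf on $S$, again written $\r{E}$, and by hypothesis the two projections $p\colon S\mor X$ and $q\colon S\mor Y$ are finite. The projection formula for the closed immersion $S\hookrightarrow X\times Y$ turns the definition \eqref{def:tensorproductwithbimod} of the tensor functor into
\[ -\tr_X\r{E}=q_*\big(p^*(-)\tr_{\O_S}\r{E}\big)\colon\Qcoh(X)\mor\Qcoh(Y). \]
Since $q$ is finite, hence affine, $q_*$ is restriction of scalars along $\O_Y\mor q_*\O_S$ and admits a right adjoint $q^!=\ShHom_{\O_Y}(q_*\O_S,-)$ (coinduction); likewise $p^*$ has right adjoint $p_*$. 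Chaining these with the tensor--$\ShHom$ adjunction on $S$ gives, naturally in $\r{M}\in\Qcoh(X)$ and $\r{N}\in\Qcoh(Y)$,
\begin{align*}
\Hom_Y(\r{M}\tr_X\r{E},\r{N})&\cong\Hom_S\big(p^*\r{M}\tr_{\O_S}\r{E},\,q^!\r{N}\big)\\
&\cong\Hom_S\big(p^*\r{M},\,\ShHom_{\O_S}(\r{E},q^!\r{N})\big)\\
&\cong\Hom_X\big(\r{M},\,p_*\ShHom_{\O_S}(\r{E},q^!\r{N})\big),
\end{align*}
so that $R\colon\r{N}\mapsto p_*\ShHom_{\O_S}(\r{E},q^!\r{N})$ is a right adjoint of $-\tr_X\r{E}$.

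It then remains to recognise $R$ as $-\tr_Y\r{E}^*$ for an actual bimodule $\r{E}^*\in\bimod(Y-X)$. Working over $q$-affine opens of $Y$ and using that $\r{E}$ is coherent, one identifies $q_*\ShHom_{\O_S}(\r{E},q^!\r{N})\cong\ShHom_{\O_Y}(q_*\r{E},\r{N})$ as $q_*\O_S$-modules; because $q_*\r{E}=\pi_{Y*}\r{E}$ is locally free (this is where ``locally free on the right'' enters) the right-hand side is $(q_*\r{E})^\vee\tr_{\O_Y}\r{N}$, with $(q_*\r{E})^\vee=\ShHom_{\O_Y}(q_*\r{E},\O_Y)$ a locally free $\O_Y$-module carrying a natural $q_*\O_S$-module structure. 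Let $\r{F}$ be the coherent $\O_S$-module corresponding to the $q_*\O_S$-module $(q_*\r{E})^\vee$ and set $\r{E}^*:=\sigma_*\r{F}$, where $\sigma\colon X\times Y\xrightarrow{\ \sim\ }Y\times X$ is the transposition; its support is $\sigma(S)$, finite over both $Y$ and $X$, so $\r{E}^*\in\bimod(Y-X)$. Applying the same projection-formula rewriting as before to $-\tr_Y\r{E}^*$ yields $\r{N}\tr_Y\r{E}^*=p_*\big(q^*\r{N}\tr_{\O_S}\r{F}\big)\cong p_*\ShHom_{\O_S}(\r{E},q^!\r{N})=R(\r{N})$, naturally in $\r{N}$; hence $-\tr_Y\r{E}^*$ is right adjoint to $-\tr_X\r{E}$, which is the assertion.

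The part needing care is purely bookkeeping: keeping straight on which of $S$, $X$, $Y$ each sheaf lives, and checking the duality identity $q_*\ShHom_{\O_S}(\r{E},q^!\r{N})\cong\ShHom_{\O_Y}(q_*\r{E},\r{N})$ compatibly with the $q_*\O_S$-actions — it is exactly there that finiteness of $q$, coherence of $\r{E}$, and local freeness of $\pi_{Y*}\r{E}$ are all used (the last to replace the $\ShHom$ by a genuine tensor product, so that $R$ is actually induced by a bimodule and not merely an abstract right adjoint). If one prefers to avoid discussing the scheme structure on $S$, an alternative is to argue Zariski-locally on $X$ and $Y$, where $\r{E}$ becomes a bimodule over the coordinate rings that is finitely generated on both sides: the statement then reduces to the classical fact that such a bimodule has its one-sided $\Hom$-dual as a right adjoint, and one glues the local duals using the uniqueness established above.
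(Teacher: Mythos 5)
Your proof is correct, and since the paper gives no argument for this lemma (it is quoted from \cite[\S4]{VdB_12}), the relevant comparison is with the cited construction: your chain of adjunctions $q_*\dashv q^!$, $-\tr_{\O_S}\r{E}\dashv\ShHom_{\O_S}(\r{E},-)$, $p^*\dashv p_*$ is the same one used there, and your $\r{E}^*$ is exactly the bimodule ${}_v\ShHom_W(\r{U},v^!\r{O}_Y)_u$ recorded in Remark \ref{rem:dual} (take $W=S$, $u=p$, $v=q$; finite duality identifies your $\r{F}$, i.e.\ the $\O_S$-module corresponding to $(q_*\r{E})^\vee$, with $\ShHom_S(\r{E},q^!\r{O}_Y)$). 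The one delicate point, the $q_*\O_S$-linearity of $q_*\ShHom_{\O_S}(\r{E},q^!\r{N})\cong\ShHom_{\O_Y}(q_*\r{E},\r{N})\cong(q_*\r{E})^\vee\tr_{\O_Y}\r{N}$ (using finiteness of $q$, coherence of $\r{E}$, and local freeness of $\pi_{Y*}\r{E}$), you have flagged explicitly and it does hold, so there is no gap.
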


\begin{remark}\label{rem:dual} Van den Bergh also gives an explicit formula (see the discussion following Proposition 4.1.6 in \cite{VdB_12}).
If $\r{E} = {}_u \r{U}_v$ then $\r{E}^*$ is given by ${}_v \ShHom_W(\r{U}, v^! \r{O}_Y)_u$
\end{remark}
The dual notion leads to the left dual: an object $^{*}{\r{E}}$ such that
\[ \Hom_X(\r{N}\tr ^{*}{\r{E}}, \r{M}) \cong \Hom_Y(\r{N}, \r{M} \tr \r{E})\]
By Yoneda's lemma we have 
\begin{equation} \label{eq:yoneda} \r{E}=^{*}{(\r{E}^*)}=(^{*}{\r{E}})^* \end{equation}
Repeated application of  duals leads to the following notation:
$$\r{E}^{*n}=
\begin{cases}
\r{E}^{\overbrace{*\ldots *}^{n}} & n \ge 0\\
^{\overbrace{*\ldots *}^{-n}}{\r{E}} & n<0
\end{cases}
$$
In the sequel  it will be convenient to invoke the following notation:
\begin{convention}\label{conv:Z_n} For $X$ and $Y$, we shall without further mention consider the sequence $(X_n)_{n \in \Z}$ defined as
\[
 X_n=X \text{ if } n \text{ is even and  }Y \text{ if } n \text{ is odd}
\] 
 \end{convention}
From the adjointness properties of the duals defined above, there are unit and counit morphisms:
\begin{align}
\label{eq:unitmorphism} i_n:&  \ \r{O}_{X_n} \mor \r{E}^{*n} \tr \r{E}^{*n+1} \\
\notag j_{n}:& \ \r{E}^{*n}\tr \r{E}^{*n-1} \mor \r{O}_{X_n} \end{align}
Our next ingredient is that of a nondegenerate bimodule.
\begin{definition}
We say that $\r{Q}\in \bimod(X-W)$ is \emph{invertible} if there exists a bimodule $\r{Q}^{-1} \in \bimod(W-X)$ such that 
\[
\r{Q} \tr_W \r{Q}^{-1} \cong \r{O}_X \text{ and } \r{Q}^{-1} \tr_X \r{Q} \cong \r{O}_W.
\]
 If there exist $\r{E} \in \bimod(X-Y)$ and $\r{F} \in \bimod(Y-W)$ such that $\r{Q}\subset \r{E}\tr_Y \r{F}$, we say the inclusion is \emph{nondegenerate} if the following composition 
\[ 
\r{E}^*\tr_X \r{Q}\mor \r{E}^*\tr_X \tr \left( \r{E} \tr_Y \r{F} \right) \mor \r{F}
\]
is an isomorphism.
\end{definition}

\begin{definition}
Let $(X_i)_{i \in \Z}$ be a sequence of smooth varieties over $\k$ and let $\r{E}_i$ be locally free $X_i-X_{i+1}$-bimodules. Then the \emph{tensor sheaf $\d{Z}$-algebra} $\d{T}(\{\r{E}_i\})$ is the sheaf $\d{Z}$-algebra generated by the $ \{\r{E}_i\}$,  more precisely
\[ \d{T}(\{\r{E}_i\})_{m,n} = \begin{cases}
0 & n<m\\
{}_{\Id} \big( \r{O}_{X_m} \big)_{\Id} & n=m\\
\r{E}_m \tr \ldots \tr \r{E}_{n-1} & n>m\\
\end{cases}
\]
If we are given smooth varieties $X$ and $Y$ and a locally free $X-Y$-bimodule $\r{E}$, the standard tensor algebra is the sheaf $\Z$-algebra $\d{T}(\r{E})$ constructed by applying the convention \ref{conv:Z_n} and defining 
\[
\r{E}_n  =  \r{E}^{*n}
\]
to the above definition
\end{definition}
We can now state the definition of the main object of study in this paper: the symmetric sheaf $\Z$-algebra.
\begin{definition}
Let $(X_i)_{i \in \d{Z}}$ be a sequence of smooth varieties over $\k$ and let $\r{E}_i$ be locally free $X_i-X_{i+1}$-bimodules. Suppose that for each $i$ we are given a nondegenerate $X_i-X_{i+2}$-bimodule ${\r{Q}_i \subset \r{E}_i \tr\r{E}_{i+1}}$. Then the \emph{symmetric sheaf $\d{Z}$-algebra} $\d{S}(\{\r{E}_i\}, \{ \r{Q}_i \})$ is the quotient of $\d{T}(\{\r{E}_i\})$ by the relations $(\r{Q}_i)_i$. More precisely, $\d{S}(\{\r{E}_i\}, \{ \r{Q}_i \})_{m,n}$ is defined as
\[ \begin{cases}
\d{T}(\{\r{E}_i\})_{m,n} & n \leq m+1\\
\d{T}(\{\r{E}_i\})_{m,n} / \bigg((\r{Q}_m\tr\ldots) + (\r{E}_m\tr Q_{m+1} \tr \ldots)+\ldots + (\ldots \tr Q_{n-2})\bigg) & n \geq m+2
\end{cases}
\]
If $X$ and $Y$ are smooth varieties, and $\r{E}$ an $X-Y$-bimodule,  the standard symmetric sheaf $\d{Z}$-algebra $\d{S}(\r{E})$ is constructed by considering the standard tensor algebra $\d{T}(\r{E})$ and considering the following sequence of nondegenerate invertible bimodules:
\begin{equation}
\label{eq:standardsheafZalgebra}
\r{Q}_n =i_n \left( \r{O}_{X_n} \right) \subset \r{E}^{*n}\tr \r{E}^{*n+1}
\end{equation}

\end{definition}

A fundamental operation in the context of sheaf $\Z$-algebras is that of twisting by a sequence of invertible bimodules:
\begin{theorem}\label{thm:twisting}
Let $(X_i)_i$ and $(Y_i)_i$ be sequences of smooth varieties over $\k$ and $\r{A}$ a sheaf $\d{Z}$-algebra on $(X_i)_i$.\\ 
Given a collection of invertible $X_i-Y_i$-bimodules $(\r{T}_i)_i$, one can construct a sheaf $\d{Z}$-algebra $\r{B}$ by
\[ \r{B}_{ij}:=\r{T}_i^{-1} \tr \r{A}_{ij}\tr \r{T}_j \]
called the twist of $\r{A}$ by $(\r{T}_i)_i$.\\
There is an equivalence of categories given by the functor 
\[ \r{T}: \Gr(\r{A})\cong \Gr(\r{B}): \r{M}_i \mor \r{M}_i \tr \r{T}_i \]
Moreover, every symmetric sheaf $\d{Z}$-algebra can be obtained from a standard symmetric one by a twist.
\end{theorem}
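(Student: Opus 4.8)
\textbf{Proof proposal for Theorem~\ref{thm:twisting}.}

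The plan is to verify the three assertions in order: (1) that $\r{B}_{ij} := \r{T}_i^{-1}\tr\r{A}_{ij}\tr\r{T}_j$ carries the structure of a sheaf $\d{Z}$-algebra; (2) that $\r{T}:\r{M}_i\mapsto\r{M}_i\tr\r{T}_i$ is an equivalence $\Gr(\r{A})\cong\Gr(\r{B})$; and (3) that every symmetric sheaf $\d{Z}$-algebra arises this way from a standard one. For (1), the multiplication $\mu^{\r{B}}_{i,j,k}\colon\r{B}_{ij}\tr\r{B}_{jk}\to\r{B}_{ik}$ is built by first using the canonical isomorphism $\r{B}_{ij}\tr\r{B}_{jk} = \r{T}_i^{-1}\tr\r{A}_{ij}\tr\r{T}_j\tr\r{T}_j^{-1}\tr\r{A}_{jk}\tr\r{T}_k \cong \r{T}_i^{-1}\tr\r{A}_{ij}\tr\r{A}_{jk}\tr\r{T}_k$ (here invoking $\r{T}_j\tr\r{T}_j^{-1}\cong\O_{X_j}$ and the unit constraints of the tensor product of bimodules from the paragraph after Definition~\ref{def:bimodule}), then applying $\r{T}_i^{-1}\tr\mu^{\r{A}}_{i,j,k}\tr\r{T}_k$. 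The identity map is $u^{\r{B}}_i\colon\O_{Y_i}\cong\r{T}_i^{-1}\tr\O_{X_i}\tr\r{T}_i \xrightarrow{\r{T}_i^{-1}\tr u^{\r{A}}_i\tr\r{T}_i}\r{T}_i^{-1}\tr\r{A}_{ii}\tr\r{T}_i = \r{B}_{ii}$. Associativity and unit axioms for $\r{B}$ then follow from those for $\r{A}$ together with coherence (naturality of the associativity and unit constraints for $\tr_X$), and I would note that bracketings of tensor products of bimodules are canonically identified, so this is essentially formal bookkeeping.

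For (2), the functor $\r{T}$ is well-defined on morphisms since $-\tr\r{T}_i$ is functorial; a quasi-inverse is $\r{N}\mapsto(\r{N}_i\tr\r{T}_i^{-1})_i$, which is a graded $\r{A}$-module via $\r{N}_i\tr\r{T}_i^{-1}\tr\r{A}_{ij} \cong \r{N}_i\tr\r{T}_i^{-1}\tr\r{A}_{ij}\tr\r{T}_j\tr\r{T}_j^{-1} = \r{N}_i\tr\r{B}_{ij}\tr\r{T}_j^{-1}\to\r{N}_j\tr\r{T}_j^{-1}$. That these two functors are mutually inverse up to natural isomorphism is immediate from $\r{T}_i\tr\r{T}_i^{-1}\cong\O_{X_i}$ and $\r{T}_i^{-1}\tr\r{T}_i\cong\O_{Y_i}$, and one checks the module-structure maps match under these isomorphisms. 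I would remark that since $-\tr\r{T}_i$ is an exact equivalence on each $\Qcoh(X_i)$ (invertible bimodules give autoequivalences), $\r{T}$ is automatically exact and hence an equivalence of abelian categories; alternatively it is a mutual quasi-inverse pair, which already suffices.

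For (3), given a symmetric sheaf $\d{Z}$-algebra $\r{A} = \d{S}(\{\r{E}_i\},\{\r{Q}_i\})$ I need to produce a standard one $\d{S}(\r{E})$ and invertible bimodules $(\r{T}_i)$ with $\r{A}\cong\d{S}(\r{E})$ twisted by $(\r{T}_i)$. The strategy is to fix $\r{E} := \r{E}_0$ (on the base pair $X=X_0$, $Y=X_1$) and to construct inductively invertible bimodules $\r{T}_i$ such that $\r{T}_i^{-1}\tr\r{E}_i\tr\r{T}_{i+1}\cong\r{E}^{*i}$ compatibly; set $\r{T}_0 = \O_X$ and, having $\r{T}_i$, define $\r{T}_{i+1}$ as the invertible bimodule measuring the discrepancy between $\r{T}_i^{-1}\tr\r{E}_i$ and $\r{E}^{*i}$ — concretely $\r{T}_{i+1} := (\r{E}^{*i})^{*}\tr(\r{T}_i^{-1}\tr\r{E}_i)$, which is invertible because $\r{E}_i$ and $\r{E}^{*i}$ are both locally free of the same rank and any two locally free bimodules of rank $(d,1)$ differ by an invertible twist on the side of rank $1$ — and going in the negative direction symmetrically using left duals. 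One then checks that under these identifications the nondegenerate sub-bimodule $\r{Q}_i\subset\r{E}_i\tr\r{E}_{i+1}$ is carried to the canonical $i_i(\O_{X_i})\subset\r{E}^{*i}\tr\r{E}^{*i+1}$ of \eqref{eq:standardsheafZalgebra}; here nondegeneracy is the key input, since it forces $\r{Q}_i$ to be the image of the unit after the identification, so the relations match and the quotient presentations agree.

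The main obstacle is step (3): one must check that the inductively chosen $\r{T}_i$ are genuinely invertible and that the duality functors $(-)^*$ and ${}^*(-)$ interact correctly with twisting — i.e.\ that $({}\r{T}_i^{-1}\tr\r{E}_i\tr\r{T}_{i+1})^*\cong\r{T}_{i+1}^{-1}\tr\r{E}_i^*\tr\r{T}_i$, which follows from the adjunction characterization in Lemma~\ref{lem:dual} once one knows $-\tr\r{T}$ is an equivalence with inverse $-\tr\r{T}^{-1}$, but requires care to make the isomorphisms coherent across all $i$ simultaneously — and, most delicately, that nondegeneracy of $\r{Q}_i$ is exactly the condition identifying it with the image of the unit $i_i$ under the chosen isomorphisms. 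The rest is diagram-chasing with the coherence constraints of the bimodule tensor product.
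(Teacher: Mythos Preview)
The paper does not give its own proof here; it simply cites \cite[\S 5.1]{VdB_12}. So the comparison is between your argument and Van den Bergh's. Parts (1) and (2) of your proposal are fine and match what one finds there: they are formal consequences of invertibility of the $\r{T}_i$ and coherence of the bimodule tensor product.

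Part (3) has a genuine gap. Your inductive definition $\r{T}_{i+1} := (\r{E}^{*i})^{*}\tr(\r{T}_i^{-1}\tr\r{E}_i)$ comes with the justification that this is invertible ``because $\r{E}_i$ and $\r{E}^{*i}$ are both locally free of the same rank and any two locally free bimodules of rank $(d,1)$ differ by an invertible twist on the side of rank~$1$.'' But the statement is meant to cover arbitrary symmetric sheaf $\d{Z}$-algebras, in particular the rank $(2,2)$ case of \S\ref{subsec:rank(2,2)case}, where your rank-$(d,1)$ argument simply does not apply; and even in the $(d,1)$ case you have not established that $\r{E}_i$ and $\r{E}^{*i}$ have the same rank (they live over possibly different pairs of varieties), nor that the composite you wrote down is invertible rather than merely locally free of some rank.

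The missing idea is that the invertible twist should be built from the $\r{Q}_i$, not from a comparison of $\r{E}_i$ with $\r{E}^{*i}$. Nondegeneracy of $\r{Q}_i\subset\r{E}_i\tr\r{E}_{i+1}$ says precisely that the composite $\r{E}_i^*\tr\r{Q}_i\mor\r{E}_i^*\tr\r{E}_i\tr\r{E}_{i+1}\mor\r{E}_{i+1}$ is an isomorphism, i.e.\ $\r{E}_{i+1}\cong\r{E}_i^*\tr\r{Q}_i$ with $\r{Q}_i$ invertible. Iterating this gives $\r{E}_i\cong\r{T}_i^{-1}\tr\r{E}_0^{*i}\tr\r{T}_{i+1}$ for $\r{T}_i$ built as a tensor product of the $\r{Q}_j$ and their inverses, and the relations automatically match because the unit $i_i$ is characterised by the same adjunction. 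This is the argument in \cite[\S 5.1]{VdB_12}; you invoked nondegeneracy only at the end to match relations, but it is needed earlier, to produce the $\r{T}_i$ in the first place.
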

\begin{proof}
This is proven in \cite[\S 5.1]{VdB_12}	
\end{proof}
We also have the following important result concerning graded modules over symmetric sheaf $\mathbb{Z}$-algebras:
\begin{theorem} \label{thm:groth}
Let $\r{A}$ be a symmetric sheaf $\d{Z}$-algebra. Then $\Gr(\r{A})$ is Grothendieck.
\end{theorem}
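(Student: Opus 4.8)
The plan is to check, on top of abelianness, the two remaining axioms of a Grothendieck category: exactness of filtered colimits (AB5) and the existence of a generator. By the remark following Definition~\ref{def:proj}, $\Gr(\r{A})$ is abelian with kernels, cokernels and images computed degreewise, so it suffices to understand colimits and to produce a generator. By Theorem~\ref{thm:twisting} one may, if convenient, assume $\r{A}=\d{S}(\r{E})$ is standard, though the argument below does not need this.

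The first and central step is to observe that every bimodule $\r{A}_{ij}$ is \emph{coherent}. Indeed, by construction $\r{A}_{ij}$ is a quotient of a tensor product $\r{E}_i\tr\cdots\tr\r{E}_{j-1}$ of the locally free coherent bimodules $\r{E}_k$ (and equals $\r{O}_{X_i}$, resp.\ $0$, in the remaining degrees), and $\bimod(X_i-X_j)$ is closed under these operations since the $X_k$ are Noetherian. In particular $\Supp\r{A}_{ij}$ is finite, hence affine, over $X_j$, so the pushforward along $\Supp\r{A}_{ij}\to X_j$ commutes with filtered colimits of quasi-coherent sheaves; combined with the fact that $\pi_{X_i}^{*}$ and $-\tr_{X_i\times X_j}\r{A}_{ij}$ are left adjoints, this shows that $-\tr_{X_i}\r{A}_{ij}\colon\Qcoh(X_i)\to\Qcoh(X_j)$ commutes with filtered colimits. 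Consequently, for a filtered system $(\r{M}^\alpha)$ in $\Gr(\r{A})$ the degreewise colimits carry canonical structure maps $(\varinjlim_\alpha\r{M}^\alpha_i)\tr\r{A}_{ij}=\varinjlim_\alpha(\r{M}^\alpha_i\tr\r{A}_{ij})\to\varinjlim_\alpha\r{M}^\alpha_j$ and realize the colimit in $\Gr(\r{A})$; since finite colimits are degreewise for trivial reasons, all colimits in $\Gr(\r{A})$ are degreewise. As exactness in $\Gr(\r{A})$ is degreewise and each $\Qcoh(X_i)$ is AB5, filtered colimits in $\Gr(\r{A})$ are exact, and $\Gr(\r{A})$ has all coproducts.

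For the generator I would use the modules $e_n\r{A}$ of \eqref{def:e_nA}: for $n\in\Z$ and $\r{F}\in\Qcoh(X_n)$ let $\r{F}\tr e_n\r{A}$ be the graded $\r{A}$-module with $(\r{F}\tr e_n\r{A})_i=\r{F}\tr_{X_n}\r{A}_{n,i}$ and module structure induced by the multiplications $\mu_{n,i,j}$ via associativity; this is functorial in $\r{F}$. Using $\r{A}_{n,n}=\r{O}_{X_n}$ one checks that the structure map $(\r{F}\tr e_n\r{A})_n\tr\r{A}_{n,i}\to(\r{F}\tr e_n\r{A})_i$ is an isomorphism for every $i$, i.e.\ $\r{F}\tr e_n\r{A}$ is generated in degree $n$; this yields the adjunction
\[
\Hom_{\Gr(\r{A})}(\r{F}\tr e_n\r{A},\r{M})\;\cong\;\Hom_{X_n}(\r{F},\r{M}_n),
\]
where $(-)_n$ is the degree-$n$ functor. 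Now choose for each $n\in\Z$ a set $\{\r{F}^\alpha_n\}_\alpha$ of generators of the Grothendieck category $\Qcoh(X_n)$ and put $G=\bigoplus_{n\in\Z}\bigoplus_\alpha\r{F}^\alpha_n\tr e_n\r{A}$, which is a genuine object of $\Gr(\r{A})$ by the previous paragraph. Given $\r{M}\in\Gr(\r{A})$ and a degree $i$, some coproduct of the $\r{F}^\alpha_i$ surjects onto $\r{M}_i$, so by the adjunction the induced map from a coproduct of copies of $\r{F}^\alpha_i\tr e_i\r{A}$ into $\r{M}$ is onto in degree $i$; letting $i$ range shows that a suitable coproduct of copies of $G$ maps onto $\r{M}$. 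Hence $G$ is a generator and $\Gr(\r{A})$ is Grothendieck.

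I expect the genuine obstacle to be the commutation of the tensor functors $-\tr\r{A}_{ij}$ with filtered colimits, i.e.\ the fact that colimits in $\Gr(\r{A})$ are computed degreewise: this is exactly the point where the hypotheses are used, via coherence of the $\r{A}_{ij}$ (ultimately the local freeness of the $\r{E}_i$ and the finiteness of bimodule supports over the base), which reduces the awkward pushforward to one along an affine morphism. Once this is in place, AB5 is formal, and the generator is produced by the standard device of representing the degree-$n$ functor; the only care needed there is to unwind Definition~\ref{def:nodig} to see that $\r{F}\tr e_n\r{A}$ is indeed a graded $\r{A}$-module and that the asserted adjunction holds.
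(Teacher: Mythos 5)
Your proposal is correct and follows essentially the same route as the paper: filtered colimits are formed degreewise using that each $\Qcoh(X_i)$ is Grothendieck, and a generator is obtained from the objects $\r{G}^j_i \tr e_i\r{A}$ for $\r{G}^j_i$ generators of $\Qcoh(X_i)$. You are merely more explicit on two points the paper leaves implicit, namely the commutation of $-\tr \r{A}_{ij}$ with filtered colimits (justified via the finite, hence affine, support of the coherent bimodules $\r{A}_{ij}$) and the adjunction $-\tr e_n\r{A} \dashv (-)_n$, which the paper records separately as Lemma \ref{lem:adjunctiontruncation}.
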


\begin{proof}
Let $(\r{M}_i,f_{ij})$ be a direct system of graded $\r{A}$-modules. In each degree $d$, we obtain a direct system of quasi-coherent $X_d$-modules $(\r{M}^d,f_{ij}^d)$. Since $\Qcoh(X_n)$ is Grothendieck, we can form the direct limit in each degree to obtain a sequence of $X_n$-modules $\r{L}_n:=\varinjlim (\r{M}_i^n,f_{ij}^n)$. If we fix a couple $(n,m)$, the universality of the direct limit naturally defines a map 
\[ \r{A}_{n,m} \tr \r{L}_n = \r{A}_{n,m} \tr \varinjlim (X_i^n,f_{ij}^n) \mor \varinjlim (X_i^m,f_{ij}^m) = \r{L}_m \]
 showing that $\r{L}$ is in fact a graded $\r{A}$-module. The fact that $\r{L}$ is a direct limit and that the formation of $\r{L}$ is exact is an easy consequence of the construction.\\
 Next, for each, $i$ let $\r{G}^j_{i}$ be a collection of generators for $\Qcoh(X_i)$. Then the collection 
\[ \{ \r{G}^j_i \tr e_i \r{A} \mid n \in \d{Z}, \r{N} \in \r{N}^n \} \]
forms a set of generators for $\Gr(\r{A})$. 
\end{proof}

\subsection{The Rank $(2,2)$ Case}
\label{subsec:rank(2,2)case}
In this section, we give a proof of the result that $\Proj(\d{S}(\r{E}))$ is Morita equivalent to a commutative scheme in the case where $\r{E}$ has rank $(2,2)$. As mentioned in the introduction however, our primary concern is the rank $(4,1)$ case. This section's sole purpose is to acquire a little geometric intuition in $\d{S}(\r{E})$ and as such  may be skipped by the reader without any trouble.\\
We first begin by introducing notation for the $\d{Z}$-graded-to-$\d{Z}$-algebra construction in our setting:
\begin{convention}
\label{conv:hat}
Let $\DS_n\r{G}_n$ be a graded algebra in the monoidal category $\bimod(X)$ Then we denote by $\widehat{\r{G}}$ the sheaf $\Z$-algebra over $X$ whose $(i,j)$-component is the $X$-bimodule $\r{G}_{j-i}$.
\end{convention}

\begin{remark}
\label{rem:hatequivalence}
It is clear that in the above situation, taking the direct sum yields an equivalence:
\[
\Gr(\r{G})\stackrel{\simeq}{\mor} \Gr(\widehat{\r{G}}): (\r{M})_i\fun \bigoplus_i \r{M}_i
\]
\end{remark}

The following lemma (which was already announced but not proven in \cite{VdB_12}) shows that  symmetric sheaf $\d{Z}$-algebras over central bimodules rank (2,2) indeed essentially coincide with sheaves of commutative graded algebras:
\begin{lemma}\label{lem:commutativealgebra}
Let $\r{V}$ be a locally free $X$-module of rank 2. There is an equivalence of the form
\[ \Gr(\d{S}(_{\Id}\r{V}_{\Id}))\stackrel{\r{T}}{\mor} \Gr \big( \reallywidehat{\textrm{\emph{Sym}$_{X\times X} (_{\Id}\r{V}_{\Id})$}}\big)\stackrel{\simeq}{\mor}\Gr(\Sym_X(\r{V})) \]
where $\r{T}$ is given by twisting through $\big(\big(\bigwedge^2\r{V}\big)^{\floor{\frac{i}{2}}}\big)_{i\in \d{Z}}$.
\end{lemma}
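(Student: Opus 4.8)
The plan is to establish the two equivalences separately, the second being essentially formal and the first being the substantive twisting computation. For the second map, Remark~\ref{rem:hatequivalence} already gives an equivalence $\Gr(\r{G}) \simeq \Gr(\widehat{\r{G}})$ for any $\Z$-graded algebra $\r{G}$ in $\bimod(X)$; applying this to the commutative graded algebra $\r{G} = \Sym_X(\r{V})$ together with the standard identification $\Sym_{X\times X}({}_{\Id}\r{V}_{\Id}) \cong {}_{\Id}(\Sym_X \r{V})_{\Id}$ (the symmetric algebra of a central bimodule is the pushforward of the symmetric algebra of the underlying module, since $-\tr_X {}_{\Id}\r{V}_{\Id}$ is just tensoring with $\r{V}$ over $\O_X$ by \eqref{eq:pullbacktensorproduct}) immediately yields the rightmost equivalence. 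So the real content is the leftmost arrow $\r{T}$.

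For the $\r{T}$ arrow, I would first unwind the definition of $\d{S}({}_{\Id}\r{V}_{\Id})$. The key structural input is the explicit formula for duals in Remark~\ref{rem:dual}: for a central bimodule $\r{E} = {}_{\Id}\r{V}_{\Id}$ with $\r{V}$ locally free of rank $2$, one computes $\r{E}^* = {}_{\Id} \ShHom_X(\r{V}, \O_X)_{\Id} = {}_{\Id}\r{V}^\vee_{\Id}$, and since $\r{V}$ has rank $2$ there is a canonical isomorphism $\r{V}^\vee \cong \r{V} \tr_X (\bigwedge^2 \r{V})^{-1}$. Hence each dual $\r{E}^{*n}$ is again central, of the form ${}_{\Id}(\r{V} \tr (\bigwedge^2\r{V})^{\otimes c_n})_{\Id}$ for an explicit integer $c_n$ depending on the parity of $n$; iterating, $c_n$ alternates and one sees that twisting the $n$-th slot by $(\bigwedge^2\r{V})^{\floor{i/2}}$ is exactly what is needed to normalize every $\r{E}^{*n}$ back to ${}_{\Id}\r{V}_{\Id}$. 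I would then invoke Theorem~\ref{thm:twisting}: twisting the standard symmetric sheaf $\Z$-algebra $\d{S}({}_{\Id}\r{V}_{\Id})$ by the invertible central bimodules $\r{T}_i = {}_{\Id}((\bigwedge^2\r{V})^{\floor{i/2}})_{\Id}$ produces a sheaf $\Z$-algebra whose degree-one pieces are all ${}_{\Id}\r{V}_{\Id}$ and whose relations $\r{Q}_n$ become the standard symmetric (Koszul) relations $\ker(\r{V}\tr_X\r{V} \to \Sym^2_X\r{V})$ — i.e.\ the twisted algebra is $\widehat{\Sym_{X\times X}({}_{\Id}\r{V}_{\Id})}$. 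The equivalence $\r{T}$ on module categories is then the one supplied by Theorem~\ref{thm:twisting}, $\r{M}_i \mapsto \r{M}_i \tr \r{T}_i$.

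There are two routine verifications to carry out in the writeup: (i) that under the parity-dependent twist the relation subbimodule $\r{Q}_n = i_n(\O_{X_n}) \subset \r{E}^{*n}\tr\r{E}^{*n+1}$ is identified, after applying $-\tr\r{T}$ on both sides, with the canonical copy of $\bigwedge^2\r{V} \hookrightarrow \r{V}\tr_X\r{V}$ (equivalently, that $i_n$ is, up to the twist, the standard coevaluation for the self-dual-up-to-determinant module $\r{V}$); and (ii) bookkeeping that the exponents $\floor{i/2}$ are precisely the partial sums that cancel the accumulated determinant twists, so that no residual twist survives in any bidegree $(i,j)$. I expect step~(i) — pinning down that the nondegenerate inclusion $\r{Q}_n$ really is the symmetric-algebra relation and not some other rank-one subbimodule — to be the main obstacle, since it requires tracing the unit morphism $i_n$ through Van den Bergh's explicit duality formulas rather than just counting ranks; once that compatibility is in hand, everything else is formal from Theorem~\ref{thm:twisting} and Remark~\ref{rem:hatequivalence}.
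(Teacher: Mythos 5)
Your proposal is correct and follows essentially the same route as the paper: compute $\r{E}^{*n}$ for the central rank-$2$ bimodule via Remark \ref{rem:dual}, twist by $\big(\bigwedge^2\r{V}\big)^{\floor{i/2}}$ using Theorem \ref{thm:twisting} so that the tensor algebra becomes $\widehat{T_X(\r{V})}$, check that the relation bimodule is carried to the symmetric-algebra relation, and handle the hat-removal by Remark \ref{rem:hatequivalence} together with (\ref{eq:pullbacktensorproduct}). The verification you flag as step (i) is precisely what the paper carries out, by a short local dual-basis computation showing that $u\tr u^*+v\tr v^*$ is sent under $\r{V}^*\tr\bigwedge^2\r{V}\cong\r{V}$ to $u\tr v-v\tr u$, the defining relation of $\Sym_X(\r{V})$.
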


\begin{proof}
We first describe the second equivalence. By Remark \ref{rem:hatequivalence}, we may remove the hat and simply consider the sheaf of graded algebras  $\Sym_{X\times X} (_{\Id}\r{V}_{\Id})$. The second equivalence now follows tautologically from the definitions, since in each degree $d,d'$,
\[ \r{M}_d \tr \textrm{Sym}_{X \times X}( {}_{\Id} \r{V} _{\Id})_{d'} = \r{M}_d \tr {}_{\Id} \left( \textrm{Sym}_X( \r{V} ) \right)_{\Id} \stackrel{(\ref{eq:pullbacktensorproduct})}{=} \r{M}_d \tr_X \textrm{Sym}_X( \r{V} )_{d'} \]
implying that both multiplications coincide. We now explain the first equivalence:\\
Let $\r{E}={}_{\Id}{\r{V}}_{\Id}$. Using the explicit expression for the dual given in Remark \ref{rem:dual}, we obtain
\[ \r{E}^*={}_{\Id}\r{H}om(\r{V},\Id^!\r{O}_X)_{\Id}={}_{\Id}( \r{V}^*)_{\Id} \]
In particular the equalities $\r{E}^{*2n} = \r{E} = {}_{\Id}( \r{V})_{\Id}$ and $\r{E}^{*2n+1}=\r{E}^*={}_{\Id}( \r{V}^*)_{\Id}$ hold for all $n$.
Since the pairing $\r{V}\tr \r{V} \mor \Lambda^2\r{V}$ is perfect,  there is an isomorphism
\begin{equation} \label{eq:exterior} \r{V}^* \tr (\Lambda^2 \r{V}) \stackrel{\cong}{\mor} \r{V}
\end{equation}
Let $(\r{T}_i)_i=(\bigwedge^{2}\r{V})^{\floor{\frac{i}{2}}}$. It follows from the definition of  $\d{T}(\r{E})$, that as sheaf $\d{Z}$-algebras, we have 
\[
\d{T}(\r{E})=\widehat{T_X(\r{V})}
\]
By Theorem \ref{thm:twisting}, applying the twist by the sequence $(\T_i)$ yields an equivalence
\begin{equation*}
\Gr(\d{T}(\r{E})) \rightarrow \Gr( \widehat{T_X(\r{V})}): \big( \r{M}_i \big)_i \mapsto \big( \r{M}_i \tr (\Lambda^2\r{V})^{\floor{\frac{i}{2}}}\big)_i \end{equation*}
specifically in each component:
\begin{equation} \label{eq:tensor1} \d{T}(\r{E})_{m,n} \cong {}_{\Id}\left( (\Lambda^2\r{V})^{\floor{\frac{m}{2}}} \tr T_X(\r{V})_{n-m} \tr (\Lambda^2\r{V})^{-\floor{\frac{n}{2}}} \right) _{\Id}
\end{equation}
We now claim that the twisting in (\ref{eq:tensor1}) induces a twisting
\begin{equation*} %\label{eq:symm1} 
\d{S}(\r{E})_{m,n} \cong {}_{\Id}\left( (\Lambda^2\r{V})^{\floor{\frac{m}{2}}} \tr \Sym_X(\r{V})_{n-m} \tr (\Lambda^2\r{V})^{-\floor{\frac{n}{2}}} \right)_{\Id}
\end{equation*}
and hence an equivalence of categories:
\begin{equation} \label{eq:sym2}\Gr(\d{S}(\r{E})) \rightarrow \Gr( \Sym_X(\r{V})): \big( \r{M}_i \big)_i \mapsto \bigoplus_i \r{M}_i \tr (\Lambda^2\r{V})^{\floor{\frac{i}{2}}} \end{equation}
So we are left with proving the claim. For this we must understand what happens under (\ref{eq:tensor1}) to the relations that define $\d{S}(\r{E})$ as a quotient of $\d{T}(\r{E})$.\\
As the relations are generated in degree 2 it suffices to consider ${\d{S}(\r{E})_{m,m+2} \tr {}_{\Id} (\Lambda^2\r{V})_{\Id}}$. This is the quotient of $\d{T}(\r{E})_{m,m+2} \tr {}_{\Id} (\Lambda^2\r{V})_{\Id} \cong {}_{\Id} \left( {T_X(\r{V})_2} \right)_{\Id} = {}_{\Id} \left( \r{V} \tr \r{V} \right)_{\Id}$ by the relation $i\big( {}_{\Id}( \r{O}_X)_{\Id} \big) \tr {}_{\Id} (\Lambda^2\r{V})_{\Id} \subset {}_{\Id} \left( \r{V} \tr \r{V}^* \tr \Lambda^2\r{V} \right) _{\Id} \cong  {}_{\Id} \left( \r{V} \tr \r{V} \right)_{\Id}$.
We have to check that this relation is exactly the one that defines $\Sym_X(\r{V})$ as a quotient of $\textrm{T}_X(\r{V})$. The latter relation is defined locally, so it suffices to check on a trivializing open subset $U$ for $\r{V}$. If $\left.\r{V} \right|_U \cong \left.\r{O}_X\right|_U u \oplus \left.\r{O}_X\right|_U v$ then $i\big( {}_{\Id}( \r{O}_X)_{\Id} \big)$ is locally given by $u \tr u^* + v \tr v^*$. One checks that the isomorphism (\ref{eq:exterior}) maps $u^* \tr (u \wedge v)$ to $v$ and $v^* \tr (u \wedge v)$ to $-u$, the induced relation in $\r{V} \tr \r{V}$ is locally given by $u \tr v - v \tr u$, the defining relation of $\Sym_X(\r{V})$.
\end{proof}

We have the following result:
\begin{proposition}
	Let $\r{E}$ be any $X-Y$-bimodule of rank $(2,2)$. Then $\Gr(\d{S}(\r{E}))$ is a locally noetherian category.
\end{proposition}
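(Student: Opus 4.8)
The plan is to reduce the statement to the case of an affine base, where $\d{S}(\r{E})$ is one of the noncommutative $\d{P}^1$-bundles of \cite{VdB_12}, and then to invoke the local noetherianity established there.

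First I would verify that local noetherianity of $\Gr(\r{A})$, for a sheaf $\d{Z}$-algebra $\r{A}$, is a local question on the base varieties. Restriction to an open $U\subseteq X_n$ is exact and sends the canonical generating family $\{\r{G}\tr e_n\d{S}(\r{E})\}$ ($\r{G}$ coherent) used in the proof of Theorem \ref{thm:groth} to a generating family of $\Gr(\d{S}(\r{E})|_U)$; and an object of $\Gr(\d{S}(\r{E}))$ is noetherian once its restrictions to the members of a finite open cover are. Hence it is enough to cover $X$ and $Y$ by finitely many affines and to treat $\d{S}(\r{E})$ over each resulting affine pair. The delicate point here is that $X$ and $Y$ are a priori different varieties: since $\Supp\r{E}$ is finite — in particular affine and of finite type — over $X$, an affine $U\subseteq X$ makes $\Supp\r{E}\cap(U\times Y)$ affine, with image in $Y$ meeting only finitely many affines $V_j$, and after restricting to $U\times V_j$ one must check that the remaining pieces still generate $\d{S}(\r{E})$ in every degree.

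So assume $X=\Spec R$ and $Y=\Spec S$ are affine. By Definition \ref{def:standardformbimodule} and (\ref{eq:pullbacktensorproduct}) we may write $\r{E}={}_u\r{U}_v$, and because $\r{E}$ has finite support and is locally free of rank $2$ on each side this is the datum of an $R$-$S$-bimodule that is finitely generated projective of rank $2$ both as a left $R$- and as a right $S$-module. The restricted symmetric sheaf $\d{Z}$-algebra $\d{S}(\r{E})$ is then exactly the $\d{Z}$-algebra underlying the noncommutative $\d{P}^1$-bundle attached to this bimodule in \cite{VdB_12} (the central case being the commutative $\d{P}^1$-bundle recovered by Lemma \ref{lem:commutativealgebra}), and by \emph{loc.\ cit.} — or, via the point-module and regularity technology, by \cite{Mori07,Nyman04} — its graded module category is locally noetherian. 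Combined with the reduction above, this proves the proposition. Alternatively, one can run the rank-$2$ analogue of the local description used later for the rank $(4,1)$ case (see \ref{thm:coverconditions}), identifying $\Gr(\d{S}(\r{E})|_U)$ with a direct summand of $\Gr(\Pi_R(S))$ for a relatively Frobenius extension $R\mor S$ of rank $2$, which is noetherian.

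The step I expect to be the main obstacle is the first one: not the abstract local-to-global implication, which is routine, but the bookkeeping needed to move back and forth between affine opens of $X$ and of $Y$ and to see that after such a refinement the restricted algebras still carry enough generators in every degree. Once the base is affine the statement is essentially \cite{VdB_12}; I would also emphasize that the commutative comparison of Lemma \ref{lem:commutativealgebra} by itself only settles the central case, so this genuinely noncommutative affine input cannot be circumvented.
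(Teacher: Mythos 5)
The paper's own proof is a one-line citation: the statement is exactly \cite[Theorem 1.2]{VdB_12}, and that theorem is proved there for smooth varieties $X,Y$, not only affine ones, so no localization is needed. Your plan instead reduces to an affine base and then invokes \cite{VdB_12} (or a rank-$2$ analogue of the $\Pi_R(S)$ description) for the affine case; since your terminal input is the same reference, the reduction buys nothing over the direct citation, and it is precisely in the reduction that there is a genuine gap. The restriction machinery of Lemma \ref{lem:affinereduction} requires, for all $m,n$, that $\Supp\big(\d{S}(\r{E})_{m,n}|_{U^m\times X_n}\big)\subset U^m\times U^n$ and symmetrically on the other side. In the rank-$(4,1)$ case this holds automatically (Corollary \ref{cor:openrestriction}) only because Lemma \ref{lem:inducedbyline} puts $\r{E}$ in the form ${}_f\r{L}_{\Id}$, so every component of $\d{S}(\r{E})$ is supported on the graph of $f$ or of an identity, and the sequence $U^{\mathrm{even}}=U$, $U^{\mathrm{odd}}=f^{-1}(U)$ works. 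A rank-$(2,2)$ bimodule is not of this form: its support is a correspondence $Z\subset X\times Y$, finite of degree $2$ over each factor, and the supports of the $\d{S}(\r{E})_{m,n}$ are iterated composites of $Z$ with itself. Given an affine $U\subset X$ you may choose an affine $V\supset \pi_Y\big(Z\cap(U\times Y)\big)$, but then $\pi_X\big(Z\cap(X\times V)\big)$ is in general strictly larger than $U$, so the second support condition fails; iterating this need not terminate, and you give no argument that a finite family of sequences $\r{U}_\alpha$ as in Lemma \ref{lem:opencover} exists. You flag this as ``the delicate point'' but do not resolve it, and it is not mere bookkeeping: it is the reason the paper's own noetherianity argument for rank $(4,1)$ leans on the special form ${}_f\r{L}_{\Id}$, and the reason Van den Bergh's proof of the rank-$(2,2)$ theorem does not proceed by such a naive affine reduction.

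Your fallback is also structurally wrong: a relatively Frobenius pair $R\mor S$ of rank $2$ (Definition \ref{def:generalizedpreprojective}) encodes the bimodule ${}_RS_S$, which has rank $(2,1)$, not $(2,2)$; both Lemma \ref{lem:inducedbyline} and Lemma \ref{thm:coverconditions} require rank $(n,1)$, and there is no identification of $\Gr(\d{S}(\r{E})|_U)$ with a direct summand of $\Gr(\Pi_R(S))$ in the rank-$(2,2)$ case. The intended proof is simply to quote \cite[Theorem 1.2]{VdB_12}.
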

\begin{proof}
This is \cite[Theorem 1.2]{VdB_12}.
\end{proof}
This above proposition ensures that we can perform the $\Proj$ construction on $\d{S}(\r{E})$ if the rank of $\r{E}$ is (2,2). The resulting noncommutative scheme is equivalent a projective bundle over $X$ as follows:

\begin{corollary}\label{cor:commutativealgebra}
Let $\r{V}$ be locally free of rank 2 as above, then we have an induced equivalence:
\[ \Phi: \Proj(\d{S}({}_{\Id}(\r{V})_{\Id})) \stackrel{\cong}{\mor} \Proj(\Sym_X(\r{V})) \stackrel{\cong}{\mor} \Qcoh(\d{P}_X(\r{V})) \]
\end{corollary}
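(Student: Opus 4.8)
The plan is to upgrade the graded equivalence of Lemma~\ref{lem:commutativealgebra} to the level of $\Proj$, and then to recognize $\Proj(\Sym_X(\r{V}))$ as the category of quasi-coherent sheaves on the relative projective bundle via a Serre-type theorem. First I would record that the equivalence $\r{T}\colon \Gr(\d{S}({}_{\Id}(\r{V})_{\Id})) \mor \Gr(\widehat{\Sym_X(\r{V})})$ produced in Lemma~\ref{lem:commutativealgebra} is, by construction, a composite of a twist by the sequence of invertible bimodules $\big((\bigwedge^2\r{V})^{\floor{i/2}}\big)_{i\in\d{Z}}$ (Theorem~\ref{thm:twisting}) followed by the ``remove the hat'' identification of Remark~\ref{rem:hatequivalence}. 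Since tensoring degreewise with an invertible bimodule is exact and detects vanishing ($\r{M}_i=0 \iff \r{M}_i\tr\r{T}_i=0$), and the hat-identification does not change the underlying graded object, both $\r{T}$ and its quasi-inverse preserve right-bounded modules, hence filtered colimits of such; thus $\r{T}$ restricts to an equivalence $\Tors(\d{S}({}_{\Id}(\r{V})_{\Id}))\cong\Tors(\widehat{\Sym_X(\r{V})})$.

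Next I would invoke the preceding Proposition: for $\r{E}$ of rank $(2,2)$ the category $\Gr(\d{S}(\r{E}))$ is locally noetherian, hence so is the equivalent category $\Gr(\widehat{\Sym_X(\r{V})})$. Therefore $\Tors$ is localizing on both sides, the quotient categories $\Proj$ exist, and an equivalence matching the two torsion subcategories descends to an equivalence of Serre quotients. This produces the first arrow $\Proj(\d{S}({}_{\Id}(\r{V})_{\Id}))\stackrel{\cong}{\mor}\Proj(\widehat{\Sym_X(\r{V})})$, and by Remark~\ref{rem:hatequivalence} the target is the quotient of the $\d{Z}$-graded quasi-coherent $\Sym_X(\r{V})$-module category by its torsion subcategory, which I will abbreviate $\Proj(\Sym_X(\r{V}))$.

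For the second arrow I would set $\r{S}:=\Sym_X(\r{V})$, a sheaf of $\d{N}$-graded commutative $\O_X$-algebras of finite type generated in degree one by the locally free module $\r{V}$, so that $\d{P}_X(\r{V})$ is by definition the relative $\Proj$ of $\r{S}$ over $X$. The standard dictionary between $\d{Z}$-graded and $\d{N}$-graded quasi-coherent $\r{S}$-modules modulo torsion (cf.\ \cite{ArtZhang94}, or its classical commutative antecedent) reduces the claim to the relative Serre theorem $\QGr(\r{S})\cong\Qcoh(\d{P}_X(\r{V}))$, with mutually inverse functors $\r{M}\mapsto\widetilde{\r{M}}$ and $\r{F}\mapsto\bigoplus_{n\ge 0}\pi_*(\r{F}(n))$, where $\pi\colon\d{P}_X(\r{V})\mor X$ is the structure map. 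I would prove this by restricting to a finite affine cover $\Spec A_j\subset X$, applying the affine Serre theorem $\QGr(S_j)\cong\Qcoh(\Proj S_j)$ to the commutative graded ring $S_j=\Gamma(\Spec A_j,\r{S})$, and gluing, checking along the way that localization at the irrelevant ideal, torsion, and $\widetilde{(-)}$ are compatible with restriction to the $\Spec A_j$ — this compatibility is exactly what makes the globalization formal. Composing the two arrows yields $\Phi$.

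The main obstacle is really only this last identification: beyond citing it, the relative Serre theorem needs the affine-cover-and-glue bookkeeping above, together with the passage from $\d{Z}$-graded to $\d{N}$-graded modules. Everything upstream of it — the compatibility of the twist and the hat-construction with right-boundedness, and the descent of an equivalence through matched localizing subcategories — is formal once the earlier results of the paper (Theorem~\ref{thm:twisting}, Remark~\ref{rem:hatequivalence}, and the preceding local noetherianity Proposition) are in hand.
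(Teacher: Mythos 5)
Your proposal follows essentially the same route as the paper: descend the graded equivalence of Lemma~\ref{lem:commutativealgebra} to the quotient categories by observing that it matches up the torsion subcategories (the paper treats this as obvious, while you justify it via the twist and the hat-identification), and then identify $\Proj(\Sym_X(\r{V}))$ with $\Qcoh(\d{P}_X(\r{V}))$ by the classical relative Serre theorem with the same pair of functors $\widetilde{(-)}$ and $p\circ\Gamma_*$. The extra detail you supply (local noetherianity so that $\Proj$ exists, and the affine-cover sketch of relative Serre) is consistent with, and a fleshing out of, the paper's argument.
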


\begin{proof}
The equivalence given in (\ref{eq:sym2}) obviously maps torsion modules onto torsion modules. Hence, it yields an equivalence  $\Proj(\d{S}({}_{\Id}(\r{V})_{\Id}) \stackrel{\cong} {\mor} \Proj( \Sym_X(\r{V}))$.\\
The second equivalence is a well known result from classical algebraic geometry and is given by the following pair of functors
\[
\begin{tikzpicture}
\node (b)at (0,0) {$\Proj(\Sym_X(\r{V}))$};
\node (c) at (4,0) {$\Qcoh(\d{P}_X(\r{V}))$};
\node (d) at (2,1.2) {$\widetilde{(-)}$};
\node (e) at (2,-1.1) {$p \circ \Gamma_* := p \left[ \ds_i \pi_*\big( (-)(i)\big)\right]$};
\draw[->] (b) to[bend left](c) ;
\draw[->] (c) to[bend left] (b);
\end{tikzpicture} \]
Where $\pi$ is the canonical projection $\pi: \d{P}_X(\r{V}) \mor X$. \qedhere 

\end{proof}

\subsection{Truncation Functors and Periodicity}
\label{subsec:truncation}
Let $\r{A}$ be a sheaf $\Z$-algebra over a sequence of varieties $(X_i)_{i \in \Z} $. We define a sequence of \emph{truncation} functors as follows: for each $m \in \d{Z}$, we can consider the functor
\begin{displaymath}
\xymatrix{
 \Gr(\r{A})\ar[rr]^{(-)_m} &&  \Qcoh (X_m)
 }
 \end{displaymath}

We shall need the following easy result on these functors:
\begin{lemma}\label{lem:adjunctiontruncation}
Let $e_m\r{A}$ be the right $\r{A}$-module defined in \ref{def:e_nA} . There is an adjoint pair
\[ - \tr e_m\r{A} \ \dashv \ (-)_m \]
\end{lemma}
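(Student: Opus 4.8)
The statement to prove is Lemma~\ref{lem:adjunctiontruncation}: the pair $-\tr e_m\r{A} \dashv (-)_m$ is adjoint. Here is the plan.

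\textbf{Approach.} The proof is a direct Yoneda-style computation: I will exhibit, for $\r{U}\in\Qcoh(X_m)$ and a graded $\r{A}$-module $\r{N}$, a natural bijection
\[
\Hom_{\Gr(\r{A})}\bigl(\r{U}\tr e_m\r{A},\ \r{N}\bigr)\ \cong\ \Hom_{X_m}\bigl(\r{U},\ \r{N}_m\bigr),
\]
natural in both arguments, and check it respects the relevant structure. The key point is that $e_m\r{A}$ is the free graded right $\r{A}$-module on a rank-one generator placed in degree $m$, so $\r{U}\tr e_m\r{A}$ is the free graded $\r{A}$-module on $\r{U}$ in degree $m$; mapping out of a free object is determined by where the generator goes. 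Concretely, $(\r{U}\tr e_m\r{A})_i = \r{U}\tr_{X_m}\r{A}_{m,i}$, with the $\r{A}$-action supplied by the multiplication maps $\mu_{m,i,j}$ of the sheaf $\Z$-algebra.

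\textbf{Key steps, in order.} First I would spell out the two functors: $-\tr e_m\r{A}\colon\Qcoh(X_m)\to\Gr(\r{A})$ sends $\r{U}$ to the graded module with components $\r{U}\tr_{X_m}\r{A}_{m,i}$ and structure maps $\Id_{\r{U}}\tr\mu_{m,i,j}$ (using associativity of $\mu$ to verify this is indeed a graded $\r{A}$-module), while $(-)_m$ is the degree-$m$ truncation. Second, given a morphism $f\colon\r{U}\tr e_m\r{A}\to\r{N}$ of graded $\r{A}$-modules, I extract its degree-$m$ component $f_m\colon\r{U}\tr_{X_m}\r{A}_{m,m}\to\r{N}_m$ and precompose with the unit $\r{U}=\r{U}\tr\r{O}_{X_m}\xrightarrow{\Id\tr u_m}\r{U}\tr\r{A}_{m,m}$ to land in $\Hom_{X_m}(\r{U},\r{N}_m)$; this defines the forward map of the claimed bijection. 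Third, in the other direction, given $g\colon\r{U}\to\r{N}_m$ I define $\widehat g$ by setting $\widehat g_i\colon\r{U}\tr_{X_m}\r{A}_{m,i}\xrightarrow{g\tr\Id}\r{N}_m\tr\r{A}_{m,i}\xrightarrow{\mu^{\r{N}}_{m,i}}\r{N}_i$, where $\mu^{\r{N}}$ denotes the module structure maps of $\r{N}$; compatibility of $\widehat g$ with the $\r{A}$-action is exactly the associativity/compatibility axiom relating $\mu^{\r{N}}$ and $\mu^{\r{A}}$. Fourth, I check the two assignments $f\mapsto f_m\circ(\Id\tr u_m)$ and $g\mapsto\widehat g$ are mutually inverse: one composite reduces to the unit axiom $\mu^{\r{N}}_{m,i}\circ(u_m$-insertion$)=\Id$ together with the fact that any module morphism out of the free module is determined by the image of the generator; the other composite is immediate from the unit axiom $\mu_{m,m,i}\circ(u_m\tr\Id)=\Id$ in $\r{A}$. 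Finally I note naturality in $\r{U}$ and $\r{N}$, which is automatic since every map in the construction is built from $\mu$, $u_m$, and the given morphisms.

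\textbf{Main obstacle.} There is no deep obstacle; this is a formal adjunction-from-a-free-object argument. The one place requiring care is bookkeeping with the bimodule tensor product: associativity ``$(\r{U}\tr_{X_m}\r{A}_{m,i})\tr_{X_i}\r{A}_{i,j}\cong\r{U}\tr_{X_m}(\r{A}_{m,i}\tr_{X_i}\r{A}_{i,j})$'' is needed to make sense of the $\r{A}$-module structure on $\r{U}\tr e_m\r{A}$ and to verify $\widehat g$ is a morphism, and one should invoke the coherence of the tensor product on $\BiMod$ from the earlier part of the excerpt (in particular that $-\tr_{X_i}\r{A}_{i,j}$ is a functor and that the identity bimodule ${}_{\Id}(\r{O}_{X_m})_{\Id}$ acts as a unit for $\tr$). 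Once this is set up, every diagram that must commute does so by the associativity and unit axioms of Definition~\ref{def:nodig} and the definition of a graded $\r{A}$-module morphism, so the lemma follows.
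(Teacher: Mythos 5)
Your argument is correct: it is exactly the standard ``free module on a generator in degree $m$'' adjunction, with the unit/counit built from $u_m$ and the multiplication maps, and the verifications reducing to the unit and associativity axioms of Definition~\ref{def:nodig}. The paper itself omits the proof (``standard and left to the reader''), and your write-up is precisely the intended argument, with the bimodule-tensor bookkeeping you flag handled correctly.
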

\begin{proof}
The proof of this is standard and left to the reader
\end{proof}
Our next result shows that there is a certain 2-periodic behavior among these functors. To this end, for $n \in \Z$, we denote by $\r{A}(n)$ the sheaf $\Z$-algebra
\begin{equation}
\label{eq:twistingsheafzalgebra}
\r{A}(n)_{i,j}=\r{A}_{n+i,n+j}
\end{equation}

\begin{proposition}\label{prop:2-periodicity}
Let $(X_i)_{i \in \Z}$ be a sequence of smooth varieties and $\r{A}$ be a symmetric sheaf $\d{Z}$-algebra on $(X_i)_{i\in \Z}$. Then there is an autoequivalence $\alpha$ on $\Gr(\r{A})$ inducing a commutative diagram for each $m$
\begin{displaymath}
\xymatrix{
\Gr(\r{A})\ar[rr]^{(-)_m} \ar[d]_{\alpha} && \Qcoh(X_m)\ar[d]^{\tr \omega_{X_m}}\\
\Gr(\r{A})\ar[rr]_{(-)_{m+2}} && \Qcoh(X_m)
}
\end{displaymath}
\end{proposition}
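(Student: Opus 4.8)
\emph{Plan.} The content of the proposition is that, for a symmetric sheaf $\d{Z}$-algebra, shifting the grading by $2$ is the same thing as twisting by the dualizing sheaves; I would therefore first isolate the relevant bimodule identity and then deduce the square formally. The identity I would establish is
\[
\r{E}^{*(n+2)}\;\cong\;\omega_{X_n}^{-1}\tr\r{E}^{*n}\tr\omega_{X_{n+1}}\qquad (n\in\Z),
\]
each $\omega_{X_i}$ being read as the central bimodule ${}_{\Id}\,\omega_{X_i}\,{}_{\Id}$. To prove it I would compute the double right dual $(\r{E}^{*})^{*}$: writing $\r{E}={}_u\r{U}_v$ with $u\colon W\to X$, $v\colon W\to Y$ and using Remark \ref{rem:dual} twice, $\r{E}^{*}={}_v\ShHom_W(\r{U},v^{!}\r{O}_Y)_u$ and then $(\r{E}^{*})^{*}={}_u\ShHom_W\!\big(\ShHom_W(\r{U},v^{!}\r{O}_Y),u^{!}\r{O}_X\big)_v$; smoothness makes $u^{!}\r{O}_X=\omega_W\tr u^{*}\omega_X^{-1}$ and $v^{!}\r{O}_Y=\omega_W\tr v^{*}\omega_Y^{-1}$ invertible, so the iterated $\ShHom$ collapses to ${}_u(\r{U}\tr u^{*}\omega_X^{-1}\tr v^{*}\omega_Y)_v$, and then (\ref{eq:pullbacktensorproduct}) and the projection formula give the bimodule isomorphism $(\r{E}^{*})^{*}\cong\omega_X^{-1}\tr\r{E}\tr\omega_Y$, which is the displayed identity for $n=0$. (Equivalently, by Lemma \ref{lem:dual} the functor $-\tr(\r{E}^{*})^{*}$ is the double right adjoint of $-\tr\r{E}$, which differs from $-\tr\r{E}$ by the Grothendieck duality twists on the two sides.) The case of general $n$ is this applied to $\r{E}^{*n}$, together with (\ref{eq:yoneda}). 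Along the way I would also record the compatibility of these isomorphisms with the units of (\ref{eq:unitmorphism}): under the identification above, $i_{n+2}$ corresponds to $\omega_{X_n}^{-1}\tr i_n\tr\omega_{X_{n+2}}$ (recall $X_{n+2}=X_n$).

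\emph{Identifying $\r{A}(2)$ as a twist.} Assume first $\r{A}=\d{S}(\r{E})$ is standard. By (\ref{eq:twistingsheafzalgebra}) and Convention \ref{conv:Z_n}, the shift $\r{A}(2)$ is again a sheaf $\d{Z}$-algebra on $(X_i)_i$; it is generated in degree $1$ by $\r{A}(2)_{i,i+1}=\r{E}^{*(i+2)}=(\r{E}^{*2})^{*i}$ with relations the images of the $i_{i+2}$, so $\r{A}(2)=\d{S}(\r{E}^{*2})$. By the identity above $\r{E}^{*2}\cong\omega_X^{-1}\tr\r{E}\tr\omega_Y$, and---the sequence $(\omega_{X_i})_i$ being $2$-periodic since $X_{i+2}=X_i$---I would check, on generators via that identity and on relations via the naturality of the $i_n$, that $\d{S}(\omega_X^{-1}\tr\r{E}\tr\omega_Y)$ is precisely the twist of $\d{S}(\r{E})$ by $(\omega_{X_i})_i$. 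Thus $\r{A}(2)$ is the twist of $\r{A}$ by $(\omega_{X_i})_i$, and Theorem \ref{thm:twisting} yields an equivalence $\r{T}\colon\Gr(\r{A})\xrightarrow{\simeq}\Gr(\r{A}(2))$, $(\r{M}_i)_i\mapsto(\r{M}_i\tr\omega_{X_i})_i$.

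\emph{The autoequivalence.} Since $X_{i+2}=X_i$, reindexing gives a tautological isomorphism of categories $s\colon\Gr(\r{A}(2))\xrightarrow{\simeq}\Gr(\r{A})$, $(\r{N}_i)_i\mapsto(\r{N}_{i-2})_i$. Put $\alpha:=s\circ\r{T}$, an autoequivalence of $\Gr(\r{A})$ as a composite of equivalences, with $(\alpha\r{M})_i=\r{M}_{i-2}\tr\omega_{X_{i-2}}=\r{M}_{i-2}\tr\omega_{X_i}$. Taking the $(m+2)$-nd component, $(\alpha\r{M})_{m+2}=\r{M}_m\tr\omega_{X_{m+2}}=\r{M}_m\tr\omega_{X_m}$, i.e. $(-)_{m+2}\circ\alpha=(-\tr\omega_{X_m})\circ(-)_m$ as functors $\Gr(\r{A})\to\Qcoh(X_m)$, which is precisely the asserted commutative square. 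For an arbitrary symmetric sheaf $\d{Z}$-algebra $\r{A}$, Theorem \ref{thm:twisting} exhibits it as a twist of a standard one, and conjugating $\alpha$ along that equivalence gives the statement in general.

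\emph{Where the difficulty lies.} Everything after the first paragraph is bookkeeping with twists and reindexing. The one substantive point is the identification of the double right dual with conjugation by dualizing sheaves and, more delicately, its naturality with respect to the units $i_n$, which is what forces the defining relations of $\d{S}(\r{E})$ onto those of $\d{S}(\r{E})(2)$ under the twist; this is a Grothendieck--Serre duality computation (naturality of the counit/trace map and of the duality isomorphism of Lemma \ref{lem:dual}), and is where I expect the actual effort to go.
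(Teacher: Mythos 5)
Your proposal is correct and follows essentially the same route as the paper: reduce to a standard symmetric sheaf $\d{Z}$-algebra via Theorem \ref{thm:twisting}, use the identity $\r{E}^{*2}\cong\omega_X^{-1}\tr\r{E}\tr\omega_Y$ to identify the twist of $\d{S}(\r{E})$ by $(\omega_{X_i})_i$ with $\d{S}(\r{E}^{*2})\cong\d{S}(\r{E})(2)$, and compose with the shift $(-2)$ to obtain $\alpha$. The only difference is one of detail: where the paper simply cites \cite[Lemma 3.1.7]{VdB_12} for the key bimodule identity, you sketch its proof via Grothendieck duality and spell out the componentwise verification of the square.
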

\begin{proof}
By Theorem \ref{thm:twisting}, $\r{A}$ is Morita equivalent to a symmetric sheaf $\d{Z}$-algebra $\d{S}(\r{E})$  in standard form with $\r{E} \in \bimod(X-Y)$ (using the notation \ref{conv:Z_n}). Moreover by  \cite[Lemma 3.1.7.]{VdB_12}, we have
\[ \r{E}^{*2}\cong\omega_{X}^{-1}\tr \r{E} \tr \omega_{Y} \]
Hence the twist by  the sequence of line bundles $({\omega_{X_i}})_{i \in \d{Z}}$ yields an equivalence 
\[ \r{T}: \Gr(\d{S}(\r{E}))\stackrel{\cong}{\mor} \Gr(\omega^{-1}\tr \d{S}(\r{E}) \tr \omega)\stackrel{\cong}{\mor} \Gr(\d{S}(\r{E}^{*2})) \]
where we used the short-hand notation
\[ \left(\omega^{-1}\tr \d{S}(\r{E}) \tr \omega \right)_{m,n} = \omega_{X_m}^{-1} \tr \d{S}(\r{E})_{m,n} \tr \omega_{X_n} \]
Next, the construction of a standard symmetric sheaf $\d{Z}$-algebra implies that there is an equivalence $\Psi:\Gr(\d{S}(\r{E})(2))\mor \Gr(\d{S}(\r{E}^{*2}))$ (where we used the notation (\ref{eq:twistingsheafzalgebra})). We now simply define
\[ \alpha := (-2)\circ\Psi^{-1}\circ \r{T}: \Gr(\d{S}(\r{E}))\mor \Gr(\d{S}(\r{E}^{*2}))\mor \Gr(\d{S}(\r{E})(2))\mor \Gr(\d{S}(\r{E})) \]
\end{proof}
In the commutative case (discussed in section \ref{subsec:rank(2,2)case}), the $0^{\textrm{th}}$ truncation functor coincides with the pushforward functor in the following sense:
\begin{theorem} \label{trm:commutativepushforward}
Let $\r{V}$ be a locally free sheaf of rank $2$ on $X$ and consider the associated symmetric sheaf $\d{Z}$-algebra $\d{S}({}_{\Id}(\r{V})_{\Id})$. Let 
\[
\Phi: \Proj(\d{S}({}_{\Id}(\r{V})_{\Id}))\mor \Qcoh(\d{P}_X(\r{V}))
\]
 be the equivalence provided by Corollary \ref{cor:commutativealgebra}. Then the following diagram commutes
\begin{displaymath}
\xymatrix{
& \Gr(\d{S}({}_{\Id}(\r{V})_{\Id}))\ar[dr]^{(-)_0}\\
\Proj(\d{S}({}_{\Id}(\r{V})_{\Id}))\ar[ur]^{\omega}\ar[dr]_\Phi&&\Qcoh(X)\\
& \Qcoh(\d{P}_X(\r{V}))\ar[ur]_{\pi_*}
}
\end{displaymath}
\end{theorem}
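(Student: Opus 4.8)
The plan is to reduce the statement to a computation about the two-sided ideal structure of $\Sym_X(\r V)$ and the classical projection $\pi\colon\d P_X(\r V)\to X$, by threading an object through all the functors in the diagram and comparing the two ways around. Concretely, start with an object $\r N\in\Proj(\d S({}_{\Id}(\r V)_{\Id}))$, apply $\omega$ to get a graded module $\r M=\omega(\r N)\in\Gr(\d S({}_{\Id}(\r V)_{\Id}))$, and chase it along the upper-right path to $\r M_0\in\Qcoh(X)$. Along the lower-left path, $\Phi(\r N)$ is by definition of Corollary \ref{cor:commutativealgebra} obtained by first passing through the equivalence (\ref{eq:sym2}) to a module over $\Sym_X(\r V)$, then applying $\widetilde{(-)}$, and we must identify $\pi_*\Phi(\r N)$ with $\r M_0$.

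First I would reduce to the level of graded modules. Since $\omega$ is right adjoint to the projection $p$ and the equivalence (\ref{eq:sym2}) carries torsion modules to torsion modules (as noted in the proof of Corollary \ref{cor:commutativealgebra}), the composite $\Proj(\d S({}_{\Id}(\r V)_{\Id}))\xrightarrow{\omega}\Gr(\d S)\xrightarrow{\r T^{-1}\text{ of }(\ref{eq:sym2})}\Gr(\Sym_X(\r V))$ agrees, up to the canonical equivalences, with the section functor $p\circ\Gamma_*$ appearing in Corollary \ref{cor:commutativealgebra}; more precisely, for $\r N\in\Proj$, the graded $\Sym_X(\r V)$-module associated to $\r N$ is $\Gamma_*(\Phi(\r N))=\bigoplus_i\pi_*(\Phi(\r N)(i))$. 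Granting this, the claim becomes: if $\r M=\omega(\r N)$ and $\r M'=\bigoplus_i\pi_*(\Phi(\r N)(i))$ is the corresponding graded $\Sym_X(\r V)$-module, then $\r M_0$ (the degree-$0$ piece of $\r M$ as a $\d S$-module) equals $\r M'_0=\pi_*(\Phi(\r N))$. But under the twist $\r T$ of Lemma \ref{lem:commutativealgebra}, the degree-$0$ component is twisted by $(\Lambda^2\r V)^{\lfloor 0/2\rfloor}=(\Lambda^2\r V)^0=\r O_X$, so the degree-$0$ pieces match on the nose: $\r M_0=\r M'_0$. Hence $(-)_0\circ\omega$ sends $\r N$ to $\pi_*(\Phi(\r N))$, which is exactly commutativity of the diagram.

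The remaining content is therefore the identification in the first step: that passing from $\Proj$ back up to graded $\Sym_X(\r V)$-modules via $\omega$ (for the $\d S$-algebra) agrees, through $\Phi$ and the equivalence $\Proj(\Sym_X(\r V))\simeq\Qcoh(\d P_X(\r V))$, with the classical $\Gamma_*=\bigoplus_i\pi_*((-)(i))$. This is a compatibility of right adjoints: $\omega$ is characterized as the right adjoint of $p\colon\Gr\to\Proj$, and $\Gamma_*$ (followed by $p$) is the right adjoint of the corresponding projection for $\Sym_X(\r V)$; since (\ref{eq:sym2}) is an equivalence of categories intertwining the two torsion subcategories, it intertwines the two projection functors and hence their right adjoints, up to canonical isomorphism. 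The graded-module valued lift $\Gamma_*$ itself (before applying $p$) is recovered because $\omega$ lands in $\Gr$, and one checks degreewise that $\omega(\r N)_i\cong\pi_*(\Phi(\r N)(i))\tr(\Lambda^2\r V)^{-\lfloor i/2\rfloor}$ using Lemma \ref{lem:adjunctiontruncation} together with the fact, from Corollary \ref{cor:commutativealgebra}, that $\pi_*((-)(i))$ computes the $i$-th truncation on the commutative side.

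The main obstacle I anticipate is not conceptual but bookkeeping: one must be careful that the two "right adjoint" descriptions of the section functor genuinely agree on the nose (not merely up to a shift or a twist), since the twist $\r T$ in Lemma \ref{lem:commutativealgebra} introduces the factors $(\Lambda^2\r V)^{\lfloor i/2\rfloor}$ in every degree, and it is precisely the vanishing $\lfloor 0/2\rfloor=0$ that makes the degree-$0$ truncation untwisted. So the crux is to verify that the equivalence $\Phi$ of Corollary \ref{cor:commutativealgebra} is set up so that the degree-$0$ part of $\omega(\r N)$ is literally $\pi_*\Phi(\r N)$ rather than a twist thereof; once this normalization is pinned down, the square commutes by the uniqueness of adjoints and a degreewise comparison. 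I would also invoke Theorem \ref{trm:commutativepushforward}'s commutative-algebra analogue implicitly — indeed this theorem \emph{is} that analogue — so the proof is genuinely a translation through the equivalences of Lemma \ref{lem:commutativealgebra} and Corollary \ref{cor:commutativealgebra} combined with the triviality of the degree-$0$ twist.
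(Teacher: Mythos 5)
Your proposal is correct, and it reaches the statement by a route that is recognizably different from the paper's, although both arguments ultimately rest on uniqueness of adjoints. The paper compares the \emph{left} adjoints of the two composites $\Proj(\d{S}({}_{\Id}(\r{V})_{\Id}))\mor\Qcoh(X)$: it observes that $(\omega(-))_0$ is right adjoint to $p((-)\tr e_0\r{A})$ and that $\pi_*\Phi(-)$ is right adjoint to $\r{T}^{-1}\big((p\circ\Gamma_*)(\pi^*(-))\big)$, and then identifies these left adjoints degreewise using the projection formula together with the classical computation $\pi_*\r{O}_{\d{P}_X(\r{V})}(i)\cong\Sym_X(\r{V})_i$ and the twist identity $\Sym_X(\r{V})_i\cong\r{A}_{0,i}\tr\r{T}_i$. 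You instead compare the \emph{right} adjoints of the two projections: since the equivalence of Lemma \ref{lem:commutativealgebra} preserves torsion it intertwines the projection functors and hence their section functors, and on the commutative side the section functor is identified with $\Gamma_*=\bigoplus_i\pi_*((-)(i))$ via the standard relative adjunction $\widetilde{(-)}\dashv\Gamma_*$; the theorem then falls out by truncating in degree $0$, where the twist is trivial ($\r{T}_0=\r{O}_X$). Your route avoids the explicit computation of $\pi_*\r{O}(i)$ and the projection-formula bookkeeping in every degree (that information is packaged into the adjunction $\widetilde{(-)}\dashv\Gamma_*$, a classical fact of the same standing as the one the paper cites from Hartshorne), at the cost of having to pin down, as you yourself flag, that the section functor for $\Sym_X(\r{V})$ is $\Gamma_*$ on the nose and not a twist of it; your degreewise normalization $\omega(\r{N})_i\cong\pi_*(\Phi(\r{N})(i))\tr(\Lambda^2\r{V})^{-\floor{\frac{i}{2}}}$ is the correct one. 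The only blemish is notational: labelling the equivalence (\ref{eq:sym2}) as ``$\r{T}^{-1}$'' reverses the direction of the twist as set up in Lemma \ref{lem:commutativealgebra}, but since only the degree-$0$ component is ultimately used, this slip does not affect the argument.
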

\begin{proof}
Let $Z:=\d{P}_X(\r{V})$ and $\r{A} := \d{S}({}_{\Id}(\r{V})_{\Id})$. The  formula we need to prove explicitly is
\[ 
\pi_*\Big( \widetilde{\ds_i (-)\tr \r{T}_i} \Big) \cong \big(\omega (-)\big)_0 
\]
where $\T_i=\big(\big(\bigwedge^2\r{V}\big)^{\floor{\frac{i}{2}}}\big)_{i\in \d{Z}}$ is given as in the statement of Lemma \ref{lem:commutativealgebra}.\\
Now by Lemma \ref{lem:adjunctiontruncation} and the definition of $\omega$, the functor $\big(\omega(-)\big)_0$ is right adjoint to $p\big((-)\tr e_0\r{A}\big)$. Another formal computation using Corollary \ref{cor:commutativealgebra} shows that $\pi_*\Big(\widetilde{\ds_i (-)\tr \r{T}_i}\Big)$ is right adjoint to the functor $\r{T}^{-1} \left( (p \circ \Gamma_*) \big( \pi^*(-) \big) \right)$. This functor in turn being equal to $p\left( \big(\pi_*\big(\pi^*(-)(i)\big)\tr \r{T}_i^{-1} \big)_i \right)$, which by the projection formula, simplifies to $p\left(\big((-)\tr \pi_*\r{O}_Z(i)\tr \r{T}^{-1}_i\big)_i \right)$. The unicity of adjoint functors thus reduces the claim to proving the isomorphism
\begin{equation}
\label{eq:2periodicity} \big((-)\tr \pi_*\r{O}_Z(i)\tr \r{T}_i\big)_i \cong (-)\tr e_0\r{A}
\end{equation}
Since $\rk(\r{E})\ge 2$, \cite[Proposition II.7.11.a]{Hartshorne77} implies that there is an isomorphism $\pi_*\big(\r{O}_Z(i)\big)=\Sym_X(\r{V})_i$. Now, by the choice of $\r{T}_i$, we have $\Sym_X(\r{V})_i=\r{A}_{0i}\tr \r{T}_i$. (\ref{eq:2periodicity}) thus becomes
\[ \big((-)\tr \pi_*\r{O}_Z(i)\tr \r{T}^{-1}_i\big)_i = \big((-)\tr  \r{A}_{0i}\tr \r{T}_i\tr \r{T}^{-1}_i\big)_i = \big( (-) \tr \r{A}_{0i} \big)_i = (-)\tr e_0\r{A} \]
proving the claim.
\end{proof}

We also have 1-periodicity for the truncation functors in this case:

\begin{proposition}\label{prop:1-periodicity}
Let $\r{V}$ be a locally free sheaf of rank 2 on $X$ and $\d{S}({}_{\Id} \r{V}_{\Id})$ the associated symmetric sheaf $\d{Z}$-algebra. Then there is an equivalence $\beta$ and for each $n$, a line bundle $\r{L}_n$ on $X$ making the diagram
\begin{displaymath}
\xymatrix{
\Gr(\d{S}({}_{\Id} \r{V}_{\Id}))\ar[rr]^{(-)_n} \ar[d]_{\beta} && \Qcoh(X)\ar[d]^{- \tr \r{L}_n}\\
\Gr(\d{S}({}_{\Id} \r{V}_{\Id}))\ar[rr]_{(-)_{n+1}} && \Qcoh(X)
}
\end{displaymath}
commute.
\end{proposition}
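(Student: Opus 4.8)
The plan is to transport the ordinary degree-shift autoequivalence across the equivalence of Lemma~\ref{lem:commutativealgebra}, from the commutative category $\Gr(\Sym_X(\r{V}))$ back to $\Gr(\d{S}({}_{\Id}\r{V}_{\Id}))$. Recall from that lemma — concretely from~(\ref{eq:sym2}) — the equivalence $\r{T}\colon \Gr(\d{S}({}_{\Id}\r{V}_{\Id}))\stackrel{\simeq}{\mor}\Gr(\Sym_X(\r{V}))$ given on objects by $\r{M}=(\r{M}_i)_i\mapsto\bigoplus_i\r{M}_i\tr(\bigwedge^2\r{V})^{\floor{i/2}}$, whose quasi-inverse sends a graded $\Sym_X(\r{V})$-module $N=\bigoplus_d N_d$ to the graded $\d{S}({}_{\Id}\r{V}_{\Id})$-module with $i$-th component $N_i\tr(\bigwedge^2\r{V})^{-\floor{i/2}}$.

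Next I would record the elementary fact that the degree shift $N\mapsto N(-1)$ (with $N(-1)_d=N_{d-1}$) is an autoequivalence of $\Gr(\Sym_X(\r{V}))$: passing through Remark~\ref{rem:hatequivalence} to the sheaf $\Z$-algebra $\widehat{\r{G}}$ of Convention~\ref{conv:hat} with $\r{G}=\Sym_{X\times X}({}_{\Id}\r{V}_{\Id})$, this is just the shift equivalence $\Gr(\widehat{\r{G}})\simeq\Gr(\widehat{\r{G}}(1))$ (cf.~(\ref{eq:twistingsheafzalgebra})) together with the tautology $\widehat{\r{G}}(1)=\widehat{\r{G}}$.

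Then I define $\beta:=\r{T}^{-1}\circ(-)(-1)\circ\r{T}$, an autoequivalence of $\Gr(\d{S}({}_{\Id}\r{V}_{\Id}))$, and compute its effect on the truncation functors. Composing the two applications of $\r{T}^{\pm1}$ with the shift — operations that rescale each graded piece by a power of $(\bigwedge^2\r{V})$ but leave its $\r{O}_X$-module structure untouched — gives
\[
(\beta\r{M})_{n+1}\;\cong\;\r{M}_n\tr(\bigwedge^2\r{V})^{\floor{n/2}-\floor{(n+1)/2}}.
\]
Hence, setting $\r{L}_n:=(\bigwedge^2\r{V})^{\floor{n/2}-\floor{(n+1)/2}}$, which equals $\r{O}_X$ when $n$ is even and $(\bigwedge^2\r{V})^{-1}$ when $n$ is odd, the square of the proposition commutes, for all $n$ simultaneously.

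I do not anticipate a real obstacle: granted Lemma~\ref{lem:commutativealgebra}, everything is formal. The only thing requiring attention is the interplay between the degree shift and the twisting factors $(\bigwedge^2\r{V})^{\floor{i/2}}$ of Lemma~\ref{lem:commutativealgebra}; the mismatch between $\floor{i/2}$ and $\floor{(i+1)/2}$ is precisely what forces the line bundles $\r{L}_n$ to be nontrivial in odd degree and prevents one from simply taking $\r{L}_n=\r{O}_X$. Alternatively, one could build $\beta$ directly on the $\d{S}$-side from the equivalence $\Gr(\d{S}(\r{E})(1))\simeq\Gr(\d{S}(\r{E}^{*1}))$ inherent in the construction of standard symmetric sheaf $\Z$-algebras — as in the proof of Proposition~\ref{prop:2-periodicity} — followed by a twist identifying $\d{S}({}_{\Id}\r{V}^*_{\Id})$ with $\d{S}({}_{\Id}\r{V}_{\Id})$ via the rank-two isomorphism $\r{V}^*\cong\r{V}\tr(\bigwedge^2\r{V})^{-1}$; this produces the same $\beta$ and the same $\r{L}_n$.
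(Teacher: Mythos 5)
Your proposal is correct and follows essentially the same route as the paper: conjugating the shift functor $(-1)$ on $\Gr(\Sym_X(\r{V}))$ by the equivalence $\r{T}$ of Lemma~\ref{lem:commutativealgebra} to define $\beta$, and taking $\r{L}_n=\r{T}_n\tr\r{T}_{n+1}^{-1}=(\bigwedge^2\r{V})^{\floor{n/2}-\floor{(n+1)/2}}$, exactly as in the paper's proof (and consistent with its subsequent remark identifying $\r{L}_n$ explicitly).
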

\begin{proof}
By Lemma \ref{lem:commutativealgebra}, there is a sequence of $X-X$-bimodules $\r{T}_i$ such that the following is an equivalence of categories
\[ \Gr(\d{S}({}_{\Id} \r{V}_{\Id})) \mor \Gr(\Sym_X(\r{V})): \left( \r{M}_i \right)_i \mapsto \bigoplus_i \r{M}_i \tr \r{T}_i \]

Let $(-1)$ denote the inverse shift functor on $\Gr(\Sym_X(\r{V}))$, i.e. $(\r{M}(-1))_i = \r{M}_{i-1}$ and define $\beta$ as the autoequivalence making the diagram
\begin{displaymath}
\xymatrix{
\Gr(\d{S}({}_{\Id} \r{V}_{\Id}))\ar[rr]^{\r{T}}\ar[d]_\beta&&\Sym_X(\r{V})\ar[d]^{(-1)}\\
\Gr(\d{S}({}_{\Id} \r{V}_{\Id}))\ar[rr]_{\r{T}} && \Sym_X(\r{V})
}
\end{displaymath}
commute. Since we clearly have $(-)_{n+1} \circ (-1)=(-)_n$, we get the required result by choosing the line bundle $\r{L}_n:=\r{T}_n\tr \r{T}^{-1}_{n+1}$ with $\r{T}_n$ as in the proof of Lemma \ref{lem:commutativealgebra}.
\end{proof}

\begin{remark}
the previous result of  1-periodicity clearly implies 2-periodicity after repeated application in the sense that
$$(-)_{n+2}\circ \beta^2=\big(\r{L}_{n+1}\tr\r{L}_n\big)\tr (-)_n$$
hence one can wonder whether this periodicity coincides with proposition \ref{prop:2-periodicity}. This is not the case in general. Indeed, from the explicit form of $\r{T}$ in proposition \ref{prop:2-periodicity} and $\beta$ in Proposition $\ref{prop:1-periodicity}$, we obtain 
$$\r{L}_n=\big(\bigwedge^{2}\r{V}\big)^{\floor{\frac{n}{2}}}\tr \big(\bigwedge^{2}\r{V}\big)^{-\floor{\frac{n+1}{2}}}$$ and $\r{L}_{n+1}\tr \r{L}_n=\left( \bigwedge^2(\r{V}) \right) ^{-1}$, which obviously does not coincide with $\omega_{X/S}$  in general.
\end{remark}

\section{Noetherianity of $\Gr(\d{S}(\r{E}))$}

As explained in the introduction, it is the case of a bimodule $\r{E}$ of rank $(4,1)$ that we are particularly interested in. This section is dedicated to proving one of the important geometric properties of $\d{S}(\r{E})$ in this setting:

\begin{theorem} \label{thm:noeth}
Let $X$ and $Y$ be smooth varieties over $\k$ and $\r{E} \in \bimod(X-Y)$ be locally free of rank (4,1). Then the category $\Gr(\d{S}(\r{E}))$ is locally noetherian.
\end{theorem}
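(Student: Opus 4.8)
The plan is to reduce the noetherianity of $\Gr(\d{S}(\r{E}))$ in the rank $(4,1)$ case to statements that can be checked locally on the base, and there to invoke the structure theory of generalized preprojective algebras together with a point-module argument. More precisely, I would proceed in the following steps.

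\textbf{Step 1: Reduction to a local statement.} Being locally noetherian is a local property on $X$ and $Y$ in the following sense: by Theorem~\ref{thm:twisting} we may assume $\d{S}(\r{E})$ is in standard form, and a set of noetherian generators for $\Gr(\d{S}(\r{E}))$ can be assembled from noetherian generators of the restricted categories $\Gr(\d{S}(\r{E})|_{U_i})$ for a finite affine open cover $\{U_i\}$ of $X$ (and the induced cover of $Y$). The subtlety is that $\r{E}$ is a bimodule, so the ``restriction'' must be taken in a way compatible with the support being finite over both factors; this is exactly the content of the result quoted as Theorem~\ref{maintheorem:localdescription} in the introduction (references \ref{thm:coverconditions}, \ref{lem:catC2}, \ref{lem:locallypirs}), which produces a finite affine cover $U_i \subset X$ such that $\Gr(\d{S}(\r{E})|_{U_i})$ is a direct summand of $\Gr(\Pi_{R_i}(S_i))$ for a relatively Frobenius map $R_i \to S_i$ of rank $4$. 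A direct summand (in the sense of a full subcategory closed under subquotients and coproducts, cut out by an idempotent) of a locally noetherian category is locally noetherian, so it suffices to show each $\Gr(\Pi_{R_i}(S_i))$ is locally noetherian, i.e.\ that the generalized preprojective algebra $\Pi_R(S)$ is noetherian when $R \to S$ is relatively Frobenius of rank $4$ with $R$ (a localization of) a smooth affine $\k$-algebra.

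\textbf{Step 2: Noetherianity of $\Pi_R(S)$ via point modules.} This is where the real work lies, and it is the step I expect to be the main obstacle. Following the strategy of \cite{Mori07} and \cite{Nyman04} (as the introduction signals), one shows that the ``point modules'' over $\d{S}(\r{E})$ — suitably redefined for the rank $(4,1)$ case, which the paper flags as a substantial modification — are parametrized by a projective scheme, and that this parametrizing scheme together with an ampleness/$\chi$-condition forces the Hilbert functions of cyclic modules to be eventually polynomial (in fact bounded), which yields the ascending chain condition. Concretely I would: (a) establish that each bimodule $\d{S}(\r{E})_{n,m}$ is locally free with the ranks computed as in the second cited theorem (Corollary~\ref{cor:locallyfree}) — this is the input that makes the counting arguments work and in particular shows $\d{S}(\r{E})$ has ``polynomial growth''; (b) run the point-module construction to get a coherent sheaf parametrizing truncated point modules and show these stabilize; (c) deduce that $e_n\d{S}(\r{E})$ is a noetherian object and hence, since these generate, that $\Gr(\d{S}(\r{E}))$ is locally noetherian.

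\textbf{Step 3: Assembling the global conclusion.} Combining Step 1 and Step 2: each $\Gr(\d{S}(\r{E})|_{U_i})$ is locally noetherian, hence $e_n\d{S}(\r{E})|_{U_i}$ is noetherian for every $n$; a coherent-sheaf gluing argument over the finite cover then shows $e_n\d{S}(\r{E})$ is a noetherian object of $\Gr(\d{S}(\r{E}))$. Since $\Gr(\d{S}(\r{E}))$ is Grothendieck (Theorem~\ref{thm:groth}) and the family $\{\r{G}^j_i \tr e_i\d{S}(\r{E})\}$ of twists of $e_i\d{S}(\r{E})$ by generators $\r{G}^j_i$ of $\Qcoh(X_i)$ is a generating set, and each such twist is noetherian (a bimodule-tensor of a noetherian module by a coherent sheaf stays noetherian, using local freeness of the components of $\d{S}(\r{E})$ from Step 2(a)), we conclude that $\Gr(\d{S}(\r{E}))$ has a generating set of noetherian objects, i.e.\ is locally noetherian. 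The main obstacle is genuinely Step 2: the point-module technique does not transfer verbatim from the rank $(2,2)$ situation, and the bulk of the argument will be in setting up the right notion of point module and proving the local freeness and rank formulas for $\d{S}(\r{E})_{n,m}$ that feed the growth estimate.
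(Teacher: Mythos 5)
Your Steps 1 and 3 reproduce the paper's reduction faithfully: restriction of $\d{S}(\r{E})$ to a finite affine cover on which $S_l/R_l$ is relatively Frobenius of rank $4$ (Lemmas \ref{lem:affinereduction}, \ref{lem:opencover} and \ref{thm:coverconditions}), passage from the resulting $2$-periodic bimodule $\Z$-algebra of global sections to a graded algebra with $\overline{\Gamma(\d{S}(\r{E}))}\cong\Pi_R(S)$ (Proposition \ref{prp:periodicalgebras}, Lemma \ref{lem:locallypirs}), and the observation that the generators $\r{N}\tr e_n\d{S}(\r{E})$ are noetherian as soon as their images under the exact embedding into $\Gr(\Pi_R(S))$ are, these images being direct summands of shifts of $\Pi_R(S)$.

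The genuine gap is your Step 2. In the paper the local statement is not proved by point modules at all: the noetherianity of $\Pi_R(S)$ for a relative Frobenius extension of rank $4$ over a noetherian base is quoted wholesale as Theorem \ref{thm:pirsnoetherian} from \cite{Genpreprojective}, and after the reduction the proof of Theorem \ref{thm:noeth} is finished in two lines. Your proposed substitute --- parametrize (suitably redefined) point modules, deduce local freeness and rank formulas for $\d{S}(\r{E})_{n,m}$, and conclude the ascending chain condition from the resulting polynomial growth of Hilbert functions --- does not work as stated: polynomial (even linear) growth of the graded pieces does not imply noetherianity (finitely presented monomial algebras of linear growth can fail ACC), and no ampleness or $\chi$-type hypothesis is established anywhere that would let you run an Artin--Zhang style argument; making such a hypothesis precise and verifying it in the rank $(4,1)$ setting would be a substantial piece of work that your sketch leaves entirely open. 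In the paper the point-module technique serves a different purpose: it proves the left-exactness of the sequence (\ref{eq:locallyfree}) in Theorem \ref{trm:seqmain}, whence the local freeness and ranks of $\d{S}(\r{E})_{n,m}$ (Corollary \ref{cor:locallyfree}); these feed the $\Ext$ computations of Section 4, not the noetherianity proof, and even in the rank $(2,2)$ case of \cite{VdB_12} and \cite{Nyman04} noetherianity is not obtained from a growth bound. So either you invoke the external theorem on $\Pi_R(S)$ --- in which case your Step 2 collapses to a citation and your argument coincides with the paper's --- or you owe a genuinely new proof of that theorem, which the proposal does not supply.
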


\begin{convention}
\label{conv:conditions}
Throughout this section we will always assume that $X,Y$ and $\r{E} \in \bimod(X-Y)$ satisfy the conditions in Theorem \ref{thm:noeth}.
\end{convention}
 The next lemma shows that under these assumptions, the bimodule $\r{E}$ can written in a convenient form using a line bundle on $Y$ and a finite map $f$ of degree 4. For future reference, we state this lemma in a slightly more general setting
 
\begin{lemma}
\label{lem:inducedbyline}
 Assume that $X,Y$ are schemes of finite type over $\k$ and $\r{E}$ is a locally free $X-Y$-bimodule of rank $(n,1)$.
Then there is a line bundle $\r{L}$ on $Y$ and a finite surjective morphism\footnote{note that $f$ is automatically flat here} $f:Y\mor X$ of degree $n$ such that 
$\r{E}\cong {}_f\r{L}_{\Id}$ (see Definition \ref{def:standardformbimodule} ).
\end{lemma}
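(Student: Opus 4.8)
The plan is to use the structure theory of bimodules together with the rank-$(n,1)$ hypothesis to produce the line bundle and the finite morphism directly. Recall from Remark~\ref{rem:dual} and the surrounding discussion (following \cite[Proposition 4.1.6]{VdB_12}) that a coherent bimodule whose support is finite over both factors, and which is locally free on the right of rank $1$, is automatically of the form $_u\r{U}_v$ for suitable morphisms: more precisely, one first observes that the projection $\pi_Y\colon \Supp(\r{E})\mor Y$ is finite, and since ${\pi_Y}_*\r{E}$ is locally free of rank $1$, the sheaf of algebras $\pi_{Y*}\O_{\Supp(\r{E})}$ acting on it forces $\Supp(\r{E})$ to map isomorphically onto $Y$ under $\pi_Y$. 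That isomorphism is the map we will call $f^{-1}$ in disguise; inverting it and composing with $\pi_X$ gives a morphism $f\colon Y\mor X$, and $\r{E}$ becomes $_f\r{L}_{\Id}$ where $\r{L}:={\pi_Y}_*\r{E}$ viewed as an $\O_Y$-module.

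First I would make the reduction to the affine case, where $X=\Spec A$, $Y=\Spec B$ and $\r{E}$ corresponds to a finitely generated $A\tr_\k B$-module $E$ whose support in $\Spec(A\tr_\k B)$ is finite over both $\Spec A$ and $\Spec B$; the hypothesis says $E$ is free of rank $n$ as an $A$-module and free of rank $1$ as a $B$-module. Then I would show the annihilator ideal $\mathfrak{a}$ of $E$ inside $A\tr_\k B$ is such that $(A\tr_\k B)/\mathfrak{a}$ is, via the second projection $B\mor (A\tr_\k B)/\mathfrak{a}$, a finite $B$-algebra that is free of rank $1$ as a $B$-module --- hence equal to $B$. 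This yields a ring map $\phi\colon A\mor B$ (the image of the first projection), i.e. a morphism $f\colon Y\mor X$, realizing $E$ as a $B$-module $L$ (free of rank $1$) with $A$ acting through $\phi$; that is exactly $_f\r{L}_{\Id}$. Freeness of $E$ of rank $n$ over $A$ then translates into $B$ being free of rank $n$ over $A$ via $\phi$, i.e. $f$ is finite flat of degree $n$; surjectivity follows since $\Spec B\mor\Spec A$ is finite flat with nonempty fibers (or because $X$ is integral and the map is dominant of finite degree). Finally I would note these local constructions glue: the morphism $f$ and the line bundle $\r{L}$ are canonically determined (the bimodule determines the functor $-\tr_X\r{E}$ by \cite[Lemma 3.1.1]{VdB_12}, and $f$, $\r{L}$ are read off from it), so they patch over an affine cover of $X$.

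The footnote claim that $f$ is automatically flat is immediate once $Y$ is smooth (hence Cohen--Macaulay) and $X$ is smooth: a finite morphism between smooth varieties with equidimensional fibers is flat by ``miracle flatness'' (\cite[Exercise III.10.9]{Hartshorne77} or the local criterion), so in the setting of Convention~\ref{conv:conditionsxy} there is nothing extra to check; in the more general finite-type setting I would instead extract flatness directly from the fact that $E$ is $A$-free and $B$ is an $A$-subalgebra-quotient on which $E$ is faithful.

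The main obstacle I expect is the identification $(A\tr_\k B)/\mathfrak{a}\cong B$ --- i.e. showing that the support of $\r{E}$, a priori just a closed subscheme of $X\times Y$ finite over both, is actually the graph of a morphism $Y\mor X$ rather than some thickening or multi-section. The input that forces this is precisely the right-rank being $1$: the structure sheaf of the support pushes forward to a subalgebra of $\End_B({\pi_Y}_*\r{E})=\End_B(B)=B$, hence is $B$ itself, so the support is reduced and maps isomorphically to $Y$. Care is needed because $\r{E}$ need not be the structure sheaf of its support --- it is a rank-one module over it --- so I would phrase the argument in terms of the annihilator and the faithful action rather than assuming $\r{E}=\O_{\Supp\r{E}}$. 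Once this is in place, everything else is bookkeeping with \eqref{eq:pullbacktensorproduct} and standard commutative algebra.
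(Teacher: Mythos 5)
Your proposal is correct and follows essentially the same route as the paper: the paper also passes to the scheme-theoretic support $W\subset X\times Y$ and uses the faithful action of $\O_W$ on $\r{E}$ together with $\ShHom_Y({\pi_Y}_*\r{E},{\pi_Y}_*\r{E})\cong\O_Y$ (the composite $R\mor S\mor\End_R(F)\cong R$ being the identity) to show $W\mor Y$ is an isomorphism, exactly your annihilator argument. The only cosmetic difference is in the flatness step: where you assert freeness of $E$ over $B$ in the affine reduction, the paper invokes its later lemma that $\r{L}$ can be trivialized on open sets of the form $f^{-1}(U)$ (a semilocal-ring argument), which is the precise form of the fact you are implicitly using.
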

\begin{proof} Let $W\subset X\times Y$ be the scheme theoretic support of $\r{E}$ and denote the projections
$W\mor X$, $W\mor Y$ by $g,h$ respectively:

\begin{center}
\begin{tikzpicture}
\matrix(m)[matrix of math nodes,
row sep=2em, column sep=2em,
text height=1.5ex, text depth=0.25ex]
{  & \Supp(\r{E})=W & \\
 & X \times Y & \\
X & & Y \\};
\path[right hook->,font=\scriptsize]
(m-1-2) edge node[right]{$\iota$} (m-2-2);
\path[->,font=\scriptsize]
(m-1-2) edge [bend right=30] node[left]{$g$}(m-3-1)
        edge [bend left=30]node[right]{$h$}(m-3-3)
(m-2-2) edge node[above]{$\pi_X$}(m-3-1)
        edge node[above]{$\pi_Y$}(m-3-3);
\end{tikzpicture}
\end{center}

 By definition $g,h$ are finite morphisms. Furthermore $\r{E} \cong {}_g\r{F}_h$ for $\r{F}\in \coh(W)$ such that $\Supp \r{F}=W$.
By Lemma \ref{triv} below we conclude that $h$ is an isomorphism and that $\r{F}$ is a line bundle on $Z$. Put
$\r{L}=h_\ast\r{F}$, $f=gh^{-1}$. Then $\r{E}\cong {}_f\r{L}_{\Id}$.  Since $\r{L}$ is a line bundle, $f_\ast\r{L}$ and
 $f_\ast \r{O}_Y$ are locally isomorphic (e.g. by Lemma 5.3.8 below). So $f_\ast\r{O}_Y$ is locally free of rank $n$ as well and therefore $f$ is flat of degree $n$.
\end{proof}
\begin{lemma} 
\label{triv} Assume that $h:W\mor Y$ is a finite morphism between $\k$-schemes of finite type, $\r{F}$ is a coherent sheaf on $W$ whose scheme theoretic support is $W$ and $h_\ast \r{F}$ is locally free of rank one. Then $h$ is an isomorphism and $\r{F}$ is a line bundle on $Z$.
\end{lemma}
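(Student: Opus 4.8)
The statement is local on $Y$, so since $h$ is finite (hence affine) I would pass to an affine open $\Spec A \subseteq Y$ over which $h_\ast\r{F}$ is free of rank one, write $h^{-1}(\Spec A) = \Spec B$ with $A$ a finitely generated (hence Noetherian) $\k$-algebra and $B$ a module-finite $A$-algebra, and let $M$ denote the finitely generated $B$-module corresponding to $\r{F}$ on $\Spec B$. Two hypotheses then get translated into algebra: the scheme-theoretic support of $\r{F}$ being all of $W$ says precisely that $\operatorname{Ann}_B(M) = 0$, i.e. $M$ is a \emph{faithful} $B$-module (one uses here that annihilators of finitely generated modules commute with localization, so the annihilator ideal sheaf of $\r{F}$ on $\Spec B$ is the sheafification of $\operatorname{Ann}_B(M)$); and local freeness of rank one of $h_\ast\r{F}$ says $M \cong A$ as an $A$-module.

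First I would prove that $\r{F}$ is a line bundle on $W$. Since $M \cong A$, the module $M$ is generated over $A$ by a single element $m$, hence a fortiori $M = Bm$ as a $B$-module. The surjection $B \to M$, $b \mapsto bm$, then identifies $M$ with $B/\operatorname{Ann}_B(m)$, and $\operatorname{Ann}_B(m) = \operatorname{Ann}_B(M) = 0$ by faithfulness; thus $M \cong B$ is free of rank one over $B$. As being a line bundle is a local condition on $W$, this already gives that $\r{F}$ is a line bundle.

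Next, $A \to B$ is an isomorphism. By the previous step $B \cong M \cong A$ as $A$-modules, so $B$ is $A$-free of rank one; choose a basis $e$, so $B = Ae$, and write $1_B = ae$, $e^2 = be$ with $a, b \in A$. Then $e = 1_B \cdot e = a e^2 = ab\,e$, so $ab = 1$ and $a$ is a unit in $A$; hence $B = Ae = A\cdot 1_B$, i.e. the structure map $A \to B$ is surjective. If $I$ denotes its kernel, then $A/I \cong B \cong A$ as $A$-modules, so the composite of $A \to A/I$ with an isomorphism $A/I \to A$ is a surjective endomorphism of the finitely generated $A$-module $A$, hence injective; this forces $A \to A/I$ injective, so $I = 0$ and $A \cong B$.

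Finally, the isomorphism $A \to B$ just produced is the canonical structure morphism, so it does not depend on the chosen trivialization of $h_\ast\r{F}$; these local isomorphisms therefore glue over an affine cover of $Y$ to show that $h$ is an isomorphism, and $\r{F}$ is a line bundle on $W$ by the second paragraph. The content of the argument lies entirely in the two translations of the first paragraph together with the elementary fact that a surjective endomorphism of a finitely generated module over a commutative ring is an isomorphism; I do not anticipate a genuine obstacle beyond this bookkeeping.
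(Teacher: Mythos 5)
Your proof is correct. It performs the same affine reduction as the paper and uses exactly the same two inputs (faithfulness of the module from the scheme-theoretic support hypothesis, and rank-one freeness of the pushforward), but the mechanism afterwards is different. The paper argues in one stroke via the composite $R \xrightarrow{h} S \to \End_R(F) \cong R$: this composite is the identity, the middle map is injective by faithfulness, hence all maps are isomorphisms, which yields simultaneously that $h$ is an isomorphism and that $F$ is a line bundle. You instead split the conclusion: first you note $M$ is cyclic over $A$, hence over $B$, so faithfulness gives $M \cong B$ (the line-bundle statement); then you show the structure map $A \to B$ is surjective by the explicit computation $1_B = ae$, $e^2 = be$, $ab = 1$, and injective by the fact that a surjective endomorphism of a finitely generated module over a commutative ring is injective. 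Your route is more element-level and slightly longer, but avoids invoking $\End_R(F) \cong R$; the paper's retraction argument is the slicker packaging of the same content. Your closing remark that the local isomorphisms are the restrictions of the canonical structure morphism, and hence glue, is the right way to handle the (mild) globalization step, which the paper leaves implicit.
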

\begin{proof} Since $h$ is finite it is affine, we may assume that $Y=\Spec R$, $W=\Spec S$ and $\r{F}=\tilde{F}$ for $F$ an $S$-module which is invertible as $R$-module. The composition of
\[
R\xrightarrow{h} S\xrightarrow{s\mapsto (f\mapsto sf)} \End_R(F)\cong R
\]
is the identity and the middle map is injective since $W$ is the scheme-theoretic support of $\r{F}$. It follows that all maps
are isomorphisms. The claim follows. 
\end{proof}

\begin{convention}
\label{conv:standardform}
Following the above lemma, we shall assume that  $\r{E}$ is given in the above form, i.e. $\r{E} = {}_f ( \r{L} ) _{\Id}$ for some finite flat morphism $f:Y\mor X$ of degree $4$.
\end{convention}

\subsection{Restricting to an Open Subset}
The first step in the proof of Theorem \ref{thm:noeth} is to show that there is an appropriate notion of restricting a sheaf $\Z$-algebra  to an open subset and that the statement of Theorem \ref{thm:noeth} can be reduced to an open cover in this sense.\\[\medskipamount]
To this end, we let $\r{A}$ denote a sheaf $\Z$-algebra over a sequence of smooth varieties $(X_i)_{i \in \Z}$ and  $\r{U}=(U^i)_{i\in \Z} $ be a sequence of affine open subsets $U^i\subset X_i$. For an $X_m-X_{m+1}$- bimodule $\r{F}$, and a graded $\r{A}$-module $\r{M}$ we will use the notation $|_\r{U}$ to denote the restriction to the corresponding open subset. I.e.
\begin{eqnarray*}
\left.\r{F}\right|_\r{U} & := & \left.\r{F}\right|_{U^m \times U^{m+1}} \\
\left( \left.\r{A}\right|_\r{U} \right)_{m,n} & := &  \left.\left(\r{A}_{m,n}\right)\right|_\r{U} \ =  \left.(\r{A}_{m,n})\right|_{U^m \times U^{n}} \\
\left( \left.\r{M}\right|_\r{U} \right)_{m} & := &  \left.\left(\r{M}_{m}\right)\right|_{U^m}
\end{eqnarray*}
To ensure that the restrictions of $\r{A}$ to an open subset remains a sheaf $\d{Z}$-algebra, we need the following technical condition:

\begin{lemma}\label{lem:affinereduction}
Let $\r{A}$ be a sheaf $\d{Z}$-algebra and  $\r{U}$ as above such that for $m,n \in \Z$: 
\[
\Supp(\left.(\r{A}_{m,n})\right|_{U^m \times X_n}) \subset U^m \times U^n \text{ and } \Supp(\left.(\r{A}_{m,n})\right|_{X_m \times U^n}) \subset U^m \times U^n
\] 
then
\begin{enumerate}[i)]
\item $\left. \r{A} \right|_\r{U}$ has an induced algebra structure.
\item Restriction of modules to $\r{U}$ defines a functor $|_\r{U}: \Gr(\r{A}) \rightarrow \Gr(\left. \r{A} \right|_\r{U})$
\end{enumerate}
\end{lemma}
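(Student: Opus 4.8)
The plan is to reduce both parts to one observation: for quasi-coherent sheaves, pushforward along a projection commutes with restriction to an open subset (flat base change along an open immersion), and under the stated support hypotheses this identifies the restriction of a tensor product of bimodules with the tensor product of the restrictions. Concretely, unwinding the definition, $\r{A}_{i,j}\tr_{X_j}\r{A}_{j,k}$ is the pushforward along $p\colon X_i\times X_j\times X_k\to X_i\times X_k$ of $\pi_{X_i\times X_j}^*\r{A}_{i,j}\tr\pi_{X_j\times X_k}^*\r{A}_{j,k}$. Restricting to $U^i\times U^k$ and applying flat base change along the open immersion $U^i\times U^k\hookrightarrow X_i\times X_k$, this becomes the pushforward along $U^i\times X_j\times U^k\to U^i\times U^k$ of $\big(\pi_{X_i\times X_j}^*\r{A}_{i,j}\tr\pi_{X_j\times X_k}^*\r{A}_{j,k}\big)|_{U^i\times X_j\times U^k}$. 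Now $\pi_{X_i\times X_j}^*\r{A}_{i,j}$ restricted to $U^i\times X_j\times U^k$ is the pullback of $\r{A}_{i,j}|_{U^i\times X_j}$, which by the first hypothesis is supported on $U^i\times U^j$; hence the whole tensor product is supported on $U^i\times U^j\times U^k$, and its pushforward to $U^i\times U^k$ agrees with the pushforward of its further restriction to $U^i\times U^j\times U^k$. That last sheaf is exactly $\r{A}_{i,j}|_{U^i\times U^j}\tr_{U^j}\r{A}_{j,k}|_{U^j\times U^k}$. (One first treats coherent bimodules and then passes to the filtered colimit defining $\BiMod$; restriction and $\tr$ commute with filtered colimits.) This yields a natural isomorphism $(\r{A}_{i,j}\tr\r{A}_{j,k})|_{U^i\times U^k}\cong(\r{A}|_\r{U})_{i,j}\tr(\r{A}|_\r{U})_{j,k}$.

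Given this, part (i) is formal. Each $(\r{A}|_\r{U})_{i,j}=\r{A}_{i,j}|_{U^i\times U^j}$ is again a $U^i$--$U^j$-bimodule, since restricting a sheaf whose support is finite over $X_i$ (resp.\ $X_j$) gives one whose support is finite over $U^i$ (resp.\ $U^j$). Via the isomorphism above, $\mu_{i,j,k}|_{U^i\times U^k}$ supplies multiplication maps $(\r{A}|_\r{U})_{i,j}\tr(\r{A}|_\r{U})_{j,k}\to(\r{A}|_\r{U})_{i,k}$, and $u_i|_{U^i}\colon\O_{U^i}\to(\r{A}|_\r{U})_{i,i}$ supplies units. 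Since restriction of quasi-coherent sheaves is a functor, applying it to the associativity and unit diagrams of $\r{A}$ produces the corresponding commuting diagrams for $\r{A}|_\r{U}$ (here one uses that the restriction isomorphism is compatible with the triple tensor products appearing in associativity), so $\r{A}|_\r{U}$ is a sheaf $\d{Z}$-algebra over $(U^i)_{i\in\Z}$.

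For part (ii) I would run the same argument one degree lower. For a graded $\r{A}$-module $\r{M}$, the structure map $\r{M}_i\tr_{X_i}\r{A}_{i,j}\to\r{M}_j$ restricted to $U^j$ is, by flat base change, the pushforward along $X_i\times U^j\to U^j$ of $\big(\pi_{X_i}^*\r{M}_i\tr\r{A}_{i,j}\big)|_{X_i\times U^j}$; since $\r{A}_{i,j}|_{X_i\times U^j}$ is supported on $U^i\times U^j$ by the second hypothesis, this equals $\r{M}_i|_{U^i}\tr_{U^i}\r{A}_{i,j}|_{U^i\times U^j}=(\r{M}|_\r{U})_i\tr(\r{A}|_\r{U})_{i,j}$. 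So $\mu_{i,j}|_{U^j}$ gives $\r{M}|_\r{U}$ the structure of a graded $\r{A}|_\r{U}$-module, compatibly with multiplication and units by functoriality; and a morphism of graded $\r{A}$-modules restricts degreewise to a collection of morphisms whose defining square stays commutative, again because restriction is a functor. Hence $|_\r{U}\colon\Gr(\r{A})\to\Gr(\r{A}|_\r{U})$ is well defined. The only step requiring genuine care is the base-change/support identification of the first paragraph, especially verifying naturality of the isomorphism $(\r{A}_{i,j}\tr\r{A}_{j,k})|_{U^i\times U^k}\cong(\r{A}|_\r{U})_{i,j}\tr(\r{A}|_\r{U})_{j,k}$ and the reduction from coherent to quasi-coherent bimodules; everything else is a formal consequence of the functoriality of restriction.
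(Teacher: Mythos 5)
Your proposal is correct and follows essentially the same route as the paper: both arguments identify $(\r{A}_{i,j}\tr\r{A}_{j,k})|_{U^i\times U^k}$ with $(\r{A}|_\r{U})_{i,j}\tr(\r{A}|_\r{U})_{j,k}$ by commuting the pushforward along the projection with restriction (flat base change along the open immersion), commuting pullback and tensor with restriction, and then invoking the support hypothesis to replace the middle factor $X_j$ by $U^j$, after which (i) and (ii) are formal. The only cosmetic difference is that you make the coherent-to-quasi-coherent colimit reduction and the degree-lower variant for modules explicit, which the paper leaves as ``completely similar.''
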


\begin{proof}
\begin{enumerate}[i)]
\item We must show that for all $l,m,n \in \d{Z}$ there are multiplication morphisms $\left. \r{A}_{l,m} \right|_\r{U} \tr \left.\r{A}_{m,n}\right|_\r{U} \rightarrow \left.\r{A}_{l,n}\right|_\r{U}$ induced by the morphisms ${\r{A}_{l,m} \tr \r{A}_{m,n} \rightarrow \r{A}_{l,n}}$.\\
It is evident that the latter induces a morphism of $U^l-U^n$-bimodules as follows:
\[ \left. \left( \r{A}_{l,m} \tr \r{A}_{m,n} \right) \right|_\r{U} \rightarrow \left. \r{A}_{l,n} \right|_\r{U} \]
Now the claim follows from the following chain of isomorphisms:
\begin{eqnarray*}
\left. \left( \r{A}_{l,m} \tr \r{A}_{m,n} \right) \right|_\r{U} & = & \left. \left( \pi_{X_l,X_n *} \left( \pi_{X_l,X_m}^*(\r{A}_{l,m}) \tr_{X_l \times X_m \times X_n} \pi_{X_m,X_n}^*(\r{A}_{m,n}) \right) \right) \right|_{U^l \times U^n} \\
& = & \pi_{U^l,U^n *} \left( \left. \left( \pi_{X_l,X_m}^*(\r{A}_{l,m}) \tr_{X_l \times X_m \times X_n} \pi_{X_m,X_n}^*(\r{A}_{m,n}) \right) \right|_{U^l \times X_m \times U^n}\right) \\
& = & \pi_{U^l,U^n *} \left( \left. \pi_{X_l,X_m}^*(\r{A}_{l,m}) \right|_{U^l \times X_m \times U^n} \tr \left. \pi_{X_m,X_n}^*(\r{A}_{m,n}) \right|_{U^l \times X_m \times U^n}\right) \\
& = & \pi_{U^l,U^n *} \left( \pi_{U^l,X_m}^*(\left. \r{A}_{l,m}\right|_{U^l \times X_m})  \tr_{U^l \times X_m \times U^n} \pi_{X_m,U^n}^*(\left. \r{A}_{m,n}\right|_{X_m \times U^n}) \right) \\
& = & \pi_{U^l,U^n *} \left( \pi_{U^l,U^m}^*(\left. \r{A}_{l,m}\right|_{U^l \times U^m})  \tr_{U^l \times U^m \times U^n} \pi_{U^m,U^n}^*(\left. \r{A}_{m,n}\right|_{U^m \times U^n}) \right) \\
& = & \left. \r{A}_{l,m} \right|_\r{U} \tr \left. \r{A}_{m,n} \right|_\r{U}
\end{eqnarray*}
where $\pi_{U^l,X_m}$ and $\pi_{U^l,U^m}$ are the projections ${\pi_{U^l,X_m}: U^l \times X_m \times U^n \rightarrow U^l \times X_m}$ and ${\pi_{U^l,U^m}: U^l \times U^m \times U^n \rightarrow U^l \times U^m}$, with similar definitions for $\pi_{X_m,U^n}$ and $\pi_{U^m,U^n}$.\\
The first equality is the definition of tensor product of bimodules \[\bimod(X_l-X_m) \times \bimod(X_m-X_n) \rightarrow \bimod(X_l-X_n) \]
The second equality follows from the commutation of pushforward and restriction of sheaves.
The third equality follows from the commutation of tensor product of sheaves and restriction.
The fourth equality follows from the commutation of pullback and restriction of sheaves.
The fifth equality follows the assumption of the lemma.
The last equality is the definition of multiplication 
\[ \bimod(U^l-U^m) \times \bimod(U^m-U^n) \rightarrow \bimod(U^l-U^n) \]
\item This essentially reduces to showing $\left. \left( \r{M}_i \tr \r{A}_{i,j} \right) \right|_{U_j} = \left( \left. \r{M}\right|_\r{U} \right)_i \tr \left( \left. \r{A}\right|_\r{U} \right)_{i,j} $ which is completely similar to i).
\end{enumerate}
\end{proof}

Our main motivation to study restriction of sheaf $\Z$-algebra lies in the following result whose proof is straightforward:

\begin{lemma} \label{lem:opencover}
Let $\r{U}_\alpha$ be a finite set of sequences such that for each $i \in \Z$, $\bigcup_\alpha (U^i)_\alpha=X_i$. Assume that $\r{A}$ is a sheaf $\d{Z}$-algebra such that the conditions in Lemma \ref{lem:affinereduction} are satisfied for all $\r{U}_\alpha$, then
\[  \forall \alpha: \left. \r{M} \right|_{\r{U}_\alpha} \in \Gr(\left. \r{A} \right|_{\r{U}_\alpha}) \textrm{ is noetherian } \Rightarrow \r{M} \in \Gr(\r{A}) \textrm{ is noetherian } \]
\end{lemma}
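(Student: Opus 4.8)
The plan is to argue by contradiction, using that the restriction functors $|_{\r{U}_\alpha}$ are exact and that an isomorphism of quasi-coherent sheaves may be checked on an open cover.

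First I would suppose $\r{M}\in\Gr(\r{A})$ is not noetherian, so that there is a strictly increasing chain of graded submodules
\[
\r{N}^1\subsetneq\r{N}^2\subsetneq\r{N}^3\subsetneq\cdots
\]
in $\Gr(\r{A})$. Since all universal constructions in $\Gr(\r{A})$ are performed degreewise, a subobject $\r{N}\subseteq\r{M}$ is precisely a quasi-coherent subsheaf $\r{N}_i\subseteq\r{M}_i$ in each degree $i$, compatible with the multiplication maps. Restriction of quasi-coherent sheaves to an open subset is exact, hence the degreewise restriction functor $|_{\r{U}_\alpha}\colon\Gr(\r{A})\to\Gr(\left.\r{A}\right|_{\r{U}_\alpha})$ provided by Lemma \ref{lem:affinereduction}(ii) is exact; in particular it sends monomorphisms to monomorphisms, so for each $\alpha$ we obtain an increasing chain of graded submodules
\[
\left.\r{N}^1\right|_{\r{U}_\alpha}\subseteq\left.\r{N}^2\right|_{\r{U}_\alpha}\subseteq\left.\r{N}^3\right|_{\r{U}_\alpha}\subseteq\cdots
\]
of $\left.\r{M}\right|_{\r{U}_\alpha}$.

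Next I would invoke the hypothesis that each $\left.\r{M}\right|_{\r{U}_\alpha}$ is noetherian: the chain above stabilizes, say from some index $N_\alpha$ onward. As the set $\{\alpha\}$ is \emph{finite}, the index $N:=\max_\alpha N_\alpha$ works simultaneously for all $\alpha$, i.e.\ $\left.\r{N}^n\right|_{\r{U}_\alpha}=\left.\r{N}^{n+1}\right|_{\r{U}_\alpha}$ for every $n\ge N$ and every $\alpha$. Finally I would descend this equality: fix $n\ge N$; for each degree $i$ the inclusion $\r{N}^n_i\hookrightarrow\r{N}^{n+1}_i$ of quasi-coherent $\O_{X_i}$-modules restricts to an isomorphism on each $(U^i)_\alpha$, and since $\bigcup_\alpha(U^i)_\alpha=X_i$ and being an isomorphism is local, $\r{N}^n_i\to\r{N}^{n+1}_i$ is an isomorphism. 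As this holds in every degree, $\r{N}^n=\r{N}^{n+1}$, contradicting the strictness of the chain. Hence $\r{M}$ is noetherian.

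The argument is essentially formal, so there is no serious obstacle; the only two points needing the (standard) justifications above are the exactness of restriction of quasi-coherent sheaves to an open subset — which is what guarantees $|_{\r{U}_\alpha}$ takes submodules to submodules — and the fact that a monomorphism of sheaves that is an isomorphism on each member of an open cover is an isomorphism. The finiteness of $\{\r{U}_\alpha\}$ enters exactly once, in producing a single stabilization index $N$.
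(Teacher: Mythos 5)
Your proof is correct and follows essentially the same route as the paper: restrict an ascending chain along each $\r{U}_\alpha$, use noetherianity of $\left.\r{M}\right|_{\r{U}_\alpha}$ together with finiteness of the cover to obtain a common stabilization index, and then conclude degreewise that the inclusions $\r{N}^n\subseteq\r{N}^{n+1}$ are equalities since the $(U^i)_\alpha$ cover $X_i$. The only difference is cosmetic (you phrase it as a contradiction and spell out the exactness of restriction and the local nature of isomorphisms, which the paper leaves implicit).
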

\begin{proof}
Suppose we are given an ascending chain of sub-objects of $\r{M}^n \subset \r{M}$ in $\Gr(\r{A})$ such that the restriction of this chain to all of the sequence $\r{U}_\alpha$ stabilizes. As there are only finitely many $\r{U}_\alpha$, there is an $N \in \d{N}$ such that for  all $n \geq N$ and  for all $\alpha$: $\left. (\r{M}^n)\right|_{\r{U}_\alpha} = \left. (\r{M}^{n+1})\right|_{\r{U}_\alpha}$. The graded modules $\r{M}^n$ and $\r{M}^{n+1}$ must coincide.
\end{proof}
Following the convention \ref{conv:standardform}, we now consider the case where $\r{A}=\d{S}(\r{E})$ for $\r{E}=_{f}{\r{L}}_{\Id}$ where $f:Y\mor X$ is finite of degree 4. Then for an affine open subset $U\subset X$ we define the associated sequence $\r{U}$ by $U^i \subset X_i$ as follows:
\[ U^i = \begin{cases} U & \textrm{ if $i$ is even}\\ f^{-1}(U) & \textrm{ if $i$ is odd} \end{cases} \]
Note that $U^i$ is indeed an affine open subset because $f$ is a finite morphism. The results of Lemma \ref{lem:affinereduction} in this context can be stated as follows:
\begin{corollary} \label{cor:openrestriction}
For any $U \subset X$,\begin{enumerate}[i)]
\item $\left. \d{S}(\r{E}) \right|_U$ has an algebra structure induced by $\d{S}(\r{E})$
\item There is a functor $|_U: \Gr(\d{S}(\r{E}) ) \rightarrow \Gr(\left. \d{S}(\r{E}) \right|_U )$ 
\item There is an isomorphism of symmetric sheaf $\d{Z}$-algebras: $\left. \d{S}(\r{E}) \right|_U \cong \d{S}(\left. \r{E} \right|_U) $
\end{enumerate}
\end{corollary}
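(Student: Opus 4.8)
The plan is to obtain (i) and (ii) from Lemma~\ref{lem:affinereduction}, applied to $\r{A}=\d{S}(\r{E})$ and the sequence $\r{U}=(U^i)_i$ with $U^i=U$ for $i$ even and $U^i=f^{-1}(U)$ for $i$ odd, and to obtain (iii) from the exactness of restriction along an open immersion together with its compatibility with the sheaf operations used to build $\d{S}(\r{E})$. Since $f$ is finite each $U^i$ is indeed affine open, so for (i)--(ii) the only thing to check is the support hypothesis of Lemma~\ref{lem:affinereduction}, namely that $\Supp\bigl(\d{S}(\r{E})_{m,n}|_{U^m\times X_n}\bigr)$ and $\Supp\bigl(\d{S}(\r{E})_{m,n}|_{X_m\times U^n}\bigr)$ both lie in $U^m\times U^n$.

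To verify this, I would first record, using Convention~\ref{conv:standardform} and iterating the explicit formula of Remark~\ref{rem:dual}, that every dual $\r{E}^{*i}$ is of the form ${}_f(\r{N}_i)_{\Id}$ for a line bundle $\r{N}_i$ on $Y$ when $i$ is even, and of the form ${}_{\Id}(\r{N}_i)_f$ when $i$ is odd; in particular $\r{E}^{*i}$ is supported on the graph $\Gamma_f=\{(x,y)\in X\times Y : x=f(y)\}$ when $i$ is even and on its transpose $\Gamma_f^{t}\subseteq Y\times X$ when $i$ is odd. For $m\le n$ the bimodule $\d{S}(\r{E})_{m,n}$ is a quotient of $\d{T}(\r{E})_{m,n}$, so $\Supp\d{S}(\r{E})_{m,n}$ is contained in $\Supp\bigl(\r{E}^{*m}\tr\cdots\tr\r{E}^{*n-1}\bigr)$ (read as $\Delta_{X_m}$ when $n=m$), and this in turn lies in the composite of the correspondences $\Gamma_f$ and $\Gamma_f^{t}$ along the chain $X_m,\dots,X_n$. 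A direct computation identifies that composite with $\Delta_X$ when $m\equiv n\equiv0\pmod{2}$, with $Y\times_X Y=\{(y,y')\in Y\times Y : f(y)=f(y')\}$ when $m\equiv n\equiv1\pmod{2}$, and with $\Gamma_f$ (resp.\ $\Gamma_f^{t}$) when $m$ is even and $n$ odd (resp.\ $m$ odd and $n$ even). Intersecting each of these closed sets with $U^m\times X_n$ or with $X_m\times U^n$, and using $f(f^{-1}(U))\subseteq U$ throughout, lands the outcome inside $U^m\times U^n$ in every case, which is exactly the hypothesis of Lemma~\ref{lem:affinereduction}. This support computation is the heart of the argument and the only genuine obstacle: it is where one uses that $\r{E}$ has rank $(4,1)$ and so, by Lemma~\ref{lem:inducedbyline}, is induced by a finite map $f$; for a general locally free bimodule the supports would spread out and $\d{S}(\r{E})|_\r{U}$ would fail to be a sheaf $\d{Z}$-algebra.

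For (iii), I would note that $\r{E}|_\r{U}\cong {}_{f'}(\r{L}|_{f^{-1}(U)})_{\Id}$, where $f'\colon f^{-1}(U)\to U$ is the base change of $f$ and is again finite flat of degree $4$, so that $\d{S}(\r{E}|_\r{U})$ is defined. Restriction along the open immersions $U^i\hookrightarrow X_i$ is exact and commutes with $\pi_*$, $\pi^*$, $\tr$, $\ShHom$ and with the twisted inverse image $v^!$ that appears in Remark~\ref{rem:dual}; hence it commutes with the passage $\r{E}\rightsquigarrow\r{E}^{*i}$, with the formation of $\d{T}(\r{E})$ (this is precisely the chain of isomorphisms in the proof of Lemma~\ref{lem:affinereduction}(i)), and with the unit morphisms $i_n\colon\O_{X_n}\to\r{E}^{*n}\tr\r{E}^{*n+1}$ whose images generate the defining relations. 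Exactness then identifies $\bigl(\d{T}(\r{E})/(\r{Q}_n)_n\bigr)|_\r{U}$ with $\d{T}(\r{E}|_\r{U})/(\r{Q}_n|_\r{U})_n=\d{S}(\r{E}|_\r{U})$, and a routine check that this identification respects the multiplication maps yields the asserted isomorphism of symmetric sheaf $\d{Z}$-algebras. Here the only points requiring attention are the bookkeeping of which open set carries which bimodule, and the use of the explicit formula of Remark~\ref{rem:dual}, rather than an abstract adjunction argument, to see that $(-)^{*}$ commutes with restriction.
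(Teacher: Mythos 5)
Your proposal is correct and follows essentially the same route as the paper: parts (i)--(ii) come from Lemma~\ref{lem:affinereduction} applied to the cover $(U, f^{-1}(U), U, \dots)$ — where you spell out the support verification that the paper dismisses as trivially satisfied for $\r{E}={}_f(\r{L})_{\Id}$ — and part (iii) uses the explicit dual formula of Remark~\ref{rem:dual} together with the commutation of restriction with $f_*$, $\ShHom$, $f^!$ and the unit morphisms, exactly as in the paper's construction of the isomorphism $\theta_{\r{E}}$.
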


\begin{proof}
\emph{i+ii)} As $\r{E}$ is given as ${}_f (\r{L})_{\Id}$ following convention \ref{conv:standardform}, the conditions in lemma \ref{lem:affinereduction} are trivially satisfied for $\r{A} = \d{S}(\r{E})$. For iii) We first show that for all $m \in \d{N}$ there is a natural isomorphism
	\begin{equation} \label{eq:dualrestriction} \theta_\r{E}:\left.(\r{E}^{*m})\right|_U = \left( \left.\r{E}\right|_U\right)^{*m} \end{equation}
Using remark \ref{rem:dual} we see by induction that for each $m \geq 0$ there is a line bundle $\r{L}_m$ such that
\begin{eqnarray*} \notag \r{E}^{*2m} &=& {}_f (\r{L}_m)_{\Id} \\
\label{eq:forappendixc} \r{E}^{*2m+1} &=&  {}_{\Id} (\r{L}_m)_f
\end{eqnarray*} where $\r{L}_0=\r{L}$. The explicit form of the dual in remark \ref{rem:dual} shows that it suffices to exhibit isomorphisms
\[ \left. {}_f \left( \ShHom_Y(\r{L}_m, f^! \r{O}_X)\right)_{\Id} \right|_U \cong {}_{f|_U} \left( \ShHom_{f^{-1}(U)}(\left.\left(\r{L}_m\right)\right|_{f^{-1}(U)}, (f|_U)^! \r{O}_U)\right)_{Id_U} \]
However as restriction to open affine subsets commutes with $f_*$, $\ShHom_Y$ and $f^!$, this isomorphism is immediate. The case $m<0$ follows easily by (\ref{eq:yoneda}).\\%	This implies that (\ref{eq:dualrestriction}) is valid for $m<0$ as well: indeed, it suffices to show this for $m=-1$. In this case. there is at least a morphism $\nu_{\r{E}}:^{*}({\r{E}}\arrowvert_U) \mor \left(^{*}{\r{E}}\right)\arrowvert_U$. To show that $\theta_\r{E}$ is an isomorphism, we may apply $(-)^*$, since this is a fully faithful functor. This yields a commutative diagram
%\begin{displaymath}
%\xymatrix{
%\big({^{*}{(\r{E}\arrowvert_U})}\big)^* \ar[r]^\simeq \ar[d]_{\nu_\r{E}^*} & \r{E}\arrowvert_U\ar[dd]^{Id}\\
%\big( ({^{*}{\r{E}}})^*\arrowvert_U   \big)\ar[d]_{\theta_\r{E}} \\
%\big( ({^{*}{\r{E}}})^*\big) \arrowvert_U \ar[r]_\simeq & \r{E}\arrowvert_U}	
%\end{displaymath}
%proving the claim.\\[\medskipamount]
Finally, the naturality of $\theta_\r{E}$ immediately implies that the restricted unit morphisms $i_m\arrowvert_U$ coincides with 
\begin{equation*}  \label{eq:localunitmorphism} {}_{\Id}\left(\r{O}_{U^m} \right)_{\Id} \mor  \left( \left.\r{E}\right|_U \right)^{*m} \tr \left( \left.\r{E}\right|_U \right)^{*m+1} 
\end{equation*}
	Implying in particular that $\theta_\r{E}$ induces an isomorphism $$i_m( {}_{\Id}\left(\r{O}_{U^m} \right)_{\Id}) \cong i_m( {}_{\Id}\left(\r{O}_{X^m} \right)_{\Id})\arrowvert_{U^m}$$ and we can extend $\theta_\r{E}$ to an isomorphism 
\[\left. \theta:\d{S}(\r{E}) \right|_U \cong \d{S}(\left. \r{E} \right|_U) \qedhere \]
\end{proof}

\subsection{Covering by Relative Frobenius Pairs}
\label{subsec:covering}
Lemma \ref{lem:opencover} shows that proving that an $\d{S}(\r{E})$-module is noetherian can be done over an affine open cover. In this subsection we construct an open cover $X = \bigcup_l U_l$ for which the categories $\Gr(\left. \d{S}(\r{E}) \right|_{U_l})$ can explicitly described (see Lemma \ref{thm:coverconditions}). Over this cover, the rings of sections satisfy a relative version of the Frobenius property as introduced in the paper \cite{Genpreprojective}. We begin by recalling the basic definition and results of \cite{Genpreprojective} for the benefit of the reader.
\begin{definition} \label{def:generalizedpreprojective}
We say that a morphism of rings $R\mor S$ is \emph{relative Frobenius} of rank $n$ if:
\begin{itemize}
\item $S$ is a free $R$-module of rank $n$.
\item $\Hom_R(S,R)$ is isomorphic to $S$ as $S$-module.
\end{itemize}
\end{definition}
\begin{remark}\label{rem:relfrob}
It is clear  that if $R$ is a field, the condition of $S/R$ being relative Frobenius coincides with $S$ being a finite dimensional Frobenius algebra in the classical sense.
\end{remark}

We shall need the following notation: for a relatively Frobenius pair, let $M:={}_R S_S$. This $R-S$-bimodule can be considered a $R\ds S$ bimodule by letting only the $R$-component act on the left and only the $S$-component on the right. Similarly, we let $N:={}_S S_R$ and consider it an $R\ds S$-bimodule by letting only the component $S$ act on the left and only the component $R$ act on the right. We now define
$$T(R,S):=T_{R\ds S}(M\ds N)$$
Note that by construction, in degree $2$, we have $M\tr_{R\ds S} M=N \tr_{R\ds S} N=0$, hence
$$T(R,S)_2=\left( M_{R \ds S} N \right) \ds \left( N \tr_{R\ds S} M\right)=\left({}_R S\tr_S  S_R\right) \ds \left({}_S S\tr_R S_R\right)$$
The algebra we will be concerned in will be a quotient of $T(R,S)$ as follows: let $\lambda$ be a generator
of $\Hom_R(S,R)$ as an  $S$-module. The $R$-bilinear form $\bl{a}{b}:=\lambda(ab)$ is clearly nondegenerate and hence we can find dual $R$-bases $(e_i)_i$, $(f_j)_j$  satisfying
\[
\lambda(e_if_j)=\delta_{ij}
\]
\begin{definition} \label{def}
For a relative Frobenius pair $S/R$, the \emph{generalized preprojective algebra} $\Pi_R(S)$ is given by
\[
T(R,S)/(\text{rels})
\]
where the relations are in degree 2 given by
\[
1\otimes 1\in {}_R S\otimes_S S_R
\]
\[
\sum_i e_i\otimes f_i\in {}_S S\otimes_R S_S
\]
\end{definition}

\begin{remark}
If $S$ is the ring $R^{\ds n}$. Then $\Pi_R(S)$ is isomorphic to the preprojective algebra over $R$ associated to the quiver with one central vertex and $n$ outgoing arrows. (See \cite[Lemma 1.5]{Genpreprojective})
\end{remark}

We shall use the following result from \cite{Genpreprojective}:
\begin{theorem} \label{thm:pirsnoetherian}
Let $S/R$ be relative Frobenius of rank 4 and assume $R$ is noetherian, then $\Pi_R(S)$ is a noetherian algebra.
\end{theorem}

Throughout, we shall make use of the following lemma, well-known to experts:

\begin{lemma} \label{lem:Xlocallytrivial}
Let $f:Y\mor X$ be a finite morphism of smooth varieties.\\
Let $\r{L}$ be a line bundle on $Y$ and $p \in X$. Then there is an open subset $U \subset X$ containing $p$, such that $\left.\r{L}\right|_{f^{-1}(U)} \cong \r{O}_{f^{-1}(U)}$.
\end{lemma}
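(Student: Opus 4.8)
The plan is to reduce to the affine situation and then argue with Nakayama's lemma together with the fact that an invertible module over a semilocal ring is free. Since $f$ is finite it is affine, so I will first choose an affine open neighbourhood $\Spec R$ of $p$ in $X$ and replace $X$ by it and $Y$ by its (affine) preimage $\Spec S$; then $S$ is a finite $R$-module and $\r{L}$ corresponds to an invertible $S$-module $L$, which is in particular a finitely generated $R$-module. Write $\f{p}\subset R$ for the prime corresponding to $p$ and $\kappa(p)=R_\f{p}/\f{p}R_\f{p}$.

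The first step is to analyze the fibre over $p$. The ring $\bar S:=S\tr_R\kappa(p)$ is a finite-dimensional $\kappa(p)$-algebra, hence Artinian, hence a finite product of local rings; consequently the invertible $\bar S$-module $\bar L:=L\tr_R\kappa(p)\cong L\tr_S\bar S$ is free of rank one. I will then choose a generator of $\bar L$ over $\bar S$ and lift it along the surjection $L\twoheadrightarrow\bar L$ to obtain an element $\ell\in L$ whose image generates $\bar L$.

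Next I consider the $S$-linear map $\varphi\colon S\mor L$, $s\mapsto s\ell$, viewed as a morphism of finitely generated $R$-modules, with cokernel $C$. By construction $C\tr_R\kappa(p)=0$, so Nakayama's lemma gives $C_\f{p}=0$, and since $\Supp(C)$ is closed there is $g\in R\setminus\f{p}$ with $C_g=0$; that is, $\varphi_g\colon S_g\mor L_g$ is surjective. Finally I upgrade this to an isomorphism: $L_g$ is invertible over $S_g$, so at every prime $\f{q}$ of $S_g$ the localization of $\varphi_g$ is a surjection between free $(S_g)_\f{q}$-modules of rank one, hence an isomorphism; therefore $\ker(\varphi_g)$ vanishes locally and so $S_g\cong L_g$. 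Taking $U=\Spec R_g\subset X$ yields an open neighbourhood of $p$ with $f^{-1}(U)=\Spec S_g$ and $\left.\r{L}\right|_{f^{-1}(U)}\cong\O_{f^{-1}(U)}$.

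The only genuinely non-formal points are the passage from "invertible" to "free" over the Artinian fibre $\bar S$ and the last upgrade from "surjective" to "isomorphism"; neither presents a real obstacle beyond bookkeeping, and in particular smoothness of $X$ and $Y$ is never used — only finiteness of $f$ (plus Noetherianity, which is automatic in this setting).
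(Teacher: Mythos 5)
Your proof is correct and follows essentially the same route as the paper's: reduce to the affine case $X=\Spec R$, $Y=\Spec S$ with $S$ finite over $R$, use triviality of invertible modules over the (semi)local ring sitting over $p$ to produce a candidate generator $\ell\in L$, and spread the resulting map $S\to L$ out over a distinguished open $\Spec R_g$. The only cosmetic difference is that the paper trivializes $L$ over the semilocal ring $S\otimes_R R_{\f{p}}$ and then kills both the kernel and the cokernel of $\cdot\,\ell$ by inverting a single element, whereas you work with the fibre $S\otimes_R\kappa(p)$, use Nakayama for the cokernel only, and dispose of the kernel by noting that a surjection between invertible $S_g$-modules is automatically an isomorphism --- both versions are fine (and, as you observe, neither uses smoothness).
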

\begin{proof}

Since affine open subsets form a base for the topology on $X$ and $f$ is affine (as it is finite), we can reduce to the case where ${X = \Spec(R)}$, ${Y = \Spec(S)}$ are affine varieties over $\k$ and $S$ is finitely generated over $R$ and $\r{L} = \tilde{L}$ for some invertible $S$-module $L$. Let $\f{p}$ be the prime ideal in $\Spec(R)$ corresponding to $f(p) \in X$, then $S_\f{p} := S \tr_R R_{\f{p}}$ is a semi-local ring, hence every finitely generated projective $S_\f{p}$-module of constant rank is free and in particular the Picard group of $S_\f{p}$ is trivial. Consequently, there exists an $l \in L$ such that
\[ S_{\f{p}} \stackrel{ \cdot l }{\mor} L_{\f{p}} \]
is an isomorphism.\\
Now consider the morphism $S \stackrel{ \cdot l }{\mor} L$ with kernel $K$ and cokernel $C$. Then there is an exact sequence
\begin{equation} \label{eq:lem:locallytrivial} 0 \mor K \mor S \stackrel{ \cdot l }{\mor} L \mor C \mor 0 \end{equation}
$K$ is a finitely generated  $R$-submodule of $S$ by the noetherianity of $R$. $L$ is finitely generated over $R$, being an invertible $S$-module. It follows that $C$ is finitely generated over $R$ as a quotient of $L$. Now let $\alpha_1, \ldots, \alpha_n$ be a set of generators for $\k$, then as $K \tr R_\f{p} = 0$ there exist elements ${x_1, \ldots, x_n \in R \backslash \f{p}}$ such that ${\alpha_1 x_1 = \ldots = \alpha_n x_n = 0}$. Set ${x:= x_1 \cdot \ldots \cdot x_n \in R \backslash \f{p}}$, then $\alpha \cdot x = 0$ for all $\alpha \in K$. Similarly there is a $x' \in R \backslash \f{p}$ such that $\beta \cdot x' = 0$ for all $\beta \in C$. Now define $z = x \cdot x'$, then ${K \tr R_z = C \tr R_z =0}$ implying that $\cdot l$ defines an isomorphism 
\[ S \tr R_z \stackrel{\cong}{\mor} L \tr R_z\]
$U = \Spec(R_z)$ then is the desired open subset.
\end{proof}

We can now prove the main lemma of this subsection, which yields a cover over which many useful geometric properties are satisfied:

\begin{lemma} \label{thm:coverconditions}
Write $\r{E} = {}_f ( \r{L} ) _{\Id}$ as in Lemma \ref{lem:inducedbyline} . There is a finite cover $X = \bigcup_l U_l$ by affine open subsets $U_l = \Spec(R_l)$ such that:
\begin{enumerate}[i)]
\item $\left.\r{L}\right|_{f^{-1}(U_l)}$ is a trivial $\r{O}_{f^{-1}(U_l)}$-module
\item $\left.\omega_Y\right|_{f^{-1}(U_l)}$ is a trivial $\r{O}_{f^{-1}(U_l)}$-module
\item $\left.\omega_X\right|_{U_l}$ is a trivial $\r{O}_{U_l}$-module
\item $f^{-1}(U_l) = \Spec(S_l)$ where $S_l/R_l$ is relative Frobenius of rank 4.
\end{enumerate} 
\end{lemma}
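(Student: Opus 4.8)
The plan is to construct the cover one property at a time, intersecting finitely many covers at the end. First I would produce a cover witnessing (iv), the relative Frobenius condition: since $f:Y\mor X$ is finite, surjective and both varieties are smooth, $f$ is flat (a finite morphism between regular schemes with equidimensional fibers), hence $f_*\r{O}_Y$ is a locally free $\r{O}_X$-module of rank $4$; choosing an affine open cover of $X$ by $\Spec(R_l)$ that simultaneously trivializes $f_*\r{O}_Y$ (so $S_l$ is free of rank $4$ over $R_l$) gives the first bullet of Definition \ref{def:generalizedpreprojective}. For the second bullet, $\Hom_{R_l}(S_l,R_l) = \Hom_{\r{O}_{U_l}}(f_*\r{O}_Y,\r{O}_{U_l})(U_l)$ and, by Grothendieck--Serre duality for the finite flat morphism $f$, this coherent $\r{O}_Y$-module is $f_*(\omega_{Y/X})$ where $\omega_{Y/X} = \omega_Y\tr f^*\omega_X^{-1}$ is the relative dualizing sheaf, which is a line bundle on $Y$ since both $\omega_X$ and $\omega_Y$ are. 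So up to replacing the $R_l$ by a further refinement we may assume $\omega_{Y/X}$ restricts to a trivial bundle on each $f^{-1}(U_l)$ (using Lemma \ref{lem:Xlocallytrivial} applied to the line bundle $\omega_{Y/X}$), and then $\Hom_{R_l}(S_l,R_l)\cong S_l$ as an $S_l$-module, giving (iv).

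Next I would handle (i), (ii), (iii). For (i) and (ii), apply Lemma \ref{lem:Xlocallytrivial} directly: it produces, for each point $p\in X$, an affine open $U\ni p$ such that $\left.\r{L}\right|_{f^{-1}(U)}\cong \r{O}_{f^{-1}(U)}$, and separately such that $\left.\omega_Y\right|_{f^{-1}(U)}\cong \r{O}_{f^{-1}(U)}$. For (iii), since $X$ is smooth the canonical bundle $\omega_X$ is a line bundle, so it is trivial on a basis of affine opens; after possibly shrinking to a distinguished open $\Spec((R_l)_g)$ we may assume $\left.\omega_X\right|_{U_l}$ is trivial as well. Intersecting the four covers obtained this way (and noting intersections of affine opens in a separated variety are affine, and distinguished opens of affine varieties are affine), we extract by quasi-compactness of $X$ a \emph{finite} subcover $X=\bigcup_l U_l$, $U_l=\Spec(R_l)$, that simultaneously satisfies (i)--(iv). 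One should check that shrinking $U_l$ to a smaller distinguished affine open preserves all four properties: this is clear for line-bundle triviality, and for (iv) it follows because $f^{-1}(\Spec((R_l)_g)) = \Spec((S_l)_g)$ and both freeness of $S_l$ over $R_l$ and the identification $\Hom_{R_l}(S_l,R_l)\cong S_l$ localize.

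The main obstacle is the duality identification $\Hom_{R_l}(S_l,R_l)\cong f_*\omega_{Y/X}$ needed for (iv), i.e.\ the claim that the relative dualizing sheaf of a finite flat morphism between smooth varieties computes the $\r{O}_X$-linear dual of $f_*\r{O}_Y$ and is itself a line bundle. This is standard Grothendieck duality, but one has to be slightly careful about which normalization of $\omega$ is in play and to note that $f^!\r{O}_X = \omega_{Y/X}[0]$ is concentrated in degree zero precisely because $f$ is finite and flat; granting this, $\r{H}om_{\r{O}_X}(f_*\r{O}_Y,\r{O}_X)\cong f_*f^!\r{O}_X = f_*\omega_{Y/X}$, and since $\omega_Y$ and $f^*\omega_X$ are both line bundles so is $\omega_{Y/X}$. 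Everything else is a routine patching argument over a quasi-compact base. Alternatively, if one wishes to avoid invoking the full machinery, one can observe that $\omega_{Y/X}$ is defined on each trivializing chart directly as $\ShHom$ of free modules and glue; the paper's later lemmas (e.g.\ the Lemma 5.3.8 referenced in the proof of Lemma \ref{lem:inducedbyline}) presumably record the local comparison $f_*\r{L}\cong f_*\r{O}_Y$ for a line bundle $\r{L}$, which is exactly what makes the refinement in the previous paragraph legitimate.
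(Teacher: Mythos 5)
Your proof is correct and follows essentially the same route as the paper's: cover for each property separately using Lemma \ref{lem:Xlocallytrivial} and the triviality of line bundles on small affines, then intersect and refine using that all four properties pass to distinguished affine opens, with duality for the finite flat $f$ supplying the identification of $\ShHom_X(f_*\r{O}_Y,-)$ with the pushforward of a line bundle. The only cosmetic difference is that you trivialize the relative dualizing sheaf $f^!\r{O}_X\cong\omega_Y\tr f^*\omega_X^{-1}$ directly, whereas the paper uses $\ShHom_X(f_*\r{O}_Y,\omega_X)\cong f_*\omega_Y$ and deduces (iv) from the trivializations of $\omega_X$ and $\omega_Y$ already arranged in (ii) and (iii).
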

\begin{proof}
We first note the following two facts:
\begin{itemize}
\item Let $\Spec(R)$ be an affine open subset on which $i)$, $ii)$, $iii)$ or $iv)$ holds. Then the same statement holds for any standard open $\Spec(R_f) \subset \Spec(R)$. This is obvious for $i)$, $ii)$ and $iii)$. For $iv)$ it follows from \cite[Lemma 3.1]{Genpreprojective}.
\item Let $\Spec(R)$ and $\Spec(R')$ be affine open subsets of $X$, then their intersection is covered by  open subsets which are simultaneously distinguished in each space, in other words subsets of the form $\Spec(R_f) = \Spec(R'_g)$ 
\end{itemize}
By these two facts it suffices to find affine open covers for i), ii), iii) and iv) separately. For i) and ii) such a cover exists by lemma \ref{lem:Xlocallytrivial} and the fact that $\omega_Y$ is a line bundle on the smooth variety $Y$. The existence of a cover satisfying iii) is immediate from the fact that $\omega_X$ is a line bundle. We have reduced the claim to exhibiting a cover satisfying iv).\\
Now by Lemma \ref{lem:Xlocallytrivial}: $f^! \omega_X$ is completely determined by $f_* \left(f^! \omega_X \right)$ and we have an isomorphism of $f_* \r{O}_Y$-modules 
\begin{equation} \label{eq:omegashriek} f_* \left( f^! \omega_X \right) := \ShHom_X(f_* \r{O}_Y, \omega_X) \cong f_* \omega_Y \end{equation}
As moreover $f$ is also surjective and flat, there is a cover ${X = \bigcup_l U_l}$ with ${U_l = \Spec(R_l)}$ and ${f^{-1}(U_l) = \Spec(S_l)}$ where $S_l$ is a free $R_l$-module of rank 4 for each $l$. By the previous arguments we can assume that ii) and iii) are also satisfied on this cover. In this case, replacing $f$ by its restriction $f^{-1}(U_l) \mor U_l$, (\ref{eq:omegashriek}) reads
\[ f_* \left( f^! \r{O}_{U_l} \right) := \ShHom_{U_l}(f \r{O}_{f^{-1}(U_l)}, \r{O}_{U_l}) \cong f_* \r{O}_{f^{-1}(U_l)} \]
and taking sections yields the required isomorphism of $S_l$-modules:
\[ \Hom_{R_l}(S_l,R_l) \cong S_l  \qedhere \]
\end{proof}

\subsection{From Periodic $\d{Z}$-Algebras to Graded Algebras}
\label{subsec:Ztogradedalgebra}
The previous section showed how we can reduce the statement of Theorem \ref{thm:noeth} to the case where $X$ and $Y$ are affine, and satisfy some convenient geometric properties (see Lemma \ref{thm:coverconditions}). In this section, we provide a second technical tool which allows us to reduce to the case where the $\Z$-algebra comes from a graded algebra. The $\widehat{(-)}$-construction (see \ref{conv:hat}) assigns a (1-periodic) $\Z$-algebra to a graded algebra. In this section, we consider the converse problem. More precisely, we show that an $n$-periodic $\d{Z}$-algebras $A$ gives rise to a graded algebra $\overline{A}$ such that $\Gr(A)$ is a direct summand of the category $\Gr(\overline{A})$. We start by describing the following slight generalization of $\Z$-algebras in order to be able to easily apply the result in our required setting:
\begin{definition}
\label{def:bimoduleZalgebra}
Let $(R_i)_{i \in \Z}$ be a sequence of commutative rings. A \emph{bimodule $\Z$-algebra} over $(R_i)_{i \in \Z}$ is a collection of $R_i-R_j$-bimodules $A_{i,j}$ together with multiplication maps
$$A_{i,j}\tr_{R_j} A_{j,l} \mor A_{i,l}$$ and $R_i$-linear unit maps $R_i \mor A_{i,i}$ satisfying the usual $\d{Z}$-algebra axioms.
If $\forall i: \ R_i=R$, then $A$ is called a bimodule $\Z$-algebra over $R$.
\end{definition}

\begin{definition} \label{def:dperiodic}
Let $A$ be a $\d{Z}$-algebra over $(R_i)_{i \in \d{Z}}$ and $d>0$ an integer.\\
Assume that for each $i$, we have $R_{i+d}=R_i$. We say $A$ is $d$-periodic if there is an isomorphism of $\d{Z}$-algebras $\varphi: A \stackrel{\sim }{\mor} A(d)$. I.e. there is a collection of $R_i-R_j$-bimodule isomorphisms $\{ \varphi_{ij}: A_{i,j} \stackrel{\sim }{\mor} A_{i+d,j+d} \}_{i,j}$ compatible with the multiplication and unit maps.
\end{definition}

Let $A$ be $d$-periodic and let $\displaystyle R:=\bigoplus _{i=0}^{d-1} R_i$. We construct a graded $R$-algebra $\overline{A}$ as follows: let $\overline{A}_n$ be a $d\times d$-matrix with entries:

\begin{equation}
\label{eq:overline}
 \left( \overline{A}_n \right) _{i,j} = \left\{ \begin{array}{cl} A_{i, i+n} & \textrm{ if $j-i \equiv n \ (\mod \ d)$ } \\
0 & \textrm{ else} \end{array} \right. 
\end{equation}
(Where we use the convention that the numbering of rows and columns of the matrix starts at 0 instead of 1.)\\
By way of example,
\[ \overline{A}_1 = \left( \begin{array}{ccccc} 0 & A_{0,1} & 0 & \ldots & 0 \\
0 & 0 & A_{1,2} & \ldots & 0 \\
\vdots & \vdots  & \vdots & \ddots & \vdots \\
0 & 0 & 0 & \ldots & A_{d-2,d-1} \\
A_{d-1,d} & 0 & 0 & \ldots & 0 \end{array} \right) \]
Each $\overline{A}_n$ is naturally a left (resp. right) $R$-module by letting  a $d$-tuple $(r_0, \ldots r_{d-1})$ act as a diagonal matrix $D$ with entries $D_{ii}:=r_i$ on the left (resp. right).\\
Moreover, there is a canonical multiplication map
$$\overline{A}_n \tr_R \overline{A}_m \mor \overline{A}_{n+m}$$ given by the ordinary matrix multiplication and applying the periodicity isomorphisms $\phi_{ij}$ whenever necessary.
The $(R_i)_{i \in \d{Z}}$-linearity of the $\d{Z}$-algebra multiplication implies that the above maps are indeed $R$-bilinear.
\begin{lemma}\label{lem:periodictograded}
Suppose $A$ is $d$-periodic, then the above maps define a graded (unital) $R$-algebra structure on the $R$-module $\overline{A}:=\ds_{i \in \d{Z}} \overline{A}_i$
\end{lemma}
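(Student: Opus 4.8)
The plan is to verify directly that $\overline{A}:=\bigoplus_{n\in\d{Z}}\overline{A}_n$, equipped with the matrix multiplication $\overline{A}_n\tr_R\overline{A}_m\mor\overline{A}_{n+m}$ described above, satisfies the axioms of a graded unital $R$-algebra: $R$-bilinearity of the multiplication maps (this is asserted in the paragraph preceding the lemma and follows from $(R_i)_i$-linearity of the $\d{Z}$-algebra multiplication, so I would only recall it), associativity, and existence of a two-sided unit. Since all of $\overline{A}$ is built entrywise out of the bimodules $A_{i,j}$, the strategy is to reduce every identity to the corresponding identity for the $\d{Z}$-algebra $A$, carefully tracking the periodicity isomorphisms $\varphi_{ij}$ that get inserted whenever a column index runs outside the range $\{0,\dots,d-1\}$.

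First I would fix notation for the multiplication: given $a\in(\overline{A}_n)_{i,j}=A_{i,i+n}$ (nonzero only when $j\equiv i+n\pmod d$) and $b\in(\overline{A}_m)_{j,k}=A_{j,j+m}$, the $(i,k)$-entry of the product is obtained by first transporting $b$ via the appropriate power $\varphi^{(t)}$ of the periodicity isomorphism so that it becomes an element of $A_{i+n,i+n+m}$ (where $t$ is determined by $j\equiv i+n\pmod d$, i.e. $j=i+n-td$), then multiplying in $A$ to land in $A_{i,i+n+m}$, which is the $(i,k)$-entry of $\overline{A}_{n+m}$ since $k\equiv j+m\equiv i+n+m\pmod d$. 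I would package this as a clean formula, say $(a\cdot b)_{ik}=\mu^A\big(a_{ij}\tr \varphi^{(t_{ij})}(b_{jk})\big)$ summed over $j$, and note that for each pair $(i,k)$ at most one $j$ in $\{0,\dots,d-1\}$ contributes.

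For associativity, take $a\in\overline{A}_n$, $b\in\overline{A}_m$, $c\in\overline{A}_p$ and compute $((a\cdot b)\cdot c)_{il}$ and $(a\cdot(b\cdot c))_{il}$ entrywise. Both expand, after expanding the inner products, to a single term of the form $\mu^A\big(\mu^A(a_{ij}\tr\varphi^{(\ast)}(b_{jk}))\tr\varphi^{(\ast)}(c_{kl})\big)$ versus $\mu^A\big(a_{ij}\tr\varphi^{(\ast)}(\mu^A(b_{jk}\tr\varphi^{(\ast)}(c_{kl})))\big)$; associativity of $\mu^A$ (the $\d{Z}$-algebra axiom) together with the fact that each $\varphi_{ij}$ is a $\d{Z}$-algebra morphism, hence commutes with $\mu^A$ — so that $\varphi^{(\ast)}(\mu^A(x\tr y))=\mu^A(\varphi^{(\ast)}(x)\tr\varphi^{(\ast)}(y))$ — and the compatibility/cocycle relation among the powers of $\varphi$ (composing shifts by $d$ adds exponents) forces the two expressions to agree. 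For the unit, the element $1_{\overline A}\in\overline{A}_0$ is the diagonal matrix with $(i,i)$-entry $u_i(1)\in A_{i,i}$, and the unit axioms reduce to the $\d{Z}$-algebra unit axioms $u_i(1)\cdot a=a$, $a\cdot u_j(1)=a$ for $a\in A_{i,j}$, again using that $\varphi_{ij}$ preserves units so no stray isomorphism survives.

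The main obstacle, such as it is, is purely bookkeeping: keeping the indices of the periodicity isomorphisms straight when column indices wrap around modulo $d$, and checking that the exponents of $\varphi$ compose correctly (a $1$-cocycle-type condition) so that reassociating the triple product does not introduce a mismatch. Once the formula for multiplication is written so that each $\overline{A}$-entry is an unambiguous composite of an $A$-multiplication with a fixed power of $\varphi$, the verification becomes a mechanical transcription of the $\d{Z}$-algebra axioms, so I would carry it out at the level of a representative entry $(i,l)$ and remark that the general case is identical. I therefore expect the proof to be short, with the only genuine content being the observation that $\varphi$ being a morphism of $\d{Z}$-algebras is exactly what is needed to push it past $\mu^A$.
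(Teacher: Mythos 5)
Your proof is correct and follows essentially the same route as the paper, which simply asserts that the compatibility of the periodicity isomorphisms $\varphi_{ij}$ with the $\d{Z}$-algebra multiplication and units gives associativity, and exhibits the unit as the diagonal matrix of the $e_i \in A_{i,i}$; you merely carry out explicitly the entrywise bookkeeping (tracking powers of $\varphi$) that the paper leaves to the reader.
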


\begin{proof}
The reader checks that the compatibility of the periodicity isomorphisms with the $\d{Z}$-algebra multiplication maps implies that the multiplication is associative. The algebra has a unit given by 
\[ 1 = \left( \begin{array}{cccc} e_0 & 0 & \ldots & 0 \\
0 & e_1 & \ldots & 0 \\
\vdots & \vdots & \ddots & \vdots \\
0 & 0 & \ldots & e_{d-1} \end{array} \right) \in \overline{A}_0 \]
where $e_i$ is the unit in $A_{ii}$.
\end{proof}
There is a convenient description of the category of graded right $\overline{A}$-modules as follows: let ${M \in \Gr(\overline{A})}$. By definition we have a decomposition $M=\bigoplus_{i \in \d{Z}} M_i$. Moreover, each $R$-module $M_i$ in turn has a direct sum decomposition given by $M_i=\DS_{j=0}^{d-1} e_jM_i$. We define $M_i^j:=e_jM_i$. This decomposition allows us to give a description of the $\overline{A}$-module structure of $M$. For a matrix $\overline{a} \in \overline{A}_m$, $e_j.\overline{a}$ only has one nonzero entry at position $(j,j+m)$. It follows from the right $R$-structure on $A_m$ that $e_j\overline{a}=\overline{a}.e_{j+m}$ -where we consider $j+m$ mod $d$ following (\ref{eq:overline}). Thus the right action of $\overline{A}_m$ on $M_i^j$ becomes a map of the form $M_i^j\tr A_{j,j+m} \mor M_{i+m}^{j+m}$ or equivalently for $l=j+m$,
$$M^j_i \tr A_{j,l} \mor M_{i+l-j}^{l}$$

\begin{lemma} \label{lem:catC}
Suppose $A$ is $d$-periodic and let $\mathscr{C}$ be the category defined as follows:
\begin{itemize}
\item $\Ob(\mathscr{C})$ consists of  collection of $R$-modules $(M_i^j)_{i \in \d{Z}, 0 \le j \leq d-1}$, such that $M_i^j$ is an $R_j$-module together with multiplication maps 
$$\mu_{i,j,l}^M: M^j_i \tr A_{j,l} \mor M_{i+l-j}^{l}$$
for each $i,j,l$ (where $l$ and $i+l-j$ should be interpreted modulo $d$) satisfying the obvious compatibility condition for multiplication and unit.
\item a morphism $M\mor N$ in $\mathscr{C}$ is a collection $f_{i,j}$ of $R_j$- linear maps $M_i^j\mor N_i^j$ such that 
\[ f_{i+l-j,l}\circ \mu_{i,j,l}^M  = \mu_{i,j,l}^N \circ ( f_{i,j} \tr A_{j,l}) \]
\end{itemize}
Then there is a canonical isomorphism of categories $\mathscr{C}\cong \Gr(\overline{A})$
\end{lemma}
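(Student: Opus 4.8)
The plan is to construct the isomorphism of categories $\mathscr{C} \cong \Gr(\overline{A})$ explicitly in both directions and check that the two assignments are mutually inverse functors. First I would unwind what a graded right $\overline{A}$-module is. Given $M \in \Gr(\overline{A})$, the decomposition $M = \bigoplus_i M_i$ into graded pieces together with the orthogonal idempotents $e_0,\dots,e_{d-1} \in \overline{A}_0$ gives a refined decomposition $M_i = \bigoplus_{j=0}^{d-1} M_i^j$ with $M_i^j := e_j M_i$, and each $M_i^j$ is naturally an $R_j$-module since $e_j$ projects onto the $R_j$-component of $R = \bigoplus R_i$. The discussion immediately preceding the lemma already shows that the right action of a homogeneous element $\overline{a} \in \overline{A}_m$ with single nonzero entry $a \in A_{j,j+m}$ sends $M_i^j$ into $M_{i+m}^{j+m}$, because $e_j \overline{a} = \overline{a} e_{j+m}$; reading off the induced map and setting $l = j+m$ (modulo $d$) yields exactly the multiplication maps $\mu_{i,j,l}^M : M_i^j \tr A_{j,l} \mor M_{i+l-j}^l$ required in the definition of $\mathscr{C}$. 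Conversely, given an object of $\mathscr{C}$, I would set $M_i := \bigoplus_{j=0}^{d-1} M_i^j$, $M := \bigoplus_{i} M_i$, and define the action of a general matrix $\overline{a} \in \overline{A}_m$ by letting its $(j, j+m)$-entry act via $\mu^M_{i,j,j+m}$ and extending additively; the matrix form of the multiplication on $\overline{A}$ is designed so that this is a well-defined graded module structure.

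Next I would verify that these assignments respect the module axioms. Associativity of the $\overline{A}$-action on $M$ translates, after restricting to homogeneous pieces, into the compatibility condition $\mu^M_{i+l-j,l,l'} \circ (\mu^M_{i,j,l} \tr A_{l,l'}) = \mu^M_{i,j,l'} \circ (M_i^j \tr \mu^A_{j,l,l'})$, which is precisely the ``obvious compatibility condition for multiplication'' in the definition of $\mathscr{C}$; the key point is that matrix multiplication in $\overline{A}$ composes the $(j,l)$-entry with the $(l,l')$-entry, with the periodicity isomorphisms $\varphi_{ij}$ inserted to reconcile indices taken modulo $d$ with the honest $\Z$-algebra multiplication $A_{j,l} \tr A_{l,l'} \mor A_{j,l'}$. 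The unit axiom corresponds to $\mu^M_{i,j,j}(m \tr e_j) = m$, matching the identity $1 \in \overline{A}_0$ described in Lemma~\ref{lem:periodictograded}. The morphism sides match tautologically: a morphism of graded $\overline{A}$-modules $f : M \mor N$ is $R$-linear and homogeneous, hence commutes with each $e_j$, so it decomposes as a family $f_{i,j} : M_i^j \mor N_i^j$ of $R_j$-linear maps, and $\overline{A}$-linearity restricted to homogeneous components is exactly the intertwining relation $f_{i+l-j,l}\circ \mu^M_{i,j,l} = \mu^N_{i,j,l}\circ(f_{i,j}\tr A_{j,l})$.

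Finally I would observe that the two constructions are inverse to each other on the nose: starting from $M \in \Gr(\overline{A})$, forming the $M_i^j$ and then reassembling reproduces $M$ with its original structure because $\sum_j e_j = 1$, and starting from an object of $\mathscr{C}$, passing to $M$ and then extracting $e_j M_i$ returns the original pieces and multiplication maps. Since both directions are strict (equalities, not just natural isomorphisms), one gets a genuine isomorphism of categories, not merely an equivalence.

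I expect the only real bookkeeping obstacle to be the careful handling of indices modulo $d$ together with the periodicity isomorphisms $\varphi_{ij}$: one must check that all the identifications $A_{i,j} \cong A_{i+d,j+d}$ needed to make matrix multiplication in $\overline{A}$ associative are consistently applied, and that the compatibility axiom in $\mathscr{C}$ is stated with exactly the cocycle condition these isomorphisms satisfy (which is built into Definition~\ref{def:dperiodic}). Everything else is a routine diagram chase, so I would present the construction of the two functors and the verification of associativity in detail and leave the remaining axioms to the reader, as is done elsewhere in the paper.
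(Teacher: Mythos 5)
Your proposal is correct and follows essentially the same route as the paper: both exploit the idempotent decomposition $M_i^j = e_jM_i$ and the observation, made in the discussion preceding the lemma, that the action of $\overline{A}_m$ restricts to maps $M_i^j \tr A_{j,l} \mor M_{i+l-j}^l$, with morphisms matching componentwise. The only cosmetic difference is that you build the inverse functor explicitly and note the correspondence is strict, whereas the paper argues via essential surjectivity plus full faithfulness of the assignment $M \mapsto (M_ie_j)$; this is a fair (arguably slightly cleaner) way to justify the word ``isomorphism'' in the statement, but it is not a different argument.
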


\begin{proof}
The above discussion shows that the assignment $M\mor (M_le_i)_{l \in\d{Z},0\le i\le n-1}$ is well defined and essentially surjective. A morphism of graded modules ${f:M \mor N}$ will satisfy $f(M_ie_j)\subset N_ie_j$ and we can define $f_{i,j}$ as the restriction to these submodules. The $A$-linearity guarantees that $(f_{i,j})_{i,j}$ indeed defines a morphism in $\r{C}$ and since $\ds M_ie_j=M$ it is clear that this assignment is faithful. Since any collection of maps $f_{i,j}$ satisfying the above compatibility with the multiplication will sum up to an $\overline{A}$-linear map, the assignment is also full.
\end{proof}

\begin{lemma} \label{lem:catC2}
There is a decomposition
\[ \mathscr{C} = \mathscr{C}_0 \oplus \ldots \oplus \mathscr{C}_{d-1} \]
where $\mathscr{C}_n$ is the full subcategory of $\mathscr{C}$ whose objects are collections of $R$-modules $(M_i^j)_{i \in \d{Z}, 0 \le j \leq d-1}$ where $M_i^j = 0$ unless $j - i \equiv n \ (mod \ d)$.
\end{lemma}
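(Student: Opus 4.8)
The plan is to show directly that the category $\mathscr{C}$ decomposes as a product of the full subcategories $\mathscr{C}_n$ defined in the statement. The key observation is that the index $j - i \pmod d$ is \emph{preserved} by both the multiplication maps and by morphisms in $\mathscr{C}$. Indeed, for an object $M = (M_i^j)$ the multiplication map $\mu_{i,j,l}^M$ sends $M_i^j$ into $M_{i+l-j}^l$, and for this target the relevant residue is $l - (i+l-j) = j - i \pmod d$; similarly a morphism $f_{i,j}$ only relates $M_i^j$ to $N_i^j$, which have the same residue. So if we set, for a fixed object $M$,
\[
(M^{(n)})_i^j = \begin{cases} M_i^j & \text{if } j - i \equiv n \pmod d \\ 0 & \text{else,}\end{cases}
\]
then each $M^{(n)}$ is again an object of $\mathscr{C}$ (the multiplication maps restrict, since they never mix residues), it lies in $\mathscr{C}_n$ by definition, and $M = \bigoplus_{n=0}^{d-1} M^{(n)}$ in $\mathscr{C}$.

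First I would verify that $M^{(n)}$ is well-defined as an object of $\mathscr{C}$: the only thing to check is that the structure maps $\mu_{i,j,l}$ of $M$ restrict to structure maps of $M^{(n)}$, which is immediate from the residue computation above, and that the compatibility/unit axioms are inherited since they are checked componentwise. Next I would check that $\Hom_{\mathscr{C}}(M, N) = \prod_{n} \Hom_{\mathscr{C}}(M^{(n)}, N^{(n)})$: given $f = (f_{i,j}) \colon M \to N$, each $f_{i,j}$ with $j - i \equiv n$ is a map $M_i^j \to N_i^j$ between the $n$-components, so $f$ splits as a product $(f^{(n)})_n$ with $f^{(n)} \colon M^{(n)} \to N^{(n)}$; conversely any such tuple assembles to a morphism in $\mathscr{C}$ because the compatibility condition only ever involves indices of a single residue class. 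Finally, since $\mathscr{C}_n$ is \emph{full} in $\mathscr{C}$ and every object of $\mathscr{C}$ is canonically a direct sum of objects from the distinct $\mathscr{C}_n$ with no morphisms between different summands, the identity functor exhibits $\mathscr{C}$ as the product $\mathscr{C}_0 \times \cdots \times \mathscr{C}_{d-1}$, which is what the displayed decomposition means.

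I do not expect any serious obstacle here: the whole content is the bookkeeping observation that the residue $j - i \pmod d$ is a conserved quantity under the $\mathscr{C}$-structure, after which the decomposition is formal. The only mild care needed is the remark that indices $l$ and $i + l - j$ appearing in the multiplication maps are taken modulo $d$ (as flagged in Lemma \ref{lem:catC}), so the assertion "$j - i \equiv n$" must be read as a congruence throughout; once that convention is fixed, the argument goes through verbatim. One could also phrase the proof via the central idempotents of $\overline{A}$: under the isomorphism $\mathscr{C} \cong \Gr(\overline{A})$ of Lemma \ref{lem:catC}, the decomposition corresponds to the orthogonal decomposition of the graded module $M$ according to which shifted idempotent $e_j$ (with $j$ reduced against the internal degree) acts as the identity, but the direct componentwise argument above is cleaner and self-contained.
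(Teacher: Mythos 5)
Your proof is correct and rests on exactly the same observation as the paper's: the residue of $j-i$ modulo $d$ is conserved by the structure maps, since $j-i = l-(l+i-j)$, after which the splitting of objects and morphisms along residue classes is formal. The paper states this in one line; your write-up just carries out the bookkeeping explicitly.
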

\begin{proof}
This follows immediately from the construction of $\mathscr{C}$ and the fact that $j-i = l- (l+i-j)$. Hence, if $( \r{M}_i^j )_{ij}$ is a non-zero object in $\mathscr{C}_n$, then so is $( \r{M}_{l+i-j}^l )_{ij}$ for all $l$.
\end{proof}

\begin{proposition} \label{prp:periodicalgebras}
There is an exact embedding of categories
\[ \overline{(-)} : \Gr(A) \hookrightarrow \Gr( \overline{A} ) \]
sucht that the essential image is a direct summand of $\Gr( \overline{A} )$.
\end{proposition}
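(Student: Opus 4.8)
The plan is to realize $\overline{(-)}$ as an equivalence between $\Gr(A)$ and the summand $\mathscr C_0$ of Lemma \ref{lem:catC2}, and then to compose with the inclusion $\mathscr C_0 \hookrightarrow \mathscr C \cong \Gr(\overline A)$ furnished by Lemmas \ref{lem:catC2} and \ref{lem:catC}; since $\mathscr C_0$ is a direct summand of $\mathscr C$, this gives at once an exact fully faithful functor whose essential image is a direct summand. I would first define $\overline{(-)}$ with values in $\mathscr C$ (transporting to $\Gr(\overline A)$ via Lemma \ref{lem:catC}): given $M \in \Gr(A)$ with structure maps $M_i \tr A_{i,j} \to M_j$, let $\overline M$ be the object of $\mathscr C$ with
\[ \overline M_i^j = \begin{cases} M_i & \text{if } j \equiv i \pmod d, \\ 0 & \text{otherwise,} \end{cases} \]
so that $\overline M \in \mathscr C_0$ and its underlying graded $R$-module is simply $\bigoplus_i M_i$, with $M_i$ placed in the $R_{\,i \bmod d}$-summand. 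The $\mathscr C$-structure map $\mu^{\overline M}_{i,j,l} \colon \overline M_i^j \tr A_{j,l} \to \overline M_{i+l-j}^l$ vanishes unless $i \equiv j \pmod d$, in which case I declare it to be the $A$-structure map $M_i \tr A_{i,\,i+l-j} \to M_{i+l-j}$ precomposed with the isomorphism $A_{j,l} \xrightarrow{\ \sim\ } A_{i,\,i+l-j}$ obtained by iterating the periodicity isomorphism $\varphi$ of Definition \ref{def:dperiodic} a total of $(i-j)/d$ times; on a morphism $f = (f_i)$ one sets $\overline f_{i,j} = f_i$ for $j \equiv i$ and $0$ otherwise.

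The next step is to check that $\overline{(-)}$ is a functor landing in $\mathscr C_0$: the associativity and unit constraints of $\mathscr C$ for these maps reduce to the module axioms for $M$ together with the fact that $\varphi$ is an \emph{isomorphism of $\Z$-algebras}, so that the powers of $\varphi$ inserted at the various tensor factors compose compatibly. Exactness is then immediate, since the underlying graded module of $\overline M$ is $M$ itself (only the action being new) and kernels, cokernels and all (co)limits in $\Gr(A)$ and in $\mathscr C$ are formed degreewise. Full faithfulness holds because a morphism $\overline M \to \overline N$ in $\mathscr C$ is a family $g_{i,j}$ forced to vanish whenever $j \not\equiv i \pmod d$, and for the surviving components $g_{i,\,i \bmod d} \colon M_i \to N_i$ the $\mathscr C$-compatibility condition becomes, after cancelling the powers of $\varphi$, exactly $A$-linearity; hence $\Hom_{\Gr(A)}(M,N) \xrightarrow{\ \sim\ } \Hom_{\mathscr C}(\overline M, \overline N)$.

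It then remains to prove essential surjectivity onto $\mathscr C_0$. Given $M \in \mathscr C_0$, I would set $N_i := M_i^{\,i \bmod d}$ and define the $A$-structure map $N_i \tr A_{i,i'} \to N_{i'}$ to be the $\mathscr C$-structure map $\mu^M_{i,\,\bar i,\,\bar i + (i'-i)}$ of $M$ (with $\bar i = i \bmod d$) precomposed with the power of $\varphi^{-1}$ converting $A_{i,i'}$ into $A_{\bar i,\,\bar i + (i'-i)}$; the module axioms for $N$ follow from those of $M \in \mathscr C$ and the multiplicativity of $\varphi$. Since $\overline N$ and $M$ have the same graded pieces and the inserted twists cancel, $\overline N \cong M$, and $M \mapsto N$ is quasi-inverse to $\overline{(-)}$. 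Consequently $\overline{(-)} \colon \Gr(A) \xrightarrow{\ \sim\ } \mathscr C_0$ is an equivalence, and composing with $\mathscr C_0 \hookrightarrow \mathscr C \cong \Gr(\overline A)$ yields the asserted exact embedding with essential image the direct summand $\mathscr C_0$. The only genuinely delicate point --- the part I would write out in full --- is the bookkeeping of the exponents of $\varphi$: one has to check that they add correctly under composition, so that associativity really is preserved; everything else is routine.
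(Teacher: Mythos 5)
Your proposal is correct and follows essentially the same route as the paper: define $\overline{M}\in\mathscr{C}$ concentrated where $j\equiv i \pmod d$, observe this gives an equivalence $\Gr(A)\stackrel{\simeq}{\mor}\mathscr{C}_0$, and compose with the inclusion $\mathscr{C}_0\hookrightarrow\mathscr{C}\cong\Gr(\overline{A})$ from Lemmas \ref{lem:catC} and \ref{lem:catC2}. The only difference is that you spell out the bookkeeping with the periodicity isomorphisms $\varphi$ and the verification of full faithfulness and essential surjectivity onto $\mathscr{C}_0$, which the paper treats as immediate.
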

\begin{proof}
Let $M$ be an $A$-module with multiplication maps $\mu_{i,m}:M_i \tr_R A_m \mor M_{i+m}$ and let $\r{C}$ be as above. We define an object $\overline{M}$ in $\mathscr{C}$ by 
\[ \overline{M}_i^j = \left\{ \begin{array}{cl} M_i & \textrm{ if $j \equiv i$ mod $d$ } \\
0 & \textrm{else} \end{array} \right. \]
where the multiplication is given by
\[ \overline{\mu}_{i,j,l}= \left\{ \begin{array}{cl} \mu_{i,l-j} & \textrm{ if $j \equiv i$ mod $d$ } \\
0 & \textrm{else} \end{array} \right. \]
This assignment clearly defines an exact embedding 
\[
\Gr(A) \stackrel{\simeq}{\mor} \mathscr{C}_0 \mono \mathscr{C}
\]
which finishes the proof by lemmas \ref{lem:catC} and \ref{lem:catC2}.
\end{proof}

\subsection{A Local Description of $\d{S}(\r{E})$}
In this final step in the preparation of the proof of theorem \ref{thm:noeth}, we complete  the local description of $\d{S}(\r{E})$. By \ref{thm:coverconditions}, we have reduced the claim to the case where $X$ and $Y$ are affine. By our hypothesis on $X$ and $\r{E}$ (see conventions \ref{conv:conditionsxy} and \ref{conv:standardform}), we assume that $X=\Spec(R)$ and $Y=\Spec(S)$ are affine varieties over $\k$ such that $S/R$ is relative Frobenius of rank 4 with induced morphism $f:Y\mor X$, that $\r{E}=_f(\r{L})_1$ for some line bundle $\r{L}$ on $Y$ and $\omega_X \cong \r{O}_X$, $\omega_Y \cong \r{L} \cong \r{O}_Y$.  After applying the global section functor, we obtain a bimodule $\Z$-algebra in the sense of \ref{def:bimoduleZalgebra} which is 2-periodic. The graded algebra associated to this $\Z$-algebra by the construction in section \ref{subsec:Ztogradedalgebra} is precisely the generalized preprojective algebra defined in Definition \ref{def:generalizedpreprojective} and studied in \cite{Genpreprojective}.

\medskip

 We start by introducing some auxiliary notations. Recall the convention \ref{conv:Z_n} and let $\r{A}$ be a sheaf $\d{Z}$-algebra over $X_i$. There is a $\d{Z}$-algebra over $\k$, $\Gamma(\r{A})$ defined in each component by
\[ \Gamma(\r{A})_{i,j} := \Gamma(X_i \times X_j, \r{A}_{i,j}) \]
since each component $\Gamma(\r{A})_{i,j}$ is an $R-S$, $R-R$, $S-S$ or $S-R$ bimodule depending on the parity of the indices, $\Gamma(\r{A})$ is in fact a $\d{Z}$-algebra over commutative groundring $R\ds S$ (compare with the discussion following Remark \ref{rem:relfrob}).
The  equivalence between quasi-coherent sheaves over an affine scheme and modules over the ring of global sections can easily be adapted to our setting to yield an equivalence:
\[ \Gamma: \Gr(\r{A}) \stackrel{\simeq}{\mor} \Gr(\Gamma(\r{A})): \{ \r{M}_n \}_{n\in \d{Z}} \mapsto \{ \Gamma(X_n,\r{M}_n) \}_{n\in \d{Z}} \]

The following is an immediate consequence of the assumptions of this section:
\begin{lemma}
The $\d{Z}$-algebra $\Gamma(\d{S}(\r{E}))$ is 2-periodic in the sense that 
\[ \Gamma(\d{S}(\r{E}))_{i,j}=\Gamma(\d{S}(\r{E}))_{i+2,j+2} \]
\end{lemma}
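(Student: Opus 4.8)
My plan is to reduce the statement to the single bimodule isomorphism $\r{E}^{*2}\cong\r{E}$ and then propagate it through the construction of $\d{S}(\r{E})$ and through the global sections functor. For the bimodule isomorphism I would invoke \cite[Lemma 3.1.7]{VdB_12}, already used in the proof of Proposition \ref{prop:2-periodicity}: for any locally free $\r{F}\in\bimod(X_n-X_{n+1})$ one has a natural isomorphism $\r{F}^{*2}\cong\omega_{X_n}^{-1}\tr\r{F}\tr\omega_{X_{n+1}}$. By the standing hypotheses of this section, $\omega_X\cong\r{O}_X$ and $\omega_Y\cong\r{O}_Y$, so $\omega_{X_n}$ is trivial for every $n\in\Z$ (Convention \ref{conv:Z_n}); applying this with $\r{F}=\r{E}^{*n}$ gives $\r{E}^{*(n+2)}\cong\r{E}^{*n}$ for all $n$. (One can also see this directly from Remark \ref{rem:dual}: writing $\r{E}={}_f(\r{L})_{\Id}$ as in Convention \ref{conv:standardform}, one computes $\r{E}^*\cong{}_{\Id}(\r{L}^{-1})_f$ and $\r{E}^{*2}\cong{}_f(\r{L}\tr f^{!}\r{O}_X)_{\Id}$, with $f^{!}\r{O}_X\cong\omega_Y\tr f^{*}\omega_X^{-1}\cong\r{O}_Y$.)

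Next I would upgrade these degreewise isomorphisms to an isomorphism of sheaf $\d{Z}$-algebras. Since the dualization operation $(-)^*$ is natural (Lemma \ref{lem:dual}) and the unit morphisms $i_n$ of \eqref{eq:unitmorphism} are defined by adjunction, tensoring the isomorphisms of the previous step yields isomorphisms $\r{E}^{*(n+2)}\tr\r{E}^{*(n+3)}\cong\r{E}^{*n}\tr\r{E}^{*(n+1)}$ in $\bimod(X_n-X_{n+2})$ that intertwine $i_{n+2}$ with $i_n$, hence carry the nondegenerate sub-bimodule $\r{Q}_{n+2}$ onto $\r{Q}_n$ (notation \eqref{eq:standardsheafZalgebra}). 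On the other hand, directly from the construction of a standard symmetric sheaf $\d{Z}$-algebra, the shift $\d{S}(\r{E})(2)$ (notation \eqref{eq:twistingsheafzalgebra}) is exactly the standard symmetric sheaf $\d{Z}$-algebra built from the family $\{\r{E}^{*(n+2)}\}_n$ with relations $\{\r{Q}_{n+2}\}_n$, i.e. $\d{S}(\r{E})(2)=\d{S}(\r{E}^{*2})$. Matching generating bimodules and defining relations degree by degree, the above isomorphisms therefore assemble into an isomorphism of sheaf $\d{Z}$-algebras $\d{S}(\r{E})(2)\cong\d{S}(\r{E})$.

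Finally I would apply the componentwise global sections functor $\Gamma$, which on each bidegree $(i,j)$ is an equivalence because $X_i\times X_j$ is affine; this transports the previous isomorphism into an isomorphism of bimodule $\d{Z}$-algebras $\Gamma(\d{S}(\r{E}))\cong\Gamma(\d{S}(\r{E}))(2)$, that is, $\Gamma(\d{S}(\r{E}))_{i,j}\cong\Gamma(\d{S}(\r{E}))_{i+2,j+2}$ compatibly with multiplication and units, which is precisely $2$-periodicity in the sense of Definition \ref{def:dperiodic}. The only step that is not pure bookkeeping is the coherence check in the second paragraph --- that the degreewise isomorphisms $\r{E}^{*(n+2)}\cong\r{E}^{*n}$ can be chosen to respect the unit morphisms $i_n$, so that they glue to an honest morphism of $\d{Z}$-algebras and not merely a sequence of unrelated bimodule isomorphisms. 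Once this naturality is unwound, the rest is immediate from the definitions of $\d{T}(\r{E})$, $\d{S}(\r{E})$ and $\Gamma$.
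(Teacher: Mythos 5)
Your argument is correct and follows essentially the same route as the paper: the paper likewise obtains $\d{S}(\r{E})_{i+2,j+2}\cong\omega_{X_i}^{-1}\tr \d{S}(\r{E})_{i,j}\tr\omega_{X_j}$ (via Proposition \ref{prop:2-periodicity}, i.e.\ \cite[Lemma 3.1.7]{VdB_12}), uses that the canonical bundles are trivial under the standing local assumptions, and then applies $\Gamma$. The only difference is presentational: you unpack the periodicity at the level of the duals $\r{E}^{*n}$ and check compatibility with the unit morphisms (hence with the relations $\r{Q}_n$ and the multiplication) explicitly, a coherence point the paper leaves implicit by citing Proposition \ref{prop:2-periodicity} directly.
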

\begin{proof}
By \ref{prop:2-periodicity}, there are isomorphisms $\d{S}(\r{E}))_{i+2,j+2} \cong \omega_i^{-1} \tr \d{S}(\r{E})) \tr \omega_j$. By the assumptions in the beginning of this section, both canonical bundles are trivial, implying that $\d{S}(\r{E})_{i,j}=\d{S}(\r{E})_{i+2,j+2}$. The result follows after applying $\Gamma(-)$.
\end{proof}
Using Lemma \ref{lem:periodictograded}, the 2-periodic $\mathbb{Z}$-algebra $\Gamma(\d{S}(\r{E}))$ gives rise to a graded algebra $\overline{\Gamma( \d{S}(\r{E}))}$. We now prove that this algebra coincides with the construction outlined in Subsection\ref{subsec:covering} :

\begin{lemma} \label{lem:locallypirs}
Let $X = \Spec(R)$ and $Y = \Spec(S)$ be smooth affine varieties  such that $S/R$ is relative Frobenius of rank 4. Let $f: Y \rightarrow X$ be the induced morphism and $\r{E} = {}_f (\r{O}_Y)_{\Id}$. Then $\overline{\Gamma(\d{S}(\r{E}))} \cong \Pi_R(S)$.
\end{lemma}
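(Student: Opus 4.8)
The plan is to unwind both sides of the claimed isomorphism explicitly and match them degree by degree. First I would recall that under Convention \ref{conv:standardform} with $\omega_X\cong\r{O}_X$ and $\r{L}\cong\omega_Y\cong\r{O}_Y$, Remark \ref{rem:dual} gives $\r{E}^{*2m}={}_f(\r{O}_Y)_{\Id}$ and $\r{E}^{*2m+1}={}_{\Id}(\r{O}_Y)_f$ for all $m$; applying $\Gamma$ and using \eqref{eq:pullbacktensorproduct} identifies the bimodule $\Z$-algebra $\Gamma(\d{T}(\r{E}))$ in standard form: $\Gamma(\d{T}(\r{E}))_{i,i+1}$ is ${}_RS_S$ when $i$ is even and ${}_SS_R$ when $i$ is odd. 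So after passing to the graded algebra via the $\overline{(-)}$-construction of Subsection \ref{subsec:Ztogradedalgebra} with $d=2$ and $R\ds S$ as groundring, $\overline{\Gamma(\d{T}(\r{E}))}_1$ is exactly the $2\times2$ matrix algebra with off-diagonal entries $M={}_RS_S$ and $N={}_SS_R$, i.e. $\overline{\Gamma(\d{T}(\r{E}))}$ is $T(R,S)=T_{R\ds S}(M\ds N)$ of Definition \ref{def}. This shows the tensor algebras agree before quotienting.

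Next I would track what the defining relations $\r{Q}_n=i_n(\r{O}_{X_n})\subset\r{E}^{*n}\tr\r{E}^{*n+1}$ become. Since relations are generated in degree $2$, it suffices to understand $i_n$ in degrees $0$ and $1$ (mod $2$). For $n$ even, $\r{E}^{*n}\tr\r{E}^{*n+1}={}_f(\r{O}_Y)_{\Id}\tr{}_{\Id}(\r{O}_Y)_f$, which after $\Gamma$ is ${}_RS\tr_SS_R$, and the unit morphism $i_n$ is the counit/unit of the adjunction $-\tr\r{E}\dashv-\tr\r{E}^*$; concretely it picks out the element $1\tr1\in{}_RS\tr_SS_R$. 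For $n$ odd, $\r{E}^{*n}\tr\r{E}^{*n+1}$ becomes ${}_SS\tr_RS_S$ and the unit of the adjunction, written in terms of the perfect pairing $\bl{a}{b}=\lambda(ab)$ coming from the chosen generator $\lambda$ of $\Hom_R(S,R)$, is precisely $\sum_ie_i\tr f_i$ for dual bases $(e_i),(f_j)$. These are exactly the two relations in Definition \ref{def} of $\Pi_R(S)$. I would spell out the identification of the abstract categorical unit/counit with these explicit elements, invoking Remark \ref{rem:dual} and the standard description of the (co)unit of a tensor–Hom adjunction for a finite locally free bimodule.

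The main obstacle I expect is precisely this last matching: verifying that the counit $i_n$, which is defined abstractly via the adjunction $\Hom(\r{M}\tr\r{E},\r{N})\cong\Hom(\r{M},\r{N}\tr\r{E}^*)$ and the explicit dual formula ${}_v\ShHom_W(\r{U},v^!\r{O}_Y)_u$ of Remark \ref{rem:dual}, really does correspond to $1\tr1$ in one parity and to $\sum_ie_i\tr f_i=\sum_i e_i\tr\lambda(e_i-)^\vee$ in the other. The subtlety is bookkeeping: one side of the adjunction has the dual on the left factor and the other on the right, so the two units $i_n$ for the two parities are genuinely different, and one must check the Frobenius generator $\lambda$ is the same data that appears in $f^!\r{O}_X$ — this is exactly where the relative Frobenius hypothesis $\Hom_R(S,R)\cong S$ as $S$-modules enters, matching $f_*f^!\r{O}_X=\ShHom_X(f_*\r{O}_Y,\r{O}_X)\cong f_*\r{O}_Y$ as in \eqref{eq:omegashriek}. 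Once the two units are identified with $1\tr1$ and $\sum_ie_i\tr f_i$, the quotient $\overline{\Gamma(\d{S}(\r{E}))}=\overline{\Gamma(\d{T}(\r{E}))}/(\text{images of }i_n)$ is visibly $T(R,S)/(\text{rels})=\Pi_R(S)$, and I would conclude by noting the identification is compatible with multiplication since the $\overline{(-)}$-construction's matrix multiplication is exactly the tensor-algebra multiplication.
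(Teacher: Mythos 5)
Your proposal is correct and follows essentially the same route as the paper: first identify $\overline{\Gamma(\d{T}(\r{E}))}$ with $T(R,S)$ using the explicit parities $\r{E}^{*2m}={}_f(\r{O}_Y)_{\Id}$, $\r{E}^{*2m+1}={}_{\Id}(\r{O}_Y)_f$ and affineness, then show the images of the units $i_0$, $i_1$ under the global-sections adjunctions are $1\tr 1$ and $\sum_i e_i\tr f_i$, so the quotient is $\Pi_R(S)$. The only detail you defer but the paper spells out is the identification of the odd-parity unit via the adjunction built from the dual bases for $\lambda$, which requires the centrality of $\sum_i e_i\tr f_i$ (Lemma \ref{lem:centralelement}) to see that the adjunction is $S$-linear.
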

\begin{proof}
Consider the quotient map 
\begin{equation*} %\label{eq:T-S-surjection-sheaves} 
\d{T}(\r{E}) \epi \d{S}(\r{E}) \end{equation*}
Taking global sections in each component $\Gamma(X_i\times X_j,(-)_{i,j})$ yields a surjection
\begin{equation*} %\label{eq:T-S-surjection-sheaves} 
\Gamma(\d{T}(\r{E}))\epi \Gamma(\d{S}(\r{E})). \end{equation*}
as $X_i \times X_j$ is affine.\\
Since the functor $\overline{(-)}$ preserves surjectivity (see Proposition \ref{prp:periodicalgebras}), we obtain a map
\begin{equation*} %\label{eq:T-S-surjection-sheaves} 
\pi:\overline{\Gamma(\d{T}(\r{E}))}\epi \overline{\Gamma(\d{S}(\r{E}))}. \end{equation*}
We first show that there is a canonical isomorphism  of $R\ds S$-modules 
\begin{equation}\label{eq:tensoralgebra}
\overline{\Gamma(\d{T}(\r{E}))}\cong T(R,S)
\end{equation}
 For this (as $\Gamma\left(\d{S}(\r{E})\right)$ is clearly generated in degrees $0$ and $1$) it suffices to show the following three facts
\begin{itemize}
\item $ \overline{\Gamma(\d{T}(\r{E}))}_0 \cong T(R,S)_0=R\ds S$ as rings
\item $\overline{\Gamma(\d{T}(\r{E}))}_1 \cong T(R,S)_1 \cong {}_R S_S\ds {}_S S_R$ as $R\ds S$-modules
\item  the multiplication map yields isomorphisms 
\[ \overline{\Gamma(\d{T}(\r{E}))}_1 \tr \overline{\Gamma(\d{T}(\r{E}))}_n \stackrel{\cong}{\mor} \overline{\Gamma(\d{T}(\r{E}))}_{n+1} \]
\end{itemize}
For the first statement, we compute:
\begin{eqnarray*}
\overline{\Gamma(\d{T}(\r{E}))}_0 & = & \left( \begin{array}{cc} \Gamma(\d{T}(\r{E}))_{0,0} & 0 \\ 0 & \Gamma(\d{T}(\r{E}))_{1,1} \end{array} \right) \\
& = & \left( \begin{array}{cc} \Gamma \left( X \times X, {}_{\Id} \left( \r{O}_X \right)_{\Id}\right) & 0 \\ 0 & \Gamma \left( Y \times Y, {}_{\Id} \left( \r{O}_Y \right)_{\Id}\right) \end{array} \right)
\end{eqnarray*}
moreover, we have
\begin{eqnarray*}
\Gamma \left( X \times X, {}_{\Id} \left( \r{O}_X \right)_{\Id}\right) & = & \Hom \left( \r{O}_{X \times X}, \Delta_* \left( \r{O}_X \right) \right) \\
& = & \Hom \left( \Delta^* \left( \r{O}_{X \times X} \right), \r{O}_X  \right) \\
& = & \Hom \left( \r{O}_X, \r{O}_X  \right) \\
& \cong & R 
\end{eqnarray*}
And similarly $\Gamma \left( Y \times Y, {}_{\Id} \left( \r{O}_Y \right)_{\Id}\right) \cong S$. combining these calculations yields 
\[ \overline{\Gamma(\d{T}(\r{E}))}_0   \cong   \left( \begin{array}{cc} R & 0 \\ 0 & S \end{array} \right) \cong R\ds S\]
In a completely similar fashion, we check the second condition:
\begin{eqnarray*}
\overline{\Gamma(\d{T}(\r{E}))}_1 & = & \left( \begin{array}{cc} 0 & \Gamma(\d{T}(\r{E}))_{0,1} \\ \Gamma(\d{T}(\r{E}))_{1,2} & 0 \end{array} \right) \\
& = & \left( \begin{array}{cc} 0 & \Gamma \left( X \times Y, \r{E} \right) \\ \Gamma \left( Y \times X, \r{E}^* \right) \end{array} \right) \\
& = & \left( \begin{array}{cc} 0 & \Gamma \left( X \times Y, {}_f(\r{O}_Y)_{\Id} \right) \\ \Gamma \left( Y \times X, {}_{\Id}(\r{O}_Y)_f \right) & 0 \end{array} \right) \\
& \cong & \left( \begin{array}{cc} 0 & {}_R S _S \\ {}_SS_R & 0 \end{array} \right) \cong {}_R S_S \ds {}_S S_R \\
\end{eqnarray*}
To check the final condition, we have the isomorphisms
\begin{equation*} \d{T}(\r{E})_{i,i+1} \tr \d{T}(\r{E})_{i+1,i+n+1} \mor \d{T}(\r{E})_{i,i+n+1} \end{equation*}
We  now apply the functor $\Gamma(X_i\times X_{i+n+1},-)$ and note that since all varieties are affine, the tensor product and $\Gamma(-)$ commute, resulting in an isomorphism

\begin{equation*} 
	\Gamma(\d{T}(\r{E}))_{i,i+1} \tr \Gamma(\d{T}(\r{E}))_{i+1,i+n+1} \mor\Gamma(\d{T}(\r{E}) )_{i,i+n+1}
\end{equation*} 
application of the functor $\overline{(-)}$ yields
\[ \overline{\Gamma(\d{T}(\r{E}))}_1 \tr \overline{\Gamma(\d{T}(\r{E}))}_n \stackrel{\simeq}{\mor} \overline{\Gamma(\d{T}(\r{E}))}_{n+1} \]
we have thus constructed the required isomorphism (\ref{eq:tensoralgebra}). Finally, we prove that the relations defining $\Pi_RS$ coincide with the kernel of $\pi$, i.e. there is a commutative diagram:
\begin{displaymath}
\xymatrix{
\overline{\Gamma(\d{T}(\r{E}))} \ar[d]_{\cong}\ar[rr]^\pi  & & \overline{\Gamma(\d{S}(\r{E}))}\ar[d]^\cong\\
T(R,S) \ar[rr]_{\overline{\pi}}& &\Pi_R(S)
}	
\end{displaymath}
The isomorphisms in the previous step yield isomorphisms:
\[ 
\zeta_0: \Hom_{X \times X}( {}_{\Id} \left(\r{O}_X \right)_{\Id}, \r{E} \tr \r{E}^*) \stackrel{\simeq}{\mor} \Hom_R(R,{}_RS_S \tr_S S_R) 
\]
\[ 
\zeta_1: \Hom_{Y \times Y}( {}_{\Id} \left(\r{O}_Y \right)_{\Id}, \r{E}^* \tr \r{E}) \stackrel{\simeq}{\mor} \Hom_S(S,{}_SS_R \tr_R S_S) 
\]
Recall that $\d{S}(\r{E})$ is defined as a quotient of $\d{T}(\r{E})$ by the relations given by the unit morphisms 
\[
\begin{cases}
i_0 \in \Hom_{X \times X}( {}_{\Id} \left(\r{O}_X \right)_{\Id}, \r{E} \tr \r{E}^*)\\
i_1 \in  \Hom_{Y \times Y}( {}_{\Id} \left(\r{O}_Y \right)_{\Id}, \r{E}^* \tr \r{E}) 
\end{cases}
\]
described in (\ref{eq:unitmorphism}).\\
Similarly $\Pi_R(S)$ is defined as a quotient of $T_R(S)$ by elements 
\[
\begin{cases}
\eta_0 \in \Hom_R(R,{}_RS_S \tr_SS_R)\\
\eta_1 \in \Hom_S(S,{}_SS_R \tr_RS_S).
\end{cases}
\]
Hence we must prove $\zeta_0(i_0)=\eta_0$ and $\zeta_1(i_1)=\eta_1$. To this end, note that there is a commutative diagram of isomorphisms
\begin{center}
\begin{tikzpicture}
\matrix(m)[matrix of math nodes,
row sep=2em, column sep=2em,
text height=1.5ex, text depth=0.25ex]
{ \Hom_{X \times Y}(\r{E}, \r{E}) & \Hom_{X \times X}({}_{\Id} \left( \r{O}_X \right)_{\Id}, \r{E} \tr \r{E}^* ) \\
\Hom_{R \tr S}(  _RS_S , _RS_S) & \Hom_R(R, {}_RS_S \tr_SS_R) \\};
\path[->,font=\scriptsize]
(m-1-1) edge (m-1-2)
        edge (m-2-1)
(m-2-1) edge node[auto]{$\varphi_0$}(m-2-2)
(m-1-2) edge node[right]{$\zeta_0$} (m-2-2);
\end{tikzpicture}
\end{center}
where $\varphi_0$ is given by the adjunction $\left(- \tr _RS_S\right) \ \dashv \ \left(- \tr _SS_R\right) = (-)_R$. Hence ${\zeta_0(i_0) = \varphi_0(\Id_{{}_RS_S}): 1_R \mapsto 1_S \tr 1_S}$ and this morphism indeed coincides with $\eta_0$. Similarly the existence of the dual bases $(e_i)_i$, $(f_j)_j$ implies there is an adjunction\\
${- \tr _SS_R = (-)_R \ \dashv  (-) \tr _RS_S}$ given by
\[ \varphi_1: \Hom_R(M \tr_SS_R, N) \mor \Hom_S(M,N \tr_RS_S): \psi \mapsto \left( \psi': m \mapsto \sum_i \psi(me_i) \tr f_i \right) \]
Where we used the lemma \ref{lem:centralelement} below to show that the morphisms in the image of $\varphi_1$ indeed have an $S$-module structure. A commutative diagram as above shows that ${\zeta_1(i_1) =\varphi_1(\Id_{{}_SS_R}): 1_S \mapsto \sum_i e_i \tr f_i}$ which coincides with $\eta_1$.

\end{proof}

\begin{lemma} \label{lem:centralelement} $\sum_i e_i\tr f_i$ is central in the $S$-bimodule $S\tr_R S$. I.e. for all $a\in S$ we have
\[
\sum_i ae_i\tr f_i=\sum_i e_i\tr f_ia
\]
\end{lemma}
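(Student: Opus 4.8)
The plan is to unwind both sides of the claimed identity in the dual $R$-bases $(e_i)_i$, $(f_j)_j$, using only the two ``reproducing'' identities these bases satisfy with respect to $\lambda$. Since $\bl{a}{b}=\lambda(ab)$ is nondegenerate and $\lambda(e_if_j)=\delta_{ij}$, every $z\in S$ can be written both as $z=\sum_i\lambda(zf_i)\,e_i$ (expanding $z$ in the $e$-basis) and as $z=\sum_i\lambda(e_iz)\,f_i$ (expanding $z$ in the $f$-basis); checking these is a one-line computation with the defining property of the dual bases, and they are the only inputs the argument needs besides the ring axioms.

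First I would fix $a\in S$ and rewrite $ae_i$ in the $e$-basis: $ae_i=\sum_j\lambda(ae_if_j)\,e_j$. Substituting this into $\sum_i ae_i\tr f_i$, interchanging the order of summation, and pulling the scalars $\lambda(ae_if_j)\in R$ across the tensor symbol (legitimate since the tensor is over $R$, which is central in $S$) gives
\[
\sum_i ae_i\tr f_i=\sum_j e_j\tr\Big(\sum_i\lambda(ae_if_j)\,f_i\Big).
\]
Next I would identify the inner sum as $f_ja$. This is the only conceptual step: because $S$ is commutative in the situation of Lemma \ref{lem:locallypirs} (it is the coordinate ring of the affine variety $Y$), the form $\lambda$ is symmetric, so $\lambda(ae_if_j)=\lambda(e_i f_j a)$; expanding $f_ja$ in the $f$-basis via the second reproducing identity then gives $f_ja=\sum_i\lambda(e_i f_j a)\,f_i=\sum_i\lambda(ae_if_j)\,f_i$. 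Substituting back collapses the right-hand side to $\sum_j e_j\tr f_j a$, which is exactly the assertion.

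So the argument is a short manipulation of finite sums, and I do not expect a genuine obstacle. The one place to be careful — and the only step that is not purely formal — is the identity $\lambda(ae_if_j)=\lambda(e_if_ja)$: this is precisely the symmetry of $\lambda$, and it is what fails for a non-commutative Frobenius extension (where the Casimir element need not be central). The essential point is therefore to invoke commutativity of $S$ at exactly this step; everything else is bookkeeping.
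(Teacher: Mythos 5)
Your proof is correct and rests on the same two ingredients as the paper's own argument: the dual-basis reproducing identities and the symmetry $\lambda(ab)=\lambda(ba)$ coming from commutativity of $S$. The difference is only mechanical: the paper verifies the identity by pairing both sides against the functionals $x\tr y\mapsto \lambda(xf_j)\lambda(ye_k)$ and checking that for every $j,k$ each side reduces to the scalar $\lambda(ae_kf_j)$, whereas you rewrite $\sum_i ae_i\tr f_i$ directly into $\sum_j e_j\tr f_ja$ by expanding $ae_i$ in the $e$-basis and recognizing the inner sum as the $f$-basis expansion of $f_ja$. Your route has the small advantage of not needing the observation, left implicit in the paper's ``it is sufficient to prove'', that these functionals separate elements of the free $R$-module $S\tr_R S$, and it makes explicit exactly where commutativity of $S$ (equivalently, symmetry of $\lambda$) enters — a step the paper's computation also uses silently and which is indeed what fails for non-symmetric Frobenius extensions.
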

\begin{proof} It is sufficient to prove that for all $j,k$ we have
\[
\sum_i \lambda(ae_if_j)\lambda(f_ie_k)=\sum_i \lambda(e_if_j)\lambda(f_iae_k)
\]
which is clear since both sides are equal to 
$
\lambda(ae_kf_j)
$.
\end{proof}

\subsection{Proof of theorem \ref{thm:noeth}}
We will now combine everything. As $X$ and $Y$ are noetherian we know that $\Qcoh(X)$ and $\Qcoh(Y)$ are locally noetherian categories and hence there exist collections of noetherian generating objects for these categories, say $\r{N}^X := \{ \r{N}_\alpha^X \}$ and $\r{N}^Y := \{ \r{N}_\beta^Y \}$. For each $i \in \d{Z}$  we define $\r{N}^n$ in $\Qcoh(X_n)$ as:
\[ \r{N}^n = \left\{ \begin{array}{cl} \r{N}^X & \textrm{ if $n$ is even} \\ \r{N}^Y & \textrm{ if $n$ is odd } \end{array} \right. \]
We shall prove that the collection 
\begin{equation} \label{eq:generatingset} \{ \r{N} \tr e_n \d{S}(\r{E}) \mid n \in \d{Z}, \r{N} \in \r{N}^n \} \end{equation}
forms a set of noetherian generators for $\Gr(\d{S}(\r{E}))$. Note that the collection is easily seen to generate as for each $\r{M} \in \Gr(\r{A})$ there is a surjective morphism 
\[ \DS_{i \in \d{Z}} \r{M}_n \tr e_i \r{A} \epi \r{M} \] and for each $n \in \d{Z}$ there is a surjective morphism 
\[\DS_{\alpha} (\r{N}^n_\alpha)^{m_\alpha} \epi \r{M}_n \]
where $\r{N}^i_\alpha \in \r{N}^i$. Hence we only need to show that the elements of (\ref{eq:generatingset}) are noetherian objects in $\Gr(\d{S}(\r{E}))$. By lemma \ref{lem:opencover} and Corollary \ref{cor:openrestriction} this can be checked locally for any open cover $X = \bigcup_l U_l$. By theorem \ref{thm:coverconditions} we may hence assume that $X = \Spec(R)$ and $Y=\Spec(S)$ are smooth affine varieties such that
\begin{enumerate}[i)]
\item $\r{L} \cong \r{O}_Y \cong \omega_Y$
\item $\omega_X \cong \r{O}_X$
\item $S/R$ is relative Frobenius of rank 4.
\end{enumerate} 
With these assumptions there are functors
\begin{equation} \label{eq:functors}
\begin{tikzpicture}
\matrix(m)[matrix of math nodes,
row sep=2em, column sep=2em,
text height=1.5ex, text depth=0.25ex]
{ \Gr(\d{S}(\r{E})) \\
 \Gr(\Gamma \left( \d{S}(\r{E}) \right)) \\
\Gr \left( \overline{\Gamma \left( \d{S}(\r{E}) \right)}\right) \\
\Gr(\Pi_R(S))\\};
\path[->,font=\scriptsize]
(m-1-1) edge node[left]{$\cong$}node[right]{$ \Gamma(-)$} (m-2-1)
(m-3-1) edge node[left]{$\cong$} node[right]{lemma \ref{lem:locallypirs}} (m-4-1);
\path[right hook->,font=\scriptsize]
(m-2-1) edge node[right]{Proposition \ref{prp:periodicalgebras}}(m-3-1);
\end{tikzpicture}
\end{equation}
Let $F:  \Gr(\d{S}(\r{E})) \mor \Gr(\Pi_R(S))$ be the composition. Then the above diagram shows that $F$ is an exact embedding of categories. Hence $\r{N} \tr e_n \d{S}(\r{E})$ is a noetherian object in $\Gr(\d{S}(\r{E}))$ if $F(\r{N} \tr e_n \d{S}(\r{E}))$ is a noetherian object in $\Gr(\Pi_R(S))$. On the other hand, as $\r{N}$ is noetherian in $\Qcoh(X_n)$ there is an $m \in \d{N}$ and a surjection $\r{O}_{X_n}^{\oplus m} \epi \r{N}$ giving rise to an surjection
\[ F(\r{O}_{X_n} \tr e_n \d{S}(\r{E}))^{\oplus m} \epi F(\r{N} \tr e_n \d{S}(\r{E})) \]
Hence it suffices to show that $F(\r{O}_{X_n} \tr e_n \d{S}(\r{E}))$ is a Noetherian object in $\Pi_R(S)$. This is easily seen since
\[ F(\r{O}_{X_n} \tr e_n \d{S}(\r{E})) = \begin{cases} R \cdot \Pi_R(S)(-n) & \textrm{if $n$ is even} \\ S \cdot \Pi_R(S)(-n) & \textrm{if $n$ is odd} \end{cases} \]
As both $R \cdot \Pi_R(S)$ and $S \cdot \Pi_R(S)$ are direct summands of $\Pi_R(S)$, which is a noetherian ring by Theorem \ref{thm:pirsnoetherian}, we have proven the theorem. \qed

\section{The Homological Properties of $\d{S}(\r{E})$}

\subsection{A Formula for $\Ext$-Groups}
\label{subsec:formula}
As before, in this section $\r{E}$ will be a locally free $X-Y$-bimodule of rank $(4,1)$ and we let $\r{A} := \d{S}(\r{E})$ denote the associated symmetric sheaf $\d{Z}$-algebra in standard form (see convention \ref{conv:conditions}). This section is dedicated to adapting the results in \cite{Mori07}, \cite{Nyman04}, \cite{Nyman09} and \cite{VdB_12} and to obtain a formula  for the $\Ext$-groups of pulled back sheaves on $\Proj(\r{A})$ along the truncation functors described in Subsection \ref{subsec:truncation}. To keep the geometric intuition we denote the functors $\omega \circ (-)_m:\Proj(\r{A})\mor \Qcoh(X_m)$ by ${\Pi_m}_*$ (compare with Theorem \ref{trm:commutativepushforward}). The left adjoints, which are given explicitly by $p((-)\tr e_m\r{A})$ following (\ref{def:e_nA}) and Definition \ref{def:proj}, are in turn denoted by $\Pi_m^*$. We shall use the notations $X_n$ and $Q_n$ as in Convention \ref{conv:Z_n} and  (\ref{eq:standardsheafZalgebra}).

\medskip

If $\r{E} \in \bimod(X-X)$ is locally free of rank (2,2) and $\r{A} = \d{S}(\r{E})$, \cite{Mori07} computes the Euler characteristics $\bl{\Pi_m^*\r{F}}{\Pi_n^*\r{G}}$ for two locally free sheaves $\r{F}$ and $\r{G}$ on $X$. In this section, we perform an analogous calculation in our setting where the bimodule $\r{E} \in \bimod(X-Y)$ is of rank (4,1). Motivated by Proposition \ref{prop:2-periodicity} our focus lies on $\bl{\Pi_m^*\r{F}}{\Pi_n^*\r{G}}$ with $\vert n-m\vert  \leq 1$.  This section is dedicated to proving the following slightly more general statement:

\begin{theorem}\label{thm:extandpullback}
Let $\r{E} \in \bimod(X,Y)$ be locally free of rank (4,1). Let $\r{F}$ and $\r{G}$ be locally free sheaves on $X_m$ respectively $X_n$ for $m,n \in \d{Z}$ such that $m \geq n-1$. Then

\[ \Ext^i_{\Proj(\r{A})} \left( \Pi_m^*\r{F}, \Pi_n^*\r{G} \right) \cong \Ext^i_{X_m} \left( \r{F}, \r{G} \tr \d{S}(\r{E})_{n,m} \right) \]
for all $i \geq 0$.
\end{theorem}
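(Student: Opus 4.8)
The plan is to reduce the computation of $\Ext$-groups on $\Proj(\r{A})$ to $\Ext$-groups of quasi-coherent sheaves on $X_m$, using the standard machinery of $\Z$-algebras: an $\Ext$-group of $\Pi$-pullbacks is computed by $p$-pushing a projective resolution of the first argument and then applying the adjunction $p \dashv \omega$. Concretely, $\Pi_m^* \r{F} = p(\r{F} \tr e_m \r{A})$, and $\RHom_{\Proj(\r{A})}(p(\r{F}\tr e_m\r{A}), \Pi_n^*\r{G})$ should be computed by a resolution of $\r{F}\tr e_m\r{A}$ by modules of the form $\r{P}\tr e_k\r{A}$ (with $\r{P}$ locally free on $X_k$ and $k\geq m$), for which the adjunction $-\tr e_k\r{A} \dashv (-)_k$ of Lemma \ref{lem:adjunctiontruncation} gives
\[
\Hom_{\Gr(\r{A})}(\r{P}\tr e_k\r{A}, \r{N}) \cong \Hom_{X_k}(\r{P}, \r{N}_k),
\]
and these projectives are $p$-acyclic and $\omega$-acyclic in the appropriate sense, so that passing to $\Proj$ introduces no correction terms in the range $k \geq m \geq n-1$. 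Combined with $\omega(\Pi_n^*\r{G})_k = (\Pi_n^* \r{G})_k$ for $k$ large — and more precisely with the identity $(\Pi_n^*\r{G})_k = \r{G}\tr \r{A}_{n,k}$ (from \eqref{def:e_nA} and the definition of $\Pi_n^*$) — one obtains
\[
\Ext^i_{\Proj(\r{A})}(\Pi_m^*\r{F}, \Pi_n^*\r{G}) \cong \Ext^i_{X_m}(\r{F}, \r{G}\tr\r{A}_{n,m}).
\]

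The key steps, in order, are: (1) build a "Koszul-type" left resolution of $e_m\r{A}$ as a graded $\r{A}$-module by sums of $\r{P}\tr e_k\r{A}$'s; one expects this to come from the two-term relation structure of a symmetric sheaf $\Z$-algebra (the relations $\r{Q}_i$ are concentrated in degree $2$, so $\r{A}$ is "quadratic" and $e_m\r{A}$ has a short, explicit resolution analogous to the Koszul resolution of $\O$ over a polynomial ring); tensoring with $\r{F}$ on the left gives a resolution of $\r{F}\tr e_m\r{A}$ since $\r{F}$ is locally free hence $-\tr_{X_m}\r{F}$ is exact. (2) Check that applying $\omega\circ p$ to the projectives $\r{P}\tr e_k\r{A}$ (with $k\geq m$) changes nothing relevant — i.e. these modules are already "saturated" up to torsion in the degrees $\geq n$ that matter — so that $\RHom_{\Proj(\r{A})}$ against $\Pi_n^*\r{G}$ may be computed directly on the resolution using $\Hom_{\Gr(\r{A})}$. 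This is where the hypothesis $m \geq n-1$ enters: it guarantees the resolution lives entirely in degrees $\geq n-1$, so that no $\Hom$ or $\Ext$ from the resolution into $\Pi_n^*\r{G}$ "sees" the torsion of $\Pi_n^*\r{G}$, which is supported in degrees $< n$. (3) Apply the truncation adjunction degreewise to rewrite the resulting complex as $\RHom_{X_m}(\r{F}, \r{G}\tr\r{A}_{n,\bullet})$ evaluated in degree $m$, and identify its cohomology with $\Ext^i_{X_m}(\r{F},\r{G}\tr\r{A}_{n,m})$ — here using that $\r{A}_{n,m}$ is a locally free bimodule (Corollary \ref{cor:locallyfree}), so that $\r{G}\tr\r{A}_{n,m}$ is genuinely quasi-coherent on $X_m$ and behaves well.

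The main obstacle I anticipate is step (2): controlling the effect of the $\Proj$-localization, i.e. showing that in the range of degrees $\geq n-1$ the ambient category $\Gr(\r{A})$ and the quotient $\Proj(\r{A})$ give the same $\Hom$'s and higher $\Ext$'s against the $\Pi$-pullbacks. This requires knowing that the projective modules $\r{P}\tr e_k\r{A}$ are "$\omega$-acyclic" — that $R^j\omega\, p(\r{P}\tr e_k\r{A})$ vanishes or is concentrated in low degree — which in turn rests on a vanishing statement for the cohomology of $\r{A}$ (the analogue of $H^{>0}(\d{P}^1, \O(d)) = 0$ for $d\geq -1$), together with the explicit computation of the ranks and structure of $\r{A}_{n,m}$ from Corollary \ref{cor:locallyfree}. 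This is precisely the part where \cite{Mori07}, \cite{Nyman04}, \cite{Nyman09} do the delicate bookkeeping; I would follow their argument, carrying out the relative (non-affine base) version and invoking the local description of $\r{A}$ via generalized preprojective algebras (Lemma \ref{lem:locallypirs}) to reduce the homological vanishing to the already-established noetherianity and the known homological dimension of $\Pi_R(S)$.
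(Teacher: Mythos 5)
Your high-level skeleton is the paper's: one reduces via the adjunction $\RHom_{\Proj(\r{A})}(\Pi_m^*\r{F},\r{M})\cong\RHom_{X_m}(\r{F},\der\Pi_{m,*}\r{M})$ (Lemma \ref{cor:differentRHom}) and then shows that passing to $\Proj$ costs nothing in the range $m\geq n-1$. But the way you justify the crucial step (2) does not work as stated. The correction terms are governed by the derived functors of $\omega\circ p$, equivalently of the torsion functor $\tau$, applied to the \emph{second} argument $\r{G}\tr e_n\r{A}$ and evaluated in degree $m$ (Lemmas \ref{lem:tau/omega} and \ref{lem:pushforward/tau}); they are not controlled by $\der^j\omega\, p(\r{P}\tr e_k\r{A})$ of the first-argument projectives, and they certainly do not vanish "because the torsion of $\Pi_n^*\r{G}$ sits in degrees $<n$": the module $\r{G}\tr e_n\r{A}$ is torsion-free, so $\tau$ of it is zero everywhere, and the real content is the vanishing of the \emph{higher} derived torsion, $\der^i\tau(\r{G}\tr e_n\r{A})_m=0$ for $i\geq 1$ and $m\geq n-1$. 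This is a genuine cohomology-vanishing theorem (the analogue of $\H^{>0}(\d{P}^1,\O(d))=0$ for $d\geq -1$), not a support observation. Relatedly, $e_m\r{A}$ needs no resolution — it is already of induced form; the "Koszul-type" complex coming from the quadratic relations resolves $\Theta_m=e_m(\r{A}/\r{A}_{\geq 1})$, and in the paper it enters on the side of the second argument, to compute $\der\tau\cong\varinjlim\underline{\ShExt}(\r{A}/\r{A}_{\geq l},-)$ (Lemmas \ref{lem:taushhom}, \ref{lem:concentrateddegreel}, \ref{lem:induction2}).

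The deeper gap is that this vanishing is precisely the part that does not transfer from the rank $(2,2)$ literature by bookkeeping, nor does it follow from noetherianity plus knowledge of $\Pi_R(S)$ via Lemma \ref{lem:locallypirs} (finite homological dimension of a local model gives no degreewise bound of the sharp form $m\geq n-1$, and the paper never argues this way). What it requires is the left-exactness of the fundamental complex (\ref{eq:locallyfree}), i.e. Theorem \ref{trm:seqmain}, whose proof in the rank $(4,1)$ case is the new ingredient: the classical point modules used by Van den Bergh collapse here (Lemma \ref{lem:notpointmod}), and the paper has to introduce modified point modules with dimension vector $(1,2,1,2,\ldots)$ to establish exactness and, as a byproduct, the local freeness and ranks of $\r{A}_{n,m}$ (Corollary \ref{cor:locallyfree}) — so you cannot invoke Corollary \ref{cor:locallyfree} as an independent input to this step either. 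Your plan correctly isolates where the difficulty lies, but it leaves exactly that step unproved and points to routes (Mori--Nyman transfer, local reduction to $\Pi_R(S)$) that do not by themselves supply it.
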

We have the following immediate corollary

\begin{corollary}

With the above assumptions, one has
\[	
\bl{\Pi_m^*\r{F}}{\Pi_n^*\r{G}}    = \bl{\r{F}}{ \r{G} \tr \d{S}(\r{E})_{n,m} }
\]

\end{corollary}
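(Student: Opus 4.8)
The plan is to transport the $\Ext$-computation from $\Proj(\r{A})$ down to $\Qcoh(X_m)$ via the adjunction $\Pi_m^*\dashv{\Pi_m}_*$ and then to isolate a cohomological vanishing statement about the ``free'' modules $e_n\r{A}$. Recall that $\Pi_m^*=p\circ(-\tr e_m\r{A})$ and ${\Pi_m}_*=\omega\circ(-)_m$, so that $\Hom_{\Proj(\r{A})}(\Pi_m^*\r{F},-)=\Hom_{X_m}(\r{F},{\Pi_m}_*(-))$. The functor ${\Pi_m}_*$ preserves injectives: $\omega$ is right adjoint to the exact functor $p$, and $(-)_m$ is by Lemma \ref{lem:adjunctiontruncation} right adjoint to $-\tr e_m\r{A}$, which is exact because each bimodule $\d{S}(\r{E})_{m,i}$ is locally free on the left (Corollary \ref{cor:locallyfree}). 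Hence the Grothendieck spectral sequence of the composite $\Hom_{X_m}(\r{F},-)\circ{\Pi_m}_*$ gives
\[
\Ext^p_{X_m}\!\big(\r{F},\,R^q{\Pi_m}_*\mathcal{M}\big)\ \Rightarrow\ \Ext^{p+q}_{\Proj(\r{A})}\!\big(\Pi_m^*\r{F},\,\mathcal{M}\big),
\]
and, taking $\mathcal{M}=\Pi_n^*\r{G}$, the theorem reduces to proving, for $m\geq n-1$: (A) ${\Pi_m}_*\Pi_n^*\r{G}\cong\r{G}\tr\d{S}(\r{E})_{n,m}$; and (B) $R^q{\Pi_m}_*\Pi_n^*\r{G}=0$ for $q>0$.

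Because the truncation $(-)_m\colon\Gr(\r{A})\to\Qcoh(X_m)$ is exact, one has $R^q{\Pi_m}_*\mathcal{M}=(R^q\omega\,\mathcal{M})_m$, so writing $M:=\r{G}\tr e_n\r{A}$ --- whose degree-$m$ component is precisely $\r{G}\tr\d{S}(\r{E})_{n,m}$ --- claims (A) and (B) together say that the unit $M\to\omega p M$ is an isomorphism in degrees $\geq n-1$ and that $R^q\omega p M$ vanishes in degrees $\geq n-1$ for all $q>0$. I would prove this by adapting the ampleness arguments of \cite{Mori07}, \cite{Nyman04} and \cite{Nyman09} to the rank $(4,1)$ case. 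The two inputs are: first, that the graded components $\d{S}(\r{E})_{n,m}$ are locally free (Corollary \ref{cor:locallyfree}), obtained from the rank-$(4,1)$ adaptation of point modules together with local noetherianity (Theorem \ref{thm:noeth}); second, a finite ``Koszul-type'' resolution of $e_n\r{A}$ as a graded $\r{A}$-module built from the shifts $e_{n+j}\r{A}$, the bimodules $\r{E}^{*\bullet}$ and the defining relations $\r{Q}_\bullet$ --- the analogue of Van den Bergh's resolution in the rank $(2,2)$ case and of the standard resolution of a generalized preprojective algebra over its degree-zero part. Tensoring this resolution with $\r{G}$, applying $\omega p$ and reading off (A) and (B) degreewise then reduces the problem to a Serre-type vanishing for the modules occurring in the resolution; the latter can be checked affine-locally, using the identification $\overline{\Gamma(\d{S}(\r{E}))}\cong\Pi_R(S)$ of Lemma \ref{lem:locallypirs}, the noetherianity of $\Pi_R(S)$ (Theorem \ref{thm:pirsnoetherian}), and the $2$-periodicity of Proposition \ref{prop:2-periodicity} to propagate the bound along the $\d{Z}$-grading. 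The hypothesis that $\r{F}$ and $\r{G}$ be locally free enters only to keep these tensor products exact and the $\Ext$-groups finite; in fact $\r{F}$ may be replaced by $\r{O}_{X_m}$ after a resolution by free sheaves.

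The main obstacle is the cohomological vanishing of the second step: producing the correct finite resolution of $e_n\r{A}$ in the rank $(4,1)$ setting and checking that the resulting regularity bound is exactly $m\geq n-1$. In the rank $(2,2)$ case this rests on the relative dimension $1$ of a $\d{P}^1$-bundle and on Van den Bergh's explicit three-term complex; here the ``fibres'' are no longer projective lines, so the combinatorics of the resolution --- reflected in the rank formulas, which agree with those of \cite{Nyman15} --- is more delicate, and controlling it is exactly what the paper's adaptation of the point-module technique is designed for. One should also dispatch separately the boundary case $m=n-1$, where $\d{S}(\r{E})_{n,n-1}=0$ and the assertion becomes the vanishing $\Ext^i_{\Proj(\r{A})}(\Pi_{n-1}^*\r{F},\Pi_n^*\r{G})=0$ for all $i\geq0$.
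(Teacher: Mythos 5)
The statement you were asked to prove is, in the paper, an immediate consequence of Theorem \ref{thm:extandpullback}, proved directly above it: since $\bl{-}{-}$ is the alternating sum of $\dim\Ext^i$, the Ext-level isomorphism $\Ext^i_{\Proj(\r{A})}(\Pi_m^*\r{F},\Pi_n^*\r{G})\cong\Ext^i_{X_m}(\r{F},\r{G}\tr\d{S}(\r{E})_{n,m})$ gives the Euler-form equality with no further work. Your proposal instead sets out to reprove that Ext-level statement, and its skeleton does match the paper's: $\Pi_m^*$ is exact (Lemma \ref{lem:pushforwardexact}, resting on Corollary \ref{cor:locallyfree}), hence ${\Pi_m}_*$ preserves injectives (Lemma \ref{cor:differentRHom}), and everything reduces to your claims (A) ${\Pi_m}_*\Pi_n^*\r{G}\cong\r{G}\tr\r{A}_{n,m}$ and (B) $R^q{\Pi_m}_*\Pi_n^*\r{G}=0$ for $q>0$ when $m\geq n-1$. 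You also correctly identify the resolution of $e_m\r{A}$ (Theorem \ref{trm:seqmain}) and the local freeness of the $\r{A}_{n,m}$ as the essential inputs, and you flag the boundary case $m=n-1$ correctly.

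The genuine gap is that (A) and (B) are the entire technical content of Section 4.1, and your proposal does not prove them: it defers to ``adapting the ampleness arguments'' and proposes to establish the vanishing affine-locally via $\overline{\Gamma(\d{S}(\r{E}))}\cong\Pi_R(S)$, noetherianity of $\Pi_R(S)$, and the $2$-periodicity of Proposition \ref{prop:2-periodicity}. That is not the paper's argument, and as stated it does not obviously work: the functors $\tau$, $\omega$ and $R^q{\Pi_m}_*$ are defined globally on $\Gr(\r{A})$ and $\Proj(\r{A})$, the local model $\Pi_R(S)$ only captures (a direct summand of) the restricted module category and is used in the paper solely for noetherianity, and nothing in your sketch produces the precise bound $m\geq n-1$. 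The paper's actual mechanism is: $R^q{\Pi_m}_*(p(-))\cong R^q\omega(p(-))_m\cong R^{q+1}\tau(-)_m$ (Lemmas \ref{lem:tau/omega} and \ref{lem:pushforward/tau}), $R^q\tau\cong\varinjlim_l\underline{\ShExt}^q(\r{A}/\r{A}_{\geq l},-)$ (Lemma \ref{lem:taushhom}, using Nyman's internal Hom), and then the vanishing $\underline{\ShExt}^i(\r{A}/\r{A}_{\geq l},\r{G}\tr e_n\r{A})_m=0$ for $m\geq n-1$ (Lemmas \ref{lem:concentrateddegreel} and \ref{lem:induction2}), where the degreewise exactness of the sequence in Theorem \ref{trm:seqmain} (the point-module result) and \cite[Cor.~4.6]{Nyman04} do the work; claim (A) then falls out of the four-term sequence in Lemma \ref{lem:tau/omega} together with torsion-freeness of $\r{G}\tr e_n\r{A}$ and the vanishing of $R^1\tau$ in degree $m$. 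Until you supply an argument at this level of detail (or simply invoke Theorem \ref{thm:extandpullback} and take alternating sums, also noting explicitly that the Ext isomorphism implies the Euler-form equality), the central step of your proposal remains unestablished.
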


The proof of Theorem \ref{thm:extandpullback} is based on the existence of an exact sequence (see (\ref{eq:locallyfree}) below). To this end, we consider $\Theta_m$ defined by
\[
(\Theta_m)_n=
\begin{cases}
	0  & m\neq n\\
	\r{O}_{X_m} & n=m
\end{cases}
\]
\begin{remark}
Note that $\Theta_m$ is a right $\r{A}$-module using  $\r{A}_{i,i}=\r{O}_{X_i}$
\end{remark}
\begin{theorem}\label{trm:seqmain}
For each $m$, there is an exact sequence of locally free $\bimod(\r{O}_{X_m}-\r{A})$-bimodules\footnote{ See \cite[Section 3.2.]{VdB_12} for the definition of this category}
\begin{equation}
 0 \mor \r{Q}_m\tr e_{m+2}\r{A} \mor  \r{E}^{*m}\tr e_{m+1}\r{A} \mor e_m\r{A} \mor \Theta_m \mor 0 
\label{eq:locallyfree}
\end{equation}
\end{theorem}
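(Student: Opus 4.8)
The plan is to construct the sequence (\ref{eq:locallyfree}) by splicing together two short exact sequences, one of which encodes the defining relations $\r{Q}_m$ of $\d{S}(\r{E})$ and the other of which is the standard "presentation of the structure module $\Theta_m$". Both of these will live in $\bimod(\r{O}_{X_m}-\r{A})$, so throughout we work in that category, and all maps will be maps of right $\r{A}$-modules which are simultaneously left $\r{O}_{X_m}$-linear. First I would observe that $e_m\r{A}$ is, by (\ref{def:e_nA}), the "free" right $\r{A}$-module on $\r{O}_{X_m}$ sitting in degree $m$, and similarly $\r{E}^{*m}\tr e_{m+1}\r{A}$ is the free right $\r{A}$-module on the bimodule $\r{E}^{*m}=\r{E}_m$ (in degree $m+1$) — indeed $\bigl(\r{E}^{*m}\tr e_{m+1}\r{A}\bigr)_n=\r{E}^{*m}\tr \r{A}_{m+1,n}=\d{T}(\r{E})_{m,n}$-component twisted appropriately, and the multiplication map $\d{T}(\r{E})_{m,m+1}\tr \r{A}_{m+1,n}\to \r{A}_{m,n}$ assembles into a morphism of right $\r{A}$-modules $\r{E}^{*m}\tr e_{m+1}\r{A}\to e_m\r{A}$. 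The cokernel of this morphism is exactly the part of $e_m\r{A}$ not generated in degree $\geq m+1$ over the degree-$(m,m+1)$ piece, which (since $\r{A}_{m,m}=\r{O}_{X_m}$ and $\r{A}$ is generated in degrees $i,i+1$) is precisely the degree-$m$ part, i.e. $\Theta_m$. This yields exactness at $e_m\r{A}$ and at $\Theta_m$, and gives the right-hand three terms.

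Next I would analyze the kernel $\r{K}$ of $\r{E}^{*m}\tr e_{m+1}\r{A}\to e_m\r{A}$. In degree $m+1$ the map is $\r{E}^{*m}\tr \r{A}_{m+1,m+1}=\r{E}^{*m}\to \r{A}_{m,m+1}=\r{E}^{*m}$, which is the identity, so $\r{K}_{m+1}=0$. In degree $m+2$ the map is $\r{E}^{*m}\tr \r{A}_{m+1,m+2}=\r{E}^{*m}\tr\r{E}^{*m+1}\to \r{A}_{m,m+2}=\d{S}(\r{E})_{m,m+2}$, whose kernel is, by the very definition of $\d{S}(\r{E})$ as a quotient of $\d{T}(\r{E})$ by the relation subbimodule, exactly $\r{Q}_m\subset \r{E}^{*m}\tr\r{E}^{*m+1}=\bigl(\r{E}^{*m}\tr e_{m+1}\r{A}\bigr)_{m+2}$. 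I claim $\r{K}$ is generated over $\r{A}$ by this degree-$(m+2)$ piece: since the relations of $\d{S}(\r{E})$ are generated in degree $2$ (this is how $\d{S}(\{\r{E}_i\},\{\r{Q}_i\})$ is defined), the kernel of $\d{T}(\r{E})_{m,-}\to\d{S}(\r{E})_{m,-}$ as a right $\d{T}(\r{E})$-module — hence a fortiori the kernel $\r{K}$ — is generated by $\r{Q}_m$ in degree $m+2$. This produces a surjection $\r{Q}_m\tr e_{m+2}\r{A}\to \r{K}$ of right $\r{A}$-modules. To see it is an isomorphism (i.e. injectivity, which is exactness at $\r{E}^{*m}\tr e_{m+1}\r{A}$), I would compare ranks in each degree: both sides are locally free bimodules by (the already-available instance of) the local picture, and the ranks of $\r{Q}_m\tr e_{m+2}\r{A}$ match the ranks of $\d{T}(\r{E})_{m,n}-\d{S}(\r{E})_{m,n}$ because the other two short exact sequences pin these down. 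Alternatively, and more cleanly, one reduces to the affine case via Corollary \ref{cor:openrestriction} and Lemma \ref{lem:locallypirs}, where $e_m\d{S}(\r{E})$ becomes (a summand of) $\Pi_R(S)(-m)$ and the sequence (\ref{eq:locallyfree}) becomes the standard "almost-exact" resolution of the simple-at-the-central-vertex / trivial module over the generalized preprojective algebra $\Pi_R(S)$, whose exactness for relative Frobenius pairs of rank $4$ is exactly the content invoked from \cite{Genpreprojective} (and which underlies Theorem \ref{thm:pirsnoetherian}).

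The local freeness assertion ("locally free $\bimod(\r{O}_{X_m}-\r{A})$-bimodules") I would handle degree by degree: $e_m\r{A}$ and $\r{E}^{*m}\tr e_{m+1}\r{A}$ are locally free in each degree because $\d{S}(\r{E})_{n,m}$ is locally free (this is the content of Corollary \ref{cor:locallyfree}, which we may assume), and $\r{E}^{*m}$, $\r{Q}_m$ are locally free bimodules by construction ($\r{Q}_m$ is an invertible bimodule, hence locally free of rank $(1,1)$); local freeness of the terms plus exactness then also gives that the maps are strict, so the sequence is an exact sequence of locally free bimodules in the required sense. \textbf{The main obstacle} is the exactness at $\r{E}^{*m}\tr e_{m+1}\r{A}$, i.e. that the relation submodule $\r{K}$ is \emph{freely} generated by $\r{Q}_m$ with no further syzygies — this is a genuine input, not formal, and is where the rank-$(4,1)$ hypothesis (equivalently, the relative-Frobenius-of-rank-$4$ property) does real work; I expect to prove it by the affine reduction to $\Pi_R(S)$ described above, invoking the structural results of \cite{Genpreprojective}, rather than by a direct bimodule computation.
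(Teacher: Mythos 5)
Your treatment of the right-hand part of the sequence (exactness at $e_m\r{A}$ and at $\Theta_m$) is fine and matches the paper, which also regards right exactness as formal from the shape of the relations. The genuine content is exactness at $\r{E}^{*m}\tr e_{m+1}\r{A}$ together with the local freeness claim, and on this point your proposal has a gap. Your first route is circular: the ranks of $\d{S}(\r{E})_{m,n}$ in Corollary \ref{cor:locallyfree} are obtained in the paper as a consequence of Theorem \ref{trm:seqmain} itself (additivity of rank along the exact sequence), so you may not ``assume'' them, and the right-exact truncations alone do not ``pin down'' these ranks --- without the injectivity you are trying to prove they only yield inequalities. Your second route outsources exactly the nontrivial step to \cite{Genpreprojective}: in this paper that reference is only invoked for the definition of $\Pi_R(S)$, its compatibility with localization, and the noetherianity statement (Theorem \ref{thm:pirsnoetherian}); no exactness of a standard complex over $\Pi_R(S)$ for relative Frobenius pairs of rank $4$ is imported from it, and noetherianity does not imply such exactness. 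Even granting such a statement, you would still need to identify the image of (\ref{eq:locallyfree}) under the embedding $\Gr(\d{S}(\r{E}))\hookrightarrow\Gr(\Pi_R(S))$ of Lemma \ref{lem:locallypirs} with that standard complex, a nontrivial verification of the kind the paper carries out elsewhere (compare the ``tedious computation'' alluded to in Lemma \ref{lem:concentrateddegreel}).

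The paper's actual proof is self-contained and goes the other way around: it checks exactness degreewise, pushes forward to $X_m$, and observes that since all terms have finite support, left exactness of the right-exact sequence can be verified numerically, i.e.\ by showing the fibre dimensions $\dim_\k(\r{O}_p\tr -)$ take the specific constant values listed in (\ref{eq:correctdimensions}) at every point $p$. These dimensions are established by induction using the modified notion of point module: Lemmas \ref{lem:notpointmod} and \ref{lem:pointupperbound} provide upper bounds, while the right-exact sequences (\ref{eq:locallyfreeloceven}), (\ref{eq:locallyfreelocodd}) and (\ref{eq:pointresolution}) provide lower bounds, and the two are shown to meet. Local freeness on the left then follows from the constancy of the fibre dimensions on the smooth variety $X_m$, local freeness on the right from \cite[Proposition 3.1.6]{VdB_12}, and Corollary \ref{cor:locallyfree} is read off afterwards. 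So the ``main obstacle'' you correctly identify is precisely the part your proposal does not actually prove, and neither of your two suggested ways around it works as stated.
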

\begin{proof}
By the nature of the relations this sequence is known to be right exact. The proof of the left exactness uses so-called `point  modules' and is deferred to Section \ref{sec:pointmodules}.
\end{proof}
As an immediate corollary of this theorem and its proof we find:
\begin{corollary}\label{cor:locallyfree}
for each $i,j \in \d{Z}$, the bimodule $\r{A}_{i,j}$ is locally free both on the left and on the right. The ranks are given by 
\[  \rk(\r{A})_{i,j}:=
\begin{cases}
(j-i+1,j-i+1) & i\equiv j \, mod \,2\\
\displaystyle \left(\frac{j-i+1}{2}, 2(j-i+1)\right) & i\, odd, \, j \, even\\
\displaystyle \left( 2(j-i+1), \frac{j-i+1}{2} \right) & i\, even, \, j \, odd
\end{cases}
\]
\begin{proof}
We have $\rk(\r{E})=(4,1)$ and $\rk(\r{E}^*)=(1,4)$, $\rk(\r{Q}_m)=(1,1)$. Since the rank is additive on short exact sequence, one can now verify the claim by induction in the three cases on $m$ using the sequences in (\ref{eq:locallyfree}).
\end{proof}

\end{corollary}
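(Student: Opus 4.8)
The plan is to derive both assertions from the exact sequence \eqref{eq:locallyfree} of Theorem \ref{trm:seqmain}. That theorem already asserts that $e_m\r{A}$ is a \emph{locally free} $\bimod(\r{O}_{X_m}-\r{A})$-bimodule, i.e.\ that each of its graded components $(e_m\r{A})_j=\r{A}_{m,j}$ is locally free over $X_m$ on the left and over $X_j$ on the right; taking $m=i$ this gives local freeness of every $\r{A}_{i,j}$, the remaining cases $i>j$ (where $\r{A}_{i,j}=0$) and $i=j$ (where $\r{A}_{i,i}=\r{O}_{X_i}$) being trivial. So the substance of the corollary is the rank formula.

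First I would record the two elementary properties of ranks of locally free bimodules that make the computation go through: additivity on short exact sequences of locally free bimodules — which follows because the pushforwards $\pi_{X_m*},\pi_{X_j*}$ are exact on $\bimod(X_m-X_j)$, the supports being finite, hence affine, over each base — and componentwise multiplicativity under $\tr$, i.e.\ $\rk(\r{F}\tr\r{G})$ is obtained from $\rk\r{F}$ and $\rk\r{G}$ by multiplying left ranks with left ranks and right ranks with right ranks (standard, cf.\ \cite[\S3.1]{VdB_12}). I would also note the input ranks: $\rk\r{Q}_m=(1,1)$ because $\r{Q}_m$ is invertible, and $\rk\r{E}^{*m}=(4,1)$ for $m$ even and $(1,4)$ for $m$ odd — the cases $m=0,1$ being Remark \ref{rem:dual} applied to $\r{E}={}_f\r{L}_{\Id}$, and the general case following from the isomorphism $\r{E}^{*2}\cong\omega_X^{-1}\tr\r{E}\tr\omega_Y$ used in Proposition \ref{prop:2-periodicity}, which has the same rank as $\r{E}$.

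Next, restricting \eqref{eq:locallyfree} to a graded degree $j>m$ annihilates $\Theta_m$ and leaves the short exact sequence of $\r{O}_{X_m}$-$\r{O}_{X_j}$-bimodules
\[ 0\mor \r{Q}_m\tr\r{A}_{m+2,j}\mor\r{E}^{*m}\tr\r{A}_{m+1,j}\mor\r{A}_{m,j}\mor 0. \]
Writing $\rk\r{A}_{m,j}=(a_{m,j},b_{m,j})$ and $\rk\r{E}^{*m}=(\alpha_m,\beta_m)$, the two properties above turn this into the recursions
\[ a_{m,j}=\alpha_m\,a_{m+1,j}-a_{m+2,j},\qquad b_{m,j}=\beta_m\,b_{m+1,j}-b_{m+2,j}\quad(j>m), \]
with base value $(a_{m,m},b_{m,m})=(1,1)$ and with $(\alpha_m,\beta_m)=(4,1)$ or $(1,4)$ according to the parity of $m$. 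I would then verify the claimed closed form by induction on $n:=j-m\ge 0$, splitting into the three cases of the statement according to the parities of $m$ and $j$ (and, within the equal-parity case, on whether $m$ itself is even or odd, since that determines whether $\rk\r{E}^{*m}$ equals $(4,1)$ or $(1,4)$). In each case $\r{A}_{m+1,j}$ falls into one of the unequal-parity cases and $\r{A}_{m+2,j}$ back into the equal-parity case, and substituting their inductively known ranks into the recursion reproduces exactly the asserted value; for instance in the equal-parity case the two factors of $2$ coming from $\rk\r{E}^{*m}$ and from the inductive value of $\r{A}_{m+1,j}$, together with the subtracted term, combine to give $(n+1,n+1)$.

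I do not expect a genuine obstacle here: once Theorem \ref{trm:seqmain} is granted the argument is pure bookkeeping, the only slightly delicate point being the exactness invoked above — that restricting \eqref{eq:locallyfree} to a fixed degree $j$ yields an exact sequence of bimodules to which additivity of rank applies — which is immediate, since exactness in $\Gr(\r{A})$ is degreewise and each $\pi_{X_\bullet*}$ is exact on sheaves supported finitely over the base.
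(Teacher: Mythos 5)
Your proposal is correct and follows essentially the same route as the paper: local freeness is read off from Theorem \ref{trm:seqmain}, and the rank formula is obtained by additivity of rank on the degreewise short exact sequences coming from (\ref{eq:locallyfree}), together with multiplicativity of ranks under $\tr$ and the input ranks $\rk\r{E}^{*m}\in\{(4,1),(1,4)\}$, $\rk\r{Q}_m=(1,1)$, verified by induction on $j-m$ in the parity cases. Your write-up simply makes explicit the recursion and bookkeeping that the paper's proof leaves to the reader.
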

This result in turn implies the following convenient fact
\begin{lemma}\label{lem:pushforwardexact}
For each $m \in \d{Z}$, $\Pi_m^*: \Qcoh(X_m) \mor \Proj(\r{A})$ is an exact functor
\end{lemma}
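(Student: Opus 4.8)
The plan is to unwind the identity $\Pi_m^* = p\circ\bigl((-)\tr e_m\r{A}\bigr)$ (see the discussion in Subsection~\ref{subsec:formula}) and to check exactness of the two factors separately. For the outer factor: by Definition~\ref{def:proj}, since $\Gr(\r{A})$ is Grothendieck (Theorem~\ref{thm:groth}) and locally noetherian (Theorem~\ref{thm:noeth}), $\Tors(\r{A})$ is a localizing subcategory of $\Gr(\r{A})$, so the projection $p\colon\Gr(\r{A})\mor\Proj(\r{A})$ is the quotient functor onto the localization and is therefore exact. It thus suffices to prove that
\[
-\tr e_m\r{A}\colon\Qcoh(X_m)\mor\Gr(\r{A}),\qquad \r{M}\longmapsto\bigl(\r{M}\tr_{X_m}\r{A}_{m,i}\bigr)_{i\in\d{Z}}
\]
is exact; being a left adjoint (Lemma~\ref{lem:adjunctiontruncation}) it is automatically right exact, so only left exactness requires an argument.

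For the inner factor I would use that $\Gr(\r{A})$ is abelian with kernels and cokernels formed degreewise (the Remark following Definition~\ref{def:proj}): a complex of graded $\r{A}$-modules is exact exactly when it is exact in each degree. Hence $-\tr e_m\r{A}$ is exact if and only if, for every $i\in\d{Z}$, the functor $-\tr_{X_m}\r{A}_{m,i}\colon\Qcoh(X_m)\mor\Qcoh(X_i)$ from~(\ref{def:tensorproductwithbimod}) is exact. As recorded immediately after~(\ref{def:tensorproductwithbimod}), tensoring with a bimodule is always right exact and is exact as soon as that bimodule is locally free on the left, i.e. $\pi_{X_m*}\r{A}_{m,i}$ is locally free.

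The last ingredient is Corollary~\ref{cor:locallyfree}, which states that every bimodule $\r{A}_{i,j}=\d{S}(\r{E})_{i,j}$ is locally free on both sides (with explicit ranks); in particular $\r{A}_{m,i}$ is left locally free for all $i\in\d{Z}$. Chaining the three steps: $-\tr_{X_m}\r{A}_{m,i}$ is exact for every $i$, hence $-\tr e_m\r{A}$ is exact, hence $\Pi_m^* = p\circ\bigl((-)\tr e_m\r{A}\bigr)$ is exact. I do not expect any real obstacle here: all the substance is in Corollary~\ref{cor:locallyfree} (and thus ultimately in the point-module analysis behind Theorem~\ref{trm:seqmain}), and once that is available the present lemma is a formal consequence. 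The only points deserving care are that left local freeness must hold in \emph{every} degree simultaneously---which is precisely what Corollary~\ref{cor:locallyfree} provides---and that exactness of a sequence in $\Gr(\r{A})$ is genuinely a degreewise condition.
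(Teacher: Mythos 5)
Your proposal is correct and follows essentially the same route as the paper: the paper's proof likewise reduces exactness of $\Pi_m^*$ to the exactness of each degreewise functor $-\tr\r{A}_{m,n}$, which holds because $\r{A}_{m,n}$ is locally free by Corollary \ref{cor:locallyfree} (with the exactness of $p$ and the degreewise nature of exactness in $\Gr(\r{A})$ left implicit). Your write-up simply makes those implicit steps explicit.
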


\begin{proof}
For each $n \geq m$, $\r{A}_{m,n}$ is locally free by Corollary \ref{cor:locallyfree}, hence the functor $- \tr \r{A}_{m,n}: \Qcoh(X_m) \mor \Qcoh(X_n)$ is exact.
\end{proof}
As an example application of the above lemma, we mention the following adjunction formula:
\begin{lemma} \label{cor:differentRHom}
There is a natural isomorphism for all $\r{F} \in \Qcoh(X_m)$ and ${\r{M} \in \r{D}^+(\Proj(\r{A}))}$:
\[ \RHom_{\Proj(\r{A})}(\Pi^*_m \r{F},\r{M}) \cong \RHom_{X_m}(\r{F},\der{\Pi_m}_* \r{M}) \]
\end{lemma}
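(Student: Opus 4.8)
The plan is to reduce this derived adjunction to the underived adjunction $\Pi_m^* \dashv {\Pi_m}_*$ recalled above by means of injective resolutions, the only nontrivial ingredient being that ${\Pi_m}_*$ preserves injectives. First I would recall that $\Proj(\r{A})$ is a Grothendieck category: it is the Serre quotient of the Grothendieck category $\Gr(\r{A})$ (Theorem \ref{thm:groth}) by the subcategory $\Tors(\r{A})$, which is indeed localizing because $\Gr(\r{A})$ is locally noetherian (Theorem \ref{thm:noeth}). Hence any $\r{M} \in \r{D}^+(\Proj(\r{A}))$ admits a quasi-isomorphism $\r{M} \to \r{I}^\bullet$ with $\r{I}^\bullet$ a bounded-below complex of injectives, and since injectives are ${\Pi_m}_*$-acyclic, $\der{\Pi_m}_*\r{M}$ is represented by ${\Pi_m}_*\r{I}^\bullet$. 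Since $\Pi_m^*\r{F}$ is a single object, $\RHom_{\Proj(\r{A})}(\Pi_m^*\r{F},\r{M})$ is computed by the Hom-complex $\Hom^\bullet_{\Proj(\r{A})}(\Pi_m^*\r{F},\r{I}^\bullet)$, and applying the underived adjunction in each degree identifies this with $\Hom^\bullet_{X_m}(\r{F},{\Pi_m}_*\r{I}^\bullet)$.

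The step that makes the right-hand side come out correctly is the claim that ${\Pi_m}_*\r{I}^\bullet$ is again a bounded-below complex of injectives, for then $\Hom^\bullet_{X_m}(\r{F},{\Pi_m}_*\r{I}^\bullet)$ already computes $\RHom_{X_m}(\r{F},{\Pi_m}_*\r{I}^\bullet) = \RHom_{X_m}(\r{F},\der{\Pi_m}_*\r{M})$, and the isomorphism of the lemma drops out, with naturality inherited from that of the underived adjunction. So I would check that ${\Pi_m}_* = (-)_m \circ \omega$ preserves injectives by treating the two factors separately: the section functor $\omega$ is right adjoint to the exact quotient functor $p$, hence preserves injectives; and $(-)_m$ is, by Lemma \ref{lem:adjunctiontruncation}, right adjoint to $- \tr e_m\r{A}$, which is computed degreewise by $- \tr \r{A}_{m,n}$ --- this vanishes for $n<m$ and is exact for $n \geq m$ since $\r{A}_{m,n}$ is then locally free on the left by Corollary \ref{cor:locallyfree} --- so $- \tr e_m\r{A}$ is exact on $\Gr(\r{A})$ and $(-)_m$ preserves injectives too.

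I do not expect a genuine obstacle here: once the local freeness of Corollary \ref{cor:locallyfree} is in hand, the statement is the standard fact that an exact functor between abelian categories (here $\Pi_m^*$, cf. Lemma \ref{lem:pushforwardexact}) whose right adjoint sends injectives to injectives remains left adjoint, on $\r{D}^+$, to the right derived functor of that adjoint. The only points demanding a little care are bookkeeping ones: that the Hom-complex computes $\RHom$ precisely because $\Pi_m^*\r{F}$ is concentrated in a single degree and $\r{I}^\bullet$ is a complex of injectives, and that ${\Pi_m}_*\r{I}^\bullet$ stays bounded below --- both immediate.
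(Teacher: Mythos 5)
Your proposal is correct and follows essentially the same route as the paper: the paper's proof simply notes that $\Pi_m^*$ is an exact left adjoint of ${\Pi_m}_*$ (via Lemma \ref{lem:pushforwardexact}, i.e.\ the local freeness of Corollary \ref{cor:locallyfree}), so that ${\Pi_m}_*$ preserves injectives and the derived adjunction follows. Your factor-by-factor check that $(-)_m\circ\omega$ preserves injectives and the explicit injective-resolution bookkeeping are just expansions of that same argument.
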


\begin{proof}
Since  $\Pi^*_m$ is an exact left adjoint to $\Pi_{m,*}$, the latter must preserve injective objects and the result follows.
\end{proof}

For the purposes of proving Theorem \ref{thm:extandpullback} we are especially interested in the case where $\r{M}=\Pi_n^* \r{G}$ for a locally free sheaf $\r{G}$ on $X_n$. It follows that we need to understand complexes of the form $R{\Pi_m}_*(\Pi_n^*\r{G})$. The strategy for computing the homology of this complex is as follows: by Lemma \ref{lem:pushforward/tau} below, it suffices to give a description the derived functors of the torsion functor $\tau$. These in turn follow from the derived functors of an internal Hom-functor $\underline{\ShHom}$ constructed in \cite{Nyman04} (Lemma \ref{lem:taushhom}).

\begin{lemma}\label{lem:tau/omega} We have the following facts for the derived functors of the torsion functor $\tau: \Gr(\r{A}) \mor \Tors(\r{A})$:
\begin{enumerate}[i)]
\item for $i \ge 1$, there is an isomorphism of functors
\[ \der^{i+1}\tau \cong (\der^i \omega) \circ p \]
\item For each $\r{M} \in \Gr(\r{A})$ there is an exact sequence:
\[ 0 \mor \tau(\r{M}) \mor \r{M} \mor \omega(p(\r{M})) \mor \der^1 \tau(\r{M}) \mor 0 \]
\end{enumerate}
\end{lemma}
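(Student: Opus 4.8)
The plan is to prove this as an instance of the standard machinery of localization at a localizing subcategory (going back to Gabriel; cf. its use in \cite{ArtZhang94}, \cite{Smith_99}, \cite{Nyman04}). Recall from Definition \ref{def:proj} that $p\colon \Gr(\r{A}) \to \Proj(\r{A})$ is exact, that $\omega$ is a fully faithful right adjoint with $p\omega \cong \Id$, so that $\omega p$ is left exact, and that $\tau\colon \Gr(\r{A}) \to \Tors(\r{A})$ is the left exact right adjoint of the exact inclusion $\iota\colon \Tors(\r{A}) \hookrightarrow \Gr(\r{A})$. Moreover $\Gr(\r{A})$ is Grothendieck (Theorem \ref{thm:groth}), hence has enough injectives, and is locally noetherian (Theorem \ref{thm:noeth}).

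First I would reduce the right-hand side of (i) from $(\der^i\omega)\circ p$ to $\der^i(\omega p)$. Since $p$ is exact and preserves injectives (the standard fact that the quotient functor of a localizing subcategory preserves injective objects), for any injective resolution $\r{M}\to \r{I}^\bullet$ in $\Gr(\r{A})$ the complex $p\r{I}^\bullet$ is an injective resolution of $p\r{M}$ in $\Proj(\r{A})$; applying $\omega$ and taking cohomology gives $\der^i(\omega p)(\r{M})\cong (\der^i\omega)(p\r{M})$ for all $i$. Hence it suffices to establish $\der^i(\omega p)\cong \der^{i+1}\tau$ for $i\ge 1$ together with the four-term sequence of (ii).

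For this I would analyse the unit $\eta\colon \Id\Rightarrow\omega p$. Applying the exact functor $p$ to $\eta_\r{M}$ yields an isomorphism (triangle identity for $p\dashv\omega$), so $\ker\eta_\r{M}$ and $\coker\eta_\r{M}$ both lie in $\Tors(\r{A})$; and since $\omega$ lands in the torsion-free objects — for torsion $T$ one has $\Hom_{\Gr}(T,\omega p\r{M})=\Hom_{\Proj}(pT,p\r{M})=0$ — the kernel of $\eta_\r{M}$ is exactly the maximal torsion submodule, i.e.\ $\tau\r{M}$. Then I would dimension-shift: choose $0\to\r{M}\to\r{I}\to\r{N}\to 0$ with $\r{I}$ injective, so that $\der^{>0}\tau(\r{I})=0$ by definition and $\der^{>0}(\omega p)(\r{I})=(\der^{>0}\omega)(p\r{I})=0$ because $p\r{I}$ is injective in $\Proj(\r{A})$, and (see the next paragraph) $\omega p\r{I}=\r{I}/\tau\r{I}$. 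Comparing, through $\eta$, the long exact sequences of $\der^\bullet\tau$ and of $\der^\bullet(\omega p)$ attached to $0\to\r{M}\to\r{I}\to\r{N}\to 0$, and iterating, produces the exact sequence $0\to\tau\r{M}\to\r{M}\to\omega p\r{M}\to\der^1\tau\r{M}\to 0$ and the isomorphisms $\der^i(\omega p)\r{M}\cong\der^{i+1}\tau\r{M}$ for $i\ge 1$, which combined with the previous step gives (i). An equivalent route is to run the colimit-over-truncations computation of $\tau$ and $\omega p$ as in \cite{ArtZhang94} and \cite{Nyman04}.

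The main obstacle is to pin down $\omega p\r{I}$ for $\r{I}$ injective. Here one uses that $\Gr(\r{A})$ is locally noetherian (Theorem \ref{thm:noeth}), so the localizing subcategory $\Tors(\r{A})$ is stable (closed under injective hulls) and every injective $\r{I}$ splits as $\r{I}_{\mathrm{tors}}\oplus\r{I}_{\mathrm{tf}}$ with $\r{I}_{\mathrm{tors}}=\tau\r{I}$ a torsion injective and $\r{I}_{\mathrm{tf}}$ a torsion-free injective. Since $p\r{I}=p\r{I}_{\mathrm{tf}}$, one has $\omega p\r{I}=\omega p\r{I}_{\mathrm{tf}}$, and the mono $\eta_{\r{I}_{\mathrm{tf}}}\colon\r{I}_{\mathrm{tf}}\hookrightarrow\omega p\r{I}_{\mathrm{tf}}$ has torsion cokernel which, by injectivity of $\r{I}_{\mathrm{tf}}$, is a direct summand of the torsion-free module $\omega p\r{I}_{\mathrm{tf}}$, hence zero; so $\omega p\r{I}=\r{I}_{\mathrm{tf}}=\r{I}/\tau\r{I}$. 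Once this decomposition and the resulting acyclicities are in place, all the long-exact-sequence comparisons above are purely formal.
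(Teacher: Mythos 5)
Your overall strategy is sound, and it is in effect a self-contained proof of the general localization statement that the paper merely cites: the paper's own proof consists of (a) local noetherianity of $\Gr(\r{A})$ (Theorem \ref{thm:noeth}), (b) stability of $\Tors(\r{A})$ under essential extensions, quoted from \cite[Lemma 2.12]{Nyman09}, and then (c) an appeal to \cite[Theorem 2.14.15]{Smith_99}. Your dimension-shifting argument, together with the splitting $\r{I}=\tau\r{I}\oplus\r{I}_{\mathrm{tf}}$ and the identification $\omega p\r{I}=\r{I}/\tau\r{I}$ for injective $\r{I}$, is precisely the proof of that cited theorem, so the formal part of your write-up is fine and has the advantage of making the mechanism explicit rather than outsourcing it.

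The one genuine weak point is the justification of the key non-formal input. You assert that \emph{because} $\Gr(\r{A})$ is locally noetherian, the localizing subcategory $\Tors(\r{A})$ is stable (closed under injective hulls). That implication is not a general fact: a localizing subcategory of a locally noetherian Grothendieck category need not be stable, and stability is exactly the hypothesis of \cite[Theorem 2.14.15]{Smith_99} that must be verified for this particular torsion class. (The same caveat applies to your opening claim that the quotient functor of an arbitrary localizing subcategory preserves injectives; in this generality that is tied to stability, and in your own argument it is anyway a consequence of the splitting $\r{I}=\tau\r{I}\oplus\r{I}_{\mathrm{tf}}$, so it should come after stability is established, not before.) Stability does hold here, but it uses the specific shape of $\Tors(\r{A})$: a noetherian torsion module is right bounded; if $\r{T}\subseteq\r{N}$ is essential with $\r{N}$ noetherian and $\r{T}$ torsion, say $\r{T}_n=0$ for $n\ge s$, then the submodule of $\r{N}$ generated by the components $\r{N}_n$ with $n\ge s$ lives in degrees $\ge s$ (since $\r{A}_{n,m}=0$ for $m<n$), hence meets $\r{T}$ trivially, hence is zero, so $\r{N}_{\ge s}=0$; the general case follows by writing a module as the directed union of its noetherian submodules, using local noetherianity. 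This is the content of \cite[Lemma 2.12]{Nyman09}, which the paper invokes at exactly this point; with that argument inserted (or that reference cited) in place of ``locally noetherian, so stable,'' your proof is complete.
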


\begin{proof}
By Theorem \ref{thm:noeth},  $\Gr(\r{A})$ is a locally noetherian category. Moreover, by \cite[Lemma 2.12]{Nyman09}, any essential extension of a torsion module remains a torsion module. In particular, the category $\Tors(\r{A})$ is closed under injective envelopes, the result now follows from \cite[Theorem 2.14.15]{Smith_99}.
\end{proof}

\begin{lemma}\label{lem:pushforward/tau}
For $i\ge 1$, and $\r{F} \in \Qcoh(X_m)$ there is an isomorphism
$$\der^i{\Pi_m}_*(\Pi_n^*\r{F}) \cong \der^{i+1}\tau(\r{F} \tr e_n \r{A})_m$$
\end{lemma}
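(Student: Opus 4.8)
The plan is to unravel the functors appearing on both sides of the claimed isomorphism and reduce it to Lemma \ref{lem:tau/omega}(i). Recall from the start of this subsection that $\Pi_n^*(-) = p\big((-)\tr e_n\r{A}\big)$ and that ${\Pi_m}_*(-) = \big(\omega(-)\big)_m$, where $p:\Gr(\r{A})\mor\Proj(\r{A})$ is the projection functor, $\omega$ its right adjoint, and $(-)_m:\Gr(\r{A})\mor\Qcoh(X_m)$ the degree-$m$ truncation functor. So the left-hand side is the $i$-th derived functor of $\r{M}\mapsto\big(\omega\r{M}\big)_m$, evaluated at $p(\r{F}\tr e_n\r{A})$.

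First I would record two facts. The truncation functor $(-)_m$ is exact: by the remark following Definition \ref{def:proj} the category $\Gr(\r{A})$ is abelian with kernels and cokernels formed degreewise, so $(-)_m$ preserves exact sequences. And $\Proj(\r{A})$ has enough injectives, being the quotient of the Grothendieck category $\Gr(\r{A})$ (Theorem \ref{thm:groth}) by the localizing subcategory $\Tors(\r{A})$ (which is localizing by Theorem \ref{thm:noeth}). Computing $\der^i{\Pi_m}_*$ by means of an injective resolution $\r{M}\mor I^\bullet$ in $\Proj(\r{A})$, the exactness of $(-)_m$ lets it commute past cohomology, so that
\[
\der^i{\Pi_m}_*(\r{M}) \;=\; H^i\big((\omega I^\bullet)_m\big) \;\cong\; \big(H^i(\omega I^\bullet)\big)_m \;=\; \big(\der^i\omega(\r{M})\big)_m
\]
naturally in $\r{M}\in\Proj(\r{A})$, for every $i\ge 0$.

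It then remains to take $\r{M} = p(\r{F}\tr e_n\r{A})$ and to invoke Lemma \ref{lem:tau/omega}(i), which for $i\ge 1$ supplies a natural isomorphism of functors $\der^i\omega\circ p \cong \der^{i+1}\tau$ on $\Gr(\r{A})$. Evaluating at $\r{F}\tr e_n\r{A}$ and applying $(-)_m$ yields
\[
\der^i{\Pi_m}_*\big(\Pi_n^*\r{F}\big) \;\cong\; \big(\der^i\omega\,(p(\r{F}\tr e_n\r{A}))\big)_m \;\cong\; \der^{i+1}\tau(\r{F}\tr e_n\r{A})_m,
\]
which is the asserted formula. I do not anticipate a real obstacle: the argument is purely formal. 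The only points needing care are the exactness of $(-)_m$ — which is what allows moving it inside the derived functor — and bookkeeping of naturality in $\r{F}$, both of which are immediate.
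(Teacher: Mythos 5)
Your proposal is correct and follows essentially the same route as the paper: both deduce $\der^i{\Pi_m}_*(p(-)) \cong \big(\der^i\omega(p(-))\big)_m$ from the exactness of the truncation functor (the paper also cites exactness of $p$), and then conclude via Lemma \ref{lem:tau/omega}(i) applied to $\r{F}\tr e_n\r{A}$.
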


\begin{proof}
As the functors $p$ and $(-)_m$ are exact there is a functorial isomorphism 
\begin{equation}
\label{eq:pushforward/tau}
(\der^i {\Pi_m}_*)(p)(-) \cong \der^i\omega(p(-))_m
\end{equation}
Combining this isomorphism with the one in lemma \ref{lem:tau/omega} we obtain for each $i \geq 1:$
\[ \der^i{\Pi_m}_*(\Pi_n^*\r{\r{F}}) :=  \der^i{\Pi_m}_*(p(\r{F} \tr e_n \r{A})) \cong \der^i \omega(p(\r{F} \tr e_n \r{A}))_m \cong \der^{i+1}\tau(\r{F} \tr e_n \r{A})_m \qedhere
\]
\end{proof}
\noindent
The following is based on \cite[Section 3.2]{Nyman04}:\\
Let $\BiMod(\r{A}- \r{A})$ denote the category whose objects are of the form $${\{ \r{B}_{m,n}\in \BiMod(X_m-X_n) \}_{m,n}}$$ such that the left and right multiplications 
\begin{eqnarray*}
&\r{A}_{l,m} \tr \r{B}_{m,n} \mor \r{B}_{l,n}\text{ and } 
\r{B}_{m,n} \tr \r{A}_{n,l} \mor \r{B}_{m,l}
\end{eqnarray*} 
 are compatible in the obvious sense. We denote by $\d{B}$ for the subcategory for which all $\r{B}_{m,n}$ are coherent and locally free. There are $\Hom$-functors
\begin{eqnarray*}
&\underline{\ShHom}:\d{B}^{op} \times \Gr(\r{A}) \mor \Gr(\r{A}) \text{ and }\\
&\ShHom: \BiMod(\r{O}_{X_n}-\r{A}) \times \Gr(\r{A}) \mor \Qcoh(X_n)
 \end{eqnarray*}
satisfying the following properties:

\begin{proposition}\label{prp:sheafhom}
\begin{enumerate}[i)]
\item $\underline{\ShHom}(\r{B},\r{M})_m = \ShHom(e_m  \r{B}, \r{M})$ for all $\r{B} \in \d{B}$ and $\r{M} \in \Gr(\r{A})$
\item $\underline{\ShHom}: \d{B}^{op} \times \Gr(\r{A}) \mor \Gr(\r{A}$) is a bifunctor, left exact in both its arguments
\item $\ShHom: \BiMod(\r{O}_{X_n}-\r{A}) \times \Gr(\r{A}) \mor \Qcoh(X_n)$ is a bifunctor, left exact in both its arguments
\item $\ShHom(\r{Q} \tr e_m \r{A}, \r{M}) \cong \r{M}_m \tr \r{Q}^*$ for all $\r{M} \in \Gr(\r{A})$ and locally free $X_m$-bimodules $\r{Q}$
\end{enumerate}
\end{proposition}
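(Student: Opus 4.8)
The plan is to recall, following \cite[Section~3.2]{Nyman04}, the construction of the two functors and then verify the four properties; compared with the rank $(2,2)$ situation treated there, the only extra input needed is that every $\r{A}_{i,j}$ is coherent and locally free on both sides, which is Corollary~\ref{cor:locallyfree}. Concretely, for $\r{N}\in\BiMod(\r{O}_{X_n}-\r{A})$ coherent and locally free and $\r{M}\in\Gr(\r{A})$ one characterizes $\ShHom(\r{N},\r{M})\in\Qcoh(X_n)$ by the tensor--hom adjunction
\[
\Hom_{X_n}\big(\r{P},\ShHom(\r{N},\r{M})\big)\;\cong\;\Hom_{\Gr(\r{A})}\big(\r{P}\tr_{X_n}\r{N},\r{M}\big),
\]
natural in $\r{P}\in\Qcoh(X_n)$, where $\r{P}\tr_{X_n}\r{N}$ denotes the graded $\r{A}$-module $\big(\r{P}\tr_{X_n}\r{N}_i\big)_i$. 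Existence of such a quasi-coherent sheaf, together with the explicit equalizer description that makes it computable and left exact, is exactly as in \cite{Nyman04} once one restricts to an affine open cover of $X_n$. One then \emph{defines} $\underline{\ShHom}(\r{B},\r{M})$ for $\r{B}\in\d{B}$ degreewise by $\underline{\ShHom}(\r{B},\r{M})_m:=\ShHom(e_m\r{B},\r{M})$, so that (i) holds by construction, and equips it with a graded right $\r{A}$-module structure as follows: the bimodule structure on $\r{B}$ gives morphisms of right $\r{A}$-modules $\r{A}_{i,m}\tr_{X_m}e_m\r{B}\mor e_i\r{B}$, and composing the induced map with the evaluation morphism $\ShHom(e_i\r{B},\r{M})\tr_{X_i}e_i\r{B}\mor\r{M}$ produces, via the adjunction above, the required maps $\underline{\ShHom}(\r{B},\r{M})_i\tr_{X_i}\r{A}_{i,m}\mor\underline{\ShHom}(\r{B},\r{M})_m$.

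For (iii): bifunctoriality of $\ShHom$ is immediate from its defining universal property, and left exactness in both variables follows by Yoneda from the facts that $\r{P}\tr_{X_n}-$ is right exact, that $\Hom_{\Gr(\r{A})}(-,\r{M})$ is left exact, and that $\Hom_{\Gr(\r{A})}(\r{P}\tr_{X_n}\r{N},-)$ is left exact; quasi-coherence of the result is built into the construction. For (ii): one first checks that the structure maps described above genuinely make $\underline{\ShHom}(\r{B},\r{M})$ an object of $\Gr(\r{A})$ --- associativity and unitality being a diagram chase using associativity of the $\r{A}$-action on $\r{B}$ together with naturality of evaluation --- and then bifunctoriality and left exactness in both arguments follow from (iii), since kernels, cokernels and exactness in $\Gr(\r{A})$ are computed degreewise.

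For (iv): writing $n$ for the left index of $\r{Q}$ and using $\r{P}\tr_{X_n}(\r{Q}\tr e_m\r{A})=(\r{P}\tr_{X_n}\r{Q})\tr e_m\r{A}$, one computes for $\r{P}\in\Qcoh(X_n)$
\[
\Hom_{\Gr(\r{A})}\big(\r{P}\tr_{X_n}(\r{Q}\tr e_m\r{A}),\r{M}\big)\;\cong\;\Hom_{X_m}\big(\r{P}\tr_{X_n}\r{Q},\r{M}_m\big)\;\cong\;\Hom_{X_n}\big(\r{P},\r{M}_m\tr\r{Q}^*\big),
\]
where the first isomorphism is the adjunction $-\tr e_m\r{A}\ \dashv\ (-)_m$ of Lemma~\ref{lem:adjunctiontruncation} and the second is the duality adjunction of Lemma~\ref{lem:dual}, applicable because $\r{Q}$ is locally free. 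Comparing with the universal property of $\ShHom$ and invoking Yoneda yields $\ShHom(\r{Q}\tr e_m\r{A},\r{M})\cong\r{M}_m\tr\r{Q}^*$.

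The hard part will not be the four verifications, which are formal once the functors are in place, but the setup: one must check that the equalizer description of $\ShHom$ from \cite{Nyman04} is compatible with restriction to affine opens, so that it glues to a well-defined quasi-coherent sheaf independent of choices, and one must verify the associativity axiom for the graded $\r{A}$-module $\underline{\ShHom}(\r{B},\r{M})$. It is precisely here that the standing hypotheses --- $X_i$ smooth varieties, $\r{Q}$ and $\r{B}$ locally free, and every $\r{A}_{i,j}$ coherent and locally free by Corollary~\ref{cor:locallyfree} --- make the rank $(2,2)$ arguments of \cite{Nyman04} carry over with only cosmetic changes.
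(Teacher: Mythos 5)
Your proposal is correct, but it is a more self-contained route than the paper's, whose proof of this proposition consists entirely of citations: (i) is declared immediate from the definitions in \cite[\S 3.2]{Nyman04}, (ii) is \cite[Proposition 3.11, Theorem 3.16(1)]{Nyman04}, and (iii), (iv) are \cite[Theorem 3.16(3),(4)]{Nyman04}. You instead take the tensor--hom adjunction $\Hom_{X_n}\big(\r{P},\ShHom(\r{N},\r{M})\big)\cong\Hom_{\Gr(\r{A})}\big(\r{P}\tr\r{N},\r{M}\big)$ as the characterizing property, define $\underline{\ShHom}$ degreewise so that (i) holds by construction, and deduce (ii)--(iv) formally; in particular your proof of (iv), combining the adjunction of Lemma \ref{lem:adjunctiontruncation} with the duality of Lemma \ref{lem:dual}, is a clean substitute for the citation of Theorem 3.16(4). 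What this buys is transparency about what the four statements actually rest on; what it costs is that you still import from \cite{Nyman04} the representability (the equalizer construction and its gluing over an affine cover) and, strictly speaking, the fact that Nyman's explicitly constructed functors satisfy your defining adjunction, so that the later appeals to his results for these same functors (Corollary 4.6 in Lemma \ref{lem:concentrateddegreel}, and \cite[Proposition 3.19]{Nyman09} in Lemma \ref{lem:taushhom}) apply verbatim; uniqueness of representing objects settles this identification, but it should be said. Two minor points: Corollary \ref{cor:locallyfree} is not actually needed for (i)--(iv) as stated (local freeness of the components of $\r{B}$ is built into the definition of $\d{B}$, and (iv) only needs $\r{Q}$ locally free so that $\r{Q}^*$ exists), though invoking it is harmless and non-circular since it is established via point modules independently of this proposition; and your left-exactness arguments tacitly use that exactness in $\Qcoh(X_n)$ is detected by $\Hom(\r{P},-)$ for all $\r{P}$ and that exactness in $\Gr(\r{A})$ is computed degreewise, both of which are fine.
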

\begin{proof}
\begin{enumerate}
\item[\emph{i)}] This follows immediately by checking the precise definitions in \cite[\S 3.2]{Nyman04}
\item[\emph{ii)}] see \cite[Proposition 3.11, Theorem 3.16(1)]{Nyman04}
\item[\emph{iii)}] see \cite[Theorem 3.16(3)]{Nyman04}
\item[\emph{iv)}] see \cite[Theorem 3.16(4)]{Nyman04} \qedhere
\end{enumerate}
\end{proof}

By \emph{ii)} and \emph{iii)} in the above proposition one can define the right derived functors $\underline{\ShExt}^{i}$ and $\ShExt^{i}$ for all $i \geq 0$. Moreover we use the notation $\r{A}_{\geq l}$ to denote the object in $\d{B}$ given by 
\[ \left( \r{A}_{\geq l} \right) _{m,n} = \left\{ \begin{array}{cc} \r{A}_{m,n} & \textrm{ if $n-m \geq l$} \\ 0 & \textrm{ else} \end{array} \right. \]
and $\r{A}_0 := \r{A} / \r{A}_{\geq 1}$. Then we have the following relation between the derived functors of $\tau$ and $\underline{\ShExt}^{i}$:
\begin{lemma}\label{lem:taushhom}
$\displaystyle R^i\tau(-)\cong \lim_{l \to \infty} \underline{\ShExt}^{i}_{\Gr({\r{A})}}(\r{A}/\r{A}_{\ge l},- )$
\end{lemma}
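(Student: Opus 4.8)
The plan is to realize the torsion functor $\tau$ as a filtered colimit of the left exact functors $\underline{\ShHom}(\r{A}/\r{A}_{\ge l},-)$ and then commute that colimit past the cohomology which computes the derived functors, using that $\Gr(\r{A})$ is Grothendieck. This mirrors the classical computation of local cohomology in \cite{ArtZhang94} and its sheafy incarnation in \cite{Nyman04}; the content is in transporting it to sheaf $\d{Z}$-algebras.

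First I would establish a natural isomorphism of functors $\tau \cong \varinjlim_{l}\underline{\ShHom}(\r{A}/\r{A}_{\ge l},-)$. Applying $\underline{\ShHom}(-,\r{M})$ --- left exact in its first variable by Proposition \ref{prp:sheafhom} ii) --- to the exact sequence $0\to\r{A}_{\ge l}\to\r{A}\to\r{A}/\r{A}_{\ge l}\to 0$ in $\d{B}$, and using the identification $\underline{\ShHom}(\r{A},-)\cong \mathrm{id}$ (a consequence of Proposition \ref{prp:sheafhom} i) and iv)), one sees that $\underline{\ShHom}(\r{A}/\r{A}_{\ge l},\r{M})$ is naturally the largest graded submodule $\r{N}\subseteq\r{M}$ with $\r{N}\cdot\r{A}_{\ge l}=0$; the surjections $\r{A}/\r{A}_{\ge l'}\epi\r{A}/\r{A}_{\ge l}$ for $l'\ge l$ turn these into an increasing chain of submodules of $\r{M}$. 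Its union consists of the sections of $\r{M}$ annihilated by $\r{A}_{\ge l}$ for some $l$, equivalently the largest submodule each of whose cyclic submodules is right bounded. This is exactly the largest torsion submodule $\tau(\r{M})$: a right bounded submodule is torsion, and conversely a torsion submodule is, since $\Gr(\r{A})$ is Grothendieck (Theorem \ref{thm:groth}), a filtered union of images of right bounded modules, hence has all of its cyclic submodules right bounded. All the constructions are manifestly functorial in $\r{M}$.

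Next I would pass to derived functors. As $\Gr(\r{A})$ is Grothendieck it has enough injectives and exact filtered colimits; fix $\r{M}\in\Gr(\r{A})$ and an injective resolution $\r{M}\to I^{\bullet}$. Left exactness of $\underline{\ShHom}(\r{A}/\r{A}_{\ge l},-)$ (Proposition \ref{prp:sheafhom} ii)) gives $\underline{\ShExt}^{i}(\r{A}/\r{A}_{\ge l},\r{M})=H^{i}\big(\underline{\ShHom}(\r{A}/\r{A}_{\ge l},I^{\bullet})\big)$, and combining this with the previous step and the exactness of filtered colimits yields
\[ R^{i}\tau(\r{M})=H^{i}\big(\tau(I^{\bullet})\big)=H^{i}\Big(\varinjlim_{l}\underline{\ShHom}(\r{A}/\r{A}_{\ge l},I^{\bullet})\Big)=\varinjlim_{l}H^{i}\big(\underline{\ShHom}(\r{A}/\r{A}_{\ge l},I^{\bullet})\big)=\varinjlim_{l}\underline{\ShExt}^{i}(\r{A}/\r{A}_{\ge l},\r{M}), \]
naturally in $\r{M}$, which is the assertion.

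The only genuinely delicate point is the first step: making precise, within the internal $\ShHom$ formalism of \cite{Nyman04} in the sheaf (rather than purely ring-theoretic) setting, the identification of $\underline{\ShHom}(\r{A}/\r{A}_{\ge l},\r{M})$ with the ``$\r{A}_{\ge l}$-torsion'' subobject of $\r{M}$, and verifying that the colimit over $l$ recovers $\tau$ on the nose and not some larger or smaller subfunctor. Granting Step 1, Step 2 is formal.
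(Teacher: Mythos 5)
Your proposal is correct and follows the paper's own route: the paper likewise proves the lemma by identifying $\tau$ with $\varinjlim_{l}\underline{\ShHom}_{\Gr(\r{A})}(\r{A}/\r{A}_{\ge l},-)$ and then applying this to an injective resolution and taking homology. The only difference is that the paper obtains the colimit description of $\tau$ by citing \cite[Proposition 3.19]{Nyman09}, whereas you sketch a direct proof of that identification; your remaining step (commuting the filtered colimit past cohomology via exactness of filtered colimits in the Grothendieck category $\Gr(\r{A})$) is exactly what the paper leaves implicit.
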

\begin{proof}
By \cite[Proposition 3.19]{Nyman09}, we have an isomorphism of functors
\[ \tau \cong \lim_{l \to \infty} \underline{\ShHom}_{\Gr( \r{A})}( \r{A}/\r{A}_{\ge l},-) \]
Applying this to the injective resolution and subsequently taking homology yields the required result
\end{proof}

\begin{lemma} \label{lem:concentrateddegreel}
Let $\r{B} \in \d{B}$ be concentrated in degree $l\geq 0$ (i.e. $\r{B}_{m,n}=0$ whenever $m+l \neq n$) and $\r{V}$ a locally free sheaf. Then for $n-l-1 \le m$ and for all $i \geq 0$:
\[ \underline{\ShExt}^{i}(\r{B}, \r{V}\tr e_n\r{A})_m=0 \]
\end{lemma}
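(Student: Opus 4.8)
The plan is to pass, in each degree $m$, from the graded internal $\underline{\ShExt}$ to an ordinary sheaf $\ShExt$, then to replace $e_m\r{B}$ by a length-two locally free resolution coming from Theorem~\ref{trm:seqmain}, and so to reduce the whole statement to the exactness of a single short complex of locally free bimodules.

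First I would invoke Proposition~\ref{prp:sheafhom}(i), $\underline{\ShHom}(\r{B},-)_m=\ShHom(e_m\r{B},-)$; since the truncation $(-)_m$ on $\Gr(\r{A})$ is exact, applying it to an injective resolution of $\r{M}$ yields $\underline{\ShExt}^{i}(\r{B},\r{M})_m\cong\ShExt^{i}(e_m\r{B},\r{M})$ for all $i$. Because $\r{B}$ is concentrated in degree $l$ and lies in $\d{B}$, the right $\r{A}$-module $e_m\r{B}$ is concentrated in degree $p:=m+l$, where it equals the locally free bimodule $\r{B}_{m,p}$; that is $e_m\r{B}\cong\r{B}_{m,p}\tr\Theta_p$ in $\BiMod(\r{O}_{X_m}-\r{A})$. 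Next, tensoring the exact sequence (\ref{eq:locallyfree}) at index $p$ on the left by the locally free bimodule $\r{B}_{m,p}$ produces a length-two resolution of $e_m\r{B}$ by modules of the form $\r{C}\tr e_q\r{A}$ with $\r{C}$ locally free. Since $\ShHom(\r{C}\tr e_q\r{A},-)$ is exact by Proposition~\ref{prp:sheafhom}(iv), applying $\ShHom(-,\r{V}\tr e_n\r{A})$ to this resolution computes $\ShExt^{i}(e_m\r{B},\r{V}\tr e_n\r{A})$; using Proposition~\ref{prp:sheafhom}(iv) and the identity $(\r{E}^{*p})^{*}=\r{E}^{*(p+1)}$ from (\ref{eq:yoneda}), this is the cohomology of
\begin{equation}
0 \mor \r{V}\tr\r{A}_{n,p}\tr\r{B}_{m,p}^{*} \mor \r{V}\tr\r{A}_{n,p+1}\tr\r{E}^{*(p+1)}\tr\r{B}_{m,p}^{*} \mor \r{V}\tr\r{A}_{n,p+2}\tr\r{Q}_p^{*}\tr\r{B}_{m,p}^{*} \mor 0 .
\tag{$\ast$}
\end{equation}
As tensoring on the left with $\r{V}$ and on the right with $\r{B}_{m,p}^{*}$ are exact (both sheaves being locally free), $(\ast)$ is exact if and only if
\[ 0 \mor \r{A}_{n,p} \mor \r{A}_{n,p+1}\tr\r{E}^{*(p+1)} \mor \r{A}_{n,p+2}\tr\r{Q}_p^{*} \mor 0 \]
is exact; the resolution having length two, proving this for $p\geq n-1$ also disposes of $\ShExt^{i}$ for $i\geq 3$, hence of the lemma.

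Finally I would prove exactness of the reduced complex. It is exactly $\ShHom(-,e_n\r{A})$ applied to the resolution (\ref{eq:locallyfree}) of $\Theta_p$, so exactness amounts to $\der^{i}\ShHom(\Theta_p,e_n\r{A})=0$ for all $i$. For $p=n-1$ the first term vanishes and one checks directly, from the unit $i_n$ and the nondegeneracy of $\r{Q}_{n-1}$, that the remaining map $\r{O}_{X_n}\tr\r{E}^{*(p+1)}\mor\r{E}^{*n}\tr\r{Q}_{n-1}^{*}$ is an isomorphism; the cases $p=n,n+1$ are similar, using $i_n$, $j_n$ and $\r{Q}_n$. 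For general $p\geq n-1$ all terms are locally free, so exactness may be checked on the affine cover of Theorem~\ref{thm:coverconditions}; there, after $\Gamma(-)$ and the construction of Subsection~\ref{subsec:Ztogradedalgebra}, the algebra becomes the generalized preprojective algebra $\Pi_R(S)$ (Lemma~\ref{lem:locallypirs}), and the reduced complex is carried, after suitable identifications, onto a portion of the standard bimodule resolution of $\Pi_R(S)$ tensored into the relevant summand $R\cdot\Pi_R(S)$ or $S\cdot\Pi_R(S)$, which is exact in the range $p\geq n-1$ by the homological dimension $2$ of $\Pi_R(S)$ and the explicit description of its relations through the dual bases $(e_i)_i$, $(f_j)_j$ in \cite{Genpreprojective}.

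The hard part will be this last step: establishing exactness of the reduced complex uniformly in $p\geq n-1$. The boundary values $p\in\{n-1,n,n+1\}$ are genuinely special and must be treated by hand, and this is also where the hypothesis enters — the bound $p\geq n-1$ is sharp, since for $p=n-2$ the cokernel of the last map is $\r{A}_{n,n}\tr\r{Q}_{n-2}^{*}\cong\r{Q}_{n-2}^{*}\neq 0$. The delicate technical point is to verify that the local reduction of Subsection~\ref{subsec:Ztogradedalgebra} really identifies the reduced complex with the expected portion of the preprojective resolution of $\Pi_R(S)$, so that its known exactness can be invoked.
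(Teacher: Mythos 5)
Your first half is essentially the paper's argument: you pass degreewise from $\underline{\ShExt}$ to $\ShExt$, replace $e_m\r{B}$ by the twist of the sequence of Theorem~\ref{trm:seqmain} by the locally free bimodule $\r{B}_{m,p}$, and strip the exact twists by $\r{V}$ and $\r{B}_{m,p}^*$ (the paper does the reduction to $\r{B}=\r{A}_0$ in one stroke via \cite[Cor.~4.6]{Nyman04}, which gives $\underline{\ShExt}^i(\r{B},-)_m\cong\underline{\ShExt}^i(\r{A}_0,-)_{m+l}\tr\r{B}_{m,m+l}^*$, but the effect is the same). Both routes land on the same three-term complex $0\mor\r{A}_{n,p}\mor\r{A}_{n,p+1}\tr\r{E}^{*p+1}\mor\r{A}_{n,p+2}\tr\r{Q}_p^*\mor 0$ whose exactness for $p\ge n-1$ is the real content. (Your remark that exactness ``amounts to $\der^i\ShHom(\Theta_p,e_n\r{A})=0$'' is just a restatement of the special case you are proving, not a reduction, but that is harmless.)

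The gap is in how you propose to prove that exactness. The paper tensors with the invertible bimodule $\r{Q}_p$, identifies the result (by a computation as in \cite[\S 7.5]{Nyman04}) with the degree-$n$ component of the left-module analogue of the sequence in Theorem~\ref{trm:seqmain}, and then reruns the point-module dimension count of Section~\ref{sec:pointmodules}; that counting argument is where the work is. Your substitute --- restrict to the cover of Theorem~\ref{thm:coverconditions}, pass to $\Pi_R(S)$ via Lemma~\ref{lem:locallypirs}, and invoke ``the standard bimodule resolution of $\Pi_R(S)$'' together with ``homological dimension $2$'' --- rests on inputs that are not available: the paper uses from \cite{Genpreprojective} only the definition, a localization lemma and noetherianity (Theorem~\ref{thm:pirsnoetherian}); exactness of a length-two standard bimodule complex for $\Pi_R(S)$ is a substantive theorem of preprojective type (its classical analogue fails for Dynkin data), and finite global dimension alone would not give exactness of one specific complex in any case. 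Moreover the identification you defer is exactly where the hypothesis must enter: the complex is \emph{not} exact for $p=n-2$, as you observe, so a blanket appeal to an everywhere-exact resolution of $\Pi_R(S)$ cannot be correct as stated --- the bound $p\ge n-1$ has to emerge from the degree bookkeeping between the $\d{Z}$-algebra, the $2$-periodization, and the summand decomposition of Lemma~\ref{lem:catC2}, none of which is carried out. Your hand-checks at $p=n-1,n,n+1$ are fine in spirit but do not cover the general case, so the crux of the lemma is assumed rather than proved; to complete the argument along the paper's lines you should instead identify the complex with the left-module version of (\ref{eq:locallyfree}) in degree $n$ and repeat the point-module argument.
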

\begin{proof}
By \cite[cor. 4.6]{Nyman04}, there is an isomorphism
\[
\underline{\ShExt}^{i}(\r{B}, \r{V} \tr e_n \r{A})_m  \cong \underline{\ShExt}^{i}(\r{A}_0, \r{V} \tr e_n \r{A})_{m+l} \tr \r{B}_{m,m+l}^*
\]
which easily reduces the proof to the case $\r{B}=\r{A}_0$ and in particular $l=0$.\\
By Proposition \ref{prp:sheafhom}\emph{(iv)} we see that the exact sequence from Theorem \ref{trm:seqmain} forms a resolution of $e_m\r{A}_0 = \Theta_m$ through $\ShHom(-,\r{V} \tr e_n\r{A})$-acyclic sheaves. In particular we can calculate  $\underline{\ShExt}^{i}(\r{A}_0, \r{V} \tr e_n \r{A})_m = \ShExt^{i}(e_m A_0, \r{V} \tr e_n \r{A})$ by taking homology of the complex
\begin{align*}
&0\mor \ShHom(e_m \r{A},\r{V}\tr e_n\r{A}) \stackrel{d_0}{\mor} \ShHom(\r{E}^{*m}\tr e_{m+1}\r{A}, \r{V}\tr e_n\r{A})\\
&\stackrel{d_1}{\mor} \ShHom(\r{Q}_m\tr e_{m+2}\r{A},\r{V}\tr e_n\r{A})\mor  0
\end{align*}
using Proposition \ref{prp:sheafhom}\emph{(iv)}, this complex becomes
\begin{equation} 
\label{eq:concentrated} 0\mor \r{V}\tr \r{A}_{n,m}\stackrel{d_0}{\mor}\r{V}\tr \r{A}_{n,m+1}\tr\r{E}^{*m+1} \stackrel{d_1}{\mor} \r{V}\tr \r{A}_{n,m+2}\tr \r{Q}_m^*\mor 0 
\end{equation}
Hence we have
\begin{itemize}
\item $\ShExt^{0}(e_m \r{A}_0, \r{V} \tr e_n \r{A}) =  \ker (d_0)$
\item $\ShExt^{1}(e_m \r{A}_0, \r{V} \tr e_n \r{A}) = \ker (d_1)/\im(d_0)$
\item $\ShExt^{2}(e_m \r{A}_0, \r{V} \tr e_n \r{A}) = \coker(d_1)$
\item $\ShExt^{i}(e_m \r{A}_0, \r{V} \tr e_n \r{A}) = 0$ for all $i \geq 3$
\end{itemize}
To show the exactness of (\ref{eq:concentrated}), we first note that the explicit nature of the isomorphisms in \cite{Nyman04} yield that (\ref{eq:concentrated}) is obtained from the sequence
\begin{equation}
\label{eq:concentratedstep1}
0\mor \r{A}_{n,m}\mor \r{A}_{n,m+1}\tr \r{E}^{*m+1}\mor \r{A}_{n,m+2}\tr Q_m^*\mor 0
\end{equation}
by tensoring with $\r{V}$. Since $\r{V}$ is locally free, it preserves exactness and it suffices to verify that (\ref{eq:concentratedstep1}) is exact. Next, we tensor with the invertible bimodule $Q_m$ to obtain
\begin{equation} \label{eq:concentratedstep2} 
0\mor \r{A}_{n,m}\tr Q_m\stackrel{d_0}{\mor}  \r{A}_{n,m+1}\tr\r{E}^{*m+1} \tr Q_m\stackrel{d_1}{\mor} \r{A}_{n,m+2}\mor 0 
\end{equation}
We can replace the middle term in (\ref{eq:concentratedstep2}) to obtain:
\begin{equation} 
\label{eq:concentratedstep3} 
0\mor \r{A}_{n,m}\tr Q_m\stackrel{d_0}{\mor}  \r{A}_{n,m+1}\tr\r{E}^{*m+1} \stackrel{d_1}{\mor} \r{A}_{n,m+2}\mor 0 
\end{equation}
A similar but tedious computation as in \cite[\S 7.5]{Nyman04} shows that this sequence  coincides  with the exact sequence in Theorem $\ref{trm:seqmain}$ in degree $n$ for left modules. We conclude the result by the same argument as for Theorem \ref{trm:seqmain}.
\end{proof}

\begin{lemma}\label{lem:induction2}
\begin{center}
$\underline{\ShExt}^i(\r{A}/\r{A}_{\ge l}, \r{V} \tr e_n\r{A})_m=0 \, \, \textrm{for } 
m \geq n-1 \textrm{ and } i \geq 0
$
\end{center}
\end{lemma}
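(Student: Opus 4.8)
The plan is a dévissage that reduces the statement to Lemma \ref{lem:concentrateddegreel}. Consider, for $0 \leq k \leq l$, the descending chain of sub-bimodules $\r{A}_{\geq k}/\r{A}_{\geq l}$ of $\r{A}/\r{A}_{\geq l}$; it starts at $\r{A}/\r{A}_{\geq l}$ when $k = 0$, ends at $0$ when $k = l$, and has successive quotients $\r{A}_{\geq k}/\r{A}_{\geq k+1}$, which is concentrated in degree $k$. By Corollary \ref{cor:locallyfree} every component $\r{A}_{m,n}$ is coherent and locally free, so each $\r{A}_{\geq k}/\r{A}_{\geq k+1}$ is a genuine single-degree object of $\d{B}$ (and $\r{A}/\r{A}_{\geq l} \in \d{B}$ as well, so the statement is meaningful). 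The crucial observation is that for $k \geq 0$ and $m \geq n-1$ one automatically has $m \geq n - k - 1$, so Lemma \ref{lem:concentrateddegreel} applies to each graded piece:
\[ \underline{\ShExt}^i(\r{A}_{\geq k}/\r{A}_{\geq k+1},\, \r{V} \tr e_n \r{A})_m = 0 \qquad (i \geq 0,\ 0 \leq k \leq l-1,\ m \geq n-1). \]

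I would then induct on $l$. For $l = 1$ the module $\r{A}/\r{A}_{\geq 1} = \r{A}_0$ is concentrated in degree $0$, so the claim is exactly the $k = 0$ instance above. For $l > 1$, apply the left-exact bifunctor $\underline{\ShHom}(-,\, \r{V}\tr e_n\r{A})$ of Proposition \ref{prp:sheafhom}(ii) to the short exact sequence in $\d{B}$
\[ 0 \mor \r{A}_{\geq l-1}/\r{A}_{\geq l} \mor \r{A}/\r{A}_{\geq l} \mor \r{A}/\r{A}_{\geq l-1} \mor 0 \]
and pass to the associated long exact sequence of derived functors; since the truncation functor $(-)_m$ is exact, evaluating in degree $m$ yields a long exact sequence relating the groups $\underline{\ShExt}^i(-,\, \r{V}\tr e_n\r{A})_m$ of the three terms. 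For $m \geq n-1$, the term coming from $\r{A}/\r{A}_{\geq l-1}$ vanishes by the inductive hypothesis, and the term coming from $\r{A}_{\geq l-1}/\r{A}_{\geq l}$ vanishes by the displayed equation with $k = l-1$; hence the term coming from $\r{A}/\r{A}_{\geq l}$ vanishes as well, which is the assertion.

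The one point that needs care — and the main, though mild, obstacle — is the bookkeeping: one must verify that $\r{A}_{\geq k}/\r{A}_{\geq k+1}$ really is a coherent locally free object of $\d{B}$ concentrated in a single degree (this is where Corollary \ref{cor:locallyfree} is used) so that Lemma \ref{lem:concentrateddegreel} applies verbatim, and that the long exact sequence for $\underline{\ShExt}^\bullet$ in its first variable survives evaluation at degree $m$ (immediate from exactness of $(-)_m$). Everything else is formal homological algebra.
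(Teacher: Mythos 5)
Your proof is correct and follows essentially the same route as the paper: the same dévissage along the filtration $\r{A}_{\ge k}$, the same short exact sequence with quotient $\r{A}/\r{A}_{\ge l-1}$ and single-degree kernel, and the same appeal to Lemma \ref{lem:concentrateddegreel} via $m \ge n-1 \ge n-k-1$. The only cosmetic difference is that you induct upward from $\r{A}_0$, whereas the paper chains the resulting isomorphisms $\underline{\ShExt}^{i}(\r{A}/\r{A}_{\ge l+1},-)_m \cong \underline{\ShExt}^{i}(\r{A}/\r{A}_{\ge l},-)_m$ down to the trivial case $\r{A}/\r{A}_{\ge 0}=0$.
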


\begin{proof} \
Consider the short exact sequence 
\[ 0 \mor \r{A}_{\geq l}/ \r{A}_{\geq l+1} \mor \r{A}/ \r{A}_{\geq l+1} \mor \r{A}/ \r{A}_{\geq l} \mor 0 \]
Applying $\underline{\ShHom}(-, \r{V} \tr e_n \r{A})$ gives rise to a long exact sequence for each $m \geq n-1$
\begin{eqnarray*} \ldots \mor \underline{\ShExt}^{i}(\r{A}_{\geq l}/ \r{A}_{\geq l+1}, \r{V} \tr e_n \r{A})_m & \mor & \underline{\ShExt}^{i}(\r{A}/ \r{A}_{\geq l+1}, \r{V} \tr e_n \r{A})_m\\
 \mor \ \ \underline{\ShExt}^{i}(\r{A}/ \r{A}_{\geq l}, \r{V} \tr e_n \r{A})_m & \mor & \underline{\ShExt}^{i+1}(\r{A}_{\geq l}/ \r{A}_{\geq l+1}, \r{V} \tr e_n \r{A})_m \mor \ldots \end{eqnarray*}
As $m \geq n-1$ it follows from Lemma \ref{lem:concentrateddegreel} that for each $i \geq 0$ we have an exact sequence
\[ 0 \mor  \underline{\ShExt}^{i}(\r{A}/ \r{A}_{\geq l+1}, \r{V} \tr e_n \r{A})_m \mor \underline{\ShExt}^{i}(\r{A}/ \r{A}_{\geq l}, \r{V} \tr e_n \r{A})_m  \mor 0 \]
Hence 
\[ \underline{\ShExt}^{i}(\r{A}/ \r{A}_{\geq l}, \r{V} \tr e_n \r{A})_m \cong \underline{\ShExt}^{i}(\r{A}/ \r{A}_{\geq 0}, \r{V} \tr e_n \r{A})_m = \underline{\ShExt}^{i}(0, \r{V} \tr e_n \r{A})_m = 0 \]
\end{proof}
We can now finish the proof of Theorem \ref{thm:extandpullback}.
\begin{proof} \emph{of Theorem \ref{thm:extandpullback}}\\
Take $m,n \in \d{Z}$ with $m \geq n-1$. Let $\r{F}$ be locally free on $X_m$ and $\r{G}$ locally free on $X_n$, then by Corollary \ref{cor:differentRHom}:
\begin{eqnarray*}
 \Ext^i_{\Proj(\r{A})} \left( \Pi_m^*\r{F}, \Pi_n^*\r{G} \right) & = & h^i \left( \der \Hom_{\Proj(\r{A})} \left( \Pi_m^*\r{F}, \Pi_n^*\r{G} \right) \right) \\
 & \cong & h^i \left( \der \Hom_{X_m} \left( \r{F}, \der {\Pi_m}_*\Pi_n^*\r{G} \right) \right)
\end{eqnarray*}
Now for $i \geq 1$ we have 
\begin{eqnarray*}
\der^i {\Pi_m}_*\Pi_n^*\r{G} & \cong & \der^{i+1} \tau (\r{G} \tr e_n \r{A})_m \\
 & \cong & \lim_{l \rightarrow \infty} \underline{\ShExt}^{i+1}(\r{A}/\r{A}_{\geq l}, \r{G} \tr e_n \r{A})_m \\
 & = & 0
\end{eqnarray*}
by Lemmas \ref{lem:pushforward/tau}, \ref{lem:taushhom} and \ref{lem:induction2} respectively.\\
In particular the complex $\der {\Pi_m}_*\Pi_n^*\r{G}$ is quasi-isomorphic to the complex that is equal to ${\Pi_m}_*\Pi_n^*\r{G}$ concentrated in position zero. Finally we can conclude by noticing that ${\Pi_m}_*\Pi_n^*\r{G} = \left( \omega p (\r{G} \tr e_n \r{A}) \right)_m$ and by Lemma \ref{lem:tau/omega} there is an exact sequence
\[ 0 = \tau(\r{G} \tr e_n \r{A})_m \mor \r{G} \tr \r{A}_{n,m} \stackrel{\cong}{\mor} \omega(p(\r{G} \tr e_n \r{A}))_m \mor \der^1 \tau(\r{G} \tr e_n \r{A})_m = 0 \]
where the first term equals zero because $\r{G} \tr e_n \r{A}$ is torsion free and the last term is zero because $\displaystyle \der^1 \tau(\r{G} \tr e_n \r{A})_m \cong \lim_{l \rightarrow \infty} \underline{\ShExt}^{\ 1}(\r{A}/\r{A}_{\geq l}, \r{G} \tr e_n \r{A})_m = 0$.\\
Hence we can conclude that for $m \geq n-1$ we have
\begin{eqnarray*}
\Ext^i_{\Proj(\r{A})} \left( \Pi_m^*\r{F}, \Pi_n^*\r{G} \right) & \cong & h^i \left( \der \Hom_{X_m} \left( \r{F}, \der {\Pi_m}_*\Pi_n^*\r{G} \right) \right) \\
& \cong & h^i \left( \der \Hom_{X_m} \left( \r{F}, \r{G} \tr \r{A}_{n,m} \right) \right) \\
& = & \Ext^i_{X_m} \left( \r{F}, \r{G} \tr \r{A}_{n,m} \right) 
\end{eqnarray*}
\end{proof}
\subsection{Point Modules in the Rank $(4,1)$ Case}
\label{sec:pointmodules}
We remain in the setting where $\r{A}=\d{S}(\r{E})$ denotes a symmetric sheaf $\mathbb{Z}$-algebra in standard form with ${\r{E} \in \bimod(X-Y)}$ locally free of rank (4,1), given in the form of $\r{E} = {}_f (\r{L})_{\Id}$ for a finite morphism $f: Y \mor X$ of degree 4 as in Lemma \ref{lem:inducedbyline}. Denote by $\alpha: X \mor \Spec(\k)$ and $\beta: Y \mor \Spec(\k)$ be the structure morphisms. Extending our convention \ref{conv:Z_n} we will write
\[ (X_n, \alpha_n) = \left\{ \begin{array}{cc} (X,\alpha) & \textrm{if $n$ is even} \\
 (Y,\beta) & \textrm{if $n$ is odd} \end{array} \right. \]
We say $P_n \in \coh(X_n)$ is locally free over $\k$ of rank $l$ if the support of $P_n$ is finite over $\k$ and $\dim_\k (\alpha_{n,*} P_n)=l$.\\ \\
A module $P \in \Gr(\r{A})$ is said to be generated in degree $m$ if $P_n = 0$ for all $n<m$ and $P_m \tr \r{A}_{m,n} \mor P_n$ is surjective for all $n \geq m$. 
As $\r{A}$ is generated in degree one as an algebra, we have surjectivity of $P_{n_1} \tr \r{A}_{n_1,n_2} \mor P_{n_2}$ for all $n_2 \geq n_1 \geq m$ by the following commuting diagram
\begin{center}
\begin{tikzpicture}
\matrix(m)[matrix of math nodes,
row sep=3em, column sep=3em,
text height=1.5ex, text depth=0.25ex]
{ P_m \tr \r{A}_{m,n_1} \tr \r{A}_{n_1,n_2} & P_{n_1} \tr \r{A}_{n_1,n_2} \\
P_m \tr \r{A}_{m,n_2} & P_{n_2} \\};
\path[->,font=\scriptsize]
(m-1-2) edge (m-2-2);
\path[->>]
(m-1-1) edge (m-1-2)
        edge (m-2-1)
(m-2-1) edge (m-2-2);
\end{tikzpicture}
\end{center}
\begin{remark} An obvious example of a module generated in degree $m$ is $e_m \r{A}$. The above diagram implies that the maps $\r{A}_{m,n} \tr e_n \r{A} \mor e_m \r{A}$ are surjective for all $m \geq n$.
\end{remark}
An $m$-shifted point-module over $\r{A}$ is defined in \cite{VdB_12} as an object $P \in \Gr(\r{A})$ such that $P$ is generated
in degree $m$ and for which $P_n$ is locally free of rank one over $\k$ for all $n \geq m$. As the next lemma shows, this concept is not very useful in our setting:

\begin{lemma} \label{lem:notpointmod}
Let $i \in \d{Z}$ and $P \in \Gr(\r{A})$ generated in degree $2i$ such that $P_{2i}$ and $P_{2i+1}$ are locally free of rank one over $\k$. Then $P_n=0$ for all $n \geq 2i+2$.
\end{lemma}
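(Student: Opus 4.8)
The plan is to derive the vanishing from the single degree‑two relation $\r{Q}_m$, with no reduction needed. Write $m=2i$, so that $X_m=X$ and $X_{m+1}=Y$; since $P$ is generated in degree $m$ and $P_m$, $P_{m+1}$ have $\k$‑length one, and $\k$ is algebraically closed, $P_m$ is the skyscraper sheaf $\r{O}_X/\f{m}_x$ at a closed point $x\in X$ and $P_{m+1}=\r{O}_Y/\f{m}_y$ at a closed point $y\in Y$. First I would locate these points. By Convention \ref{conv:standardform} and Remark \ref{rem:dual} one has $\r{E}^{*m}={}_f(\r{L}_i)_{\Id}$ and $\r{E}^{*m+1}=(\r{E}^{*m})^*={}_{\Id}(\r{M}_i)_f$ for line bundles $\r{L}_i,\r{M}_i$ on $Y$ (this is exactly the computation in the proof of Corollary \ref{cor:openrestriction}, and uses that $m$ is even). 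Then $P_m\tr\r{A}_{m,m+1}=P_m\tr\r{E}^{*m}=f^*(P_m)\tr\r{L}_i$ is supported on the finite fibre $f^{-1}(x)$, so surjectivity of the multiplication $P_m\tr\r{A}_{m,m+1}\to P_{m+1}$ forces $f(y)=x$; and likewise $P_{m+1}\tr\r{A}_{m+1,m+2}=P_{m+1}\tr\r{E}^{*m+1}=f_*(P_{m+1}\tr\r{M}_i)\cong\r{O}_X/\f{m}_{f(y)}=\r{O}_X/\f{m}_x$, again of length one.

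Next I would show $P_{m+2}=0$. Write $\mu\colon P_m\tr\r{E}^{*m}\to P_{m+1}$ and $\mu'\colon P_{m+1}\tr\r{E}^{*m+1}\to P_{m+2}$ for the multiplication maps, both surjective by generation in degree $m$. By Lemma \ref{lem:dual} together with (\ref{eq:yoneda}) (which gives $(\r{E}^{*m})^*=\r{E}^{*m+1}$) the functor $-\tr\r{E}^{*m+1}$ is right adjoint to $-\tr\r{E}^{*m}$; let $\widetilde{\mu}\colon P_m\to P_{m+1}\tr\r{E}^{*m+1}$ be the morphism adjoint to $\mu$, so that $\widetilde{\mu}=(\mu\tr\r{E}^{*m+1})\circ(P_m\tr i_m)$ with $i_m$ the unit morphism (\ref{eq:unitmorphism}). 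Since adjunction is a bijection on $\Hom$‑groups and $\mu\neq0$, also $\widetilde{\mu}\neq0$; but by the previous paragraph its source and its target are both isomorphic to $\r{O}_X/\f{m}_x$, so $\widetilde{\mu}$ is an isomorphism, in particular an epimorphism. On the other hand, associativity of the $\r{A}$‑module structure identifies $\mu'\circ(\mu\tr\r{E}^{*m+1})$ with the composite $P_m\tr(\r{E}^{*m}\tr\r{E}^{*m+1})\twoheadrightarrow P_m\tr\r{A}_{m,m+2}\to P_{m+2}$, and since $\r{A}_{m,m+2}=(\r{E}^{*m}\tr\r{E}^{*m+1})/\r{Q}_m$ with $\r{Q}_m=i_m(\r{O}_X)$, right‑exactness of $P_m\tr-$ makes this composite annihilate the image of $P_m\tr i_m$. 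Hence $\mu'\circ\widetilde{\mu}=0$; as $\widetilde{\mu}$ is epi, $\mu'=0$, and as $\mu'$ is epi, $P_{m+2}=0$.

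Finally, for $n\ge m+2$, generation in degree $m$ gives a surjection $P_m\tr\r{A}_{m,n}\to P_n$, which — since $\r{A}$ is generated in degree one and $P_m\tr-$ is right exact — factors through $P_{m+2}\tr\r{A}_{m+2,n}=0$; so $P_n=0$, which would complete the argument. The one genuinely delicate point is the surjectivity of $\widetilde{\mu}$ in Step~2: this is precisely where the hypothesis $\rk\r{E}=(4,1)$ is used, as opposed to $(2,2)$, where $m$‑shifted point modules do exist — it is what forces $P_{m+1}\tr\r{E}^{*m+1}$ to be a length‑one sheaf concentrated at the same point as $P_m$, so that a nonzero map between them is automatically surjective. (One could alternatively restrict $X$ to the affine cover of Lemma \ref{thm:coverconditions} and argue inside $\Gr(\Pi_R(S))$, where the relation $1\otimes1=0$ plays the role of $\r{Q}_m$, but the argument above avoids this reduction.) The remaining verifications are routine, in particular the identification $\widetilde{\mu}=(\mu\tr\r{E}^{*m+1})\circ(P_m\tr i_m)$, which follows from $P_m$ being cyclic together with naturality of the adjunction unit.
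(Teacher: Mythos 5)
Your proof is correct and follows essentially the same route as the paper: you show that the vanishing of the $\r{Q}_{2i}$-action forces the composite $\widetilde{\mu}$-then-$\mu'$ to be zero, observe that $\widetilde{\mu}$ (the paper's $\varphi_{2i}^*$) is a nonzero map between length-one sheaves hence an isomorphism — this being exactly where rank $(4,1)$ enters, since $\r{E}^{*2i+1}$ has left rank $1$ — so the surjective $\mu'$ (the paper's $\varphi_{2i+1}$) vanishes, giving $P_{2i+2}=0$ and then $P_n=0$ for $n\ge 2i+2$ by generation in degree $2i$. The only differences are cosmetic: you spell out via the unit formula what the paper cites from \cite[Lemma 4.3.2]{VdB_12}, and your explicit identification of the supports ($f(y)=x$) is harmless but unnecessary, since any nonzero map between length-one modules is automatically an isomorphism.
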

\begin{proof}
Recall that the following composition
\[ 
P_{2i} \mor P_{2i} \tr \r{E}^{*2i} \tr \r{E}^{*2i+1} \mor P_{2i+1} \tr \r{E}^{*2i+1} \mor P_{2i+2}
 \] 
must be zero as it represents the action of $\r{Q}_{2i}$. By \cite[lemma 4.3.2.]{VdB_12} this composition equals
\[ 
P_{2i} \stackrel{\varphi_{2i}^*}{\mor} P_{2i+1} \tr \r{E}^{*2i+1} \stackrel{\varphi_{2i+1}}{\mor} P_{2i+2} 
\] 
where $\varphi_{2i}^*$ is obtained by adjointness from $\varphi_{2i}: P_{2i} \tr \r{E}^{*2i} \mor P_{2i+1}$ and $\r{E}^{*2i+1}$ has rank $(1,4)$. Since $P_{2i}$ and $P_{2i+1} \tr \r{E}^{*2i+1}$ are locally  free of rank one over $\k$ we obtain that  $\varphi_{2i}^*$ is either an isomorphism or zero. Similarly $\varphi_{2i+1}$ is either injective or zero. Hence the only way the composition can be zero is if $\varphi_{2i}^*=0$ or $\varphi_{2i+1}=0$. The first  doesn't occur as $\varphi_{2i} \neq 0$ (because $P$ is generated in degree $2i$ and $P_{2i+1}\neq 0$). Hence we  have $\varphi_{2i+1} = 0$. However $\varphi_{2i+1}$ is  surjective  (because $P$ is generated in degree $2i$), implying that  $P_{2i+2}=0$. Using surjectivity of $P_{2i+2} \tr \r{A}_{2i+2,n} \mor P_n$ for all $n \geq 2i+2$ the result follows.
\end{proof}
We thus propose the following variation of the above definition, better suited to our needs:
\begin{definition}
A shifted point module is an object $P \in \Gr(\r{A})$ which is generated in degree $2i$ for some integer $i$ such that for all $n \geq 2i$,
 $P_n$ is locally free over $\k$ of rank one if $n$ is even  and $P_n$ is locally free over $\k$ of rank two if $n$ is odd. We will often use the short hand notation $\dim_\k(P_n) = \dim_{\k}(\alpha_{n,*}(P_n))$ whenever the latter is finite. So we could say $P$ is a shifted point module if is generated in degree $2i$ and:
\[ \dim_\k(P_n) = \left\{ \begin{array}{cl} 0 & \textrm{if $n < 2i$} \\
1 & \textrm{if $n \geq 2i$ is even} \\
2 & \textrm{if $n > 2i$ is odd}
\end{array}
\right. \]
\end{definition}
The following lemma shows that this new definition of point modules is better behaved than the naive one:
\begin{lemma} \label{lem:pointupperbound}
Let $P \in \Gr(\r{A})$ be a graded module and $i \in \d{Z}$ such that:
\begin{itemize}
\item $P$ is generated in degree $2i$
\item $\dim_\k(P_{2i})=1$
\item $\dim_\k(P_{2i+1})=2$
\end{itemize}
Then for all $n \geq 2i+2$ fixed, we have
\begin{equation} \label{eq:pointabove}
 \dim_\k(P_n) \leq \left\{ \begin{array}{cl} 1 & \textrm{if $n$ is even} \\
2 & \textrm{if $n$ is odd}
\end{array}
\right. 
\end{equation}
Moreover if equality holds in \emph{(\ref{eq:pointabove})}, then $P_n$ is characterized up to unique isomorphism by the data $\varphi_{2i}: P_{2i} \tr \r{E}^{*2i} \mor P_{2i+1}$.\\
If on the other hand \emph{(\ref{eq:pointabove})} is a strict inequality for some $n$, then $P_l = 0$ for all $l > n$.
\end{lemma}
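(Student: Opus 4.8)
The plan is to prove the three assertions simultaneously by induction on $n\ge 2i+2$, the engine being a count of $\k$-lengths together with the translation of the relation $\r{Q}$ provided by \cite[lemma 4.3.2.]{VdB_12}. First I would pass to the affine, relative‑Frobenius situation. Since $\dim_\k P_{2i}=1$, the sheaf $P_{2i}$ is a skyscraper at a single closed point $x_0\in X$; as $P$ is generated in degree $2i$, every $P_n$ is a subquotient of $P_{2i}\tr\r{A}_{2i,n}$ and hence supported ``over'' $x_0$, so by the support conditions of Corollary~\ref{cor:openrestriction} we may restrict everything to an affine open $U_l=\Spec R\ni x_0$ chosen as in Lemma~\ref{thm:coverconditions}, and assume $X=\Spec R$, $Y=\Spec S$ with $S/R$ relative Frobenius of rank $4$ and $\r{E}={}_f(\r{O}_Y)_{\Id}$. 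In this picture $-\tr\r{E}^{*n}$ for $n$ even is the induction functor $-\tr_RS$, which multiplies $\k$‑length by $4$, and for $n$ odd is restriction of scalars, which preserves $\k$‑length. Writing $\varphi_n\colon P_n\tr\r{E}^{*n}\to P_{n+1}$ for the structure maps — surjective for $n\ge 2i$ since $\r{A}$ is generated in degree $1$ — and $\varphi_n^{*}\colon P_n\to P_{n+1}\tr\r{E}^{*n+1}$ for the adjunct of $\varphi_n$, I note that $\varphi_n^{*}=0$ iff $\varphi_n=0$, and that by \cite[lemma 4.3.2.]{VdB_12} the action of $\r{Q}_n$ is the composite $\varphi_{n+1}\circ\varphi_n^{*}$, which therefore vanishes.

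The inductive claim at degree $n$ would be: the bound (\ref{eq:pointabove}) holds; if it is an equality then $P_{2i},\dots,P_n$ all have the expected lengths and $(P_n,\varphi_{n-1})$, equivalently $P_n\cong\coker(\varphi_{n-2}^{*})$ with $\varphi_{n-1}$ the canonical projection, is determined up to unique isomorphism by $\varphi_{2i}$; if it is strict then $P_l=0$ for all $l>n$. The step into an even degree $n=2j$ is routine: by the inductive hypothesis we may assume $\dim_\k P_{2j-2}=1$ and $\dim_\k P_{2j-1}=2$ (otherwise $P_{2j}=0$ already), so $\varphi_{2j-2}^{*}$ is a nonzero map out of the length‑one $P_{2j-2}$, hence injective; since $\varphi_{2j-1}\circ\varphi_{2j-2}^{*}=0$ and $\dim_\k(P_{2j-1}\tr\r{E}^{*2j-1})=2$, we get $\dim_\k P_{2j}=2-\dim_\k\ker\varphi_{2j-1}\le 1$, with equality forcing $\ker\varphi_{2j-1}=\im\varphi_{2j-2}^{*}$ and hence $P_{2j}\cong\coker\varphi_{2j-2}^{*}$.

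The step into an odd degree $n=2j+1$ is the crux; again we may assume the lengths of $P_{2j-2},P_{2j-1},P_{2j}$ are the expected $1,2,1$. Here $\varphi_{2j-1}^{*}$ maps $P_{2j-1}$ into $P_{2j}\tr\r{E}^{*2j}$, a module of length $4$, so the crude estimate only yields $\dim_\k P_{2j+1}\le 4$; I must instead prove that $\varphi_{2j-1}^{*}$ is \emph{injective}. Unwinding the adjunction shows $\ker\varphi_{2j-1}^{*}$ is the largest $S$‑submodule of $P_{2j-1}$ contained in $\ker\varphi_{2j-1}$. By the even step just carried out, $\ker\varphi_{2j-1}=\im\varphi_{2j-2}^{*}$, which is the image under the surjection $\varphi_{2j-2}\colon P_{2j-2}\tr_RS\twoheadrightarrow P_{2j-1}$ of the ``unit section'' $\{m\tr 1:m\in P_{2j-2}\}$; since this section $S$‑generates $P_{2j-2}\tr_RS$, applying $\varphi_{2j-2}$ gives $S\cdot\ker\varphi_{2j-1}=P_{2j-1}$, which has length $2$. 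A length‑one module whose $S$‑span has length $2$ contains no nonzero $S$‑submodule, so $\varphi_{2j-1}^{*}$ is injective. Consequently $\dim_\k\ker\varphi_{2j}\ge\dim_\k\im\varphi_{2j-1}^{*}=2$ and $\dim_\k P_{2j+1}=4-\dim_\k\ker\varphi_{2j}\le 2$, with equality forcing $\ker\varphi_{2j}=\im\varphi_{2j-1}^{*}$ and $P_{2j+1}\cong\coker\varphi_{2j-1}^{*}$.

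For the strict clause, at a minimal $n$ where (\ref{eq:pointabove}) fails all lower degrees are expected; if $\dim_\k P_n=0$ then $P_l=0$ for $l\ge n$ by generation, and the only other case is $n=2j+1$ with $\dim_\k P_n=1$, where the relation $\r{Q}_{n-1}$ reads $\varphi_n\circ\varphi_{n-1}^{*}=0$ with $\varphi_{n-1}^{*}$ a nonzero — hence, out of the length‑one $P_{n-1}$, injective — map onto the length‑one $P_n\tr\r{E}^{*n}$; thus $\varphi_n=0$, $P_{n+1}=0$ and $P_l=0$ for $l>n$ by generation. The characterization clause then follows by unrolling the recursion $(P_k,\varphi_{k-1})=(\coker\varphi_{k-2}^{*},\text{projection})$ down to $\varphi_{2i}$. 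I expect the decisive difficulty to be precisely the injectivity of $\varphi_{2j-1}^{*}$ at odd steps: the sharp bound $2$ (as opposed to the trivial $4$) depends entirely on recognizing $\ker\varphi_{2j-1}$ as the one‑dimensional image of a cyclic generator of $P_{2j-1}$ and hence non‑$S$‑stable; the rest — the reduction to the relative‑Frobenius chart and the $\k$‑length bookkeeping — is routine given Lemmas~\ref{thm:coverconditions} and \ref{lem:locallypirs} and Corollary~\ref{cor:openrestriction}.
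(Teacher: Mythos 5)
Your argument is correct, and its skeleton matches the paper's: induction on the degree, the relation read as $\varphi_{n+1}\circ\varphi_n^{*}=0$ via \cite[lemma 4.3.2.]{VdB_12}, length bookkeeping with the ranks $(4,1)$ and $(1,4)$, the identification $P_{n}\cong\coker(\varphi_{n-2}^{*})$ in the equality case, and the degeneration argument in the strict case (your inline treatment of the strict clause is exactly the paper's Lemma \ref{lem:notpointmod}, which the paper simply cites there). Where you genuinely diverge is at the one hard step, the injectivity of $\varphi_{2j-1}^{*}:P_{2j-1}\mor P_{2j}\tr\r{E}^{*2j}$, which is what improves the trivial bound $4$ to $2$ in odd degrees. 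The paper stays at the sheaf level: if the adjoint had a kernel, a length-one $W\subset P_{2j-1}$ would give a graded submodule $\overline{W}$ concentrated in a single (odd) degree, and the quotient $(P_{\ge 2j-2})/\overline{W}$ would be generated in even degree with three consecutive $\k$-lengths $1,1,1$, contradicting Lemma \ref{lem:notpointmod}. You instead pass to a relative-Frobenius chart (Lemma \ref{thm:coverconditions}, Corollary \ref{cor:openrestriction}) and compute directly: $\ker\varphi_{2j-1}^{*}$ is the largest $S$-submodule of $P_{2j-1}$ contained in $\ker\varphi_{2j-1}$, while in the equality case $\ker\varphi_{2j-1}=\im\varphi_{2j-2}^{*}$ is the image of the unit section $m\mapsto m\tr 1$ and therefore $S$-generates the length-two module $P_{2j-1}$, so it contains no nonzero $S$-submodule. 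This is a valid alternative; it makes the role of the Frobenius structure and of the local relations $1\tr 1$ and $\sum_i e_i\tr f_i$ (exactly as computed in Lemma \ref{lem:locallypirs}) completely explicit, at the cost of the preliminary reduction — the support argument placing all $P_n$ over $x_0$ and $f^{-1}(x_0)$, the trivialization of the line bundles $\r{L}_j$ on the chosen chart, and the compatibility of the units/adjoints with restriction — all of which is indeed available from Corollary \ref{cor:openrestriction} and Lemma \ref{lem:locallypirs} but needs to be invoked, whereas the paper's route never localizes and needs only Lemma \ref{lem:notpointmod}.
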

\begin{proof}
We prove all facts by induction on $n$. So suppose (\ref{eq:pointabove}) and the subsequent claims hold for $n=2i, \ldots , m$. We distinguish several cases depending on whether the inequalities are in fact equalities or not.\\ \\
\textbf{Case 1: Equality holds in (\ref{eq:pointabove}) for $n=2i, \ldots , m$. }\\
The following composition is zero:
\[ P_{m-1} \stackrel{\varphi_{m-1}^*}{\mor} P_m \tr \r{E}^{*m} \stackrel{\varphi_m}{\mor} P_{m+1} \]
$\varphi_m$ is surjective, using the fact that the ranks are $(4,1)$ or $(1,4)$ depending on the parity of $m$, one verifies that  (\ref{eq:pointabove}) holds for $n=m+1$ if  $\varphi_{m-1}^*$ is injective. Moreover the same reasoning shows that
 if the equality holds for $\dim_\k(P_{m+1})$, then $P_{m+1} \cong \coker(\varphi_{m-1}^*)$ and is hence defined up to unique isomorphism.\\ \\
\textbf{Case 1a: $m$ is odd}\\
We have $\dim_\k(P_{m-1})=1$ and the claim reduces to $\varphi_{m-1}^* \neq 0$  which holds as  $\varphi_{m-1} \neq 0$\\
\textbf{Case 1b: $m$ is even}\\
If $\varphi_{m-1}^*$ is not injective, then there exists a $W \subset P_{m-1}$ with $\dim_\k(W)=1$ such that the composition 
\[ W \hookrightarrow P_{m-1} \stackrel{\varphi_{m-1}^*}{\mor} P_m \tr \r{E}^{*m} 
\]
or equivalently the composition
\[
 W\tr \r{E}^{* m-1} \hookrightarrow P_{m-1}\tr \r{E}^{* m-1} \mor P_m 
\]
is zero. This implies that there is a $\overline{W} \in \Gr(\r{A})$ given by $\overline{W}_{m-1} = W$ and $\overline{W}_l = 0$ for $l \neq m-1$ together with an embedding $\chi : \overline{W} \hookrightarrow P_{\geq m-2}$. Let $C = \coker(\chi)_{\ge m-2}$. Then $C$ is generated in degree $m-2$ (which is even!) and $\deg_k(C_{m-2})=\deg_k(C_{m-1})=\deg_k(C_m)=1$ contradicting Lemma \ref{lem:notpointmod}.\\ \\
\textbf{Case 2: There is an integer $n \in \{ 2i+2, \ldots , m \}$ providing a  strict inequality for $\dim_\k(P_n)$ in (\ref{eq:pointabove})}\\
Let $n_0$ be the smallest such $n$. We have to show $P_l=0$ for all $l > n_0$. \\
Assume that $P_{n_0}=0$. Then $P_l=0$ by surjectivity of $P_{n_0} \tr \r{A}_{n_0,l} \mor P_l$.\\
The only nontrivial case is when $n_0$ is odd and $\dim_\k(P_{n_0})=1$. In this case $\dim_\k(P_{n_0 - 1})=1$ as well and the result follows from Lemma \ref{lem:notpointmod}.
\end{proof}
\begin{remark}
The proof of the above lemma also shows that any data ${\varphi_{2i}: P_{2i} \tr \r{E}^{*2i} \twoheadrightarrow P_{2i+1}}$ with $\dim_\k(P_{2i})=1$ and $\dim_\k(P_{2i+1})=2$ can be extended to a shifted point module which is unique up to unique isomorphism.
\end{remark}
From now on we use the following short hand notation:
\begin{equation}
\label{eq:lnp}
L_{n,p} := \r{O}_p \tr e_n \r{A}
\end{equation}

where $p$ is any point on $X_n$.
\begin{proof} \emph{of Theorem \ref{trm:seqmain}}\\
The exactness of the sequence (\ref{eq:locallyfree}) can be checked for each degree $n$ separately:
\begin{equation} \label{eq:locallyfreedegree} 0 \mor \r{Q}_m \tr \r{A}_{m+2,n} \mor \r{E}^{*m} \tr \r{A}_{m+1,n} \mor \r{A}_{m,n} \mor 0 \end{equation}
As all terms in this sequence are elements of $\bimod(X_m - X_n)$, applying the pushforward of the projection $\pi_m: X_m\times X_n\mor X_m$, yields a sequence of coherent sheaves on $X_m$:
\begin{equation} \label{eq:locallyfreeleft} 0 \mor \pi_{m,*}(\r{Q}_m \tr \r{A}_{m+2,n} )\mor \pi_{m,*} (\r{E}^{*m} \tr \r{A}_{m+1,n}) \mor \pi_{m,*} (\r{A}_{m,n}) \mor 0 \end{equation}
and (\ref{eq:locallyfreeleft}) is exact if and only if (\ref{eq:locallyfreedegree}) is since the support of these bimodules is finite. The structure of the relations on $\r{A}$ implies that (\ref{eq:locallyfree}) and hence also (\ref{eq:locallyfreedegree}) and (\ref{eq:locallyfreeleft}) are right exact. Now for any point $p \in X_m$ the following complex will be right exact as well:
\begin{eqnarray} \notag 0 \rightarrow \r{O}_p \tr \pi_{m,*} (\r{Q}_m \tr \r{A}_{m+2,n} ) & \rightarrow & \r{O}_p \tr \pi_{m,*} (\r{E}^{*m} \tr \r{A}_{m+1,n})  \rightarrow  \\
\label{eq:locallyfreestalklike1} & \rightarrow & \r{O}_p \tr \pi_{m,*} (\r{A}_{m,n}) \rightarrow 0. \end{eqnarray}
 As all terms (\ref{eq:locallyfreestalklike1}) are locally free over $\k$, its left exactness can be checked numerically. Hence in order to prove the lemma we show that the terms in (\ref{eq:locallyfreestalklike1}) have the ``correct'' constant dimension (see (\ref{eq:correctdimensions})) at each point $p\in X_m$. From this it follows that (\ref{eq:locallyfreedegree}) is exact and its terms are locally free on the left. The claim that the terms are locally free on the right in turn follows from \cite[Proposition 3.1.6]{VdB_12}.)\\[\medskipamount]
We are left with finding the length of the objects in (\ref{eq:locallyfreestalklike1}). Any object in $\bimod(X_m-X_n)$ is of the form ${}_u \r{U}_v$ for finite maps $u$ and $v$. As taking the direct image through a finite morphism preserves the length, for any such bimodule, we compute:
\begin{eqnarray*}
\dim_\k( \r{O}_p \tr \pi_{m,*}( {}_u \r{U}_v)) & = & \dim_\k( \r{O}_p \tr u_* \r{U}) \\
 & = & \dim_\k( u_*( u^*(\r{O}_p) \tr \r{U})) \\
 & = & \dim_\k( u^*(\r{O}_p) \tr \r{U}) \\
 & = & \dim_\k( v_*( u^*(\r{O}_p) \tr \r{U})) \\
 & = & \dim_\k( \r{O}_p \tr {}_u \r{U}_v))
\end{eqnarray*}
It follows that the length of the terms in (\ref{eq:locallyfreestalklike1}) can be computed from 
\begin{equation} 
\label{eq:locallyfreestalklike2} 
0 \rightarrow \r{O}_p \tr \r{Q}_m \tr \r{A}_{m+2,n}  \rightarrow \r{O}_p \tr \r{E}^{*m} \tr \r{A}_{m+1,n} \rightarrow \r{O}_p \tr \r{A}_{m,n} \rightarrow 0 
\end{equation}

In the case where $m=2i-1$, the fact that $\dim_\k(\r{O}_p\tr \r{E}^{* 2i-1})=1$, implies that there must be a point $q \in X_{2i}$ such that $\r{O}_{p} \tr \r{E}^{*2i-1} = \r{O}_{q}$. Similarly, in the case where  $m=2i$, we have  $\dim_\k(\r{O}_{p} \tr \r{E}^{*2i})=4$, and  there must be points $\widetilde{q^a} \in X_{2i+1}$, $a=1,\ldots, 4$ such that $\r{O}_{p} \tr \r{E}^{*2i}$ is an extension of the $\r{O}_{\widetilde{q^a}}$. Put 
\[
M_{2i+1,p}=
\r{O}_p \tr_{X_{2i}} \r{E}^{*2i} \tr_{X_{2i+1}} e_{2i+1} \r{A}.
\]
 Then $M_{2i+1,p}$ is an extension of the $L_{2i+1,\widetilde{q^a}}$. The sequence (\ref{eq:locallyfreestalklike2})  now gives rise to the following right exact sequences
\begin{equation} \label{eq:locallyfreeloceven}
 L_{2i+1, p} \mor L_{2i,q} \mor L_{2i-1,p} \mor 0 
\end{equation} \begin{equation} \label{eq:locallyfreelocodd}
 L_{2i+2, p} \mor M_{2i+1,p} \mor L_{2i,p} \mor 0
\end{equation}
Finally there also is a right exact sequence:
\begin{equation} \label{eq:pointresolution}
L_{2i+1, p'} \mor L_{2i-1,p} \mor P_p \mor 0
\end{equation}
where the morphism $L_{2i+1, p'} \mor L_{2i-1,p}$ comes from the fact that $\dim_\k(\r{O}_p \tr \r{A}_{2i-1,2i+1})=3>0$ so that there is a $p' \in X_{2i+1}$ with a nonzero morphism $\r{O}_{p'} \mor \r{O}_p \tr \r{A}_{2i-1,2i+1}$. $P_p$ is defined as the cokernel of this morphism.\\ \\
We now prove the following by induction on $j$ (simultaneously for all points $p$ and all $i \in \d{Z}$):
\begin{eqnarray}
\notag  \dim_\k((P_p)_{2i+2j})	&=&1 \\
\notag \dim_\k((P_p)_{2i+2j+1})&=&2\\
\label{eq:correctdimensions} \dim_\k((L_{2i,p})_{2i+2j})&=&2j+1\\
\notag \dim_\k((L_{2i,p})_{2i+2j+1})&=&4j+4\\
\notag \dim_\k((L_{2i-1,p})_{2i+2j})&=&j+1\\
\notag \dim_\k((L_{2i-1,p})_{2i+2j+1})&=&2j+3
\end{eqnarray}
It is easy to see that these claims hold for $j=0$. So by induction we  suppose they hold for $j=0, \ldots, l$, for all $p$ and for all $i \in \d{Z}$. We prove that the claims also hold for $j=l+1$.\\ \\
By (\ref{eq:locallyfreelocodd}) we see: 
\begin{eqnarray*}
\dim_\k((L_{2i,p})_{2i+2l+2}) & \geq & \dim_\k((M_{2i+1,p})_{2i+2l+2})-\dim_\k((L_{2i+1,p})_{2i+2l+2}) \\
& = & \sum_{a=1}^4 \dim_\k((L_{2i+2,\widetilde{q^a_{2i+1}}})_{2i+2l+2})-\dim_\k((L_{(2i+2,p})_{2i+2l+2}) \\
& = & 4 \cdot (l+1) - (2l+1) \\
& = & 2l+3
\end{eqnarray*}
where the last equality follows from the induction hypothesis. This can be written schematically as:
\begin{equation} \label{eq:firstbound} \begin{array}{ccccccccccc}
&0& & & & & & & & & \\
& \uparrow & & & & & & & & & \\
& L_{2i,p} & 0 & 1 & 4 & 3 & \ldots & 2l+1 & 4l+4 & \underline{2l+3} & \underline{4l+8} \\
& \uparrow & & & & & & & & & \\
& M_{2i+1,p} & 0 & 0 & 4 & 4 &\ldots & 4l & 8l+4 & 4l+4 & 8l+12 \\
& \uparrow & & & & & & & & & \\
& L_{2i+2,\widetilde{p}} & 0 & 0 & 0 & 1 & \ldots & 2l-1 & 4l & 2l+1 & 4l+4 \\
\end{array} \end{equation}

Where the numbers on the right of a module signifies $\dim_\k((-)_x)$ for\\
${x=2i-1, \ldots, 2i+2l+3}$ and an underlined number implies a lower bound for $\dim_\k$. Similarly we write $\overline{N}$ to denote an upperbound for a certain $\dim_\k$.\\ \\
Now consider the module $P_{p,\geq 2i+2l}$. It is generated in degree $2i+2l$ because $P_p$ is a quotient of $L_{2i-1,p}$. Moreover $\dim_\k((P_p)_{2i+2l})=1$ and $\dim_\k((P_p)_{2i+2l+1})=2$, so Lemma \ref{lem:pointupperbound} implies $\dim_\k((P_p)_{2i+2l+2}) \leq 1$ and $\dim_\k((P_p)_{2i+2l+3}) \leq 2$. Together with the right exact sequence (\ref{eq:pointresolution}) this gives us the following upper bounds:
\begin{equation} \label{eq:secondbound} \begin{array}{cccccccccc}
&0& & & & & & & & \\
& \uparrow & & & & & & & & \\
& P & 1 & 1 & 2 & \ldots & 1 & 2 & \overline{1} & \overline{2} \\
& \uparrow & & & & & & & & \\
& L_{2i-1,p} & 1 & 1 & 3 & \ldots & l+1 & 2l+3 & \overline{l+2} & \overline{2l+5} \\
& \uparrow & & & & & & & & \\
& L_{2i+1,p'} & 0 & 0 & 1 & \ldots & l & 2l+1 & l+1 &2l+3 \\
\end{array} \end{equation}
Combining the bounds found in (\ref{eq:firstbound}) and (\ref{eq:secondbound}) and using (\ref{eq:locallyfreeloceven}) we have:
\begin{equation} \label{eq:thirdbound} \begin{array}{cccccccccc}
&0& & & & & & & & \\
& \uparrow & & & & & & & & \\
& L_{2i-1,p} & 1 & 1 & 4 & \ldots & l+1 & 2l+3 & \overline{l+2} & \overline{2l+5} \\
& \uparrow & & & & & & & & \\
& L_{2i,q} & 0 & 1 & 4 & \ldots & 2l+1 & 4l+4 & \underline{2l+3} & \underline{4l+8} \\
& \uparrow & & & & & & & & \\
& L_{2i+1,\widetilde{p}} & 0 & 0 & 1 & \ldots & l-1 & 2l+1 & l+1 & 2l+3 \\
\end{array} \end{equation}
Right exactness of (\ref{eq:locallyfreeloceven}) implies that the bounds in (\ref{eq:thirdbound}) are in fact equalities. By way of example we find the upper bound
\begin{eqnarray*}
 \dim_\k((L_{2i,q})_{2i+2l+2} ) & \leq & \dim_\k((L_{2i-1,p})_{2i+2l+2} ) + \dim_\k((L_{2i+1,\widetilde{p}})_{2i+2l+2} ) \\
& \leq & l+2 + l +1 \\
& = & 2l+3
\end{eqnarray*}
which equals the already known lower bound for $\dim_\k((L_{2i,q})_{2i+2l+2}$.  Hence we have found exact values for $\dim_\k(L_{2i+1,q})$. A priori the above right exact sequence only gives those exact value for the points $q \in X_{2i}$ for which there is a $p \in X_{2i-1}$ such that $\r{O}_p \tr \r{E}^{*2i-1} = \r{O}_q$. But as $\r{E}^{*2i-1}$ is of the form ${}_{\Id}(\r{L}_{i-1})_f$ as in (\ref{eq:forappendixc}) we have $q = f(p)$ and surjectivity of $f$ implies that $q$ runs through all points of $X_{2i}$ as $p$ runs through all points of $X_{2i-1}$. With the same reasoning we now obtain  the exact values
for $\dim_k (L_{2i-1})_{2i+2l+2}$ and
$\dim_k (L_{2i-1})_{2i+2l+3}$. . \\ \\
Hence we have proven (\ref{eq:correctdimensions}) for all $i,j \in \d{Z}$ and for all points $p$. As these values do not depend on $p$  and $X$ is a smooth variety, it follows from \cite[ex. II, \S 5, no.8]{Hartshorne77} that the terms in (\ref{eq:locallyfreeleft}) are locally free on the left (and hence also on the right). Filling in these values for (\ref{eq:locallyfreestalklike1}), the theorem follows.
\end{proof}

\section{The Full Exceptional Sequence} 

%We have done the preparatory work needed to compute the dimensions of the $\Ext$-groups of the exceptional sequence (\ref{eq:exceptionalsequence}). We will prove that the sequence. 
This section is dedicated to the proof of the following theorem:

\begin{theorem}
\label{thm:mainexoticsequence}
	Let  $\r{E}$ be a $\d{P}^1$-bimodule of rank $(4,1)$. Let $\d{S}(\r{E})$ be the associated symmetric sheaf $\d{Z}$-algebra and put $Z=\Proj(\d{S}(\r{E}))$. Let $\D$ denote the triangulated subcategory of objects in $\D(Z)$ with bounded noetherian cohomology. Then $\D$ is $\Ext$-finite and
\begin{equation} \label{eq:fullstrongex}
\bigg( \Pi_1^*(\r{O}_{\d{P}^1}),\Pi_1^*(\r{O}_{\d{P}^1}(1)), \Pi_0^*(\r{O}_{\d{P}^1}),\Pi_0^*(\r{O}_{\d{P}^1}(1)\bigg)
\end{equation}
is a full strong exceptional sequence in $\D$. In the particular case where $\r{E} = {}_f(\mathcal{O})_{Id}$ for a morphism $f:\d{P}^1\mor \d{P}^1$ of degree 4, the Gram matrix of the Euler form for this exceptional sequence is given by
\[
\begin{bmatrix}
1&2&1&5\\
0&1&0&4\\
0&0&1&2\\
0&0&0&1\\
\end{bmatrix}
\]
\end{theorem}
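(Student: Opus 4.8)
The plan is to reduce every $\Hom$ and $\Ext$ between the four objects to a cohomology computation on $\d{P}^1$ via Theorem~\ref{thm:extandpullback}, and then to establish fullness by combining the generation properties of $\Proj(\d{S}(\r{E}))$ with Beilinson's resolution on $\d{P}^1$ and the exact sequences of Theorem~\ref{trm:seqmain}. Throughout write $\r{A}=\d{S}(\r{E})$, $E_1=\Pi_1^*\r{O}_{\d{P}^1}$, $E_2=\Pi_1^*\r{O}_{\d{P}^1}(1)$, $E_3=\Pi_0^*\r{O}_{\d{P}^1}$ and $E_4=\Pi_0^*\r{O}_{\d{P}^1}(1)$. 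For every ordered pair $(E_k,E_l)$ the two gradings that occur lie in $\{0,1\}$ and so satisfy the hypothesis $m\ge n-1$ of Theorem~\ref{thm:extandpullback}, which identifies $\Ext^i_Z(E_k,E_l)$ with $\Ext^i_{\d{P}^1}(\r{F},\r{G}\tr\r{A}_{n,m})$ for the appropriate line bundles $\r{F},\r{G}$. Now $\r{A}_{0,0}=\r{A}_{1,1}=\r{O}_{\d{P}^1}$; $\r{A}_{1,0}=0$ because $\r{A}$ is concentrated in nonnegative degree; and $\r{A}_{0,1}=\r{E}^{*0}=\r{E}={}_f(\r{L})_{\Id}$, so that by~(\ref{eq:pullbacktensorproduct}) one has $\r{O}_{\d{P}^1}(a)\tr\r{A}_{0,1}\cong f^*\r{O}_{\d{P}^1}(a)\tr\r{L}$, again a line bundle on $\d{P}^1$. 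Hence each group is an $\Ext$ between line bundles on $\d{P}^1$: it vanishes for $i\ge 2$; for the six pairs with $k>l$ it vanishes for all $i$ (either because $\r{A}_{1,0}=0$, or because it is a twist of $H^*(\d{P}^1,\r{O}(-1))=0$); for $k\le l$ the groups in degrees $i\ge1$ vanish because the line bundles that occur have degree $\ge -1$ — immediate for the normalisation $\r{E}={}_f(\r{O})_{\Id}$, and for a general $\d{P}^1$-bimodule a consequence of its defining properties; and the remaining $\Hom$-spaces are then $H^0$ of line bundles on $\d{P}^1$.

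\emph{Exceptional sequence and Gram matrix.} It follows at once that $\Hom_Z(E_k,E_k)=H^0(\d{P}^1,\r{O})=\k$ with $\Ext^{>0}_Z(E_k,E_k)=0$, that $\Ext^*_Z(E_k,E_l)=0$ for $k>l$, and that $\Ext^{>0}_Z(E_k,E_l)=0$ for $k\le l$; thus $(E_1,E_2,E_3,E_4)$ is a strong exceptional sequence, and the Gram matrix of its Euler form is upper triangular with $(k,l)$-entry $\dim_\k\Hom_Z(E_k,E_l)$. Evaluating these dimensions in the case $\r{E}={}_f(\r{O})_{\Id}$ with $\deg f=4$ — where $\r{A}_{0,1}=\r{E}$, $\r{O}_{\d{P}^1}(1)\tr\r{E}\cong\r{O}_{\d{P}^1}(4)$, so that for instance $\Hom_Z(E_1,E_4)=H^0(\r{O}(4))$ is $5$-dimensional and $\Hom_Z(E_2,E_4)=H^0(\r{O}(3))$ is $4$-dimensional, while $\Hom_Z(E_1,E_3)=H^0(\r{O})$ is $1$-dimensional — produces precisely the asserted $4\times4$ matrix.

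\emph{Fullness.} Let $\D'=\langle E_1,E_2,E_3,E_4\rangle$ be the thick subcategory of $\D$ they generate; I must show $\D'=\D$, and I would proceed in three stages. First, the right $\r{A}$-modules $\r{N}\tr e_n\r{A}$, with $\r{N}$ ranging over a set of generators of $\Qcoh(X_n)$, generate $\Gr(\r{A})$ (as in the proof of Theorem~\ref{thm:groth}); using the adjunctions $p\dashv\omega$ and $-\tr e_n\r{A}\dashv(-)_n$ (Lemma~\ref{lem:adjunctiontruncation}) together with exactness of $\Pi_n^*$ (Lemma~\ref{lem:pushforwardexact}) and local noetherianity of $\Gr(\r{A})$ (Theorem~\ref{thm:noeth}), an object $M\in\D$ with $\RHom_Z(\Pi_n^*\r{F},M)=0$ for all $n\in\d{Z}$ and all $\r{F}\in\coh(X_n)$ satisfies $(\der\omega M)_n=0$ for every $n$, hence $M\cong p(\der\omega M)=0$; so $\D$ is generated by the objects $\Pi_n^*\r{F}$. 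Second, since $(\r{O}_{\d{P}^1},\r{O}_{\d{P}^1}(1))$ generates $D^b(\d{P}^1)$ and $\Pi_n^*$ is exact, $\Pi_n^*(\coh\d{P}^1)\subseteq\langle\Pi_n^*\r{O}_{\d{P}^1},\Pi_n^*\r{O}_{\d{P}^1}(1)\rangle$, and therefore $\D=\langle\Pi_n^*\r{O}_{\d{P}^1},\Pi_n^*\r{O}_{\d{P}^1}(1):n\in\d{Z}\rangle$. Third, tensoring the exact sequence~(\ref{eq:locallyfree}) on the left with $\r{O}_{X_m}$ or $\r{O}_{X_m}(1)$ and applying the exact functor $p$, which annihilates the torsion module $\Theta_m$, produces for every $m$ and $k\in\{0,1\}$ a distinguished triangle connecting $\Pi_m^*\r{O}_{X_m}(k)$ to $p\big(\r{O}_{X_m}(k)\tr\r{E}^{*m}\tr e_{m+1}\r{A}\big)$ and $p\big(\r{O}_{X_m}(k)\tr\r{Q}_m\tr e_{m+2}\r{A}\big)$; since $\r{E}^{*m}$ has the form ${}_u(\text{line bundle})_v$ with $\{u,v\}=\{f,\Id\}$ (see the proof of Corollary~\ref{cor:openrestriction}) and $\r{Q}_m$ is an invertible $X_m$-bimodule, these two terms are identified with $\Pi_{m+1}^*$, resp.\ $\Pi_{m+2}^*$, of coherent sheaves on $\d{P}^1$. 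A descending induction on $m$ (anchored at $m\in\{0,1\}$, where the terms lie in $\D'$ by the second stage) together with an ascending induction obtained by rearranging the same triangle to express $\Pi_{m+2}^*$ in terms of $\Pi_m^*$ and $\Pi_{m+1}^*$ then places $\Pi_m^*(\coh\d{P}^1)$ inside $\D'$ for every $m$, and with the first two stages this gives $\D=\D'$.

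\emph{$\Ext$-finiteness and the main obstacle.} Once fullness is known, $\D$ is $\Ext$-finite: every object is built by finitely many triangles from shifts of the $E_k$, and by the first paragraph each $\RHom_Z(E_k,E_l)$ has finite-dimensional total cohomology (it is $0$ for $k>l$ and concentrated in degree $0$ of finite dimension for $k\le l$), so the same holds for $\RHom_Z(M,N)$ with $M,N\in\D$. The crux of the whole argument is the third stage of fullness: identifying the ``generalised pullbacks'' $p\big(\r{O}_{X_m}(k)\tr\r{E}^{*m}\tr e_{m+1}\r{A}\big)$ with honest $\Pi_{m+1}^*$ of coherent sheaves on $\d{P}^1$ and verifying that the two-directional induction closes up; a secondary technical point is the passage in the first stage from the generation statement in the Grothendieck category $\Proj(\r{A})$ to the bounded derived category $\D$.
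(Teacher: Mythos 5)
Your computation of the $\Ext$-groups, the strongness, the exceptionality and the Gram matrix follows the paper's route essentially verbatim (Theorem \ref{thm:extandpullback} together with $\r{A}_{1,0}=0$ and $\r{A}_{0,1}=\r{E}$), and your stages 2 and 3 of the fullness argument --- the Euler sequences on $\d{P}^1$, the triangles obtained by tensoring (\ref{eq:locallyfree}) with line bundles and applying $p$, and the two-sided induction over $m$ --- are precisely the induction the paper carries out inside Lemma \ref{lem:ort}; the identifications you single out as ``the crux'' are unproblematic, since $\r{E}^{*m}$ is of the form ${}_u(\r{L}_m)_v$ and $\r{Q}_m$ is a central invertible bimodule. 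The genuine gap is your stage 1. From $\RHom_Z(\Pi_n^*\r{F},M)=0$ for all $n,\r{F}$ you do correctly get $M\cong p(\der\omega M)=0$ (using exactness of $p$), i.e.\ the right orthogonal of the family $\{\Pi_n^*\r{F}\}$ inside $\D$ vanishes; but you then assert, with no argument, that this family therefore generates $\D$ as a thick triangulated subcategory. Vanishing of a right orthogonal does not imply generation: that implication requires either an ambient category with coproducts in which the given objects are compact (not available here --- $\D$ has no coproducts, and the $\Pi_n^*\r{F}$ are not shown to be compact in $\D(Z)$, nor does a noetherian object of $\Proj(\r{A})$ come with a finite resolution by such objects), or an explicit mutation/d\'evissage mechanism. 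The paper supplies exactly this in Lemma \ref{lem:crit}: an arbitrary $T\in\D$ is mutated through the \emph{finite} list $E_1,\dots,E_4$ via $T_{i-1}=\cone(\Hom^\bullet_{\D}(E_i,T_i)\otimes_\k E_i\mor T_i)$, and $T_0$ lies in the right orthogonal, which is killed by Lemma \ref{lem:ort}. To even form these cones one needs condition (a) of Lemma \ref{lem:crit}, namely $\sum_j\dim_\k\Hom^j_{\D}(E_i,T)<\infty$ for \emph{every} $T\in\D$; this is where Lemma \ref{lem:tau} ($\cd\Pi_{m,*}\le 1$ and coherence of $R^i\Pi_{m,*}$ on noetherian objects) and the adjunction of Lemma \ref{cor:differentRHom} enter. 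Your proposal contains no substitute for this per-object finiteness --- your $\Ext$-finiteness is deduced only \emph{after} fullness, so it cannot be fed back into stage 1 --- and consequently fullness (and with it your $\Ext$-finiteness argument) is not actually proved.

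Two smaller points. First, for a general rank $(4,1)$ $\d{P}^1$-bimodule $\r{E}={}_f(\r{L})_{\Id}$ your appeal to ``degree $\ge -1$'' is not a formal consequence of the definitions: if $\deg\r{L}<0$ the forward higher $\Ext$'s need not vanish, so the strongness and the $\Hom$-dimensions genuinely use the normalisation $\r{L}\cong\r{O}$ (the paper is equally terse on this, but you should not present it as automatic). Second, note that the paper runs the orthogonality reduction in the opposite direction to yours (orthogonality to the four $E_i$ implies orthogonality to all $\Pi_m^*(\r{O}(a))$, hence to a generating family, hence $M=0$); that direction is the one that combines correctly with Lemma \ref{lem:crit}, since the mutation argument must be performed against a finite collection.
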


We will prove this theorem through a series of lemmas. We first exhibit some technical results required to show that the sequence is indeed full
\begin{lemma}
\label{lem:crit}
Let $\r{T}$ be a $\k$-linear  triangulated category. 
Assume that $E_1,\ldots,E_n$ is a collection of objects in $\r{T}$ such that
\begin{itemize}
\item[(a)]
$
 \sum_j \dim\Hom_\r{T}^j(E_i,T)<\infty\text{ for all $i$ and for all $T\in \Ob(\r{T})$}
$.
\item[(b)]
 $(E_i)_i$ satisfies the conditions for an exceptional sequence,
except that we do not require $\Hom$-finiteness of $\r{T}$.
\item[(c)] we have 
\[ ((E_i)_i)^{\perp}:=\{Y\in \T\,\vert \ \forall m: Hom^i(E_m,Y)=0 \}= \{0\} \]
\end{itemize}
Then 
\begin{enumerate}
\item $E_1,\ldots,E_n$ generate $\r{T}$ as a triangulated category and
\item $\r{T}$ is $\Ext$-finite.
\end{enumerate}
\end{lemma}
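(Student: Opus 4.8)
The plan is to establish assertion (1) — that $E_1,\dots,E_n$ generate $\r{T}$ — and then to deduce (2) from it by an easy d\'evissage. Write $\r{T}'$ for the triangulated subcategory of $\r{T}$ generated by $E_1,\dots,E_n$. To prove $\r{T}'=\r{T}$ I would construct, for every $T\in\Ob(\r{T})$, a distinguished triangle $T'\to T\to T''\to T'[1]$ with $T'\in\r{T}'$ and $\Hom^j_{\r{T}}(E_i,T'')=0$ for all $i$ and all $j$; hypothesis (c) then forces $T''=0$, so $T\cong T'\in\r{T}'$.

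The engine is a generalized evaluation triangle. Let $E$ be an exceptional object, so that $\Hom^\bullet_{\r{T}}(E,E)=\k$ concentrated in degree $0$, and let $T\in\Ob(\r{T})$ be arbitrary. Hypothesis (a) says the graded vector space $V:=\Hom^\bullet_{\r{T}}(E,T)$ is finite dimensional, so the object $V\tr_\k E:=\bigoplus_j\big(\Hom^j_{\r{T}}(E,T)\tr_\k E\big)[-j]$ is a \emph{finite} direct sum of shifts of $E$ — hence lies in $\r{T}'$ — and carries a canonical evaluation morphism $\mathrm{ev}\colon V\tr_\k E\to T$. Completing $\mathrm{ev}$ to a triangle $V\tr_\k E\xrightarrow{\mathrm{ev}}T\to\cone(\mathrm{ev})\to$ and applying $\Hom^\bullet_{\r{T}}(E,-)$, one computes $\Hom^\bullet_{\r{T}}(E,V\tr_\k E)=V=\Hom^\bullet_{\r{T}}(E,T)$ with $\mathrm{ev}_\ast$ the identity, so the long exact sequence gives $\Hom^\bullet_{\r{T}}(E,\cone(\mathrm{ev}))=0$. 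This is the only place (a) is used, and it is exactly what makes $V\tr_\k E$ a legitimate object of $\r{T}'$.

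Next I would iterate this through $E_n,E_{n-1},\dots,E_1$ in that order. Set $T_{[n+1]}:=T$ and, having built $T_{[k+1]}$, put $T_{[k]}:=\cone\big(V_k\tr_\k E_k\xrightarrow{\mathrm{ev}}T_{[k+1]}\big)$ with $V_k=\Hom^\bullet_{\r{T}}(E_k,T_{[k+1]})$, so that $\Hom^\bullet_{\r{T}}(E_k,T_{[k]})=0$ and there is a triangle $V_k\tr_\k E_k\to T_{[k+1]}\to T_{[k]}\to$. The crucial bookkeeping point is that this step does not disturb the vanishings already obtained: since $(E_i)_i$ is an exceptional sequence, $\Hom^\bullet_{\r{T}}(E_i,E_k)=0$ whenever $i>k$, so applying $\Hom^\bullet_{\r{T}}(E_i,-)$ to the $k$-th triangle gives $\Hom^\bullet_{\r{T}}(E_i,T_{[k]})\cong\Hom^\bullet_{\r{T}}(E_i,T_{[k+1]})$ for $i>k$. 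By downward induction on $k$ starting from $k=n$, the object $T'':=T_{[1]}$ thus satisfies $\Hom^\bullet_{\r{T}}(E_i,T'')=0$ for all $i$. Composing the maps $T=T_{[n+1]}\to T_{[n]}\to\cdots\to T_{[1]}=T''$ and repeatedly applying the octahedral axiom exhibits the fibre $T'$ of $T\to T''$ as an iterated extension of the objects $V_k\tr_\k E_k$, so $T'\in\r{T}'$ and we obtain the triangle $T'\to T\to T''\to T'[1]$. Since $\Hom^\bullet_{\r{T}}(-,T'')$ is cohomological, vanishing on each $E_i$ propagates to all of $\r{T}'$, so $T''\in(\r{T}')^{\perp}=((E_i)_i)^{\perp}=\{0\}$ by (c); hence $T''=0$ and $T\cong T'\in\r{T}'$. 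This proves (1), and shows moreover that every object of $\r{T}$ is a finite iterated extension of shifts of the $E_i$.

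For (2) I would show $\Hom^\bullet_{\r{T}}(S,T)$ is finite dimensional for all $S,T\in\Ob(\r{T})$ by induction on the number of cone operations needed to build $S$ from shifts of $E_1,\dots,E_n$, possible by the last remark: the base case $S\cong E_i[m]$ is hypothesis (a), and the inductive step applies the cohomological functor $\Hom^\bullet_{\r{T}}(-,T)$ to a triangle presenting $S$ as a cone, using that an extension of finite-dimensional graded vector spaces is again finite dimensional. I expect the only delicate points to be the rigorous construction of $V\tr_\k E$ with its universal morphism (where finiteness from (a) is essential) and the bookkeeping in the descending induction over $k$; neither is a genuine obstacle — the statement is a variant of Bondal's generation criterion — but the processing order of the $E_i$ must match the direction of the semiorthogonality in hypothesis (b).
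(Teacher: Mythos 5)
Your proposal is correct and follows essentially the same route as the paper: iterated left mutation, taking cones of the evaluation maps $\Hom^\bullet_{\r{T}}(E_i,T_i)\otimes_\k E_i\to T_i$ in the order $i=n,\dots,1$, using (a) for finiteness of each $V_i\otimes_\k E_i$, semiorthogonality from (b) to preserve the vanishings, and (c) to kill the final cone, then deducing Ext-finiteness by d\'evissage to the case $T_1=E_i$, which is hypothesis (a). The only cosmetic difference is that the paper skips the octahedral assembly of the fibre $T'$, observing directly that each $T_i$ lies in the subcategory generated by $E_i$ and $T_{i-1}$ and concluding from $T_0=0$.
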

\begin{proof} Let $T\in \r{T}$. We have to prove that $T$ is in the triangulated subcategory of $\r{T}$
generated by $E_1,\ldots,E_n$. We put $T_n=T$ and  define $T_{i-1}$ inductively by $L_{E_i}T_i$ for $i=n,n-1,\ldots, 1$, i.e.
\[
T_{i-1}=\cone(\Hom_\r{T}^\bullet(E_i,T_{i})\otimes_k E_i\mor T_{i})
\]
Then $T_i$ is in the triangulated subcategory of $\r{T}$ generated by $T_{i-1}$ and $E_i$. Furthermore
\[
\Hom_{\r{T}}^\bullet(E_j,T_i)=0\text{ for } j>i
\]
It follows that $T_0=0$. Hence we are done. 

For (2) we have to prove that if $T_1,T_2\in \r{T}$ then $\sum_j \dim\Hom^j_{\r{T}}(T_1,T_2)<\infty$. Since $T_1$ is in
the triangulated category generated by $(E_i)_i$ we may assume $T_1=E_i$ for some $i$. But then the claim is 
part of the hypotheses.
\end{proof}
\begin{lemma} 
\label{lem:tau} Let $X,Y$ be smooth varieties over $\k$ and let $\r{E}$ be a locally free $X-Y$-bimodule of rank $(4,1)$. Then for all $m\in \Z$ one has
\begin{enumerate}
\item The cohomological dimension of $\Pi_{m,*}$ satisfies\footnote{It is easy to see that in (1) the cohomological dimension is exactly one, but we do 
not need it and leave it out for clarity} 
\[
\cd \Pi_{m,\ast}\le 1.
\]
\item If $\r{F}$ is a noetherian object then $R^i\Pi_{m,\ast}\r{F}$ is a coherent sheaf for all $i$.
\end{enumerate}
\end{lemma}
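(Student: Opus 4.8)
The plan is to deduce both statements from the description of the derived functors of the torsion functor $\tau:\Gr(\r{A})\to\Tors(\r{A})$ in terms of the internal $\underline{\ShExt}$-groups (Lemma~\ref{lem:taushhom}), together with the length-two resolution~(\ref{eq:locallyfree}) of $\Theta_m$ coming from Theorem~\ref{trm:seqmain}. For (1): since $(-)_m$ is exact and $\Pi_{m,\ast}(\r{M})=(\omega\r{M})_m$, we have $R^i\Pi_{m,\ast}\cong(R^i\omega(-))_m$; and since every $\r{M}\in\Proj(\r{A})$ equals $p(\omega\r{M})$, Lemma~\ref{lem:tau/omega}(i) gives $R^i\omega(\r{M})\cong R^{i+1}\tau(\omega\r{M})$ for $i\ge1$, so it suffices to show $\cd\tau\le2$. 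By Proposition~\ref{prp:sheafhom}(iv) every term of~(\ref{eq:locallyfree}) is $\ShHom(-,\r{M})$-acyclic for all $\r{M}$, so~(\ref{eq:locallyfree}) presents $\Theta_m=e_m\r{A}_0$ as the zeroth cohomology of a length-two complex of such acyclics, whence $\underline{\ShExt}^{\,j}(\r{A}_0,-)=0$ for $j\ge3$; the same vanishing then holds for each $\r{A}_{\ge l}/\r{A}_{\ge l+1}$ (concentrated in one degree) by \cite[Cor.\ 4.6]{Nyman04}, for $\r{A}/\r{A}_{\ge l}$ by d\'evissage along its finite filtration, and for $\tau$ by passing to the colimit in Lemma~\ref{lem:taushhom}. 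Hence $R^j\tau=0$ for $j\ge3$, i.e. $\cd\Pi_{m,\ast}\le1$.

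For (2), part (1) leaves only $R^0\Pi_{m,\ast}\r{F}$ and $R^1\Pi_{m,\ast}\r{F}$. Since $\Proj(\r{A})$ is the quotient of the locally noetherian category $\Gr(\r{A})$ (Theorem~\ref{thm:noeth}) by the localizing subcategory $\Tors(\r{A})$, a noetherian $\r{F}$ has the form $p(\r{N})$ with $\r{N}$ noetherian in $\Gr(\r{A})$, and then every graded piece $\r{N}_k$ is coherent because $\r{N}$ is a quotient of a finite sum of the generators $\r{G}\tr e_n\r{A}$ ($\r{G}$ coherent on $X_n$) from the proof of Theorem~\ref{thm:noeth}. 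The exact sequence of Lemma~\ref{lem:tau/omega}(ii) in degree $m$ together with the isomorphism $R^1\Pi_{m,\ast}\r{F}\cong(R^2\tau\r{N})_m$ (Lemma~\ref{lem:tau/omega}(i)), given coherence of $\r{N}_m$, reduce the claim to coherence of $(R^j\tau\r{N})_m$ for $j=1,2$ and all noetherian $\r{N}$. I would then pick a short exact sequence $0\to\r{K}\to\r{P}\to\r{N}\to0$ with $\r{P}$ a \emph{finite} sum of modules $\r{G}\tr e_n\r{A}$ and $\r{K}$ noetherian: because $R^3\tau=0$, the long exact $\tau$-sequence gives a surjection $(R^2\tau\r{P})_m\twoheadrightarrow(R^2\tau\r{N})_m$ (reducing the $j=2$ case to $\r{N}=\r{G}\tr e_n\r{A}$) and exhibits $(R^1\tau\r{N})_m$ as an extension of a subsheaf of $(R^2\tau\r{K})_m$ by a quotient of $(R^1\tau\r{P})_m$ (so the $j=2$ case applied to $\r{K}$, plus the $j=1$ case for $\r{N}=\r{G}\tr e_n\r{A}$, gives $j=1$ in general). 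Thus everything reduces to coherence of $(R^{j'}\tau(\r{G}\tr e_n\r{A}))_m$ for $j'=1,2$ and arbitrary $n$; and here, by Lemma~\ref{lem:taushhom} this group is $\varinjlim_l\underline{\ShExt}^{\,j'}(\r{A}/\r{A}_{\ge l},\r{G}\tr e_n\r{A})_m$, whose transition maps are isomorphisms once $l\ge n-m-1$ by Lemma~\ref{lem:concentrateddegreel}, so it equals the value at the \emph{finite} truncation $\r{A}/\r{A}_{\ge\max(0,\,n-m-1)}$ — which, via the finite filtration by the $\r{A}_{\ge k}/\r{A}_{\ge k+1}$, the identity \cite[Cor.\ 4.6]{Nyman04}, the (suitably shifted) resolution~(\ref{eq:locallyfree}) and Proposition~\ref{prp:sheafhom}(iv), is the cohomology of an explicit bounded complex of coherent sheaves on $X_m$ built from pieces $\r{G}\tr\r{A}_{n,k}$ tensored with locally free bimodules, hence coherent.

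The delicate point is the coherence claim in (2): $R^{\ast}\tau$ is genuinely a colimit over the truncations $\r{A}/\r{A}_{\ge l}$, and for a noetherian $\r{F}$ — equivalently a noetherian $\r{N}$, which may be nonzero in all large degrees, e.g.\ $\r{N}=e_n\r{A}$ — one must know this colimit stabilizes after finitely many steps with coherent value. Lemma~\ref{lem:concentrateddegreel}, itself a consequence of the rank-$(4,1)$ resolution of Theorem~\ref{trm:seqmain}, is exactly what makes this work; abstract local noetherianity alone does not. A more computational alternative for (2) would be to observe that coherence is local on $X_m$, restrict to an affine open via Corollary~\ref{cor:openrestriction}, identify $\Gr(\d{S}(\r{E}))$ there with modules over the noetherian algebra $\Pi_R(S)$ (Lemma~\ref{lem:locallypirs}, Theorem~\ref{thm:pirsnoetherian}), and invoke the classical finiteness of local cohomology of finitely generated modules over noetherian graded rings in a bounded range of degrees.
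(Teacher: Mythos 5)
Your proposal is correct, and for part (1) it is essentially the paper's own argument: the paper makes the same reduction, via the exactness of $(-)_m$ and Lemma \ref{lem:tau/omega}, to the statement $\cd\tau\le 2$, and then simply cites \cite[Corollary 4.10]{Nyman04} ``using the exact sequence (\ref{eq:locallyfree})'' --- the acyclicity-plus-d\'evissage-plus-colimit argument you spell out is exactly that citation. For part (2), however, your route genuinely differs from the paper's. The paper stays inside $\Proj(\d{S}(\r{E}))$: it takes a left resolution of $\r{F}$ by finite sums of $\Pi^\ast_n(\r{G})$, uses the finite cohomological dimension from (1) to reduce to $\r{F}=\Pi^\ast_n(\r{G})$, then uses the short exact sequences (\ref{eq:basic}) repeatedly to lower $n$ until $n\le m$, where it quotes the computation made in the proof of Theorem \ref{thm:extandpullback} that $R^i\Pi_{m,\ast}\Pi^\ast_n\r{G}$ equals $\r{G}\tr\r{A}_{n,m}$ for $i=0$ and vanishes for $i\ge 1$, so coherence is immediate. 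You instead work in $\Gr(\r{A})$ with $\tau$: a single presentation by generators together with $R^{\ge 3}\tau=0$ reduces everything to $(R^{j}\tau(\r{G}\tr e_n\r{A}))_m$ for \emph{arbitrary} $n$, and there you prove coherence (rather than vanishing) by showing the colimit of Lemma \ref{lem:taushhom} stabilizes at a finite truncation, via Lemma \ref{lem:concentrateddegreel}. The paper's route buys brevity, since it recycles Lemma \ref{lem:induction2} and the proof of Theorem \ref{thm:extandpullback} verbatim; yours buys that you never need the shifting-down step and you obtain an explicit coherent model for $R^{j}\tau$ of a generator in every degree, including $n>m+1$ where the groups do not vanish. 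One small caveat, which applies equally to the paper's own proof: Lemma \ref{lem:concentrateddegreel} (and Lemma \ref{lem:induction2}) are stated with a locally free sheaf $\r{V}$ in the second argument, whereas your presentation (like the paper's resolution) uses merely coherent $\r{G}$. This is harmless, and you should say why: either choose the presentation with locally free $\r{G}$ (the smooth separated varieties here carry an ample family of line bundles, so every coherent sheaf is a quotient of a locally free one), or note that all terms of the sequence (\ref{eq:concentratedstep1}) are locally free on the left by Corollary \ref{cor:locallyfree}, so tensoring with any coherent $\r{G}$ preserves its exactness and the proof of Lemma \ref{lem:concentrateddegreel} goes through unchanged.
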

\begin{proof} 
\begin{enumerate}
\item By (\ref{eq:pushforward/tau}), we have
\[
\der^i\Pi_{m,\ast}(p(-))=\der^i\omega(p(-))_m
\]
which reduces the claim to $\cd \omega=1$. From Lemma \ref{lem:tau/omega} we  in turn obtain
\[
\der^i\omega(p(-)) \cong \der^{i+1}\tau 
\]

and the claim now reduces to $\cd \tau=2$. This is proved as in \cite[Corollary 4.10]{Nyman04} using the exact sequence (\ref{eq:locallyfree})
instead of the exact sequence (4.1) in loc.\ cit.
\item Since $\Gr(\d{S}(\r{E}))$ is locally noetherian we may construct a left resolution of $\r{F}$ by
objects which are finite direct sums of objects of the form 
\[
p(\r{G}\otimes_{\r{O}_{X_n}} e_n \d{S}(\r{E}))=\Pi^\ast_n(\r{G})
\]
for $\r{G}\in \coh(X_n)$. Using that $\Pi_{m,\ast}$ has finite cohomological dimension we reduce to the case $\r{F}=\Pi^\ast_n(\r{G})$.

Tensoring (\ref{eq:locallyfree}) (with $m$ replaced by $n$) on the left with $\r{G}\in \coh(X_{n})$ we obtain exact sequences
in $Z=\Proj (\d{S}(\r{E}))$
\begin{equation}
\label{eq:basic}
0\mor \Pi^{\ast}_{n+2}(\r{G})\mor \Pi^{\ast}_{n+1}(\r{G}\tr_{X_{n}} \r{E}^{\ast n}) \mor \Pi^\ast_n(\r{G})\mor 0
\end{equation}
Hence repeatedly using such exact sequences we may reduce
to the case $\r{F}=\Pi^\ast_n(\r{G})$ for $n\le m$.
When $n\le m$ it is shown in the proof of theorem 5.4.1 that
\[
R^i\Pi_{m,\ast} \Pi^\ast_n \r{G}=
\begin{cases}
\r{G}\otimes_{X_n} \d{S}(\r{E})_{n,m}&\text{if $i=0$}\\
0&\text{otherwise}
\end{cases}
\]
\end{enumerate}
This is indeed coherent. 
\end{proof}

\begin{lemma} 
\label{lem:ort} Assume $X=\d{P}^1$ and let $f:Y\mor X$ be a morphism of degree 4. Put $\r{E}={}_f(\r{O}_X)_{\Id}$. Then the right orthogonal to the subcategory generated by
\[
E=(\Pi_{1}^\ast(\r{O}_{\d{P}^1}),
\Pi_{1}^\ast(\r{O}_{\d{P}^1}(1)),
\Pi_{0}^\ast(\r{O}_{\d{P}^1}),
\Pi_{0}^\ast(\r{O}_{\d{P}^1}(1)))
\]
in $\D(\Proj (\d{S}(\r{E})))$ is zero.
\end{lemma}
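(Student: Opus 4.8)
The plan is to turn the orthogonality hypothesis into a vanishing statement for all the derived push-forwards $\der\Pi_{m,\ast}\r{M}$, $m\in\d{Z}$, and then to recognize that such an $\r{M}$ is right-orthogonal to a generating set of the Grothendieck category $\Proj(\d{S}(\r{E}))$.

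First I would take $\r{M}$ in the right orthogonal. By the adjunction of Lemma \ref{cor:differentRHom}, for $m\in\{0,1\}$ (so $X_m=\d{P}^1$) and $\r{F}\in\{\r{O}_{\d{P}^1},\r{O}_{\d{P}^1}(1)\}$ one has $\RHom_{X_m}(\r{F},\der\Pi_{m,\ast}\r{M})\cong\RHom_Z(\Pi_m^\ast\r{F},\r{M})=0$. Since $(\r{O}_{\d{P}^1},\r{O}_{\d{P}^1}(1))$ is a full strong exceptional collection on $\d{P}^1$, it generates $\D^b(\coh\d{P}^1)$; and by Lemma \ref{lem:tau} the complex $\der\Pi_{m,\ast}\r{M}$ has bounded, coherent cohomology. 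Hence $\der\Pi_{0,\ast}\r{M}=0$ and $\der\Pi_{1,\ast}\r{M}=0$, and applying Lemma \ref{cor:differentRHom} once more gives $\RHom_Z(\Pi_m^\ast\r{G},\r{M})=0$ for \emph{every} $\r{G}\in\coh(X_m)$ and $m\in\{0,1\}$.

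Next I would propagate this to all degrees by means of the short exact sequences
\[ 0\mor\Pi_{n+2}^\ast(\r{G}\tr\r{Q}_n)\mor\Pi_{n+1}^\ast(\r{G}\tr\r{E}^{\ast n})\mor\Pi_n^\ast(\r{G})\mor 0 \]
in $Z$, valid for all $n\in\d{Z}$ and $\r{G}\in\coh(X_n)$ by Theorem \ref{trm:seqmain} (cf. (\ref{eq:basic})). Applying $\RHom_Z(-,\r{M})$ to the associated triangle, and using that $\r{Q}_n$ is invertible (so $\r{G}\tr\r{Q}_n$ runs through all of $\coh(X_{n+2})$ as $\r{G}$ runs through $\coh(X_n)$, while $\r{G}\tr\r{E}^{\ast n}$ is in any case coherent on $X_{n+1}$), I conclude: if $\RHom_Z(\Pi_n^\ast(-),\r{M})$ and $\RHom_Z(\Pi_{n+1}^\ast(-),\r{M})$ vanish on all coherent sheaves, then so does $\RHom_Z(\Pi_{n+2}^\ast(-),\r{M})$; and, reading the same triangle the other way, vanishing for $\Pi_{n+1}^\ast(-)$ and $\Pi_{n+2}^\ast(-)$ forces it for $\Pi_n^\ast(-)$. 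Starting from the cases $m=0,1$ established above and inducting both upwards and downwards yields $\RHom_Z(\Pi_m^\ast\r{G},\r{M})=0$ for all $m\in\d{Z}$ and all $\r{G}\in\coh(X_m)$. Finally, the objects $\Pi_m^\ast\r{G}=p(\r{G}\tr e_m\d{S}(\r{E}))$, with $m$ ranging over $\d{Z}$ and $\r{G}$ over a generating set of $\Qcoh(X_m)$, form a generating set of $\Proj(\d{S}(\r{E}))$ — this is precisely the set produced in the proof of Theorem \ref{thm:groth}, transported along the exact localization $p$ — so an object right-orthogonal to all of them is zero: if $\r{M}\neq 0$, choosing the lowest nonzero cohomology sheaf $H^{j_0}(\r{M})$ and using the truncation triangle gives $\Hom_{\Proj(\d{S}(\r{E}))}(\Pi_m^\ast\r{G},H^{j_0}(\r{M}))\cong\Hom_{\D(Z)}(\Pi_m^\ast\r{G},\r{M}[j_0])=0$ for all generators, whence $H^{j_0}(\r{M})=0$, a contradiction.

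The step I expect to be the real obstacle is the propagation in the third paragraph: the four objects of $E$ see only the truncations $(-)_0$ and $(-)_1$, and upgrading ``$\der\Pi_{0,\ast}\r{M}=\der\Pi_{1,\ast}\r{M}=0$'' to ``$\der\Pi_{m,\ast}\r{M}=0$ for all $m$'' is not formal — it rests squarely on the exact sequences of Theorem \ref{trm:seqmain}, i.e. on the particular shape of the defining relations of $\d{S}(\r{E})$ in the rank $(4,1)$ case. (A minor point of care is the ambient category: the argument is cleanest for $\r{M}$ with bounded noetherian cohomology, where Lemma \ref{lem:tau} keeps $\der\Pi_{m,\ast}\r{M}$ in $\D^b(\coh)$ and $\RHom_Z(\Pi_m^\ast\r{G},-)$ is well behaved; the general case reduces to this by the usual generation argument for derived categories of Grothendieck categories.)
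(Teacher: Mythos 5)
Your overall architecture is the same as the paper's: the decisive middle step in both arguments is the propagation over all $m$ via the short exact sequences $0\mor\Pi_{n+2}^*(\r{G}\tr\r{Q}_n)\mor\Pi_{n+1}^*(\r{G}\tr\r{E}^{*n})\mor\Pi_n^*(\r{G})\mor 0$ coming from Theorem \ref{trm:seqmain} (i.e.\ (\ref{eq:basic})), followed by the observation that the $\Pi_m^*\r{G}$ generate $\Proj(\d{S}(\r{E}))$ as a Grothendieck category. The divergence is in your first step, and that is where there is a gap relative to the stated generality of the lemma. The statement concerns the right orthogonal in all of $\D(\Proj(\d{S}(\r{E})))$, but you deduce $\der\Pi_{m,*}\r{M}=0$ for $m=0,1$ by combining Lemma \ref{cor:differentRHom} (stated only for $\r{M}\in\D^+$) with Lemma \ref{lem:tau}(2), whose coherence conclusion is only available when $\r{M}$ has noetherian cohomology; for an arbitrary object of $\D(\Proj(\d{S}(\r{E})))$ neither applies. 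Your closing parenthetical does not repair this: a generation argument enlarges the class of test objects appearing on the left of $\RHom(-,\r{M})$, but it cannot restrict the class of objects $\r{M}$ being tested for vanishing, so there is no reduction of the full statement to the bounded-noetherian case. (Your final ``lowest nonzero cohomology sheaf'' step likewise presupposes $\r{M}$ bounded below; the paper is equally terse there, but it is another place where the unbounded statement needs an extra word.)

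The paper's own first step avoids all hypotheses on $\r{M}$ by staying on the $Z$ side: pulling back the Euler sequences $0\mor\r{O}_{\d{P}^1}(a)\mor\r{O}_{\d{P}^1}(a+1)^{\oplus 2}\mor\r{O}_{\d{P}^1}(a+2)\mor 0$ along the exact functors $\Pi_m^*$ (Lemma \ref{lem:pushforwardexact}) shows that orthogonality to the four objects of $E$ already forces orthogonality to every $\Pi_m^*(\r{O}_{\d{P}^1}(a))$ for $m=0,1$; the induction with (\ref{eq:basic}) then runs because $\r{O}_{\d{P}^1}(a)\tr\r{E}^{*m}$ is again a sum of line bundles, and one concludes with the same generation argument as yours. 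Alternatively, your route can be salvaged by replacing ``generates $\D^b(\coh\d{P}^1)$ plus coherence of $\der\Pi_{m,*}\r{M}$'' with the fact that $\r{O}_{\d{P}^1}\oplus\r{O}_{\d{P}^1}(1)$ is a compact generator of $\D(\Qcoh(\d{P}^1))$, so that $\der\Pi_{m,*}\r{M}=0$ requires no coherence; but then you must also justify the adjunction of Lemma \ref{cor:differentRHom} for unbounded $\r{M}$ (which is possible, since $\Pi_m^*$ is exact and $\Proj(\d{S}(\r{E}))$ is Grothendieck, hence admits K-injective resolutions). As written, your argument is complete only for $\r{M}$ with bounded(-below) noetherian cohomology — enough for the use of the lemma inside Theorem \ref{thm:mainexoticsequence}, but not for the lemma as stated.
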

\begin{proof} 
Assume that $A\in \Proj (\d{S}(\r{E}))$ is right orthogonal to $E$.
Using the exact
sequences
\[
0\mor \r{O}_{\d{P}^1}(a)\mor \r{O}_{\d{P}^1}(a+1)^{\oplus 2}\mor \r{O}_{\d{P}^1}(a+2)\mor 0
\]
and the exactness of $\Pi^\ast_m$ (Lemma \ref{lem:pushforwardexact}) we find that $A$ is
right orthogonal to $\Pi_{m}^\ast(\r{O}_{\d{P}^1}(a))$ for $m=0,1$ and all $a$.

From (\ref{eq:basic}) we obtain exact sequences in $\Proj (\d{S}(\r{E}))$
\[
0\mor \Pi^{\ast}_{m+2}(\r{O}_{\d{P}^1}(a))\mor \Pi^{\ast}_{m+1}(\r{O}_{\d{P}^1}(a)\otimes_{X_{m}} \tr {E}^{\ast m})
\mor \Pi^\ast_m(\r{O}_{\d{P}^1}(a))\mor 0
\]
Since $\r{O}_{\d{P}^1}(a)\otimes_{X_{m}} \r{E}^{\ast m}$, being locally free, is isomorphic to a sum of $\r{O}_{\d{P}}(b)$ 
we conclude by induction that $A$ is right orthogonal to $\Pi_{m}^\ast(\r{O}_{\d{P}^1}(a))$ for all $m,a$.

Now $(\Pi_{m}^\ast(\r{O}_{\d{P}^1}(a)))_{m,a}$ is a collection of generators for $\Proj (\d{S}(\r{E}))$ as a Grothendieck category. From
this it is easy to see that the right orthogonal to $(\Pi_{m}^\ast(\r{O}_{\d{P}^1}(a)))_{m,a}$ in $\D(\Proj (\d{S}(\r{E})))$
is zero. This finishes the proof.
\end{proof}

\begin{proof}\emph{of \ref{thm:mainexoticsequence}}.
The computation of the Gram matrix, the strongness and exceptionality  is an immediate application of the formula \ref{thm:extandpullback}:
\[
\Ext^i_Z \left( \Pi_n^*\r{F}, \Pi_n^*\r{G} \right)=\Ext^i_{\d{P}^1} \left( \r{F}, \r{G} \tr \d{S}(\r{E})_{n,n} \right)=\Ext^i_{\d{P}^1} \left( \r{F}, \r{G}  \right)
\]
proving the claim for the subsequences
\begin{center}
$\big(\Pi_1^*(\r{O}_{\d{P}^1}),\Pi_1^*(\r{O}_{\d{P}^1}(1))\big)$ and $\big(\Pi_0^*(\r{O}_{\d{P}^1}),\Pi_0^*(\r{O}_{\d{P}^1}(1)\big)$
\end{center}
There are no backward $\Hom$'s by the formula \ref{thm:extandpullback} once again.\\
There are four remaining cases. Since they are all very similar, we pick one out and leave the other three to the reader:
\begin{align*}
	\Ext^i_Z \left( \Pi_1^*( \r{O}_{\d{P}^1}), \Pi_0^*(\r{O}_{\d{P}^1}(1) \right)
	=&\Ext^i_{\d{P}^1}(\r{O}_{\d{P}^1},\r{O}_{\d{P}^1}(1)\tr \d{S}(\r{E})_{0,1})\\
	=&\Ext^i_{\d{P}^1}(\r{O}_{\d{P}^1},\r{O}_{\d{P}^1}(1)\tr {}_f (\r{O}_{\d{P}^1})_{\Id} )\\
	=&\H^i(\d{P}^1,\r{O}_{\d{P}^1}(1)\tr \r{O}_{\d{P}^1}(1)\tr {}_f (\r{O}_{\d{P}^1})_{\Id})\\
	\stackrel{(\ref{eq:pullbacktensorproduct})}{=}& \H^i(\d{P}^1,f^*\r{O}_{\d{P}^1}(1))\\
	=& \H^i(\d{P}^1,\r{O}_{\d{P}^1}(4))\
\end{align*}
which is indeed only nonzero for $i\neq 0$, in which case it is 5-dimensional over $\k$.\\[\medskipamount]
To show that the sequence is full, we have to verify conditions (a)(b)(c) of lemma \ref{lem:crit}. Condition (a) follows from Lemma \ref{lem:tau},  which implies that
$R\Pi_{m,\ast}\r{G}$ lives in $D^b_{\coh}(\Qcoh(X_m))$, combined with 
the fact that by Lemma \ref{cor:differentRHom}, we have
\[ \Ext^i_{\Proj \r{A}}(\Pi^\ast_m(\r{O}_{\d{P}^1}(a)),\r{G})=\Ext^i_{X_m}(\r{O}_{\d{P}^1}(a),R\Pi_{m,\ast}\r{G})\]
Condition (b) is proven above.
Finally, condition (c) follows from Lemma \ref{lem:ort}.
\end{proof}

As the exceptional collection (\ref{eq:fullstrongex}) is full and strong we can prove the following:
\begin{theorem}
Let $\r{E} = {}_f \left( \mathcal{O}_{\mathbb{P}^1} \right)_{Id}$ and $\D$ be as in Theorem \ref{thm:mainexoticsequence}, then there is an equivalence
\[ \D \cong \D(\k Q/I) \]
where $Q$ is the quiver
\begin{center}
\begin{tikzpicture}[
    implies/.style={double,double equal sign distance,-implies},
    dot/.style={shape=circle,fill=black,minimum size=2pt,
                inner sep=0pt,outer sep=2pt}]
\node[vertice,dot] (a) at ( -1.5, 0) {};
\node[vertice,dot] (b) at (1.5, 0) {};
\node[vertice,dot] (c) at (1.5, -3) {};
\node[vertice,dot] (d) at ( -1.5, -3) {};
\tikzset{every node/.style={fill=white},rectangle}
\draw[->,font=\scriptsize]
    (a) edge[dotted] node[vertice,rectangle]{$5$} node[below=0.2cm]{$\gamma$} (c)
    (d) edge[implies] node[vertice,rectangle]{$2$} node[below=0.25cm]{$\delta$} (c)
    (a) edge node[left=0.1cm]{$\omega$} (d)
    (a) edge[implies] node[vertice,rectangle]{$2$} node[above=0.2cm]{$\alpha$} (b)
    (b) edge[implies] node[vertice,rectangle]{$4$} node[right=0.2cm]{$\beta$} (c);
\end{tikzpicture}
\end{center}
and the relations $I$ are such that
\begin{itemize}
\item There is a 5-dimensional space of ``diagonal'' morphisms $\gamma_n$.
\item Each $\gamma_n$ can be written as a linear combination of the $\alpha_i \beta_j$. 
\item $\omega \delta_0$ and $\omega \delta_1$ are linearly independent.
\end{itemize}
\end{theorem}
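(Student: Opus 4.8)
The plan is to invoke the standard theory of tilting objects. By Theorem~\ref{thm:mainexoticsequence} the category $\D$ is $\Ext$-finite and $E=(E_1,E_2,E_3,E_4):=\big(\Pi_1^*\O_{\d{P}^1},\,\Pi_1^*\O_{\d{P}^1}(1),\,\Pi_0^*\O_{\d{P}^1},\,\Pi_0^*\O_{\d{P}^1}(1)\big)$ is a full strong exceptional sequence in $\D$. Hence $T:=\bigoplus_{i=1}^{4}E_i$ is a tilting object: strongness gives $\Hom^k_\D(T,T)=0$ for $k\neq 0$, fullness gives that $T$ classically generates $\D$, and $\Ext$-finiteness makes $\Lambda:=\End_\D(T)$ a finite-dimensional $\k$-algebra. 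By the standard tilting equivalence (Bondal) the functor $\RHom_\D(T,-)\colon\D\xrightarrow{\sim}\D^b(\mod\Lambda)$ is then an exact equivalence, so it suffices to identify $\Lambda$ with $\k Q/I$.

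Since each $E_i$ is exceptional, $\End_\D(E_i)=\k$; as the $E_i$ are pairwise non-isomorphic (already visible from the Gram matrix), $\Lambda$ is basic with complete set of primitive orthogonal idempotents $e_i=\Id_{E_i}$. Exceptionality of the ordered sequence gives $\Hom_\D(E_j,E_i)=0$ for $j>i$, so $\Lambda$ is a directed algebra, $\rad\Lambda=\bigoplus_{i<j}\Hom_\D(E_i,E_j)$ is nilpotent, and $\Lambda\cong\k Q/I$ with $Q$ the Gabriel quiver of $\Lambda$ and $I$ an admissible ideal. I would next compute all the $\Hom$-spaces from Theorem~\ref{thm:extandpullback}, using $\d{S}(\r{E})_{0,0}=\O_X$, $\d{S}(\r{E})_{1,1}=\O_Y$, $\d{S}(\r{E})_{0,1}=\r{E}={}_f(\O)_{\Id}$ and the identity $-\tr_X{}_f(\O)_{\Id}=f^*(-)$ from (\ref{eq:pullbacktensorproduct}); this gives
\begin{gather*}
\Hom_\D(E_1,E_2)=\Hom_\D(E_3,E_4)=H^0(\d{P}^1,\O(1)),\qquad \Hom_\D(E_1,E_3)=\k,\\
\Hom_\D(E_2,E_4)=H^0(\d{P}^1,\O(3)),\qquad \Hom_\D(E_2,E_3)=0,\qquad \Hom_\D(E_1,E_4)=H^0(\d{P}^1,\O(4)),
\end{gather*}
of dimensions $2,2,1,4,0,5$, matching the Gram matrix.

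The number of arrows $i\to j$ of $Q$ equals $\dim e_j(\rad\Lambda/\rad^2\Lambda)e_i$. Because $\Hom_\D(E_2,E_3)=0$, nothing in $\rad^2\Lambda$ lands between the pairs $(1,2),(2,4),(1,3),(3,4)$, so these contribute full-dimensional arrow spaces $\alpha\colon1\to2$ ($2$-dimensional), $\beta\colon2\to4$ ($4$-dimensional), $\omega\colon1\to3$ ($1$-dimensional), $\delta\colon3\to4$ ($2$-dimensional). For the pair $1\to4$, the composition $\Hom_\D(E_1,E_2)\tr\Hom_\D(E_2,E_4)\to\Hom_\D(E_1,E_4)$ becomes, under the identifications above, the multiplication $H^0(\O(1))\tr H^0(\O(3))\to H^0(\O(4))$, which is surjective on $\d{P}^1$; hence $\rad^2\Lambda$ already fills $\Hom_\D(E_1,E_4)$ and there is no arrow $1\to4$. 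This is exactly the quiver $Q$ of the statement, the dotted $\gamma$ (label $5$) being only a record of $\dim\Hom_\D(E_1,E_4)=5$. Then $I=\ker(\k Q\twoheadrightarrow\Lambda)$ is supported on the ten length-$2$ paths $a\to c$ (the eight $\alpha_i\beta_j$ and the two $\omega\delta_k$) and is the $5$-dimensional kernel of $\k^{10}\to H^0(\O(4))$: surjectivity of $H^0(\O(1))\tr H^0(\O(3))\to H^0(\O(4))$ shows that each basis vector $\gamma_n$ of $\Hom_\D(E_1,E_4)$ is a linear combination of the $\alpha_i\beta_j$, while injectivity of $f^*\colon H^0(\O(1))\hookrightarrow H^0(\O(4))$ (valid since $f$ is surjective) shows $\omega\delta_0,\omega\delta_1$ remain linearly independent in $\Lambda$ — precisely the three properties of $I$ asserted. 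Combining $\RHom_\D(T,-)$ with $\D^b(\mod\Lambda)=\D(\k Q/I)$ yields the equivalence.

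The step I expect to be the main obstacle is the compatibility underlying the last paragraph: one must check that the isomorphisms of Theorem~\ref{thm:extandpullback} are natural enough that composition of morphisms in $\D$ corresponds to multiplication of global sections (for the $\alpha_i\beta_j$) and to $f^*$ (for the $\omega\delta_k$). Concretely this is a naturality statement for the adjunctions $\Pi_m^*\dashv\Pi_{m,*}$ combined with the identification $-\tr_X\r{E}=f^*(-)$; it is routine but must be carried out with care, whereas everything else is formal tilting theory.
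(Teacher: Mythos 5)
Your proposal is correct and follows essentially the paper's own route: both build the tilting object $T=\bigoplus_i E_i$ from the full strong exceptional sequence of Theorem \ref{thm:mainexoticsequence}, identify $\D\cong\D(\End(T))$, and compute $\End(T)$ via Theorem \ref{thm:extandpullback} by identifying the $\Hom$-spaces with the graded pieces $\k[x,y]_1,\k[x,y]_3,\k[x,y]_4$ and the compositions with polynomial multiplication and with $f^*$ (the paper, like you, leaves this compatibility of composition with the identifications essentially asserted). The only divergence is presentational: you pass to the Gabriel quiver, ruling out arrows $1\to 4$ by surjectivity of $H^0(\O(1))\tr H^0(\O(3))\to H^0(\O(4))$ and describing $I$ as the kernel on length-two paths, whereas the paper keeps the five dotted $\gamma$-arrows in $Q$ and writes relations expressing each $\gamma_n$ through the compositions $\alpha_i\beta_j$ and $\omega\delta_k$ — two presentations of the same algebra, so your argument yields the stated equivalence.
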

\begin{proof}
As the exceptional collection (\ref{eq:fullstrongex}) is full and strong, there is a tilting object
\[ \r{T} := \Pi_1^*(\r{O}) \oplus \Pi_1^*(\r{O}(1)) \oplus \Pi_0^*(\r{O}(1)) \oplus \Pi_0^*(\r{O}) \]
showing that $\D \cong \D(\End(\r{T}))$. Now $\End(\r{T})$ is obviously isomorphic to $kQ/J$ where $Q$ is given by the quiver
\begin{center}
\begin{tikzpicture}[
    implies/.style={double,double equal sign distance,-implies},
    dot/.style={shape=circle,fill=black,minimum size=2pt,
                inner sep=0pt,outer sep=2pt}]
\node[vertice,ellipse] (a) at ( -1.5, 0) {$\Pi_1^*(\r{O})$};
\node[vertice,ellipse] (b) at (1.5, 0) {$\Pi_1^*(\r{O}(1))$};
\node[vertice,ellipse] (c) at (1.5, -3) {$\Pi_0^*(\r{O}(1))$};
\node[vertice,ellipse] (d) at ( -1.5, -3) {$\Pi_0^*(\r{O})$};
\tikzset{every node/.style={fill=white},rectangle}
\draw[->,font=\scriptsize]
    (a) edge[dotted] node[vertice,rectangle]{$5$} node[below=0.2cm]{$\gamma$} (c)
    (d) edge[implies] node[vertice,rectangle]{$2$} node[below=0.25cm]{$\delta$} (c)
    (a) edge node[left=0.1cm]{$\omega$} (d)
    (a) edge[implies] node[vertice,rectangle]{$2$} node[above=0.2cm]{$\alpha$} (b)
    (b) edge[implies] node[vertice,rectangle]{$4$} node[right=0.2cm]{$\beta$} (c);
\end{tikzpicture}
\end{center}
and the relations in $J$ are induced by composition for the $\Hom$-sets in the exceptional collection (\ref{eq:fullstrongex}). To check that relations in $J$ actually satisfy the 3 above conditions, note that the above quiver can be identified with:
\begin{center}
\begin{tikzpicture}[
    implies/.style={double,double equal sign distance,-implies},
    dot/.style={shape=circle,fill=black,minimum size=2pt,
                inner sep=0pt,outer sep=2pt}]
\node[vertice,ellipse] (a) at ( -1.5, 0) {$\k[x,y]_0$};
\node[vertice,ellipse] (b) at (1.5, 0) {$\k[x,y]_1$};
\node[vertice,ellipse] (c) at (1.5, -3) {$\k[x,y]_4$};
\node[vertice,ellipse] (d) at ( -1.5, -3) {$\k[x,y]_0$};
\tikzset{every node/.style={fill=white},rectangle}
\draw[->,font=\scriptsize]
    (a) edge[dotted] node[vertice,rectangle]{$5$} node[below=0.2cm]{$\gamma'$} (c)
    (d) edge[implies] node[vertice,rectangle]{$2$} node[below=0.25cm]{$\delta'$} (c)
    (a) edge node[left=0.1cm]{$Id$} (d)
    (a) edge[implies] node[vertice,rectangle]{$2$} node[above=0.2cm]{$\alpha'$} (b)
    (b) edge[implies] node[vertice,rectangle]{$4$} node[right=0.2cm]{$\beta'$} (c);
\end{tikzpicture}
\end{center}
Where the $\alpha_i'$, $\beta_j'$, $\gamma_m'$ give vectorspace bases for $k[x,y]_1$, $k[x,y]_3$ and $k[x,y]_4$ respectively and $\delta'_0, \delta'_1$ are homogeneous degree 4 polynomials defining $f$:
\[ f: \mathbb{P}^1 \rightarrow \mathbb{P}^1: [x:y] \mapsto [\delta_0'(x:y):\delta_1'(x:y) ] \]
\end{proof}
\begin{remark}
In the special case where $f: \mathbb{P}^1 \rightarrow \mathbb{P}^1$ is given by $[x:y] \mapsto [x^4:y^4]$ the relations are given by
\begin{equation} \label{eq:tiltquiverrel} \left\{ \begin{array}{cc} \alpha_i \beta_j = \gamma_{i+j} & 0 \leq i \leq 1, 0 \leq j \leq 3 \\
\omega \delta_i = \gamma_{4i} & 0 \leq i \leq 1 \end{array} \right. \end{equation}
as in this case $\alpha_i'$, $\beta_j'$, $\gamma_m'$ and $\delta_n'$ denote multiplication by $x^i y^{1-i}$, $x^j y^{3-j}$, $x^my^{4-m}$ and $x^{4n}y^{4-4n}$ respectively.
\end{remark}

\begin{remark}
The total number of degrees of freedom is 3 (in choosing $f$, or in choosing such a quiver). This can be intuitively seen as follows:
\begin{itemize}
\item We first fix bases for the 4 vertices
\item There are $8=2 \cdot 4$ compositions of $\alpha_i \beta_j$. As these generate the 5-dimensional space of diagonal morphisms, there are 3 relations between them. The degrees of freedom for these choices is given by the dimension of $Gras(3,8)$ which is $3(8-3)=15$
\item The $\omega \delta_m$ should be expressed in the 5-dimensional space of $\gamma_n$. The amount of ways this can be done is given by the amount of morphisms from a 2-dimensional vectorspace to a 5-dimensional one: hence 10 ways.
\item Now we can base change each of the 4 vertices, giving an action of $GL(1) \times GL(2) \times GL(2) \times GL(2)$, which is 1+4+4+16=25-dimensional. But we should mod out this group by all scalar multiplications by $a, b, c, d$ respectively which satisfy $ab=cd$. So there is an action by a 22-dimensional group.
\item An action of 22-dimensional group on a 25 dimensional space gives a 3-dimensional moduli space.
\end{itemize}

\end{remark}
\newpage

\bibliographystyle{plain}
\bibliography{NC_P^1-Bundles}
\nocite{*}
\end{document}